\documentclass[11pt]{article}
\DeclareMathAlphabet{\mathpzc}{OT1}{pzc}{m}{it}
\usepackage{amssymb,amsmath,amsthm}
\usepackage{extarrows}
\usepackage{enumerate,multicol}
\usepackage[capposition=below]{floatrow}
\usepackage{fullpage}
\usepackage{graphicx}
\usepackage[mathcal]{euscript}
\usepackage{mathrsfs}
\usepackage{upgreek}
\usepackage{MnSymbol}
\usepackage{marginnote}
\usepackage{tikz,tikz-cd}
\usepackage{tikz-3dplot}
\usepackage{comment}
\usepackage{pgfplots}
\usepackage{subfig}
\usepackage{hyperref}
\usepackage{musicography}
\usepackage{breakcites}

\usepackage[backend=biber,bibstyle=authoryear,citestyle=authoryear,
natbib=true,sorting=nyt,sortcites=true,maxnames=10,minnames=10,
hyperref=true,abbreviate=false,date=long,isbn=true,eprint=true,
uniquename=init,giveninits=true
]{biblatex}
\bibliography{Bibliography}

\pgfplotsset{compat=1.15}
\usepgfplotslibrary{fillbetween}
\usetikzlibrary{matrix,arrows}
\setcounter{MaxMatrixCols}{10}

\emergencystretch=2em

\newtheorem{thm}{Theorem}[section]

\theoremstyle{definition}
\numberwithin{equation}{section}
\newtheorem{cor}[thm]{Corollary}
\newtheorem{lem}[thm]{Lemma}

\newtheorem{defin}[thm]{Definition}
\newtheorem{exam}[thm]{Example}
\newtheorem{propo}[thm]{Proposition}

\newtheorem{sublem}{Sublemma}

\newcommand{\R}{\mathbb{R}}
\renewcommand{\d}{\ensuremath{\operatorname{d}\!}}
\newcommand{\overbar}[1]{\mkern 1.5mu\overline{\mkern-1.5mu#1\mkern-1.5mu}\mkern 1.5mu}
\newcommand{\localflow}{\mathscr{LF}}
\newcommand{\suchthat}{\;\ifnum\currentgrouptype=16 \middle\fi|\;}
\DeclareMathAlphabet\mathbfcal{OMS}{cmsy}{b}{n}
\tikzset{
  subseteq/.style={
    draw=none,
    edge node={node [sloped, allow upside down, auto=false]{$\subseteq$}}}}
\hypersetup{colorlinks,
            allcolors = blue}

\date{}

\begin{document}

\renewcommand\qedsymbol{$\blacksquare$}
\title{The exponential map for time-varying vector fields}
\author{Yanlei Zhang\footnote{Graduate student, Department of Mathematics and Statistics, Queen’s University, Kingston, ON K7L 3N6, \hspace*{1.5em} Canada. \newline \hspace*{1.8em}Email:\href{mailto:yanlei.zhang@queensu.ca}{yanlei.zhang@queensu.ca}}}
\maketitle

\begin{abstract}
The exponential map that characterises the flows of vector fields is the key in understanding the basic structural attributes of control systems in geometric control theory. However, this map does not exists due to the lack of completeness of flows for general vector fields.

An appropriate substitute is devised for the exponential map, not by trying to force flows to be globally defined by any compactness assumptions on the manifold, but by categorical development of spaces of vector fields and flows, thus allowing for systematic localisation of such spaces. That is to say, we give a presheaf construction of the exponential map for vector fields with measurable time-dependence and continuous parameter-dependence in the category of general topological spaces. Moreover, all manners of regularity in state are considered, from the minimal locally Lipschitz dependence to holomorphic and real analytic dependence. Using geometric descriptions of suitable topologies for vector fields and for local diffeomorphisms, the homeomorphism of the exponential map is derived by a uniform treatment for all cases of regularities. Finally, a new sort of continuous dependence is proved, that of the fixed time local flow on the parameter which plays an important role in the establishment of the homeomorphism of the exponential map.

\vspace{5pt}
\noindent\textbf{Keywords:} Exponential map, time-dependent vector fields, parameter-dependent vector fields, local flows, geometric analysis

\vspace{5pt}
\noindent\textbf{AMS Subject Classifications (2020):} 53C23, 18F60, 46M10, 34A12, 34A26, 46E10, 53B05
\end{abstract}
\newpage
\begingroup
  \hypersetup{hidelinks}
  \tableofcontents
\endgroup  

\section{Introduction}
In geometric control theory, one can study linear and nonlinear control theory from the point of view of applications, or from a more fundamental point of view where the system structure is a key element. It is well understood that the language of systems such as I am interested in should be founded in the study of differential geometry and vector fields on manifolds \citep{MR2062547,MR1410988,MR1978379,MR1425878,MR2099139}.

Understanding the basic structural attributes of control systems requires understanding how the system is connected to the trajectories of the system. For a control system 
$$\xi'(t)=F(t,\xi(t),\mu(t))$$
with a control $\mu:\mathbb{T}\rightarrow \mathcal{C}$ and trajectory $\xi:\mathbb{T}\rightarrow M$, the controllability, reachability, and stabilisability properties depend solely on the understanding a family of flows
$$(t,t_0,x_0)\mapsto\Phi^F(t,t_0,x_0,\mu),\hspace{10pt} \mu\in\mathscr{U}$$
for some class of controls $\mathscr{U}$. This is typically thought of as the image of the family of time-varying vector fields $(t,x)\mapsto F_\mu(t,x):=F(t,\xi(t),\mu(t))$ under some “exponential map” 
\begin{eqnarray*}
    \text{exp}:\{\text{vector fields}\}&\rightarrow& \{\text{local flows}\}\\
	  F&\mapsto& \Phi^F.
\end{eqnarray*}
The idea of the flow of a vector field seems so well understood that it barely merits any systematic explication. However, the accepted casual manner of this presentation has a deficiency, this being that there is no way to generally define the exponential map as a mapping from the Lie algebra of vector fields to the group of diffeomorphisms. Quite apart from any technical difficulties that arise from working with infinite-dimensional manifolds, the incompleteness of general vector fields causes any na\"ive definition to fail. 

One might overcome this by working with compact manifolds or by working with only completevector fields. Particularly, the assumption of completeness is one that is very often made in passing ``for the sake of convenience." For manifolds that are compact, the problem of completeness can be overcome, and the desired exponential map, in fact, exists \citep{MR0271983}. For noncompact manifolds, one can work with vector fields with compact support. These compactness assumptions (for compact manifolds) or impositions (for compact support) are not satisfactory. For example, the compactness assumption fails for linear ordinary differential equations, and any theory not including these can hardly be said to be general. As another instance of the lacking of these compactness constructions, note that a real analytic vector field on a noncompact manifold can never have compact support. From our perspective, any theory not including analytic vector fields is not satisfactory.

Another route to some sort of exponential map in the time-varying case involves coming up with some sort of series representation for time-varying flows. The so-called Volterra series is an adaptation of the exponential series for time-varying vector fields, and rigorous versions of this work date back to \citep{Agra_ev_1979}. A nice recent summary of this work can be found in the book of \citep{MR2062547}. The “inversion” of the Volterra series leads to the notion of a Baker–Campbell–Hausdorff formula in the time-varying case. An example of this can be found in the work of \citep{MR886816}. These sorts of considerations have given rise to an area dedicated to using methods from the theory of free algebras. For a recent outline of these methods, we refer to \citep{Kawski}. While these techniques have proved to have significant value in geometric control theory, the problem of defining the exponential map for general vector fields still remains open. 

The drawbacks of these approaches on this subject comes in various forms. One such drawback concerns regularity of the vector fields, and definedness and convergence of the series. The series from this theory are comprised of differential operators that arise from iterating first-order differential operators, i.e., vector fields. In degenerate cases where some sort of nilpotency can be assumed, only finite iterations are necessary. However, one cannot expect this to be the case generically, and so a general theory in this
framework must allow for infinite iterations of first-order differential operators, i.e., at least infinite differentiability. Thus the theory simply does not apply to lower degrees of regularity. Moreover, even in the infinitely differentiable setting, the series do not converge in any meaningful way. This is not surprising since there are some aspects of Taylor series in these series representations, and so one expects, and it indeed the case, that real analyticity is required for convergence. Again, lesser regularity is simply not
represented by these series methods (and it is certainly not claimed to be represented in the literature on the subject). 

Another limitation of the series representations of flows is that they are only made for a single time-varying vector field, whereas any sort of “exponential map” should give us some representation of a flow given any time-varying vector field. The problem here comes in two flavours. First, the completeness problem mentioned above appears in the series representations as well; the domain in space and time simply cannot be uniform over all vector fields. Second, if one is working with real analytic vector fields and asking for convergence of series, the region of convergence will depend on the specific vector field \citep[Example 6.24]{Jafarpour2014}. In any case, the lack of a fixed domain for flows and convergence creates a problem for series representation methods as a means for defining any general sort of exponential map.

This, then, leaves open the question of how one satisfactorily addresses the problem of defining the exponential map in any general way.

\subsection{Contribution of the paper}
The general question about the existence of the exponential map is addressed in this paper by considering, not vector fields and diffeomorphisms, but presheaves of vector fields and groupoids of local diffeomorphisms of all possible regularities, which allow for systematic localisation of the components of what will become the exponential map, i.e.,
$$\text{exp}:\{\text{presheaf of vector fields}\}\rightarrow\{\text{groupoid of local diffeomorphisms}\}.$$

Moreover, we present a methodology for working with vector fields with measurable time-dependence, continuous parameter-dependence with parameters in an arbitrary topological space, and for working with the resulting flows of such vector fields. The presheaf point of view provides a theory that integrates the fact that flows are only locally defined, even for vector fields that are globally defined. That is to say, we deal with the lack of completeness not by trying to force flows to be globally defined by some sort of assumption, but by making vector fields themselves locally defined, thus putting them on the same local footing as their flows. The categorical framework for talking about time-dependent vector fields and local flows herein allows one to infer the existence of a presheaf in the category of topological spaces, which plays an important role in the study of controllability of a control system.  

Another attribute of the framework is that time-varying vector fields are considered with a variety of degrees of regularity with respect to state, namely, Lipschitz, finitely differentiable, smooth, real analytic, and holomorphic. In doing this, use is made of locally convex topologies for the spaces of vector fields with these degrees of regularity. The classes of time-varying and parameter-dependent vector fields are characterised by their continuity, measurability, and integrability with respect to these locally convex topologies. Within this framework, very general are provided results concerning existence, uniqueness, and regular dependence of flows on initial and final time, initial state, and parameters. These results include, and drastically extend, known results for properties of flows.

Moreover, the homeomorphism of this map is established upon the suitable topologies for sets of vector fields and flows using geometric decompositions of various jet bundles by various of connections. This framework is interesting in that it allows an elegant and uniform treatment of vector fields across various regularity classes.

\subsection{An outline of the paper}\label{sec:1.2}
Roughly speaking, in the next two sections we develop the classes of vector fields and flows we use, and attributes of these. In the final section, we prove properties of flows of vector fields, and define the exponential map and its properties.

In more details, Section \ref{sec:2} overviews the locally convex topologies for the space of sections of a vector bundle for various regularity classes presented in \citep{Jafarpour2014}, including the newly arrived topology in the real analytic case. Most importantly, we use the these locally convex topologies to describe classes of time-dependent and parameter-dependent sections. The approach we take strictly extends the usual approach to parameter-dependence in the theory of ordinary differential equations, and allows, for example, vector fields that depend on a parameter in a general topological space.

In Section \ref{sec:3}, we carefully and geometrically establish the basic results concerning existence and uniqueness of integral curves, and of the regular dependence of flows on initial time, final time, initial state, and parameter. We make a comment that the new type of continuity result for the “parameter to local flow” mapping provides a geometric toolbox for dealing with the homeomorphism of the exponential map which will be established in Section \ref{sec:4}.

In Section \ref{sec:4}, we carefully establish the presheaf exponential map in the category of topological spaces. We start by developing a categorical framework for talking about time-dependent vector fields and local flows, which allow one to infer the existence of a presheaf, here in the category of topological spaces. The presheaves we construct are put together by requiring that, in any product neighbourhood of a point in time, state, and parameter, the theory should agree with the “standard” theory of Section \ref{sec:3.1} and Section \ref{sec:3.2}. Since the collection of product neighbourhoods are a basis for the open sets in the product, standard presheaf theory constructions then give a presheaf whose local sections over products agree with the prescribed ones. Essentially by taking inverse limits in these appropriate categories, we show that the theory of local flows is very elegantly represented by the existence of an “exponential mapping” from the presheaf of vector fields to the presheaf of flows. Moreover, under the appropriate topologies we developed for time-dependent vector fields and the topology for local flows herein,  this exponential map can be shown to be an homeomorphism onto its image by the universal property of the inverse limit. 
\subsection{Background and notation}
We shall give a brief outline of the notations we use in the main body of the paper. We shall mainly give definitions, establish the bare minimum of facts we require, and refer the reader to the references for details.

\vspace{10pt}
\noindent\textbf{Manifolds, vector bundles, and jet bundles.} 
We shall assume all manifolds to be e Hausdorff, second countable, and connected. We shall will work with manifolds and vector bundles coming from the different categories: smooth (i.e., infinitely differentiable), real analytic, and holomorphic (i.e., complex analytic). We shall use ``class $C^r$" to denote these three cases, i.e., $r\in\{\infty,\omega,\text{hol}\}$ for smooth, real analytic, and holomorphic, respectively. When $r = \text{hol}$, we shall frequently ask that $M$ be a Stein manifold; this means that there is a proper embedding of $M$ in $\mathbb{C}^N$ for a suitable $N \in \mathbb{Z}_{>0}$ \citep{MR71085}. A typical $C^r$-vector bundle we will denote by $\pi : E \to M$. The dual bundle we denote by $E^*$. The tangent bundle we denote by $\pi_{TM} : TM \to M$ and the cotangent bundle by $\pi_{T^\ast M} : T^\ast M \to M$.

While manifolds and vector bundles are smooth, real analytic, or holomorphic, we will work with other sorts of geometric objects, e.g., functions, mappings, sections, that have various sorts of regularity. Let us introduce the terminology we shall use. Let $m\in\mathbb{Z}_{\geq 0}$ and let $m'\in \{0,\text{lip}\}$. We will work with objects with regularity $\nu\in \{m+m',\infty,\omega,\text{hol}\}$. Thus $\nu=m$ means ``$m$-times continuously differentiable," $\nu=\infty$ means ``smooth", $\nu=\omega$ means ``real analytic", and $\nu=\text{hol}$ means ``holomorphic." Given $\nu\in \{m+m',\infty,\omega,\text{hol}\}$, we shall often say ``let $r\in\{\infty,\omega,\text{hol}\}$, as required." This has obvious meaning that $r=\text{hol}$ when $\nu=\text{hol}$, $r=\omega$ when $\nu=\omega$, and $r=\infty$ otherwise. We shall also use the terminology ``let $\mathbb{F}\in\{\R,\mathbb{C}\}$, as appropriate." This means that $\mathbb{F}=\mathbb{C}$ when $r=\text{hol}$ and $\mathbb{F}=\R$ otherwise.

If $m \in \mathbb{Z}_{\geq 0}$ and $m' \in \{0, \text{lip}\}$, and if $\nu \in \{m+m', \infty, \omega, \text{hol}\}$, then the $C^\nu$-sections of $E$ are denoted by $\Gamma^\nu(E)$. By $C^\nu(M)$ we denote the set of $C^\nu$-functions on $M$, noting that these are $\mathbb{F}$-valued, i.e., $\mathbb{C}$-valued when $\nu = \text{hol}$. If $M$ and $N$ are $C^r$-manifolds, $C^\nu(M; N)$ denotes the set of $C^\nu$-mappings from $M$ to $N$. If $f \in C^{\nu+1}(M)$ and $X \in \Gamma^\nu(TM)$, we denote by $\mathscr{L}_Xf$ or $Xf$ the Lie derivative of $f$ with respect to $X$. If $\Phi \in C^1(M; N)$, $T\Phi: TM \to TN$ denotes the derivative of $\Phi$.

For a $C^r$-vector bundle $\pi : E \to M$, $r \in \{\infty, \omega,\text{hol}\}$, we denote by $\pi_m : J_mE \to M$ the vector bundle of $m$-jets of sections of $E$; see \citep[\S 12.17]{MR1202431} and \citep{MR989588}. For $C^r$-manifolds $M$ and $N$, we denote by $\rho^m_0: J^m(M; N) \to M \times N$ the bundle of $m$-jets of mappings from $M$ to $N$. We also have a fibre bundle 
$$\rho_m \triangleq \text{pr}_1\circ \rho^m_0: J^m(M; N) \to M,$$
where $\text{pr}_1$ is the projection onto the first component. We signify the $m$-jet of a section, function, or mapping by use of the prefix $j_m$, i.e., $j_m\xi$, $j_mf$, or $j_m\Phi$. The set of jets of sections at $x$ we denote by $J^m_x E$ and the set of jets of mappings at $(x, y) \in M \times N$ we denote by $J^m(M; N)_{(x,y)}$.  As a special case, $J^m(M; \R)$ denotes the bundle of $m$-jets of functions. We denote by $T^{*m}_x M = J^m(M; \R)_{(x,0)}$ the jets of functions with value 0 at $x$, and $T^{*m} M = \cup_{x\in M}T^{*m}_x M$. The space $T^{*m}_x M$ has the structure of a $\R$-algebra specified by requiring that
$$\mathbf{m}^r_x\ni  f \mapsto j_mf(x) \in T^*m_x M$$
be a $\R$-algebra homomorphism, with $\mathbf{m}^r_x \subseteq C^r(M)$ being the ideal of functions vanishing at $x$. We then note \citep[Proposition 12.9]{MR1202431}  that $J^m(M; N)_{(x,y)}$ is identified with the set of $\R$-algebra homomorphisms from $T^{*m}_y N$ to $T^{*m}_x M$ according to
$$j_m\Phi(x)(j_m g(y)) = j_m(\Phi^*g)(x)$$
for $\Phi$ a smooth mapping defined in some neighbourhood of $x$ and satisfying $\Phi(x) = y$. We note that $\Gamma^\nu(J^mE)$ can be thought of in the usual way since $\pi_m : J^mE \to M$ is a $C^r$-vector bundle. However, $J^m(M; N)$ is not, generally, a vector bundle; nonetheless, we shall denote by $\Gamma^\nu(J^m(M; N))$ the set of $C^\nu$-sections of the bundle $\rho_m :J^m(M; N) \to M$.

\vspace{10pt}
\noindent\textbf{Metrics and connections.} We shall make use of Riemannian and fibre metrics for the same reason of convenience. Many definitions we make use a specific choice for such metrics, although none of the results depend on these choices. For $r \in \{\infty, \omega\}$, we let $\pi : E \to M$ be a $C^r$-vector bundle. We denote by $\mathbb{G}_M$ a $C^r$-Riemannian metric on $M$ and by $\mathbb{G}_\pi$ a $C^r$-metric for the fibres of $E$. We make a note that the existence of these in the real analytic case is verified by \citep[Lemma 2.4]{Jafarpour2014}. The metrics $\mathbb{G}_M$ and $\mathbb{G}_\pi$ then induce metrics in all tensor products of $TM$ and
$E$ and their duals. For simplicity, we just denote any such metric by $\mathbb{G}_{M,\pi}$. 

We will frequently make use of the distance function on M associated with a Riemannian metric $\mathbb{G}$. In order to have constructions involving G make sense—in terms of not depending on the choice of Riemannian metric—we should verify that such constructions do not depend on the choice of this metric. Of course, this is not true for all manner of general assertions. However, Lemma \ref{lem:5.2} captures what we need. This lemma will not surprise most readers, but we could not find a proof of this anywhere. 

For convenience we shall make use of connections in representing certain objects that do not actually require a connection for their description. For $r \in \{\infty, \omega\}$, we let $\pi : E \to M$ be a $C^r$-vector bundle. We let $\nabla^M$ be a $C^r$-affine connection on $M$ and we let $\nabla^\pi$ denote a $C^r$-linear connection in the vector bundle. The existence of these on the real analytic case is proved by \citep[Lemma 2.4]{Jafarpour2014}. Almost always we will not require $\nabla^M$ to be the Levi-Civita connection for the Riemannian metric $\mathbb{G}_M$, nor do we typically require there to be any metric relationship between $\nabla^\pi$ and $\mathbb{G}_\pi$. However, in our constructions for the Lipschitz topology, it is sometimes convenient to assume that $\nabla^M$ is the Levi-Civita connection for $\mathbb{G}_M$ and that
$\nabla^\pi$ is $\mathbb{G}_\pi$-orthogonal, i.e., parallel transport consists of inner product preserving mappings. Thus, a safety-minded reader may wish to make these assumptions in all cases.
\section{Space of sections}\label{sec:2}
We present a methodology for working with vector fields with measurable time-dependence and for working with the resulting flows of such vector fields. We begin in this section by characterizing time-varying vector fields on manifolds using locally convex topologies generated by a family of seminorms. This presentation of time-varying vector fields agrees with, and extends, the standard treatments. It closely follow from \citep{Jafarpour2014}.
\subsection{Measurable and integrable funtions with values in locally convex topological vector spaces}
The topological characterisation relies on notions of measurability, integrability, and boundedness in the locally convex spaces $\Gamma^\nu(E)$. For an arbitrary locally convex space $V$, let us review some definitions.
\begin{enumerate}
    \item[(1)]{
    Let $(\mathcal{M},\mathscr{A})$ be a measurable space. A function $\Psi:\mathcal{M}\rightarrow V$ is ``measurable" if $\Phi^{-1}(\mathcal{B})\in \mathscr{A}$ for every Borel set $\mathcal{B}\subseteq V$.
    }
    \item[(2)]{
    It is possible to describe a notion of integral, called the ``Bochner integral", for a function $\gamma:\mathbb{T}\rightarrow V$ that closely resembles the usual construction of the Lebesgue integral. A curve $\gamma:\mathbb{T}\rightarrow V$ is ``Bochner integrable" if its Bochner integral exists and is ``locally Bochner integrable" if the Bochner integral of $\gamma|\mathbb{T'}$ exists for any compact subinterval $\mathbb{T'}\subseteq\mathbb{T}$.
    }
    \item[(3)]{
    Finally, a subset $\mathcal{B}\subseteq V$ is bounded if $p|\mathcal{B}$ is bounded for any continuous seminorm $p$ on $V$. A curve $\gamma:\mathbb{T}\rightarrow V$ is ``essentially von Neumann bounded" if there exists a bounded set $\mathcal{B}$ such that 
    $$\lambda(\{t\in \mathbb{T}\;|\;\gamma(t)\notin\mathcal{B} \})=0,$$
    and is ``locally essentially von Neumann bounded" if $\gamma|\mathbb{T'}$ is essentially von Neumann bounded for every compact subinterval $\mathbb{T'}\subseteq\mathbb{T}$.
	}
\end{enumerate}
\subsection{Topologies on space of sections}
In this section we will provide explicit seminorms that define the various topologies we use for local sections, corresponding to regularity classes $\nu\in \{m+m',\infty,\omega,\text{hol}\}$. We shall not use much space to describe the nature of these topologies, but give a general sketch and refer the interested readers to  \citep{Jafarpour2014} for more details.

\subsubsection{Locally Lipschitz sections of vector bundles}
As we are interested in ordinary differential equations with well-defined flows, we must, according to the usual theory, consider locally Lipschitz sections of vector bundles. In particular, we will find it essential to topologise the space of locally Lipschitz sections of $\pi : E \to M$. To define the seminorms for this topology, we make use of a ``local least Lipschitz constant."

We let $\xi : M \to E$ be such that $\xi(x) \in E_x$ for every $x \in M$. For a piecewise differentiable curve $\gamma : [0, T] \to M$, we denote by $\tau_{\gamma,t} : E_{\gamma(0)} \to E_{\gamma(t)}$ the isomorphism of parallel translation along $\gamma$ for each $t \in [0, T]$. We then define, for $K \subseteq M$ compact,
$$l_K(\xi)=\sup\left\{\frac{||\tau^{-1}_{\gamma,1}(\xi\circ\gamma(1))-\xi\circ\gamma(0)||_{\mathbb{G}_\pi}}{\ell_{\mathbb{G}_M}(\gamma)}\ \suchthat\ \gamma:[0,1]\to M,\  \gamma(0), \gamma(1) \in K,\  \gamma(0) \neq \gamma(1)\right\},$$
which is the \textbf{K-sectional dilatation of} $\xi$. Here $\ell_{\mathbb{G}_M}$ is the length function on piecewise differentiable curves. We also define
\begin{eqnarray*}
  \text{dil}\ \xi: M &\to& \R_{\geq 0}\\
  x&\mapsto&\inf\{l_{\text{cl}}(\mathcal{U})(\xi)\ \suchthat\  \mathcal{U} \text{ is a relatively compact neighbourhood of } x\},
\end{eqnarray*}
which is the local sectional dilatation, respectively, of $\xi$. Note that, while the values taken by $\text{dil}\xi$ will depend on the choice of a Riemannian metric $\mathbb{G}$, the property $\text{dil}\  \xi(x) < \infty$ for $x \in M$ is independent of $\mathbb{G}$, whence $\xi\in \Gamma^{\text{lip}}(E)$ \citep[Lemma 3.10]{Jafarpour2014}.

The following characterisations of the local sectional dilatation are useful.
\begin{lem}[Local sectional dilatation using derivatives]
For a $C^\infty$-vector bundle $\pi : E \to M$ and for $\xi \in \Gamma^{\text{lip}}(E)$, we have
\begin{eqnarray*}
      \text{dil}\xi(x)=\inf\{\sup\{\|\nabla^{\pi_m}_{v_y}\xi\|_{\mathbb{G}_{M,\pi}}\ |\ y\in \text{cl}(\mathcal{U}),\ \|v_y\|_{\mathbb{G}_M}=1, \ \xi \text{ differentiable at }y\}|\\ \mathcal{U} \text{ is a relatively compact neighbourhood of } x\}.
\end{eqnarray*}
\begin{proof}
\citep[Lemma 3.12]{Jafarpour2014}.
\end{proof}
\end{lem}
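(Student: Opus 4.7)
The plan is to establish equality between the two infima by comparing quantities over nested relatively compact neighbourhoods of $x$. Write $\Lambda(\mathcal{U})$ for the supremum on the right-hand side. Since both $\mathcal{U}\mapsto l_{\text{cl}(\mathcal{U})}(\xi)$ and $\mathcal{U}\mapsto\Lambda(\mathcal{U})$ are monotone in $\mathcal{U}$ under inclusion, it suffices to show that for every relatively compact neighbourhood $\mathcal{U}$ of $x$ and every $\varepsilon>0$ there is a smaller relatively compact neighbourhood $\mathcal{V}\subset\mathcal{U}$ with $\Lambda(\mathcal{V})\leq l_{\text{cl}(\mathcal{U})}(\xi)$ and a (possibly different) such $\mathcal{V}$ with $l_{\text{cl}(\mathcal{V})}(\xi)\leq \Lambda(\mathcal{U})+\varepsilon$.

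For the easier inequality $\Lambda(\mathcal{V})\leq l_{\text{cl}(\mathcal{U})}(\xi)$, choose $\mathcal{V}\subset\subset \mathcal{U}$ small enough that for every $y\in\text{cl}(\mathcal{V})$ and every unit vector $v_y\in T_yM$ the geodesic $\gamma_{y,v_y}$ of $\nabla^M$ with initial conditions $(y,v_y)$ stays in $\text{cl}(\mathcal{U})$ on a uniform time interval $[0,t_0]$. At a point $y$ of differentiability of $\xi$, the covariant derivative is recovered as
$$\nabla^\pi_{v_y}\xi=\lim_{t\to 0^+}\frac{1}{t}\bigl(\tau^{-1}_{\gamma_{y,v_y},t}(\xi\circ\gamma_{y,v_y}(t))-\xi(y)\bigr),$$
and each difference quotient is bounded in $\mathbb{G}_\pi$-norm by $l_{\text{cl}(\mathcal{U})}(\xi)$ because the geodesic arc has length $t$ and endpoints in $\text{cl}(\mathcal{U})$. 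Passing to the limit and taking sup over $y$ and $v_y$ gives the bound.

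For the reverse inequality $l_{\text{cl}(\mathcal{V})}(\xi)\leq \Lambda(\mathcal{U})+\varepsilon$, apply Rademacher's theorem in vector-bundle charts to conclude that $\xi$ is differentiable almost everywhere in $\text{cl}(\mathcal{U})$. For any piecewise differentiable curve $\gamma:[0,1]\to M$ with endpoints in $\text{cl}(\mathcal{V})$, split into two cases. When $\gamma$ stays in $\text{cl}(\mathcal{U})$, use the fundamental theorem of calculus
$$\tau^{-1}_{\gamma,1}(\xi\circ\gamma(1))-\xi\circ\gamma(0)=\int_0^1\tau^{-1}_{\gamma,t}\bigl(\nabla^\pi_{\gamma'(t)}\xi\bigr)\,dt,$$
which is valid for almost every $t$ since $\xi\circ\gamma$ is Lipschitz hence absolutely continuous; assuming $\nabla^\pi$ is $\mathbb{G}_\pi$-orthogonal as the background paragraph permits, $\tau^{-1}_{\gamma,t}$ is an isometry, so the left-hand side has norm at most $\Lambda(\mathcal{U})\,\ell_{\mathbb{G}_M}(\gamma)$. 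When $\gamma$ leaves $\text{cl}(\mathcal{U})$, bound $\ell_{\mathbb{G}_M}(\gamma)\geq 2\,\text{dist}(\text{cl}(\mathcal{V}),\partial\mathcal{U})$ from below and $\|\tau^{-1}_{\gamma,1}(\xi\circ\gamma(1))-\xi\circ\gamma(0)\|_{\mathbb{G}_\pi}\leq 2\sup_{\text{cl}(\mathcal{U})}\|\xi\|_{\mathbb{G}_\pi}$ from above, so the ratio is $\leq \varepsilon$ once $\mathcal{V}$ is sufficiently small.

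The main obstacle is justifying the chain-rule identity inside the integral when $\xi$ is only Lipschitz: the set of points where $\xi$ fails to be differentiable is null, but $\gamma$ need not avoid it. The resolution is that $\xi\circ\gamma$ is absolutely continuous, so it has a derivative a.e.\ in $t$, and at any $t$ where both $\xi$ is differentiable at $\gamma(t)$ and $\gamma'(t)$ exists the chain rule gives the stated integrand; on the exceptional set $E$ of $t$ with $\gamma(t)$ a non-differentiability point of $\xi$, Lipschitz maps send null sets to null sets, so the area/coarea argument forces $\gamma'(t)=0$ for almost every $t\in E$, and the integrand is zero there under the natural linear extension. Once this a.e.\ identity is secured, the two inequalities combine to give the stated equality.
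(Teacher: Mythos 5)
The paper offers no proof of this lemma at all --- it simply cites Lemma 3.12 of Jafarpour--Lewis --- so your attempt has to be judged against the standard argument rather than against anything in the text. Your first inequality is fine: at a Rademacher point the covariant derivative is the limit of parallel-transport difference quotients along short geodesics, each of which is bounded by $l_{\text{cl}(\mathcal{U})}(\xi)$, and this gives $\Lambda(\mathcal{V})\leq l_{\text{cl}(\mathcal{U})}(\xi)$. The reverse inequality, however, has two genuine gaps. The first is your resolution of the chain-rule problem, which is false as stated: for a curve into a manifold of dimension at least $2$ it is \emph{not} true that $\gamma'(t)=0$ for almost every $t\in\gamma^{-1}(N)$ when $N$ is merely Lebesgue-null in $M$. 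The non-differentiability set of a Lipschitz section can contain a hypersurface (take $\xi$ to be the function $|x^1|$ on $\R^2$, so $N=\{x^1=0\}$), and a curve running along that hypersurface has nonzero speed on all of $\gamma^{-1}(N)$; the one-dimensional fact that an absolutely continuous $f:\R\to\R$ has $f'=0$ a.e.\ on the preimage of a null set does not generalize to curves in higher dimension, and ``Lipschitz maps send null sets to null sets'' is about images of null subsets of $[0,1]$, not preimages of null subsets of $M$. The standard repair --- and the actual content of the cited lemma --- is a Fubini/transversality argument (for almost every curve in a family of small translates of $\gamma$ in a chart, the preimage of $N$ is null; then pass to the limit using continuity of $\xi$ and of parallel transport) or mollification of $\xi$ in a trivializing chart. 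This is the heart of the proof and cannot be dispatched in one sentence.

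The second gap is the case of curves with endpoints in $\text{cl}(\mathcal{V})$ that leave $\text{cl}(\mathcal{U})$. The bound you derive for the ratio is $\sup_{\text{cl}(\mathcal{V})}\|\xi\|_{\mathbb{G}_\pi}/\operatorname{dist}(\text{cl}(\mathcal{V}),M\setminus\mathcal{U})$, and shrinking $\mathcal{V}$ drives this to $\|\xi(x)\|_{\mathbb{G}_\pi}/\operatorname{dist}(x,M\setminus\mathcal{U})$, which is a fixed positive constant, not $\varepsilon$. Worse, with the definition of $l_K$ exactly as written in the paper (only the \emph{endpoints} of $\gamma$ are required to lie in $K$), such detours are a genuine obstruction: if $\nabla^\pi$ has holonomy and $\xi(x)\neq0$, a near-loop based at points near $x$ and passing through a fixed region of curvature produces a ratio bounded below by a positive constant independent of $\mathcal{V}$, which can exceed the derivative bound. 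The statement is only correct if $l_K$ is read as a supremum over curves whose image stays in $K$ (or in a fixed geodesically convex neighbourhood), which is how Lemma~\ref{lem:lsdasd} implicitly uses it; you should impose that reading explicitly, after which your second case disappears and only the chain-rule issue remains to be repaired.
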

\begin{lem}[Local sectional dilatation and sectional dilatation]\label{lem:lsdasd}
Let $\pi : E \rightarrow M$ be a $C^\infty$-vector bundle. Then, for each $x_0 \in M$, there exists a relatively compact neighbourhood $\mathcal{U}$ of $x_0$ such that
$$l_{\text{cl}(\mathcal{U})}(\xi) = \sup\{\text{dil}\ \xi(x)\ \suchthat\  x \in \text{cl}(\mathcal{U})\},\  \xi \in \Gamma^{\text{lip}}(E).$$
\end{lem}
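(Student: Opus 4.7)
The plan is to prove the equality by establishing both inequalities, having chosen $\mathcal{U}$ small enough for $\text{cl}(\mathcal{U})$ to bridge the curve-based quantity $l_{\text{cl}(\mathcal{U})}(\xi)$ with the pointwise derivative description of $\text{dil}\,\xi$ from the preceding lemma. Concretely, I would take $\mathcal{U}$ to be a relatively compact neighbourhood of $x_0$ whose closure lies inside a larger totally normal neighbourhood $\mathcal{U}_0$ with respect to the Levi-Civita connection of $\mathbb{G}_M$, so that any two distinct points of $\text{cl}(\mathcal{U})$ are joined by a unique $\mathbb{G}_M$-minimising geodesic contained in $\mathcal{U}_0$. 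For convenience, and as the paper explicitly permits, I would assume $\nabla^\pi$ to be $\mathbb{G}_\pi$-orthogonal, so parallel transport becomes a fibrewise isometry.

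One direction, $\sup\{\text{dil}\,\xi(x):x\in\text{cl}(\mathcal{U})\}\leq l_{\text{cl}(\mathcal{U})}(\xi)$, is essentially a definition chase. For $x$ in the interior of $\mathcal{U}$, the set $\mathcal{U}$ itself is a relatively compact neighbourhood of $x$ and is therefore admissible in the infimum defining $\text{dil}\,\xi(x)$; for $x\in\partial\mathcal{U}$, the upper semicontinuity of $\text{dil}\,\xi$---immediate from its description as an infimum over shrinking neighbourhoods---lets us pass to the limit from the interior.

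For the reverse inequality $l_{\text{cl}(\mathcal{U})}(\xi)\leq\sup\{\text{dil}\,\xi(x):x\in\text{cl}(\mathcal{U})\}$, the preceding lemma rewrites the right hand side in terms of $\|\nabla^{\pi}_{v_y}\xi\|_{\mathbb{G}_{M,\pi}}$ at differentiable points, and the engine of the argument is the Rademacher-based integration identity
$$\tau^{-1}_{\gamma,1}(\xi\circ\gamma(1))-\xi\circ\gamma(0)=\int_0^1\tau^{-1}_{\gamma,t}\bigl(\nabla^\pi_{\gamma'(t)}\xi\bigr)(\gamma(t))\,dt,$$
valid for piecewise differentiable $\gamma$ along which $\xi$ is absolutely continuous. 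When $\gamma$ lies in $\text{cl}(\mathcal{U}_0)$, integration together with the isometry of parallel transport bounds the ratio by $\sup_{y\in\text{cl}(\mathcal{U}_0)}\|\nabla^\pi\xi(y)\|_{\mathrm{op}}$, which in turn is $\leq\sup_{x\in\text{cl}(\mathcal{U})}\text{dil}\,\xi(x)$ on shrinking $\mathcal{U}\subseteq\mathcal{U}_0$ suitably. When $\gamma$ strays outside $\text{cl}(\mathcal{U}_0)$, I would replace it with the minimising geodesic $\gamma_0$ joining its endpoints inside $\mathcal{U}_0$: the length drops, and the holonomy-type discrepancy $(\tau^{-1}_{\gamma,1}-\tau^{-1}_{\gamma_0,1})(\xi\circ\gamma(1))$ can be absorbed via a combination of the numerator bound $\|\xi\circ\gamma(0)\|_{\mathbb{G}_\pi}+\|\xi\circ\gamma(1)\|_{\mathbb{G}_\pi}$ coming from the isometry of parallel transport, the smallness of holonomy on short homotopies inside the simply connected $\mathcal{U}_0$, and the observation that very long $\gamma$ produce vanishing ratios.

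The main obstacle is exactly this final reduction: curves entering $l_{\text{cl}(\mathcal{U})}(\xi)$ are not constrained to lie in $\text{cl}(\mathcal{U})$, and the Rademacher estimate only delivers information at points $\gamma$ actually visits. The neighbourhood $\mathcal{U}$ must therefore accommodate, at once, the total-normality-type geodesic replacement of escaping curves, the orthogonality-based uniform control of parallel transport, and the lemma-based pointwise-derivative description of $\text{dil}\,\xi$ over a slightly enlarged neighbourhood; this balancing act is the genuine content of the proof.
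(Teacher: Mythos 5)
Your overall strategy --- pick $\mathcal{U}$ with good geodesic properties and pass between the curve-based quantity $l_{\text{cl}(\mathcal{U})}$ and the pointwise derivative description of $\text{dil}\,\xi$ via Rademacher --- is the same as the paper's, which takes $\mathcal{U}$ geodesically convex and runs an $\epsilon/2$-argument entirely at the level of the suprema of $\|\nabla^\pi_{v_y}\xi\|_{\mathbb{G}_{M,\pi}}$ supplied by the preceding lemma. However, two of your steps fail as written. First, in the direction $\sup_{x\in\text{cl}(\mathcal{U})}\text{dil}\,\xi(x)\leq l_{\text{cl}(\mathcal{U})}(\xi)$, your treatment of boundary points invokes upper semicontinuity of $\text{dil}\,\xi$, but that inequality points the wrong way: upper semicontinuity gives $\limsup_{y\to x}\text{dil}\,\xi(y)\leq\text{dil}\,\xi(x)$, whereas to transfer the interior bound to $x\in\text{cl}(\mathcal{U})\setminus\mathcal{U}$ you need $\text{dil}\,\xi(x)\leq\liminf_{y\to x}\text{dil}\,\xi(y)$, i.e.\ lower semicontinuity --- and this is false: on $M=\R$ with the trivial line bundle and flat connection, $f(y)=c\max\{y-b,0\}$ has $\text{dil}\,f\equiv 0$ on $(a,b)$ but $\text{dil}\,f(b)=c$, while $l_{[a,b]}(f)=0$. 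The boundary points are therefore exactly where the argument cannot be a ``definition chase plus a limit''; they require the derivative characterisation and the specific geometry of $\mathcal{U}$.

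Second, in the reverse direction your integration identity bounds the difference quotient of a curve $\gamma$ contained in $\text{cl}(\mathcal{U}_0)$ by $\sup_{y\in\text{cl}(\mathcal{U}_0)}\|\nabla^\pi\xi(y)\|$, and you then assert this is $\leq\sup_{x\in\text{cl}(\mathcal{U})}\text{dil}\,\xi(x)$ ``on shrinking $\mathcal{U}\subseteq\mathcal{U}_0$ suitably.'' Shrinking $\mathcal{U}$ only decreases the right-hand side; the interior points of $\gamma$ lie in $\mathcal{U}_0\setminus\text{cl}(\mathcal{U})$, and their derivative norms are not controlled by $\text{dil}\,\xi$ on $\text{cl}(\mathcal{U})$. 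The natural cure is to integrate only along the minimising geodesic joining the endpoints, which geodesic convexity keeps inside $\mathcal{U}$ --- but then, as you yourself note, the parallel transport in the numerator of $l_{\text{cl}(\mathcal{U})}(\xi)$ is taken along the original $\gamma$, not along the geodesic, and your proposed control of the holonomy discrepancy (smallness of holonomy for short homotopies inside the simply connected $\mathcal{U}_0$) does not apply: $\gamma$ may leave $\mathcal{U}_0$ entirely, so the loop formed by $\gamma$ and the geodesic is neither short nor contained in $\mathcal{U}_0$, and for curves of moderate length the ratio need not be small. The paper avoids estimating along arbitrary curves altogether: it chooses $x$ and $v_x$ with $\|\nabla^\pi_{v_x}\xi\|_{\mathbb{G}_{M,\pi}}$ within $\epsilon/2$ of $l_{\text{cl}(\mathcal{U})}(\xi)$, then a small geodesically convex $\mathcal{V}\ni x$ realising $\text{dil}\,\xi(x)$ within $\epsilon/2$, and compares the two suprema directly. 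You would need either to adopt that route or to supply a genuine argument for the geodesic-replacement and holonomy steps.
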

\begin{proof}
We let $\mathcal{U}$ be a geodesically convex neighbourhood of $x_0$ so that
$$\text{dil}\ \xi(x)=\sup\{\|\nabla^{\pi_m}_{v_y}\xi\|_{\mathbb{G}_{M,\pi}}\ |\ y\in \text{cl}(\mathcal{U}),\ \|v_y\|_{\mathbb{G}_M}=1, \ \xi \text{ differentiable at }y\}.$$
Thus $l_{\text{cl}}(\mathcal{U})(\xi)$ is an upper bound for
$$\{\text{dil}\ \xi(x)\ \suchthat\ x\in\text{cl}(\mathcal{U})\}.$$
Next, let $\epsilon \in \R_{>0}$. Let $x \in \mathcal{U}$ and $v_x \in T_xM$ be such that (1) $\xi$ is differentiable at $x$,
(2) $||v_x||_{\mathbb{G}_M} = 1$, and (3) $l_{\text{cl}}(\mathcal{U})(\xi) - ||\nabla^\pi_{v_x}\xi ||_{\mathbb{G}_{M,\pi}} <\frac{\epsilon}{2}$. Then let $\mathcal{V}$ be a geodesically convex neighbourhood of $x$ such that $\text{cl}(\mathcal{V}) \subseteq \mathcal{U}$ and such that
$$\sup\{||\nabla^\pi_{v_y}\xi||_{\mathbb{G}_M,\pi}\ \suchthat\  y \in \text{cl}(\mathcal{V}),\  ||v_y||_{\mathbb{G}_M} = 1,\ \xi\text{ differentiable at } y\} - \text{dil}\ \xi(x) <\frac{\epsilon}{2}.$$
We have 
\begin{eqnarray*}
  l_{\text{cl}(\mathcal{U})}(\xi) -\frac{\epsilon}{2}< \sup\{\|\nabla^{\pi_m}_{v_y}\xi\|_{\mathbb{G}_{M,\pi}}\ |\ y\in \text{cl}(\mathcal{V}),\ \|v_y\|_{\mathbb{G}_M}=1, \ \xi \text{ differentiable at }y\} \leq l_{\text{cl}(\mathcal{U})}(\xi).
\end{eqnarray*}
Therefore,
\begin{eqnarray*}
  l_{\text{cl}(\mathcal{U})}(\xi) -\epsilon&=& l_{\text{cl}(\mathcal{U})}(\xi) -\frac{\epsilon}{2}-\frac{\epsilon}{2}\\
  &\leq& \sup\{\|\nabla^{\pi_m}_{v_y}\xi\|_{\mathbb{G}_{M,\pi}}\ |\ y\in \text{cl}(\mathcal{V}),\ \|v_y\|_{\mathbb{G}_M}=1, \ \xi \text{ differentiable at }y\} \\
  &&\ \ +\text{dil}\ \xi-  \sup\{\|\nabla^{\pi_m}_{v_y}\xi\|_{\mathbb{G}_{M,\pi}}\ |\ y\in \text{cl}(\mathcal{V}),\ \|v_y\|_{\mathbb{G}_M}=1, \ \xi \text{ differentiable at }y\}\\
  &=& \text{dil}\ \xi(x).
\end{eqnarray*}
This shows that $l_\text{cl}(\mathcal{U})(\xi)$ is the least upper bound for
$$\{\text{dil}\ \xi(x)\ \suchthat\ x\in\text{cl}(\mathcal{U})\},$$
as required. 
\end{proof}

From this result, we can deduce the continuity of the local sectional dilatation.
\begin{lem}[Local sectional dilatation is continuous]\label{lem:lsdic}
Let $\pi : E \to M$ be a $C^\infty$-vector bundle. If $\xi \in \Gamma^{\text{lip}}(E)$, then $\text{dil} \xi \in C^0(M)$.
\end{lem}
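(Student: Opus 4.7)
I would prove the continuity of $\text{dil}\ \xi$ at every $x_0\in M$ by establishing upper and lower semicontinuity separately, with Lemma \ref{lem:lsdasd} as the central tool.

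Upper semicontinuity is immediate from the definition of $\text{dil}\ \xi(x_0)$ as an infimum. Given $\epsilon>0$, I choose a relatively compact open neighbourhood $\mathcal{U}$ of $x_0$ with $l_{\text{cl}(\mathcal{U})}(\xi)<\text{dil}\ \xi(x_0)+\epsilon$. Because $\mathcal{U}$ is open, it is a relatively compact neighbourhood of each of its points, so $\text{dil}\ \xi(x)\le l_{\text{cl}(\mathcal{U})}(\xi)<\text{dil}\ \xi(x_0)+\epsilon$ for every $x\in\mathcal{U}$, which gives the USC conclusion.

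For lower semicontinuity I would argue by contradiction. Suppose $x_n\to x_0$ with $\text{dil}\ \xi(x_n)\le\text{dil}\ \xi(x_0)-\epsilon$ for some $\epsilon>0$ and all large $n$. Using Lemma \ref{lem:lsdasd} at each $x_n$, I pick a geodesically convex neighbourhood $\mathcal{V}_n$ of $x_n$, small enough (by the infimum definition of $\text{dil}\ \xi(x_n)$) that
$$l_{\text{cl}(\mathcal{V}_n)}(\xi)=\sup\{\text{dil}\ \xi(z):z\in\text{cl}(\mathcal{V}_n)\}<\text{dil}\ \xi(x_0)-\tfrac{3\epsilon}{4}.$$
Simultaneously, Lemma \ref{lem:lsdasd} at $x_0$ produces a geodesically convex neighbourhood $\mathcal{U}$ of $x_0$ with $l_{\text{cl}(\mathcal{U})}(\xi)=\sup_{\text{cl}(\mathcal{U})}\text{dil}\ \xi\ge\text{dil}\ \xi(x_0)$. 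I then select a near-optimal path $\gamma$ realising $l_{\text{cl}(\mathcal{U})}(\xi)$ to within $\tfrac{\epsilon}{4}$; for $n$ large enough that $\mathcal{U}\subseteq\bigcup_k\mathcal{V}_k$ after a possible further shrinking of $\mathcal{U}$ centred at $x_0$, the trace of $\gamma$ may be subdivided into short subarcs whose endpoints lie within individual $\text{cl}(\mathcal{V}_k)$. Adding the $\mathbb{G}_\pi$-norm differences along these subarcs via the triangle inequality, and bounding each contribution by the corresponding $l_{\text{cl}(\mathcal{V}_k)}(\xi)$, yields the upper bound $l_{\text{cl}(\mathcal{U})}(\xi)<\text{dil}\ \xi(x_0)-\tfrac{3\epsilon}{4}$, contradicting the choice of $\gamma$.

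The main obstacle will be precisely this last step, the path-decomposition and parallel-transport bookkeeping. The quantity $\|\tau^{-1}_{\gamma,1}(\xi\circ\gamma(1))-\xi\circ\gamma(0)\|_{\mathbb{G}_\pi}$ must be controlled by a telescoping triangle inequality over the concatenation, which in turn requires tracking the composition of parallel transports along successive subarcs. Geodesic convexity of the $\mathcal{V}_k$ lets each subarc be replaced by a geodesic so its $\ell_{\mathbb{G}_M}$-length decomposes additively, and a pigeonhole refinement of the finite cover $\{\mathcal{V}_k\}$ ensures that every subarc's endpoints sit in a common $\text{cl}(\mathcal{V}_k)$; verifying that the resulting estimate actually sums to $\max_k l_{\text{cl}(\mathcal{V}_k)}(\xi)$ in the limit, rather than accumulating additional error from the parallel-transport composition, is where the technical core of the argument sits.
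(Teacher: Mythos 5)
Your upper-semicontinuity argument is correct and is exactly the first half of the paper's proof: the infimum definition immediately gives $\text{dil}\,\xi(x)\leq l_{\text{cl}(\mathcal{U})}(\xi)<\text{dil}\,\xi(x_0)+\epsilon$ for all $x$ in a suitable $\mathcal{U}$. The problem is the lower-semicontinuity half, and you have correctly located where the trouble is --- but it is not parallel-transport bookkeeping; the step fails outright. The sets $\mathcal{V}_n$ are neighbourhoods of the points $x_n$ of a single convergent sequence, and nothing guarantees that $\bigcup_n\mathcal{V}_n$ contains a neighbourhood of $x_0$: the point $x_0$ itself lies in no $\mathcal{V}_n$ on which $l_{\text{cl}(\mathcal{V}_n)}(\xi)$ has been made small, and the difference quotients that make $l_{\text{cl}(\mathcal{U})}(\xi)$ large are precisely those with an endpoint at, or arbitrarily near, $x_0$. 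So the subdivision of a near-optimal curve into subarcs each lying in some $\text{cl}(\mathcal{V}_k)$ cannot be carried out, and the telescoping estimate never gets started.

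The gap is not repairable, because $\text{dil}\,\xi$ is in general only upper semicontinuous. Take $M=\R$, $E$ the trivial line bundle, $C\subseteq[0,1]$ a fat (positive-measure, nowhere dense) Cantor set, and $\xi=f$ with $f(x)=\int_0^x\chi_C(t)\,\d t$. Then $f$ is $1$-Lipschitz. For $x\notin C$ the function is constant near $x$, so $\text{dil}\,f(x)=0$; for $x_0\in C$ every interval about $x_0$ contains a subinterval $I$ of the Cantor construction with $m(C\cap I)/m(I)$ arbitrarily close to $1$, so $l_{[x_0-r,x_0+r]}(f)=1$ for every $r$ and hence $\text{dil}\,f(x_0)=1$. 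Since $C$ is nowhere dense, each such $x_0$ is a limit of points where $\text{dil}\,f$ vanishes, so $\text{dil}\,f\notin C^0(\R)$. For what it is worth, the paper's own proof shares this defect: its concluding ``contradiction'' paragraph only rules out an upward jump $\text{dil}\,\xi(x)-\text{dil}\,\xi(x_0)\geq\epsilon$, which is the upper-semicontinuity statement already established in its first paragraph, and never addresses a downward jump. Your proposal therefore reproduces the half of the lemma that is actually provable and stalls exactly where any proof must; the honest conclusion is that only upper semicontinuity of $\text{dil}\,\xi$ holds at this level of generality, and any later use of the lemma should be checked to see whether that weaker property suffices.
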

\begin{proof}
Let $x_0 \in M$ and let $\epsilon \in \R_{>0}$. By definition of $\text{dil} \xi$, there exists a relatively compact neighbourhood $\mathcal{U}$ of $x_0$ such that
$$l_{\text{cl}\mathcal{U})}(\xi) - \text{dil} \xi(x_0) < \epsilon.$$
If $x \in \mathcal{U}$, then, since $\mathcal{U}$ is a relatively compact neighbourhood of $x$, 
$$\text{dil} \xi(x) \leq l_{\text{cl}(\mathcal{U})} \implies \text{dil}\xi(x) - \text{dil} \xi(x_0) \leq l_{\text{cl}(\mathcal{U})} -  \text{dil} \xi(x_0) < \epsilon.$$
Now suppose that $\text{dil} \xi$ is discontinuous at $x_0$. Then there exists $\epsilon \in \R_{>0}$ such that, for every relatively compact neighbourhood $\mathcal{U}$ of $x_0$,
$$-\epsilon \geq \text{dil} \xi(x) - \text{dil} \xi(x_0) \geq \epsilon, x \in \mathcal{U}.$$
This is in contradiction with our conclusion from the preceding paragraph.
\end{proof}
\subsubsection{Fibre norms for jet bundles}\label{Section:3.1}
Fibre norms for jet bundles of a vector bundle play an important role in our unified treatment of various classes of regularities. Our discussion begins with general constructions for the fibres of jet bundles. Let $r\in \{\infty,\omega,\text{hol}\}$ and let $M$ be a $C^r$-manifold.  Let $\pi:E\rightarrow M$ be a $C^r$-vector bundle with $\pi_m:J^mE\rightarrow M$ its $m$th jet bundle. We shall suppose that we have a $C^r$-affine connection $\nabla^M$ on $M$ and a $C^r$-vector bundle connection $\nabla^\pi$ in $E$. By additionally supposing that we have a $C^r$-Riemannian metric $\mathbb{G}_M$ on M and a $C^r$-fibre metric $\mathbb{G}_\pi$ on $E$, we shall give a $C^r$-fibre norm on $J^mE$.

Denote $T^m(T^*M)$ the $m$-fold tensor product of $T^*M$ and $S^m(T^*M)$ the symmetric tensor bundle. The connection $\nabla^M$ induces a covariant derivative for tensor fields $A\in \Gamma^1(T^k_l(TM))$ on $M$, $k,l\in\mathbb{Z}_{\geq 0}$. This covariant derivative we denote by $\nabla^M$,  dropping the particular $k$ and $l$.  Similarly, the connection $\nabla^\pi$ induces a covariant derivative for sections $B\in \Gamma^1(T^k_l(E))$  of the tensor bundles associated with $E$, $k,l\in\mathbb{Z}_{\geq 0}$. This covariant derivative we denote by $\nabla^\pi$,  dropping the particular $k$ and $l$.
We will also consider differentiation of sections of $T^{k_1}_{l_1}(TM) \otimes T^{k_2}_{l_2}(E)$, and we denote the covariant derivative by $\nabla^{M,\pi}$. Note that
$$\nabla^{M,\pi,m}\xi\triangleq\underbrace{\nabla^{M,\pi}\cdots(\nabla^{M,\pi}}_{m-1\ \text{times}}(\nabla^\pi\xi))\in\Gamma^\infty(T^m(T^*M)\otimes E).$$
For $\xi\in \Gamma^\infty(E)$ and $m\in\mathbb{Z}_{\geq 0}$, we define
$$D^m_{\nabla^M,\nabla^\pi}(\xi)=\text{Sym}_m\otimes\text{id}_E(\nabla^{M,\pi,m}\xi)\in\Gamma^\infty(S^m(T^*M)\otimes E).$$
where $\text{Sym}_m:T^m(T^*M)\rightarrow S^m(T^*M)$ by
$$\text{Sym}_m(v_1\otimes\cdots\otimes v_m)=\frac{1}{m!}\sum_{\sigma\in\mathfrak{S}_m}v_{\sigma(1)}\otimes\cdots\otimes v_{\sigma(m)}.$$
We take the convention that $D^0_{\nabla^M,\nabla^\pi}(\xi)=\xi$. We then have a map
\begin{eqnarray*}
      S^m_{\nabla^M,\nabla^{M,\pi}}:J^mE &\rightarrow&  \bigoplus\limits_{j=0}^{m}(S^j(T^*M)\otimes E) \\
	         j_m\xi(x)&\mapsto& (\xi(x),D^1_{\nabla^M,\nabla^\pi}(\xi)(x),...,D^m_{\nabla^M,\nabla^\pi}(\xi)(x))
\end{eqnarray*}
that can be verified to be an  isomorphism of vector bundles \citep[Lemma 2.1]{Jafarpour2013}. Note that inner products on the components of a tensor products induce an inner product on the tensor product in a
natural way \citep[Lemma 2.2]{Jafarpour2013}. Then we have a fibre metric in all tensor spaces associated with $TM$ and $E$ and their tensor products. We shall denote by $G_{M,\pi}$ any of these various fibre metrics. In particular, we have a fibre metric $\mathbb{G}_{M,\pi}$ on $T^j(T^*M)\otimes E$ for each $j\in \mathbb{Z}_{\geq 0}$. This thus gives us a fibre metric $\mathbb{G}_{M,\pi,m}$ on $J^mE$ defined by
\begin{equation}\label{eq:fibremetric}
    \mathbb{G}_{M,\pi,m}(j_m\xi(x),j_m\eta(x))=\sum_{j=0}^{m}\mathbb{G}_{M,\pi}\left(\frac{1}{j!}D^j_{\nabla^M,\nabla^\pi}(\xi)(x),\frac{1}{j!}D^j_{\nabla^M,\nabla^\pi}(\eta)(x)\right).
\end{equation}

Associated to this inner product on fibres is the norm on fibres, which we denote by $\|\cdot\|_{\mathbb{G}_{M,\pi,m}}$. We shall use these fibre norms continually in our descriptions of various topologies in the next few sections.
\subsubsection{Seminorms for spaces of finitely differentiable sections}
In this section we give a seminorm for sections of regularity $\nu=m\in \mathbb{Z}_{\geq 0}$. We again take $\pi:E\rightarrow M$ to be smooth vector bundle. For the space $\Gamma^m(E)$ of $m$-times continuously differentiable sections, we define seminorms $p^m_K,\  K\subseteq M$ compact, for $\Gamma^m(E)$ by
$$p^m_K(\xi)=\sup\{\|j_m\xi(x)\|_{G_{M,\pi,m}}\ |\ x\in K\}.$$
We call the locally convex topology on $\Gamma^m(E)$ defined by the family of seminorms $p^m_K$ where $K\subseteq M$ compact, the $\mathbf{C^m}$\textbf{-topology}, and it is complete, Hausdorff, separable, and metrizable \citep[§3.4]{Jafarpour2013}.
\subsubsection{Seminorms for spaces of Lipschitz sections}
In this section we again work with a smooth vector bundle $\pi:E\rightarrow M$. Different from defining the fibre metrics from the last section, for the Lipschitz topologies the affine connection $\nabla^M$ is required to be the Levi-Civita connection for the Riemannian metric $G_M$ and the linear connection $\nabla^\pi$is required to be $G_\pi$-orthogonal. Because we have the decomposition
$$J^mE \simeq \bigoplus\limits_{j=0}^{m}(S^j(T^*M)\otimes E),$$
it follows that the vector bundle $J^mE$ has a $C^r$-connection $\nabla^{\pi_m}$, defined by
$$\nabla^{\pi_m}_X j_m\xi=(S^m_{\nabla^M,\nabla^\pi})^{-1}(\nabla^\pi_X \xi, \nabla^{M,\pi}_X D^1_{\nabla^M,\nabla^\pi}(\xi),...,\nabla^{M,\pi}_X D^m_{\nabla^M,\nabla^\pi}(\xi)).$$

By Rademacher’s Theorem \citep[Theorem 3.1.6]{MR0257325}, if a section $\xi$ is of class $C^{m+\text{lip}}$, then its $(m+1)$-th derivative exists almost everywhere. Then by \citep[Lemma 3.12]{Jafarpour2013}, we define
\begin{eqnarray*}
      \text{dil}\  j_m\xi(x)=\inf\{\sup\{\|\nabla^{\pi_m}_{v_y}j_m\xi\|_{\mathbb{G}_{M,\pi}}\ |\ y\in \text{cl}(\mathcal{U}),\ \|v_y\|_{\mathbb{G}}=1, \ j_m\xi \text{ differentiable at }y\}|\\ \mathcal{U} \text{ is a relatively compact neighbourhood of } x\}.
\end{eqnarray*}
which is the \textbf{local sectional dilatation} of $\xi$. Let $K\subseteq M$ be compact and define 
$$\lambda^m_K(\xi)=\sup\{\text{dil}j_m\xi(x)\ |\ x\in K\}$$
for $\xi\in \Gamma^{m+\text{lip}}(E)$. We then can define a seminorm on $\xi\in \Gamma^{m+\text{lip}}(E)$ by 
$$p^{m+\text{lip}}_K(\xi)=\max\{\lambda^m_K(\xi), p^m_K(\xi)\}.$$
We call the locally convex topology on $\Gamma^{m+\text{lip}}(E)$ defined by the family of seminorms $p^{m+\text{lip}}_K$, $K\subseteq M$ compact, the $\mathbf{C^{m+\textbf{lip}}}$\textbf{-topology}, and it is complete, Hausdorff, separable, and metrizable \citep[§3.5]{Jafarpour2014}.
\subsubsection{Seminorms for spaces of smooth sections}
Let $\pi:E\rightarrow M$ be a smooth vector bundle.  Using the fibre norms from the preceding section, it is a straightforward matter to define appropriate seminorms that define the locally convex topology for $\Gamma^\infty(E)$. For $K\subseteq M$ compact and for $m\in\mathbb{Z}_{\geq 0}$, define a seminorm $p^\infty_{K,m}$ on $\Gamma^\infty(E)$ by
$$p^\infty_{K,m}(\xi)=\sup\{\|j_m\xi(x)\|_{\mathbb{G}_{M,\pi,m}}\ |\ x\in K\}.$$
We call the locally convex topology on $\Gamma^\infty(E)$ defined by the family of seminorms $p^\infty_{K,m}$, $K\subseteq M$ compact, $m\in\mathbb{Z}_{\geq 0}$, the $\mathbf{C^\infty}$\textbf{-topology}, and it is complete, Hausdorff, separable, and metrizable \citep[§3.2]{Jafarpour2014}.
\subsubsection{Seminorms for spaces of holomorphic sections}
For the topology of the holomorphic sections, we consider an holomorphic vector bundle $\pi:E\rightarrow M$ and denote by $\Gamma^{\text{hol}}(E)$ the space of holomorphic sections of $E$. Let $G_\pi$ be an Hermitian metric on the vector
bundle and denote by $\|\cdot\|_{G_\pi}$ the associated fibre norm. For $K\subseteq M$ compact, denote by $p^{\text{hol}}_K$ the seminorm on $\Gamma^{\text{hol}}(E)$ defined by
$$p^{\text{hol}}_K(\xi)=\sup\{\|\xi(z)\|_{\mathbb{G}_\pi}\ |\ z\in K\}.$$
The family of seminorms $p^{\text{hol}}_K$ where $K\subseteq M$ compact, defines a locally convex topology for $\Gamma^{\text{hol}}(E)$ we call the $\mathbf{C}^{\textbf{hol}}$\textbf{-topology}, and it is complete, Hausdorff, separable, and metrizable \citep[§4.2]{Jafarpour2014}.
\subsubsection{Seminorms for spaces of real analytic sections}\label{sec:sfras}
The topology one considers for real analytic sections does not have the same attributes as smooth, finitely differentiable, Lipschitz, and holomorphic cases. There is a history to the characterisation of real analytic topologies, and we refer to [Jafarpour and Lewis 2014, §5.2] for four equivalent characterisations of the real analytic topology for the space of real analytic sections of a vector bundle. Here we will give the most elementary of these definitions to state, although it is probably not the most practical definition.

In this section we let $\pi:E\rightarrow M$ be a real analytic vector bundle and let $\Gamma^\omega(E)$ be the space of real analytic sections. Here we need all of the data used to define the seminorms in the finitely differentiable and smooth cases to topologise $\Gamma^\omega(E)$, only now we need this data to be real analytic. We refer to \citep[Lemma 2.4]{Jafarpour2014} for the existence of this data. Therefore, we can define real analytic fibre metrics $\mathbb{G}_{M,\pi,m}$ on the jet bundles $J^mE$ as in Section \ref{Section:3.1}. To define seminorms for $\Gamma^\omega(E)$, we let $c_0(\mathbb{Z}_{\geq 0};\R_{>0})$ denote the space of sequences in $\R_{> 0}$, indexed by $\mathbb{Z}_{\geq 0}$, and converging to zero. We shall denote a typical element of
$c_0(\mathbb{Z}_{\geq 0};\R_{>0})$ by $\boldsymbol{a}=(a_j)_{j\in \mathbb{Z}_{\geq 0}}$. Now for $K\subseteq M$ compact, and $\boldsymbol{a}\in c_0(\mathbb{Z}_{\geq 0};\R_{>0})$, we define a seminorm $p^\omega_{K,\boldsymbol{a}}$ on $\Gamma^\omega(E)$ by 

$$p^\omega_{K,\boldsymbol{a}}(\xi)=\sup\{a_0a_1...a_m \|j_m\xi(x)\|_{\mathbb{G}_{M,\pi,m}}\ |\ x\in K,\ m\in \mathbb{Z}_{\geq 0}\}.$$

The family of seminorms $p^\omega_{K,\boldsymbol{a}}$, $K\subseteq M$ compact, $\boldsymbol{a}\in c_0(\mathbb{Z}_{\geq 0};\R_{>0})$, defines a locally convex topology on $\Gamma^\omega(E)$ that we call the $\mathbf{C^\omega}$\textbf{-topology}, and it is complete, Hausdorff, separable, but not metrizable \citep[§5.3]{Jafarpour2014}.
\subsubsection{Summary and notation}
The preceding developments have been made for spaces of sections of a general vector bundle. We will primarily (but not solely) be interested in spaces of vector fields and functions. For vector fields, the seminorms above can be defined for an affine connection $\nabla^M$ on $M$, as this also serves as a vector bundle connection for $\pi_{TM}:TM\rightarrow M$. For vector fields, the decomposition of the jet bundle $J^m TM$ is 
\begin{eqnarray*}
      S^m_{\nabla^M}:J^mTM &\rightarrow&  \oplus_{j=0}^{m}(S^j(T^*M)\otimes TM) \\
	         j_mX(x)&\mapsto& (X(x),\text{Sym}_1\otimes\text{id}_{TM}(\nabla^M X)(x),...,\text{Sym}_m\otimes\text{id}_{TM}(\nabla^{M,m} X)(x)).
\end{eqnarray*}
where 
$$\nabla^{M,k} X\triangleq \underbrace{\nabla^M\cdots \nabla^M}_{k \text{ times}} X$$
For the Lipschitz topologies, one needs for $\nabla^M$ to be a metric connection, and so it may as well be the Levi-Civita connection associated with a Riemannian metric $\mathbb{G}$.  With the Riemannian metric $\mathbb{G}$, this decomposition gives the various topologies for spaces of vector fields. Functions are particular instances of sections of a vector bundle, as we can identity a function with a section of the trivial line bundle $\mathbb{F}_M=M\times \mathbb{F}$. We denote by $C^\nu(M)$ the set of functions having the regularity $\nu$ coming from one of the classes of regularity we consider. Note that this bundle has a canonical flat connection $\nabla^\pi$ which, when translated to functions, amounts to the requirement that $\nabla^\pi_X f=\mathscr{L}_Xf$. For $f\in C^\nu(M)$ when $\nu\geq m$, let us denote 
$$\nabla^{M,0}f=f,\  \nabla^{M,1}f=df, \text{ and }\nabla^{M,m}f=\nabla^{M,m-1}df.$$ 
Then the decomposition of the jet bundle of $\mathbb{F}_M$ looks like 
\begin{eqnarray*}
      S^m_{\nabla^M}:J^m(M;\R) &\rightarrow&  \R\oplus(\oplus_{j=0}^{m}S^j(T^*M)) \\
	         j_mf(x)&\mapsto& ((\nabla^{M,0}f)(x),(\nabla^{M,1}f)(x),...,\text{Sym}_m(\nabla^{M,m} f)(x)).
\end{eqnarray*}
This decomposition can be used to define the locally convex topologies for the various regularity classes of functions. Thus, if we suppose that we have a Riemannian metric $G_M$ and affine connection $\nabla^M$ on $M$, there is induced a natural fibre metric $G_m$ on $J^m(M;\R)$ for each $m\in\mathbb{Z}_{\geq 0}$ by 
$$G_{M,m}(j_mf(x),j_mg(x))=\sum_{j=0}^{m}G_{M}\left(\frac{1}{j!}\text{Sym}_j(\nabla^{M,j}f)(x),\frac{1}{j!}\text{Sym}_j(\nabla^{M,j}g)(x)\right),$$
and the associated norm we denote by $\|\cdot\|_{G_{M,m}}$.

In the real case, the degrees of regularity are ordered according to
\begin{equation}\label{eq:3.1}
    C^0\supset C^{\text{lip}}\supset C^1\supset\cdots \supset C^m\supset C^{m+1}\supset \cdots\supset C^\infty\supset C^\omega,
\end{equation}
and in the complex case the ordering is the same, of course, but with an extra $C^{\text{hol}}$ on the right.

With all the topologies for our various cases of regularities, we will, for $K\subseteq M$ be compact, for $k\in\mathbb{Z}_{\geq 0}$, and for $\boldsymbol{a}\in c_0(\mathbb{Z}_{\geq 0};\R_{>0})$, denote 
\begin{equation}\label{eq:3.2}
p^\nu_K=\left\{\begin{array}{lcl} p^m_K, \ \ \ \ \ \ \ \nu=m\\  p^{m+\text{lip}}_K, \ \ \ \nu=m+\text{lip},\\  p^\infty_{K,m}, \ \ \ \ \nu=\infty, \\  p^\omega_{K,\boldsymbol{a}}, \ \ \ \ \ \nu=\omega, \\  p^{\text{hol}}_K, \ \ \ \ \ \ \nu=\text{hol}\end{array}\right.
\end{equation}
The convenience and brevity more than make up for the slight loss of preciseness in this approach.

We comment that these seminorms make it clear that we have an ordering of the regularity classes as
$$m_1 < m_1 + \text{lip} < \cdots < m_2 < m_2 + \text{lip} < \cdots < \infty < \omega < \text{hol}$$
from least regular (coarser topology) to more regular (finer topology), and where $m_1 < m_2$. There is also an obvious “arithmetic” of degrees of regularity that we will use without feeling the need to explain it.
\subsection{Time- and parameter-dependent sections and functions}\label{sec:2.3}
In this section we introduce the classes of vector fields, depending on both time and parameter, that we work with. In our presentation, we shall make use of measurable and integrable functions with values in a locally convex topological vector space. This is classical in the case of Banach spaces, but is not as fleshed out in the general case.
\subsubsection{time-dependent sections}
We carefully introduce in this section the class of timedependent vector fields we consider, and which were quickly introduced in Section 1.2. First of all, since all topological vector spaces we consider are Suslin spaces, all standard notions of measurability coincide \citep[Theorem 1]{MR385067}. Thus, for example, one can take as one’s notion of measurability the naive one that preimages of Borel sets are measurable. The notion of integrability we use is ``integrability by seminorm," and seems to originate in \citep{MR0442176}. We refer to \citep{lewis2021geometric} for details and further references. For complete Suslin spaces, such as we are working with, integrability by seminorm amounts to the requirement that the application of any continuous seminorm to the vector-valued functions yields a function in the usual scalar $L^1$-space. 

With these technicalities stowed away for safety and convenience, we now make our definitions. Let $m\in \mathbb{Z}_{\geq 0}$, let $m'\in\{0,\text{lip}\}$, let $\nu\in\{m+m',\infty,\omega,\text{hol}\}$, and let $r\in \{\infty,\omega,\text{hol}\}$, as required. Let $\pi : E \rightarrow M$ be a $C^r$-vector bundle and let $\mathbb{T} \subseteq \R$ be an interval. We say that $\xi : \mathbb{T} \rightarrow \Gamma^\nu(E)$ is locally integrally bounded if, for any continuous seminorm $p$ for $\Gamma^\nu(E)$ and for $\mathbb{S} \subseteq \mathbb{T}$ compact, $p \circ (\xi|\mathbb{S}) \in L^1(\mathbb{T}; \R)$. We give a locally convex topology for the set of locally integrally bounded sections by the seminorms
$$p^\nu_{K,\mathbb{S}}(\xi) = \int_{\mathbb{S}}p^\nu_{K}(\xi(t))\d t,\hspace{10pt} K \subseteq M, \mathbb{S}\subseteq \mathbb{T} \ \text{compact},$$
where $p^\nu_{K}$ is the seminorm defined by (\ref{eq:3.2}). Thus,
colloquially, the set of locally integrally bounded mappings is just $L^1_{\text{loc}}(\mathbb{T}; \Gamma^\nu(E))$. We abbreviate this space by $\Gamma^\nu_{\text{LI}}(\mathbb{T}; E)$. Explicit characterisations of these spaces are given by \citep{Jafarpour2014}.
\begin{defin}[Classes of time-varying sections]
 For a vector bundle $\pi:E\rightarrow M$ of class $C^r$ and an interval $\mathbb{T}\subseteq\R$, let $\xi:\mathbb{T}\times M\rightarrow E$ satisfy $\xi(t,x)\in E_x$ for each $(t,x)\in\mathbb{T}\times M$. Denote by $\xi_t\;(t\in \mathbb{T})$, the map $x\mapsto \xi(t,x)$ and suppose that $\xi_t\in\Gamma^\nu(E)$ for every $t\in\mathbb{T}$. Then $\xi$ is:
\begin{enumerate}
    \item[(i)]{
    a \emph{Carath\'{e}odory section of class $C^\nu$} if the curve $\mathbb{T}\ni t\mapsto\xi_t\in \Gamma^\nu(E)$ is measurable;
    }
    \item[(ii)]{
    \emph{(locally) integrally $C^\nu$-bounded} if the curve $\mathbb{T}\ni t\mapsto\xi_t\in\Gamma^\nu(E)$ is (locally) Bochner integrable;
    }
    \item[(iii)]{
    \emph{(locally) essentially $C^\nu$-bounded} if the curve $\mathbb{T}\ni t\mapsto\xi_t\in\Gamma^\nu(E)$ is (locally) essentially von Neumann bounded.
	}
\end{enumerate}
We denote:
\begin{enumerate}
    \item[(iv)]{
    the set of Carath\'{e}odory sections of class $C^\nu$ by $\Gamma^\nu_{\text{CF}}(\mathbb{T};E)$;
    }
    \item[(v)]{
    the set of (locally) integrally $C^\nu$-bounded sections by ($\Gamma^\nu_{\text{LI}}(\mathbb{T};E)$) $\Gamma^\nu_{\text{I}}(\mathbb{T};E)$;
    }
    \item[(vi)]{
    the set of (locally) essentially $C^\nu$-bounded sections by ($\Gamma^\nu_{\text{LB}}(\mathbb{T};E)$) $\Gamma^\nu_{\text{B}}(\mathbb{T};E)$;
	}
\end{enumerate}
\end{defin}
We shall also find some alternative, more expansive, notation profitable, notation that is the “standard” adaptation from the usual notation for functions. For a time-domain $\mathbb{T}$ we denote
\begin{enumerate}
    \item $\text{CF}(\mathbb{T};\Gamma^{\nu}(E))$: Carath\'{e}odory sections of class $C^\nu$;
    \item $\text{L}^{1}(\mathbb{T};\Gamma^{\nu}(E))$: integrally $C^\nu$-bounded sections;
    \item $\text{L}^{1}_{\text{loc}}(\mathbb{T};\Gamma^{\nu}(E))$: integrally $C^\nu$-bounded sections;
    \item $\text{L}^{\infty}(\mathbb{T};\Gamma^{\nu}(E))$: essentially $C^\nu$-bounded sections;
    \item $\text{L}^{\infty}_{\text{loc}}(\mathbb{T};\Gamma^{\nu}(E))$: locally essentially $C^\nu$-bounded sections.
\end{enumerate}
The spaces of integrable or essentially bounded sections have natural topologies, which we now describe.
\begin{enumerate}
    \item The space $\text{L}^{1}(\mathbb{T};\Gamma^{\nu}(E))$ has a locally convex topology defined by seminorms $p^{\nu}_{K,1}$, $K\subseteq M$ compact, given by
    $$p^{\nu}_{K,1}(\xi)=\int_\mathbb{T}p^{\nu}_{K}\circ \xi_t \d t,$$
    where $p^{\nu}_{K}$ is a seminorm for the topology on $\Gamma^{\nu}(E)$. Here we make the abuse of notation by possibly omitting extra bits of notation from the seminorm $p^{\nu}_{K}$, cf. (\ref{eq:3.2}). We recall that $$\text{L}^{1}(\mathbb{T};\Gamma^{\nu}(E))\simeq \text{L}^{1}(\mathbb{T};\R)\widehat{\otimes}_\pi \Gamma^{\nu}(E),$$
    where $\widehat{\otimes}_\pi$ denotes the completed projective tensor product \citep[Corrolary 15.7.2]{MR632257}.
    \item The space $\text{L}^{1}_{\text{loc}}(\mathbb{T};\Gamma^{\nu}(E))$ has a locally convex topology defined by seminorms $p^{\nu}_{K,\mathbb{S},1}$, $K\subseteq M$ compact, $\mathbb{S}\subseteq\mathbb{T}$ compact, given by 
    \begin{equation}\label{eq:2.3}
        p^{\nu}_{K,\mathbb{S},1}(\xi)=\int_\mathbb{S}p^{\nu}_{K}\circ \xi_t \d t,
    \end{equation}
    where seminorms $p^{\nu}_{K}$ are as above. We note that $\text{L}^{1}_{\text{loc}}(\mathbb{T};\Gamma^{\nu}(E))$ is the inverse limit of the inverse system $\text{L}^{1}_{\text{loc}}(\mathbb{S};\Gamma^{\nu}(E))_{\mathscr{K}_\mathbb{T}}$, where $\mathscr{K}_\mathbb{T}$ is the directed set of compact subintervals of $\mathbb{T}$ ordered by $\mathbb{S}_1\preceq \mathbb{S}_2$ if $\mathbb{S}_1\subseteq\mathbb{S}_2$. 
    \item The space $\text{L}^{\infty}(\mathbb{T};\Gamma^{\nu}(E))$ has a locally convex topology defined by seminorms $p^\nu_{K,\infty}$, $K\subseteq M$ compact, given by 
    $$p^\nu_{K,\infty}(\xi)=\text{ess sup}\{p^\nu_K\circ\xi_t\;|\;t\in\mathbb{T}\},$$
    where seminorms $p^{\nu}_{K}$ are as above. We have 
    $$\text{L}^{\infty}(\mathbb{T};\Gamma^{\nu}(E))\simeq \text{L}^{\infty}(\mathbb{T};\R)\widehat{\otimes}_\pi \Gamma^{\nu}(E).$$
    \item The space $\text{L}^{\infty}_{\text{loc}}(\mathbb{T};\Gamma^{\nu}(E))$ has a locally convex topology defined by seminorms $p^{\nu}_{K,\mathbb{S},\infty}$ $K\subseteq M$ compact, $\mathbb{S}\subseteq\mathbb{T}$ compact, given by 
    $$p^{\nu}_{K,\mathbb{S},\infty}(\xi)=\text{ess sup}\{p^\nu_K\circ\xi_t\;|\;t\in\mathbb{S}\},$$
    where seminorms $p^{\nu}_{K}$ are as above. We note that $\text{L}^{\infty}_{\text{loc}}(\mathbb{T};\Gamma^{\nu}(E))$ is the inverse limit of the inverse system $\text{L}^{\infty}_{\text{loc}}(\mathbb{S};\Gamma^{\nu}(E))_{\mathscr{K}_\mathbb{T}}$. 
\end{enumerate}
Our method of working with vector fields and their flows is to use general globally defined functions to replace local coordinates. As such, functions assume an important role in our presentation. Bearing in mind that functions are sections of the trivial line bundle, the above general definitions for sections of vector bundles apply specifically to functions, and yield the spaces $C^\nu_{\text{LI}}(\mathbb{T}; M)$ of time-dependent functions $f:\mathbb{T} \rightarrow C^\nu(M)$.

The following lemma indicates how we will convert sections and vector fields into functions.
\begin{lem}[Time-dependent functions from time-dependent sections and vector fields]\label{lem:2.3}
Let $m\in \mathbb{Z}_{\geq 0}$, let $m'\in\{0,\text{lip}\}$, let $\nu\in\{m+m',\infty,\omega,\text{hol}\}$, and let $r\in \{\infty,\omega,\text{hol}\}$, as required. Let $\beta : B \rightarrow M$ be a $C^r$-affine bundle modelled on the $C^r$-
vector bundle $\pi : E \rightarrow M$, and let $\mathbb{T} \subseteq \R$ be an interval. When $\nu = \text{hol}$, assume that $M$ is a Stein manifold. Then $X\in\Gamma^\nu_{\text{LI}}(\mathbb{T};TM)$ if and only if $X_tf\in C^0_{\text{LI}}(\mathbb{T};B)$ for any $f\in C^r(M)$. 
\end{lem}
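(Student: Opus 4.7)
The plan is to handle both implications by exploiting the fact that $X \mapsto Xf$ is a continuous linear operation on $\Gamma^\nu(TM)$ for each fixed $f \in C^r(M)$, and that with a finite collection of globally defined test functions one can recover the coordinate components of $X$ pointwise on any chart. The forward direction is then essentially functorial, while the converse requires a local-to-global reconstruction whose content depends on the regularity class.

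For the forward direction, fix $f \in C^r(M)$ and consider $\mathscr{L}_f : \Gamma^\nu(TM) \to C^\nu(M)$ sending $X \mapsto Xf$. In a local chart, the $m$-jet of $Xf$ is a linear combination of jets of $X$ up to order $m$ whose coefficients are polynomial in the jets of $f$ up to order $m+1$; on a compact $K$ these coefficients are uniformly bounded, giving $p^\nu_K(Xf) \le C_{f,K'}\, p^\nu_{K'}(X)$ on a compact neighbourhood $K' \supseteq K$. For $\nu = \omega$ the standard Cauchy-type estimates for jets of products absorb the jets of $f$ into a modified weight sequence $\boldsymbol{a}'$, producing $p^\omega_{K,\boldsymbol{a}}(Xf) \le C\, p^\omega_{K',\boldsymbol{a}'}(X)$. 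Since the spaces involved are complete Suslin spaces, post-composition with a continuous linear map preserves measurability and local integrable boundedness, so $X \in \Gamma^\nu_{\text{LI}}(\mathbb{T}; TM)$ yields $t \mapsto X_tf$ in the corresponding space of time-dependent $C^\nu$-functions.

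For the converse, I construct in each regularity class a finite collection of globally defined $C^r$-functions $f^1, \ldots, f^N$ whose Lie derivatives along $X_t$ determine $X_t$ pointwise on a chart. Fix a relatively compact chart $(\mathcal{U}, \varphi)$ with coordinates $x^1, \ldots, x^n$ and an open $\mathcal{V}$ with closure contained in $\mathcal{U}$. In the smooth, finitely differentiable and Lipschitz cases, smooth bump functions produce $f^i \in C^r(M)$ agreeing with $x^i$ on $\mathcal{V}$; in the real analytic case, Grauert's proper real analytic embedding $M \hookrightarrow \R^N$ provides $f^1, \ldots, f^N \in C^\omega(M)$ whose Jacobian has rank $n$ everywhere on $\overline{\mathcal{V}}$; in the holomorphic case, Cartan's theorem applied to the Stein manifold $M$ provides holomorphic functions on $M$ that locally generate the tangent space. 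In all cases, on $\mathcal{V}$ the components of $X_t$ are recovered by a fixed continuous linear operation from $(X_tf^i)_i$, yielding $p^\nu_K(X_t) \le C \sum_i p^\nu_K(X_tf^i)$ for $K \subset \mathcal{V}$; a finite cover of any compact $K \subseteq M$ by such charts, together with the hypothesis on each $f^i$, then completes the proof.

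The main obstacle will be the real analytic case. No bump functions are available, and the Jacobian of the embedding typically has more columns than rows, so the inversion that recovers $X_t$ from the $X_t f^i$ is only pointwise rather than a globally defined continuous linear map. Worse, the weight $\boldsymbol{a}$ in $p^\omega_{K,\boldsymbol{a}}$ must be tracked carefully through this pointwise inversion: the coordinate components of $X_t$ are rational in the bounded quantities $\partial f^i/\partial x^j$ and the $X_t f^i$, and controlling their jets requires Cauchy-type inequalities that translate a weight $\boldsymbol{a}$ for $X_t$ into a (possibly smaller) weight $\boldsymbol{a}'$ for the $X_t f^i$. The verification that such an $\boldsymbol{a}' \in c_0(\mathbb{Z}_{\ge 0}; \R_{>0})$ exists, uniformly in $t$, uses the equivalent characterisations of the $C^\omega$-topology summarised in Section \ref{sec:sfras} together with techniques from \citep{Jafarpour2014}. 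Once this local estimate is in hand, measurability of $t \mapsto X_t$ follows from the Suslin property exactly as in the forward direction.
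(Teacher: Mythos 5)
Your proposal is correct, and it rests on exactly the same principle as the paper's proof --- namely that the $C^\nu$-topology on $\Gamma^\nu(TM)$ is the initial (weak-$\mathscr{L}$) topology associated with the family of maps $X\mapsto \mathscr{L}_Xf$, $f\in C^r(M)$ --- but you execute it very differently. The paper's entire proof is two sentences: it cites the initial-topology characterisation from \citep{Jafarpour2014} and the abstract fact that post-composition with a continuous linear map preserves the classes $\Gamma^\nu_{\text{LI}}$, from which both implications follow at once (the forward one because each $q\circ\mathscr{L}_{(\cdot)}f$ is a continuous seminorm on $\Gamma^\nu(TM)$, the converse because every seminorm $p^\nu_K$ is dominated by finitely many seminorms of the images $Xf_i$). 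You instead reprove the relevant half of that characterisation by hand: jet estimates for the forward direction, and for the converse an explicit recovery of the components of $X_t$ from $(X_tf^i)_i$ using bump-function coordinates, the Grauert embedding, or Cartan's theorem according to the regularity class. What your route buys is self-containedness and a concrete picture of \emph{which} test functions control $X$ in each class; what it costs is that in the real analytic case the pointwise inversion and the bookkeeping of the weight sequences $\boldsymbol{a}$ force you back onto the equivalent characterisations of the $C^\omega$-topology in \citep{Jafarpour2014} --- i.e., onto essentially the same external input the paper cites wholesale. You correctly identify that measurability of $t\mapsto X_t$ in the converse direction is rescued by the Suslin property (weak and strong measurability coincide), which is the one point where a purely seminorm-based argument would otherwise be incomplete. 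No gap, but be aware that your real analytic case is not genuinely more elementary than the citation it is meant to replace.
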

\begin{proof}
We note that the seminorms on $\Gamma^\nu(TM)$ provided by (\ref{eq:3.2}) is the initial topology associated with the mappings $X\mapsto\mathscr{L}_Xf$, $f\in C^r(M)$ \citep[Section 3]{Jafarpour2014}. We observe that if $\pi_E : E \rightarrow M$ and $\pi_F : F \rightarrow N$ are $C^r$-vector bundles, $\nu_1, \nu_2 \in \{m + m', \infty, \omega, \text{hol}\}$ are two regularity classes, and if $\phi: \Gamma^{\nu_1} (E) \rightarrow  \Gamma^{\nu_1} (F)$ is a continuous linear map, then
$$\phi \circ \xi\in \Gamma^{\nu_1}_{\text{LI}}(\mathbb{T};F),\hspace{10pt} \xi\in \Gamma^{\nu_2}_{\text{LI}}(\mathbb{T};E),$$
since continuous linear maps preserve continuity of seminorms. Then the desired conclusion follows.
\end{proof}
The following result gives a more familiar characterisation of locally integrally bounded functions of class $C^{\text{lip}}$, bearing in mind our policy of introducing a Riemannian metric whenever it is convenient.
\begin{lem}[Property of time-varying locally Lipschitz functions]\label{lem:2.4}
Let $M$ be a smooth manifold, let $\mathbb{T}$ be a time-domain, and let $f \in C^{\text{lip}}_{\text{LI}}(\mathbb{T}; M)$. If $K \subseteq M$ is compact, then there exists $l\in L^1_{\text{loc}}(\mathbb{T}; \R_{\geq 0})$ such that
$$|f(t, x_1) - f(t, x_2)| \leq l(t)d_\mathbb{G}(x_1, x_2), \hspace{10pt}t \in \mathbb{T},\  x_1, x_2 \in K.$$
\end{lem}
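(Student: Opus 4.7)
The plan is to control the pointwise Lipschitz constant by the $C^{\text{lip}}$-seminorm $p^{\text{lip}}_{K'}(f_t)$ on a slightly enlarged compact set $K'$, and then invoke the local integral boundedness hypothesis to conclude that $t \mapsto p^{\text{lip}}_{K'}(f_t)$ lies in $L^1_{\text{loc}}(\mathbb{T};\R_{\geq 0})$.

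First I would realize $K$ inside a good finite cover. By Lemma \ref{lem:lsdasd}, each $x \in K$ has a geodesically convex relatively compact neighbourhood $\mathcal{U}_x$ such that
$$l_{\text{cl}(\mathcal{U}_x)}(g) = \sup\{\text{dil}\, g(y) \suchthat y \in \text{cl}(\mathcal{U}_x)\} \leq p^{\text{lip}}_{\text{cl}(\mathcal{U}_x)}(g)$$
for every $g \in C^{\text{lip}}(M)$. Extract a finite subcover $\mathcal{U}_{x_1}, \ldots, \mathcal{U}_{x_n}$ of $K$ and set $K' = \bigcup_{i=1}^n \text{cl}(\mathcal{U}_{x_i})$, which is compact. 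By a standard Lebesgue number argument applied to the open cover $\{\mathcal{U}_{x_i}\}$ of $K$, there exists $\delta > 0$ such that any two points $x_1, x_2 \in K$ with $d_\mathbb{G}(x_1,x_2) < \delta$ lie jointly in some $\mathcal{U}_{x_i}$.

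Next I would split into two cases. If $x_1, x_2 \in K$ and $d_\mathbb{G}(x_1,x_2) < \delta$, then both points lie in some $\mathcal{U}_{x_i}$, and geodesic convexity together with the definition of $l_{\text{cl}(\mathcal{U}_{x_i})}(f_t)$ (via connecting $x_1$ to $x_2$ by a minimizing geodesic inside $\mathcal{U}_{x_i}$) gives
$$|f(t,x_1) - f(t,x_2)| \leq l_{\text{cl}(\mathcal{U}_{x_i})}(f_t)\, d_\mathbb{G}(x_1, x_2) \leq p^{\text{lip}}_{K'}(f_t)\, d_\mathbb{G}(x_1,x_2).$$
If instead $d_\mathbb{G}(x_1, x_2) \geq \delta$, then
$$|f(t,x_1) - f(t,x_2)| \leq 2 p^0_K(f_t) \leq \frac{2}{\delta} p^{\text{lip}}_{K'}(f_t)\, d_\mathbb{G}(x_1, x_2),$$
using $p^0_K \leq p^{\text{lip}}_{K'}$. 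Defining
$$l(t) = \max\left\{1, \frac{2}{\delta}\right\} p^{\text{lip}}_{K'}(f_t)$$
yields the desired inequality for all $x_1, x_2 \in K$ uniformly in $t$.

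Finally, I would verify the regularity of $l$. The map $t \mapsto p^{\text{lip}}_{K'}(f_t)$ is the composition of the measurable curve $t \mapsto f_t \in C^{\text{lip}}(M)$ with a continuous seminorm, hence measurable; since $f \in C^{\text{lip}}_{\text{LI}}(\mathbb{T}; M)$, the defining condition of local integral boundedness forces $\int_\mathbb{S} p^{\text{lip}}_{K'}(f_t)\,\d t < \infty$ for every compact $\mathbb{S} \subseteq \mathbb{T}$, so $l \in L^1_{\text{loc}}(\mathbb{T};\R_{\geq 0})$. The principal technical point, and the only nontrivial obstacle, is the transition from the infinitesimal bound $\text{dil}\, f_t$ to a genuine Lipschitz bound on $K$ when $K$ is not geodesically convex; Lemma \ref{lem:lsdasd} combined with the Lebesgue number trick handles exactly this.
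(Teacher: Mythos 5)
Your proposal is correct and follows essentially the same route as the paper's proof: Lemma \ref{lem:lsdasd} to pass from the local dilatation to a genuine Lipschitz bound on small (geodesically convex) neighbourhoods, a finite subcover of $K$ with a Lebesgue number to handle nearby pairs, a sup-norm bound to handle distant pairs, and the defining property of $C^{\text{lip}}_{\text{LI}}(\mathbb{T};M)$ to get local integrability of the resulting bound. The only cosmetic difference is that you package the finitely many local bounds into the single seminorm $p^{\text{lip}}_{K'}(f_t)$ on the enlarged compact set $K'$, whereas the paper takes a pointwise maximum of finitely many $L^1_{\text{loc}}$ functions $l_{x_j}$ and $2\kappa/r$; the two are equivalent.
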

\begin{proof}
Since functions are to be thought of as sections of the trivial line bundle $\R_M=M\times \R$, and since we use the flat connection on this bundle, we have, for any compact set $K \subseteq M$ and for $g \in C^{\text{lip}}(M)$,
\begin{align*}
    l_K(g)&= \sup\left\{\frac{|g\circ \gamma(1) - g \circ \gamma(0)|}{l_\mathbb{G}(\gamma)}\ \suchthat \ \gamma:[0, 1] \rightarrow M, \gamma(0),\  \gamma(1) \in K,\  \gamma(0) \neq\gamma(1)\right\}\\
    &=\sup\left\{\frac{|g(x_1) - g(x_2)|}{d_\mathbb{G}(x_1, x_2)}\ \suchthat \ x_1, x_2 \in K,\  x_1 \neq x_2\right\}.
\end{align*}
Let $K\subseteq M$ be compact. Let $x \in K$ and let $\mathcal{U}_x$ be a neighbourhood of $x$ such that, by Lemma 2.9, for $g \in C^{\text{lip}}(M)$, we have $\lambda^0_{\text{cl}(\mathcal{U}_x)}(g) =l_{\text{cl}(\mathcal{U}_x)}(g)$. Since $f\in C^{\text{lip}}_{\text{LI}}(\mathbb{T};M)$, there exists $l_x\in L^1_{\text{loc}}(\mathbb{T};\R_{\geq 0})$ such that 
$$\text{dil} f(t, y) \leq l_x(t), \hspace{10pt} (t, y) \in \mathbb{T} \times \text{cl}(\mathcal{U}_x).$$
Thus, for $x_1, x_2 \in \mathcal{U}_x$, we have
$$|f(t, x_1) - f(t, x_2)| \leq l_x(t)d_\mathbb{G}(x_1, x_2), \hspace{10pt} t \in \mathbb{T}.$$
By compactness of $K$, there exist $x_1,..., x_m \in K$ such that $K \subseteq \cup_{j=1}^m \mathcal{U}_{x_j}$. By the Lebesgue Number Lemma \citep[Theorem 1.6.11]{MR1835418},  there exists $r \in \R_{>0}$ with the property that, if $x_1, x_2 \in K$ satisfy $d_\mathbb{G}(x_1, x_2) < r$, then there exists $j \in \{1,..., m\}$ such that $x_1,x_2\in\mathcal{U}_{x_j}$. Since $C^{\text{lip}}_{\text{LI}}(\mathbb{T};M)\subseteq C^{0}_{\text{LI}}(\mathbb{T};M)$, there exists $\kappa\in L^1_{\text{loc}}(\mathbb{T};\R_{\geq 0})$ such that $|f(t, x)| \leq \kappa(t)$ for $(t, x) \in \mathbb{T} \times K$. Let
$$l(t) = \max \left\{l_{x_1}(t), . . . , l_{x_m}(t), \frac{2\kappa(t)}{r},\hspace{10pt} t \in \mathbb{T}\right\},$$
noting that $l\in L^1_{\text{loc}}(\mathbb{T};\R_{\geq 0})$.  Let $x_1, x2 \in K$. If $d_\mathbb{G}(x_1, x_2) < r$, then let $j \in \{1, . . . , m\}$ be such that $x_1, x_2 \in \mathcal{U}_{x_j}$, and then we have
$$|f(t, x_1) - f(t, x_2)| \leq l_{x_j}(t)d_\mathbb{G}(x_1, x_2)\leq l(t)d_\mathbb{G}(x_1, x_2), \hspace{10pt} t \in \mathbb{T}.$$
If $d_\mathbb{G}(x_1,x_2)\geq r$, then 
$$|f(t, x_1) - f(t, x_2)| \leq |f(t, x_1)| + |f(t, x_2)| \leq 2\kappa(t) \leq
\frac{2\kappa(t)}{r}d_\mathbb{G}(x_1, x_2) \leq l(t)d_\mathbb{G}(x_1, x_2),$$
which gives the result. 
\end{proof}
\subsubsection{Integrable sections along a curve.}\label{sec:2.3.2}
We shall require the notion of a section of an affine bundle over a curve. Thus we let $r \in \{\infty, \omega, \text{hol}\}$ and let $\beta: B \rightarrow M$ be a $C^r$-affine bundle over a $C^r$-vector bundle $\pi : E \rightarrow M$. Let $\mathbb{T} \subseteq \R$ be an interval. We denote by $L^1_{\text{loc}}(\mathbb{T}; B)$ the mappings $\Gamma: \mathbb{T} \rightarrow B$ for which $F \circ \Gamma \in L^1_{\text{loc}}(\mathbb{T};F)$ for every $F \in \text{Aff}^r(B)$. Note that, if $\Gamma \in L^1_{\text{loc}}(\mathbb{T}; B)$, then there is a mapping $\gamma : \mathbb{T} \rightarrow M$ specified by the requirement that the diagram
\[\begin{tikzcd}
\mathbb{T} \arrow{dr}{\gamma} \arrow{r}{\Gamma}
& B \arrow{d}{\beta} \\
& M
\end{tikzcd}\]
commute. We can think of $\Gamma$ as being a section of $B$ over $\gamma$. We topologise $L^1_{\text{loc}}(\mathbb{T}; B)$ by
giving it the initial topology associated with the mappings
\begin{eqnarray*}
  \alpha_F:L^1_{\text{loc}}(\mathbb{T}; B)&\rightarrow& L^1(\mathbb{S};\mathbb{F})\\
  \Gamma&\mapsto& F\circ\Gamma|\mathbb{S},
\end{eqnarray*}
where $F\in\text{Aff}^r(B)$ and $\mathbb{S}\subseteq\mathbb{T}$ a compact subinterval. 

Associated to these spaces of integrable sections over a curve, we have a few constructions and technical results whose importance will be made apparent at various points during the subsequent presentation. We consider the space $C^0 (\mathbb{T}; M)$ with the topology (indeed,
uniformity) defined by the family of semimetrics 
\begin{equation}\label{eq:gam12}
    d_{\mathbb{S},M}(\gamma_1, \gamma_2) = \sup\{d_\mathbb{G}(\gamma_1(t), \gamma_2(t))\ \suchthat\  t \in \mathbb{S}\}, \hspace{10pt} \mathbb{S} \subseteq \mathbb{T} \text{ a compact interval.}
\end{equation}
We consider this in the following context. We consider $C^r$-vector bundles $\pi_E : E \rightarrow M$ and $\pi_F : F \rightarrow N$. We abbreviate
$$E^* \otimes F = \text{pr}^*_1 E^* \otimes \text{pr}^*_2 F,$$
where $\text{pr}_1: M \times N \rightarrow M$ and $\text{pr}_2: M \times N \rightarrow N$ are the projections. We regard $E^* \otimes F$ as a
vector bundle over $M \times N$. The fibre over $(x, y)$ we regard as
$$E^*_x \otimes F_y \simeq \text{Hom}_\mathbb{F}(E_x; F_y).$$
Note that the total spaces $E$ and $F$ of these vector bundles, and so also $E^* \otimes F$, inherit a Riemannian metric from a Riemannian metrics on their base spaces and fibre metrics \citep[\S 4.1]{lewis2020geometric}. Thus $E^*\otimes F$ possess the associated distance function, and we shall make use of this to define, as in (\ref{eq:gam12}), a topology on the space $C^0(\mathbb{T}; E^*\otimes F)$. If $\Gamma \in C^0(\mathbb{T}; E^*\otimes F)$, then we have induced mappings
$$\gamma_M \in C^0(\mathbb{T}; M),\hspace{10pt} \gamma_N \in C^0(\mathbb{T}; N)$$
obtained by first projecting to $M \times N$, and then projecting onto the components of the product. Note that $\Gamma(t) \in \text{Hom}_\mathbb{F}(E_{\pi_F\circ\Gamma(t)}; E_{\pi_F\circ\Gamma(t)}),\  t \in \mathbb{T}$.

If $\xi : \mathbb{T} \times M \rightarrow E$ satisfies $\xi(t, x) \in E_x$ and if $\Gamma: \mathbb{T} \rightarrow E^* \otimes F$, then we can define
\begin{eqnarray*}
  \xi_\Gamma:\mathbb{T}&\to&F\\
  t&\mapsto&\Gamma(t)(\xi(t,\gamma_M(t))).
\end{eqnarray*}
We call $\xi_\Gamma$ the composite section associated with $\xi$ and $\Gamma$. The following lemma shows that this mapping is integrable, under suitable hypotheses on $\xi$ and $\Gamma$.
\begin{lem}[Integrability of composite section]\label{lem:2.5}
Let $\pi_E : E \to M$ and $\pi_F : F \to M$ be $C^\infty$-vector bundles and let $\mathbb{T} \subseteq \R$ be an interval. If $\xi \in \Gamma^0_{\text{LI}}(\mathbb{T}; E)$ and if $\Gamma \in C^0(\mathbb{T}; E^*\otimes E)$, then $\xi_\Gamma \in L^1_{\text{loc}}(\mathbb{T}; F)$.
\end{lem}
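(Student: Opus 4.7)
The plan is to verify the two ingredients that characterise membership in $L^1_{\text{loc}}(\mathbb{T}; F)$: that $\xi_\Gamma$ is measurable, and that $H\circ \xi_\Gamma \in L^1_{\text{loc}}(\mathbb{T};\mathbb{F})$ for every $H\in\text{Aff}^r(F)$. Throughout I will read the hypothesis as $\Gamma\in C^0(\mathbb{T};E^*\otimes F)$ (the version consistent with the setup preceding the lemma), so that $\Gamma(t)\in \text{Hom}_\mathbb{F}(E_{\gamma_M(t)};F_{\gamma_N(t)})$, and I will write $\gamma_M,\gamma_N\in C^0(\mathbb{T};M)$ for the two induced continuous curves.

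For measurability, I would factor $\xi_\Gamma$ as the composition
\[
t\ \longmapsto\ (\xi_t,\gamma_M(t))\ \longmapsto\ \xi_t(\gamma_M(t))\ \longmapsto\ \Gamma(t)\bigl(\xi_t(\gamma_M(t))\bigr).
\]
Because $\xi\in\Gamma^0_{\text{LI}}(\mathbb{T};E)$, the curve $t\mapsto \xi_t\in\Gamma^0(E)$ is measurable into the Fréchet (hence Suslin) space $\Gamma^0(E)$, and $\gamma_M$ is continuous; the pair is therefore Borel measurable into the product $\Gamma^0(E)\times M$. The evaluation map $\text{ev}:\Gamma^0(E)\times M\to E$, $(\sigma,x)\mapsto\sigma(x)$, is continuous for the $C^0$-topology (uniform convergence on compacts), so composing yields measurability of $t\mapsto\xi(t,\gamma_M(t))$. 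Applying the continuous map $\Gamma$ fibrewise—using joint continuity of the bundle pairing $(E^*\otimes F)\times_{M}E\to F$—produces the measurability of $\xi_\Gamma$.

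For the $L^1_{\text{loc}}$ bound, fix a compact subinterval $\mathbb{S}\subseteq\mathbb{T}$ and any $H\in\text{Aff}^r(F)$. The sets $K_M\triangleq\gamma_M(\mathbb{S})$ and $K_N\triangleq\gamma_N(\mathbb{S})$ are compact in $M$. Continuity of $\Gamma$ and compactness of $\mathbb{S}$ give a bound
\[
\|\Gamma(t)\|_{\mathbb{G}_{M,E,F}}\ \leq\ B,\qquad t\in\mathbb{S},
\]
for the induced fibre norm on $E^*\otimes F$. Since $H$ restricts to an affine function on each fibre $F_y$ whose linear and constant parts depend continuously on $y$, compactness of $K_N$ yields a constant $C>0$ with $|H(v)|\leq C(1+\|v\|_{\mathbb{G}_F})$ for every $v\in F$ with $\pi_F(v)\in K_N$. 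Combining these estimates,
\[
|H\circ\xi_\Gamma(t)|\ \leq\ C\bigl(1+B\,\|\xi(t,\gamma_M(t))\|_{\mathbb{G}_E}\bigr)\ \leq\ C\bigl(1+B\,p^0_{K_M}(\xi_t)\bigr),\qquad t\in\mathbb{S}.
\]
The right-hand side is in $L^1(\mathbb{S};\R)$ by the very definition of $\Gamma^0_{\text{LI}}(\mathbb{T};E)$, giving $H\circ\xi_\Gamma|\mathbb{S}\in L^1(\mathbb{S};\mathbb{F})$. As $\mathbb{S}$ and $H$ were arbitrary, $\xi_\Gamma\in L^1_{\text{loc}}(\mathbb{T};F)$.

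I expect the main obstacle to be the measurability argument rather than the estimates: one must be careful that the pointwise-evaluated composite is Borel measurable, which relies on the fact that the vector-valued function spaces in use are Suslin (so that scalar and Borel notions of measurability coincide, as invoked in Section \ref{sec:2.3}) and on the continuity of evaluation in the $C^0$-topology. The integrability half is then essentially a routine estimate exploiting compactness of $\mathbb{S}$, $\gamma_M(\mathbb{S})$, $\gamma_N(\mathbb{S})$ together with the fibre-norm bound on $\Gamma$.
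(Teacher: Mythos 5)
Your proof is correct, and it establishes the same two facts the paper does (measurability of $\xi_\Gamma$, then an $L^1$ bound on each compact subinterval), but the measurability half is argued by a genuinely different route. The paper proves measurability by the classical Carath\'eodory-type device: it observes that $t\mapsto G\circ\Gamma(s)(\xi(t,\gamma_M(s)))$ is measurable for fixed $s$ and $s\mapsto G\circ\Gamma(s)(\xi(t,\gamma_M(s)))$ is continuous for fixed $t$, partitions a compact interval $[a,b]$, forms the step-function approximations $g_k(t)=\sum_j G\circ\Gamma(t_{k,j})(\xi(t,\gamma_M(t_{k,j})))\chi_{\mathbb{T}_{k,j}}$, and passes to the pointwise limit. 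You instead factor $\xi_\Gamma$ through the evaluation map $\Gamma^0(E)\times M\to E$ and the bundle pairing, using measurability of $t\mapsto\xi_t$ into $\Gamma^0(E)$, second countability (so that the product Borel $\sigma$-algebra is the Borel $\sigma$-algebra of the product), and joint continuity of evaluation in the compact-open topology. Your argument is shorter and more structural, but it leans on the separability/metrizability of $\Gamma^0(E)$ and on the Suslin-space identification of measurability notions; the paper's argument is more elementary and self-contained, needing only pointwise limits of measurable functions. For the integrability half the two proofs coincide in substance: the paper simply asserts the existence of $h\in L^1(\mathbb{S};\R_{\geq 0})$ dominating $|G\circ\Gamma(t)(\xi(t,x))|$ on $\mathbb{S}\times K$, whereas you make the constant explicit by separating the fibre-norm bound on $\Gamma$ over $\mathbb{S}$ from the affine bound $|H(v)|\leq C(1+\|v\|_{\mathbb{G}_F})$ over $\gamma_N(\mathbb{S})$ and then invoking $t\mapsto p^0_{K_M}(\xi_t)\in L^1(\mathbb{S};\R)$; this is arguably a more careful justification of the step the paper leaves implicit. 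You are also right to read the hypothesis as $\Gamma\in C^0(\mathbb{T};E^*\otimes F)$; the statement as printed contains a typo.
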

\begin{proof}
Let $G\in\text{Aff}^\infty(F)$. We first show that $t\mapsto G\circ\xi_\Gamma$ is measurable on $\mathbb{T}$. Note that 
$$t\mapsto G\circ\Gamma(s)(\xi(t,\gamma_M(s)))$$
is measurable for each $s \in \mathbb{T}$ and that
\begin{equation}\label{eq:2.5}
    s\mapsto G\circ\Gamma(s)(\xi(t,\gamma_M(s)))
\end{equation}
is continuous for each $t \in \mathbb{T}$. Let $[a, b] \subseteq \mathbb{T}$ be compact, let $k \in \mathbb{Z}_{>0}$, and denote
$$t_{k,j} = a + \frac{j-1}{k}(b - a), \hspace{10pt} j \in \{1, . . . , k + 1\}.$$
Also denote 
$$\mathbb{T}_{k,j} = [t_{k,j} , t_{k,j+1}), \hspace{10pt} j \in \{1, . . . , k - 1\},$$
and $\mathbb{T}_{k,k} = [t_{k,k}, t_{k,k+1}]$. Then define $g_k : \mathbb{T} \rightarrow \R$ by
$$g_k(t)=\sum_{j=1}^{k}G\circ\Gamma(t_{k,j})(\xi(t,\gamma_M(t_{k,j})))\chi_{t_{k,j}}.$$
Note that $g_k$ is measurable, being a sum of products of measurable functions \citep[Proposition 2.1.7]{MR3098996}. By continuity of (\ref{eq:2.5}) for each $t \in \mathbb{T}$, we have
$$\lim_{k\to\infty}g_k(t) = G\circ\Gamma(t)(\xi(t,\gamma_M(t))),\hspace{10pt} t \in [a, b],$$
showing that $t\mapsto G\circ\Gamma(t)(\xi(t,\gamma_M(t)))$ is measurable on $[a,b]$, as pointwise limits of measurable functions are measurable \citep[Proposition 2.1.5]{MR3098996}. Since the compact interval $[a, b] \subseteq \mathbb{T}$ is arbitrary, we conclude that $t \mapsto  G\circ\Gamma(t)(\xi(t,\gamma_M(t)))$ is measurable on $\mathbb{T}$.

Let $\mathbb{S} \subseteq \mathbb{T}$ be compact and let $K \subseteq M$ be a compact set for which $\gamma_M(\mathbb{S}) \subseteq K$. Since $\xi\in \Gamma^0_{\text{LI}}(\mathbb{T};M)$ and since $\Gamma$ is continuous, there exists $h\in L^1(\mathbb{S};\R_{\geq 0})$ be such that 
$$|G\circ \Gamma(t)(\xi(t,x))|\leq h(t) \hspace{20pt} (t,x)\in\mathbb{S}\times K,$$
In particular, this shows that $t \mapsto G\circ\Gamma(t)(\xi(t,\gamma_M(t)))$ is integrable on $\mathbb{S}$ and so locally integrable on $\mathbb{T}$.
\end{proof}
We also have the mapping
\begin{eqnarray*}
  \Psi_{\mathbb{T},E,\xi}:C^0(\mathbb{T}; E^* \otimes F) &\to& L^1_{\text{loc}}(\mathbb{T}; F)\\
  \Gamma&\mapsto& \xi_\Gamma,
\end{eqnarray*}
which is well-defined by Lemma \ref{lem:2.5}. The following lemma gives the continuity of this mapping.
\begin{lem}[Continuity of curve to composite section map]\label{lem:2.6}
Let $\pi_E : E \to M$ and $\pi_F : F\to M$ be $C^\infty$-vector bundles, let $\mathbb{T} \subseteq \R$ be an interval, and let $\xi \in \Gamma^0_{\text{LI}}(\mathbb{T}; E)$. Then $\Psi_{\mathbb{T},E,\xi}$ is continuous.
\end{lem}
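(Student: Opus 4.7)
The plan is to use the definition of the initial topology on $L^1_{\text{loc}}(\mathbb{T};F)$: continuity of $\Psi_{\mathbb{T},E,\xi}$ is equivalent to continuity of each $\alpha_G\circ\Psi_{\mathbb{T},E,\xi}$ into $L^1(\mathbb{S};\mathbb{F})$ as $G$ ranges over $\text{Aff}^r(F)$ and $\mathbb{S}$ ranges over the compact subintervals of $\mathbb{T}$. Fixing such $G$ and $\mathbb{S}$, the task reduces to showing that $\Gamma\mapsto G\circ\xi_\Gamma|\mathbb{S}$ is continuous into $L^1(\mathbb{S};\mathbb{F})$, which I would carry out by a sequential (equivalently, net) argument together with dominated convergence. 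Note that Lemma \ref{lem:2.5} already guarantees that each $G\circ\xi_\Gamma$ is locally integrable, so the $L^1$-norm of the difference $G\circ\xi_{\Gamma_n}-G\circ\xi_{\Gamma_0}$ over $\mathbb{S}$ is well-defined.

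Suppose $\Gamma_n\to\Gamma_0$ in $C^0(\mathbb{T};E^*\otimes F)$, so in particular $\sup_{t\in\mathbb{S}}d_\mathbb{G}(\Gamma_n(t),\Gamma_0(t))\to 0$, and let $\gamma_{M,n}$ denote the $M$-projection of $\Gamma_n$. For each $t\in\mathbb{S}$, the convergence $\Gamma_n(t)\to\Gamma_0(t)$ in $E^*\otimes F$ forces $\gamma_{M,n}(t)\to\gamma_{M,0}(t)$ in $M$; continuity of the section $\xi_t\in\Gamma^0(E)$ then gives $\xi(t,\gamma_{M,n}(t))\to\xi(t,\gamma_{M,0}(t))$ in the total space $E$. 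Joint continuity of the evaluation pairing on $E^*\otimes F$ together with continuity of $G$ deliver the pointwise convergence
$$G\bigl(\Gamma_n(t)(\xi(t,\gamma_{M,n}(t)))\bigr)\longrightarrow G\bigl(\Gamma_0(t)(\xi(t,\gamma_{M,0}(t)))\bigr)$$
for every $t\in\mathbb{S}$.

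The main obstacle is producing an $L^1(\mathbb{S};\R)$-dominant independent of $n$, since the fibres $E_{\gamma_{M,n}(t)}$ and $F_{\gamma_{N,n}(t)}$ move with $n$. Uniform convergence on $\mathbb{S}$ makes the set $\{\Gamma_n(t):n\in\mathbb{Z}_{>0},\,t\in\mathbb{S}\}\cup\Gamma_0(\mathbb{S})$ relatively compact in $E^*\otimes F$, and its projection to $M$ is contained in a compact set $K\subseteq M$. Exploiting affineness of $G$ and linearity of $\Gamma_n(t)$ on the fibre $E_{\gamma_{M,n}(t)}$, and working with the total space metric $\mathbb{G}$ on $E^*\otimes F$ (as constructed in Section~1.3), one obtains constants $A,B\in\R_{\geq 0}$, independent of $n$ and $t\in\mathbb{S}$, such that
$$\bigl|G\bigl(\Gamma_n(t)(\xi(t,\gamma_{M,n}(t)))\bigr)\bigr|\leq A+B\,\|\xi(t,\gamma_{M,n}(t))\|_{\mathbb{G}_{\pi_E}}\leq A+B\,p^0_K(\xi_t).$$
Since $\xi\in\Gamma^0_{\text{LI}}(\mathbb{T};E)$, the function $t\mapsto p^0_K(\xi_t)$ lies in $L^1(\mathbb{S};\R)$, supplying the required dominant. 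An application of the dominated convergence theorem then yields $\alpha_G\circ\Psi_{\mathbb{T},E,\xi}(\Gamma_n)\to\alpha_G\circ\Psi_{\mathbb{T},E,\xi}(\Gamma_0)$ in $L^1(\mathbb{S};\mathbb{F})$, completing the proof.
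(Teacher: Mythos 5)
Your proposal is correct and follows essentially the same route as the paper: reduce to scalar-valued continuity via the affine functions $G$ defining the initial topology, trap the curves $\Gamma_n(\mathbb{S})$ in a compact subset of $E^*\otimes F$ by uniform convergence, establish pointwise convergence of the integrands, and conclude by dominated convergence with a dominant supplied by $\xi\in\Gamma^0_{\text{LI}}(\mathbb{T};E)$. If anything, your argument is slightly more careful than the paper's, since you track the convergence of $\Gamma_n(t)$ in the homomorphism slot as well as in the base point, whereas the paper's displayed limit only varies $\gamma_{M,j}(t)$.
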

\begin{proof}
Let $G \in \text{Aff}^\infty(F)$ and let $\mathbb{S} \subseteq \mathbb{T}$ be a compact interval. Let $\Gamma_j \in C^0(\mathbb{T}; E^* \otimes F)$,
$j \in \mathbb{Z}_{>0}$, be a sequence of curves converging to $\Gamma \in C^0(\mathbb{T}; E^* \otimes F)$. Since $\Gamma(\mathbb{S})$ is compact
and $E^* \otimes F$ is locally compact, we can find a precompact neighbourhood $\mathcal{W}$ of $\Gamma(\mathbb{S})$. Then, for $N \in \mathbb{Z}_{>0}$ sufficiently large, we have $\Gamma_j (\mathbb{S}) \subseteq \mathcal{W}$ by uniform convergence. Therefore, we can find a compact set $L \subseteq E^* \otimes F$ such that $\Gamma^j (\mathbb{S}) \subseteq L$, $j \in \mathbb{Z}_{>0}$, and $\Gamma(\mathbb{S}) \subseteq L$. Let $g \in L^1(\mathbb{S}; \R_{\geq0})$ be such that
$$|G\circ\Gamma(t)(\xi(t,x))|\leq h(t) \hspace{10pt} (t,x)\in\mathbb{S}\times K,$$
this since $\xi \in \Gamma^0_{\text{LI}}(\mathbb{T}; M)$ and since $\Gamma$ is continuous. Then, for fixed $t \in \mathbb{S}$, continuity of
$x \mapsto G \circ \Gamma(t)(\xi(t, x))$ ensures that
$$\lim_{j\to\infty}G\circ\Gamma(t)(\xi(t,\gamma_{M,j}(t)))= G\circ\Gamma(t)(\xi(t,\gamma_M(t))).$$
We also have
$$|G\circ\Gamma(t)(\xi(t,\gamma_{M,j}(t)))|\leq g(t),\hspace{10pt} t\in\mathbb{S}.$$
Therefore, by the Dominated Convergence Theorem \citep[Theorem 2.4.5]{MR3098996}
$$\lim_{j\to\infty}\int_{\mathbb{S}}G\circ\Gamma(t)(\xi(t,\gamma_{M,j}(t)))\d t=\int_{\mathbb{S}}G\circ\Gamma(t)(\xi(t,\gamma_M(t)))\d t,$$
which gives the desired continuity.
\end{proof}
\subsubsection{Time- and parameter-dependent sections.}
Now we turn our attention to vector fields depending on both parameter and time, as we outlined in Section \ref{sec:1.2}. 
\begin{defin}
Let $m\in \mathbb{Z}_{\geq 0}$, let $m'\in\{0,\text{lip}\}$, let $\nu\in\{m+m',\infty,\omega,\text{hol}\}$, and let $r\in \{\infty,\omega,\text{hol}\}$, as required. Let $\pi:E\rightarrow M$ be a vector bundle of class $C^r$, let $\mathbb{T}\subseteq\R$  be a time interval, and let $\mathcal{P}$ be a topological space. Consider a map $\xi:\mathbb{T}\times M\times \mathcal{P}\rightarrow E$ with the property that $\xi(t,x,p)\in E_x$ for each $(t,x,p)\in\mathbb{T}\times M\times \mathcal{P}$. Denote by $\xi^p:\mathbb{T}\times M\rightarrow E$ the map $\xi^p(t,x)=\xi(t,x,p)$. Then $\xi$ is a:
\begin{enumerate}[(i)]
    \item \textbf{\textit{separately continuous parameter-dependent, (locally) integrally bounded section of class}} $\mathbf{C^\nu}$ if $(t,x)\to \xi(t,x,p)$ is a (locally) integrally bounded section of class of class $C^\nu$ for each $p\in\mathcal{P}$ and if $p\to \xi(t,x,p)$ is continuous for each $(t,x)\in\mathbb{T}\times M$;
    \item \textbf{\textit{parameter-dependent, (locally) integrally bounded section of class}} $\mathbf{C^\nu}$ if it is a separately continuous parameter-dependent, (locally) integrally bounded section of class $\mathbf{C^\nu}$ and if the map $\mathcal{P}\ni p\to \xi^p\in\Gamma^\nu_{\text{I}}(\mathbb{T};E)\ (\Gamma^\nu_{\text{LI}}(\mathbb{T};E))$ is continuous;
    \item \textbf{\textit{separately continuous parameter-dependent, (locally) essentially bounded section of class}} $\mathbf{C^\nu}$ if $(t,x)\to \xi(t,x,p)$ is a (locally) essentially bounded section of class of class $C^\nu$ for each $p\in\mathcal{P}$ and if $p\to \xi(t,x,p)$ is continuous for each $(t,x)\in\mathbb{T}\times M$;
    \item \textbf{\textit{parameter-dependent, (locally) essentially bounded section of class}} $\mathbf{C^\nu}$ if it is a separately continuous parameter-dependent, (locally) essentially bounded section of class $\mathbf{C^\nu}$ and if the map $\mathcal{P}\ni p\to \xi^p\in\Gamma^\nu_{\text{B}}(\mathbb{T};E)\ (\Gamma^\nu_{\text{LB}}(\mathbb{T};E))$ is continuous.
\end{enumerate}
We denote:
\begin{enumerate}
    \item[(v)]{
    the set of parameter-dependent, (locally) integrally bounded section of class $C^\nu$ by $\Gamma^\nu_{\text{PLI}}(\mathbb{T};E;\mathcal{P})$;
    }
    \item[(vi)]{
    the set of parameter-dependent, (locally) essentially bounded section of class $C^\nu$ by ($\Gamma^\nu_{\text{PLB}}(\mathbb{T};E;\mathcal{P})$) $\Gamma^\nu_{\text{PB}}(\mathbb{T};E;\mathcal{P})$.
	}
\end{enumerate}
\end{defin}
As with solely parameter-dependent and solely time-varying sections, we have more fulsome notation for time-varying, parameter-dependent sections that is sometimes useful. To see this, we first observe that the spaces
$$\text{L}^{1}(\mathbb{T};\Gamma^{\nu}(E)),\hspace{8pt}\text{L}^{1}_{\text{loc}}(\mathbb{T};\Gamma^{\nu}(E)),\hspace{8pt}\text{L}^{\infty}(\mathbb{T};\Gamma^{\nu}(E)),\hspace{8pt}\text{L}^{\infty}_{\text{loc}}(\mathbb{T};\Gamma^{\nu}(E))$$
have topologies, as described above. We then have the spaces 
\begin{enumerate}
    \item $C^0(\mathcal{P};\text{L}^{1}(\mathbb{T};\Gamma^{\nu}(E)))$,
    \item $C^0(\text{L}^{1}_{\text{loc}}(\mathbb{T};\Gamma^{\nu}(E)))$,
    \item $C^0(\text{L}^{\infty}(\mathbb{T};\Gamma^{\nu}(E)))$, and
    \item $C^0(\text{L}^{\infty}_{\text{loc}}(\mathbb{T};\Gamma^{\nu}(E)))$.
\end{enumerate}
This notation is sufficiently obvious that it does not warrant explanation. As in the solely time-varying case, we shall not always state result in all four cases of ``integrable," ``locally integrable," ``essentially bounded," and ``locally essentially bounded," although all of the obvious results hold, and we shall use them.

To give a slightly explicit characterisation of membership in $\Gamma^\nu_{\text{PLI}}(\mathbb{T};E;\mathcal{P})$, we note that the conditions for such membership on $\xi$ are, just by definition: for each $p_0 \in \mathcal{P}$, for each compact $K \subseteq M$ and $\mathbb{S} \subseteq \mathbb{T}$, and for each $\epsilon \in \R_{>0}$, there exists a neighbourhood $\mathcal{O} \subseteq \mathcal{P}$ of $p_0$ such that
\begin{equation}\label{eq:2.4}
    \int_{\mathbb{S}}p^\nu_K(\xi^p_t-\xi^{p_0}_t)\d t<\epsilon,\hspace{10pt}p\in\mathcal{O},
\end{equation}
where seminorms $p^{\nu}_{K}$ are as above. We can, moreover, topologise these spaces. As in the case of parameter-dependent sections, there are at least two ways to introduce such a topology.
\begin{enumerate}
    \item The \textbf{\textit{topology of pointwise convergence}} on $C^0(\mathcal{P};\text{L}^{1}(\mathbb{T};\Gamma^{\nu}(E)))$ is the locally convex topology defined by the family of seminorms $p^{\nu}_{K,1,p}$, $K\subseteq M$ compact, given by
    $$p^{\nu}_{K,1,p}(\xi)=p^{\nu}_{K,1}\circ \xi^p,$$
    where $p^{\nu}_{K,1}$ is a seminorm for the topology on $\text{L}^{1}(\mathbb{T};\Gamma^{\nu}(E))$. Here we make the abuse of notation by possibly omitting extra bits of notation from the seminorm $p^{\nu}_{K,1}$, cf. (\ref{eq:3.2}).
    \item The \textbf{\textit{topology of pointwise convergence}} on $C^0(\text{L}^{1}_{\text{loc}}(\mathbb{T};\Gamma^{\nu}(E)))$ is the locally convex topology defined by the family of seminorms $p^{\nu}_{K,\mathbb{S},1,p}$, $p\in\mathcal{P}$, $K\subseteq M$ compact and $\mathbb{S}\subseteq\mathbb{T}$ compact, given by
    $$p^{\nu}_{K,\mathbb{S},1,p}(\xi)=p^{\nu}_{K,\mathbb{S},1}\circ \xi^p,$$
    where $p^{\nu}_{K,\mathbb{S},1}$ is a seminorm for the topology on $\text{L}^{1}_{\text{loc}}(\mathbb{T};\Gamma^{\nu}(E))$. Here we make the abuse of notation by possibly omitting extra bits of notation from the seminorm $p^{\nu}_{K,\mathbb{S},1}$, cf. (\ref{eq:3.2}).
    \item The \textbf{\textit{compact open topology}} on $C^0(\mathcal{P};\text{L}^{1}(\mathbb{T};\Gamma^{\nu}(E)))$ is the locally convex topology defined by the family of seminorms $p^{\nu}_{K,1,L}$, $L\subseteq\mathcal{P}$ compact and $K\subseteq M$ compact, given by
    $$p^{\nu}_{K,1,L}(\xi)=\sup\{ p^{\nu}_{K,1}\circ \xi^p\ |\ p\in L\},$$
    where $p^{\nu}_{K,1}$ is a seminorm for the topology on $\text{L}^{1}(\mathbb{T};\Gamma^{\nu}(E))$. If $\mathcal{P}$ is locally compact, we can make use of \citep[Corollary 16.6.3]{MR632257}, along with the discussion contained in \citep[\S 3.6.E]{MR632257}, to conclude that 
    $$C^0(\mathcal{P};\text{L}^{1}(\mathbb{T};\Gamma^{\nu}(E)))\simeq C^0(\mathcal{P})\check{\otimes}_\varepsilon \text{L}^{1}(\mathbb{T};\Gamma^{\nu}(E)),$$
    where $\check{\otimes}_\varepsilon$ is the injective tensor.
    \item The \textbf{\textit{compact open topology}} on $C^0(\mathcal{P};\text{L}^{1}_{\text{loc}}(\mathbb{T};\Gamma^{\nu}(E)))$ is the locally convex topology defined by the family of seminorms $p^{\nu}_{K,\mathbb{S}, 1,L}$, $L\subseteq\mathcal{P}$ compact, $K\subseteq M$ compact, and $\mathbb{S}\subseteq\mathbb{T}$ compact given by
    $$p^{\nu}_{K,\mathbb{S},1,L}(\xi)=\sup\{ p^{\nu}_{K,\mathbb{S},1}\circ \xi^p\ |\ p\in L\},$$
    where $p^{\nu}_{K,\mathbb{S},1}$ is a seminorm for the topology on $\text{L}^{1}_{\text{loc}}(\mathbb{T};\Gamma^{\nu}(E))$. If $\mathcal{P}$ is locally compact, we can make use of \citep[Corollary 16.6.3]{MR632257}, along with the discussion contained in \citep[\S 3.6.E]{MR632257}, to conclude that 
    $$C^0(\mathcal{P};\text{L}^{1}_{\text{loc}}(\mathbb{T};\Gamma^{\nu}(E)))\simeq C^0(\mathcal{P})\check{\otimes}_\varepsilon \text{L}^{1}_{\text{loc}}(\mathbb{T};\Gamma^{\nu}(E)),$$
    where $\check{\otimes}_\varepsilon$ is the injective tensor.
\end{enumerate}
The following result then characterises time-and parameter-dependent Lipschitz functions, just as in the time-dependent case.
\begin{lem}[Property of time- and parameter-dependent locally Lipschitz functions]\label{lem:5.3}
Let $M$ be a smooth manifold, let $\mathbb{T}$ be a time-domain, let $\mathcal{P}$ be a topological space, and let $f \in C^{\text{lip}}_{\text{PLI}}(\mathbb{T}; M; \mathcal{P})$. If $K \subseteq M$ is compact, if $\mathbb{S} \subseteq \mathbb{T}$ is a compact interval, and if $p_0 \in \mathcal{P}$, then there exists $C \in \R_{>0}$ and a neighbourhood $\mathcal{O}$ of $p_0$ such that
$$\int_{\mathbb{S}}|f(t,x_1,p)-f(t,x_2,p)|\d t\leq Cd_\mathbb{G}(x_1,x_2), \hspace{10pt} x_1,x_2\in K,\ p\in\mathcal{O}.$$
\end{lem}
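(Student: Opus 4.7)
The plan is to parallel the proof of Lemma \ref{lem:2.4}, overlaying on it the uniformity in $p$ that continuity of the map $\mathcal{P} \ni p \mapsto f^p \in C^{\text{lip}}_{\text{LI}}(\mathbb{T}; M)$ at $p_0$ provides. First I would set up the geometry exactly as in Lemma \ref{lem:2.4}: for each $x \in K$, use Lemma \ref{lem:lsdasd} (applied to the trivial line bundle $\R_M$ with its flat connection) to select a geodesically convex neighbourhood $\mathcal{U}_x$ of $x$ such that $\lambda^0_{\text{cl}(\mathcal{U}_x)}(g) = l_{\text{cl}(\mathcal{U}_x)}(g)$ for every $g \in C^{\text{lip}}(M)$. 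By compactness, extract a finite subcover $\mathcal{U}_{x_1}, \dots, \mathcal{U}_{x_m}$ of $K$, and use the Lebesgue Number Lemma to fix $r \in \R_{>0}$ such that any two points of $K$ within distance $r$ lie in a common $\mathcal{U}_{x_j}$.

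Next I would exploit the hypothesis that $f \in C^{\text{lip}}_{\text{PLI}}(\mathbb{T}; M; \mathcal{P})$, which by (\ref{eq:2.4}) states that the map $p \mapsto f^p$ is continuous into $C^{\text{lip}}_{\text{LI}}(\mathbb{T}; M)$ at $p_0$ in all of the seminorms $p^{\text{lip}}_{L, \mathbb{S}, 1}$. Since $f^{p_0} \in C^{\text{lip}}_{\text{LI}}(\mathbb{T}; M)$, the quantities $A_j := \int_\mathbb{S} p^{\text{lip}}_{\text{cl}(\mathcal{U}_{x_j})}(f^{p_0}_t)\,\d t$ and $A_K := \int_\mathbb{S} p^0_K(f^{p_0}_t)\,\d t$ are finite. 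Continuity at $p_0$, applied with the finitely many compact sets $\text{cl}(\mathcal{U}_{x_1}), \dots, \text{cl}(\mathcal{U}_{x_m}), K$, produces a single neighbourhood $\mathcal{O}$ of $p_0$ on which
$$\int_\mathbb{S} p^{\text{lip}}_{\text{cl}(\mathcal{U}_{x_j})}(f^p_t - f^{p_0}_t)\,\d t \leq 1, \quad j=1,\dots,m, \qquad \int_\mathbb{S} p^0_K(f^p_t - f^{p_0}_t)\,\d t \leq 1.$$
Since $\lambda^0_{\text{cl}(\mathcal{U}_{x_j})} \leq p^{\text{lip}}_{\text{cl}(\mathcal{U}_{x_j})}$ and $p^0$ is subadditive, this yields $\int_\mathbb{S} \lambda^0_{\text{cl}(\mathcal{U}_{x_j})}(f^p_t)\,\d t \leq A_j + 1$ and $\int_\mathbb{S} p^0_K(f^p_t)\,\d t \leq A_K + 1$ for every $p \in \mathcal{O}$.

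Finally, mimic the near/far dichotomy of Lemma \ref{lem:2.4}. For $p \in \mathcal{O}$ and $x_1, x_2 \in K$ with $d_\mathbb{G}(x_1,x_2) < r$, choose $j$ with $x_1, x_2 \in \mathcal{U}_{x_j}$; by Lemma \ref{lem:lsdasd} and geodesic convexity,
$$|f(t,x_1,p) - f(t,x_2,p)| \leq \lambda^0_{\text{cl}(\mathcal{U}_{x_j})}(f^p_t)\, d_\mathbb{G}(x_1,x_2),$$
whose integral is at most $(A_j+1)\,d_\mathbb{G}(x_1,x_2)$. For $d_\mathbb{G}(x_1,x_2) \geq r$, use
$$|f(t,x_1,p) - f(t,x_2,p)| \leq 2 p^0_K(f^p_t) \leq \tfrac{2}{r} p^0_K(f^p_t)\, d_\mathbb{G}(x_1,x_2),$$
giving $\int_\mathbb{S}|f(t,x_1,p)-f(t,x_2,p)|\,\d t \leq \tfrac{2(A_K+1)}{r}\, d_\mathbb{G}(x_1,x_2)$. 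Setting $C := \max_j (A_j+1) + \tfrac{2(A_K+1)}{r}$ completes the proof. The main technical point — and the only place where the argument deviates substantively from Lemma \ref{lem:2.4} — is securing \emph{uniformity} of $C$ across $\mathcal{O}$; this is exactly what the integrally controlled continuity in the $C^{\text{lip}}_{\text{LI}}$-topology delivers, so once the finite geometric cover is fixed first (ahead of choosing $\mathcal{O}$), everything else is routine.
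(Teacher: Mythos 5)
Your proposal is correct and follows essentially the same route as the paper's proof: cover $K$ by geodesically convex neighbourhoods on which Lemma \ref{lem:lsdasd} identifies the dilatation seminorm with the Lipschitz quotient, use the continuity condition (\ref{eq:2.4}) together with a triangle inequality to transfer the integral bounds from $p_0$ to a uniform neighbourhood $\mathcal{O}$, and conclude with the same near/far dichotomy via the Lebesgue Number Lemma. The only differences are cosmetic (choosing one $\mathcal{O}$ after fixing the finite cover rather than intersecting the $\mathcal{O}_x$, and summing rather than taking a maximum for the final constant).
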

\begin{proof}
Let $K \subseteq M$ and $\mathbb{S} \subseteq \mathbb{T}$ be compact, and let $p_0 \in \mathcal{P}$. Let $x \in K$ and, as in the proof of Lemma \ref{lem:2.4}, let $\mathcal{U}_x$ be a neighbourhood of $x$ and let $\ell_x \in L^1(\mathbb{S};\R_{\geq0})$ be such that
$$\text{dil}\  f(t, y, p_0) \leq \ell_x(t)d_\mathbb{G}(x_1, x_2),\hspace{10pt} (t, y) \in \mathbb{S} \times \text{cl}(\mathcal{U}_x).$$
According to (\ref{eq:2.4}), there exists a neighbourhood $\mathcal{O}_x$ of $p_0$ such that
\begin{equation*}
    \int_{\mathbb{S}} \text{dil}\ (f^p-f^{p_0})(t,y)\d t<1,\hspace{10pt}(t,y,p)\in\mathbb{S}\times\mathcal{U}_x\mathcal{O}_x.
\end{equation*}
Therefore, by the triangle inequality,
$$\int_{\mathbb{S}} \text{dil}\ f^p(t,y)\d t\leq\int_{\mathbb{S}} \text{dil}\ (f^p-f^{p_0})(t,y)\d t+\int_{\mathbb{S}} \text{dil}\ f^{p_0}(t,y)\d t<\underbrace{1+\int_{\mathbb{S}}\ell_x(t)\d t}_{C_x}$$
for all $(t, y, p) \in \mathbb{S} \times \mathcal{U}_x \times \mathcal{O}_x$. Thus, by Lemma \ref{lem:lsdasd}, there exists $C_x \in \R_{>0}$ such that
$$\int_{\mathbb{S}}\frac{f(t,x_1,p)-f(t,x_2,p)}{d_\mathbb{G}(x_1,x_2)}\d t\leq \int_{\mathbb{S}}\lambda^0_{\text{cl}(\mathcal{U}_x)}(f^p_t)\d t \leq C_x$$
for $x_1, x_2 \in \mathcal{U}_x$ distinct and for $p \in \mathcal{O}_x$.
By compactness of $K$, there exists $x_1, . . . , x_m \in K$ such that $K\subseteq \cup_{j=1}^{m}\mathcal{U}_{x_j}$.  By the Lebesgue Number Lemma \citep[Theorem 1.6.11]{MR1835418}, there exists $r \in \R_{>0}$ with the property that, if $x_1, x_2 \in K$ satisfies $d_\mathbb{G}(x_1, x_2) < r$, then there exists $j \in \{1, . . . , m\}$ such that $x_1, x_2 \in \mathcal{U}_{x_j}$. Since $C^0_{\text{LI}}(\mathbb{T}; M; \mathcal{P}) \subseteq C^{\text{lip}}_{\text{LI}}(\mathbb{T}; M; \mathcal{P})$, by (\ref{eq:2.4}) there exists a neighbourhood $\mathcal{O}'$ of $p_0$ such that
\begin{equation*}
    \int_{\mathbb{S}} |f(t,x,p)|\d t<1,\hspace{10pt}(t,y,p)\in\mathbb{S}\times K\times \mathcal{O'}.
\end{equation*}
Let
$$C=\max\left\{C_{x_1},...,C_{x_m},\frac{r}{2}\right\}$$
and let $\mathcal{O} = \mathcal{O'} \cap (\cap^m_{j=1}\mathcal{O}_{x_j})$. Let $x_1, x_2 \in K$ and $p \in \mathcal{O}$. If $d_{\mathbb{G}}(x_1, x_2) < r$, then let $j \in \{1, . . . , m\}$ be such that $x_1, x_2 \in \mathcal{U}_{x_j}$, and then we have
$$\int_{\mathbb{S}}|f(t, x_1, p) - f(t, x_2, p)| \d t \leq C_{x_j}d_\mathbb{G}(x_1, x_2) \leq Cd_\mathbb{G}(x_1, x_2).$$
If $d_\mathbb{G}(x_1, x_2) \geq r$, then
\begin{eqnarray*}
  \int_{\mathbb{S}}|f(t, x_1, p) - f(t, x_2, p)| \d t &\leq& \int_{\mathbb{S}}|f(t, x_1, p)| \d t + \int_{\mathbb{S}}|f(t, x_2, p)| \d t\\
  &<& 2 \leq \frac{2}{r}d_\mathbb{G}(x_1,x_2)\leq Cd_\mathbb{G}(x_1,x_2),
\end{eqnarray*}
which gives the result.
\end{proof}
\section{Vector fields and flows}\label{sec:3}
For the class of time- and parameter-dependent vector fields introduced in the previous section, we give a geometric characterisation of their integral curves and flows, and prove the more standard results on the manner in which the flow depends on its arguments. While the arguments used in the proofs of bear an unsurprising similar to the standard proofs, we highlight two important points of departure:
\begin{enumerate}
    \item in the time-dependent setting, we are using a class of vector fields that is new, so the proofs necessarily reflect this by being different than standard proofs;
    \item as throughout the chapter, we eschew the use of coordinates in favour of globally defined functions, as this is an important ingredient in our approach.
    \item give geometric proofs for standard results, globally expressed, concerning continuous (and more regular, when appropriate) dependence of terminal state on initial state, initial and final time.
    \item prove a new type of continuity result for the “parameter to local flow” mapping. 
\end{enumerate}
\subsection{Integral curves for vector fields}\label{sec:3.1}
In this section, we will give geometric definitions and characterisations of integral curves and flows. We begin by defining and characterising integral curves in our global framework.
\begin{defin}[Integral curve]
Let $m\in \mathbb{Z}_{\geq 0}$, let $m'\in\{0,\text{lip}\}$, let $\nu\in\{m+m',\infty,\omega,\text{hol}\}$, and let $r\in \{\infty,\omega,\text{hol}\}$, as required. Let $M$ be a $C^r$-manifold, let $\mathbb{T}\subseteq \R$ be an interval, let $X\in \Gamma^\nu_{\text{LI}}(\mathbb{T};TM)$. An \textbf{integral curve} for $X$ is a locally absolutely continuous curve $\xi:\mathbb{T'}\rightarrow M$ such that 
\begin{enumerate}[(i)]
    \item $\mathbb{T'}\subseteq \mathbb{T}$ and
    \item $\xi'(t)=X(t,\xi(t))$ for almost every $t\in\mathbb{T'}$.
\end{enumerate}
An integral curve $\xi:\mathbb{T'}\rightarrow M$ is \textbf{\textit{maximal}} if, given any other integral curve $\eta:\mathbb{T''}\rightarrow M$ for which $\eta(t)=\xi(t)$ for some $t\in\mathbb{T'}\cap\mathbb{T''}$, we have $\mathbb{T}''\subseteq\mathbb{T'}$.
\end{defin}
The following result, while admittedly simple, characterises integral curves in a way that will be of essential use to our approach.
\begin{lem}[``Weak" characterisation of integral curves]\label{lem:3.2}
Let $m\in \mathbb{Z}_{\geq 0}$, let $m'\in\{0,\text{lip}\}$, let $\nu\in\{m+m',\infty,\omega,\text{hol}\}$, and let $r\in \{\infty,\omega,\text{hol}\}$, as required. Let $M$ be a $C^r$ manifold, Stein when $\nu=\text{hol}$, let $\mathbb{T}\subseteq \R$ be an interval, and let $X\in \Gamma^\nu_{\text{LI}}(\mathbb{T};TM)$. For a curve $\xi:\mathbb{T'}\rightarrow M$, the following statements are equivalent:
\begin{enumerate}[(i)]
    \item\label{lem:3.2-1} $\xi $ is a integral curve for $X$;
    \item\label{lem:3.2-2} for any $t_0\in\mathbb{T'}$ and any $f\in C^\infty(M)$,
    $$f\circ \xi(t)=f\circ\xi(t_0)+\int^{t}_{t_0}Xf(s,\xi(s))\d s;$$
    \item\label{lem:3.2-3} for any $t_0\in\mathbb{T'}$ and any $f\in C^r(M)$,
    $$f\circ \xi(t)=f\circ\xi(t_0)+\int^{t}_{t_0}Xf(s,\xi(s))\d s.$$
\end{enumerate}
\begin{proof}
(\ref{lem:3.2-1})$\implies$ (\ref{lem:3.2-2}). Let $t_0\in\mathbb{T'}$ and $f\in C^{\infty}(M)$. Then we have 
\begin{align*}
    f\circ\xi(t_0)+\int^{t}_{t_0}Xf(s,\xi(s))\d s &= f\circ\xi(t_0)+\int^{t}_{t_0}\langle df(\xi(s));\xi'(s)\rangle\d s\\
    &= f\circ\xi(t_0)+\int^{t}_{t_0}\frac{\d}{\d s}f\circ\xi(s)\d s=f\circ\xi(t),
\end{align*}
as claimed. 

(\ref{lem:3.2-2})$\implies$ (\ref{lem:3.2-3}). This follows since real analytic and holomorphic functions are smooth.

(\ref{lem:3.2-3})$\implies$ (\ref{lem:3.2-1}). Let $t_0\in \mathbb{T'}$ and let $\chi^1,...,\chi^n\in C^{r}(M)$
be such that 
$$(\d\chi^1(\xi(t_0)),...,\d\chi^1(\xi(t_0)))$$
is a basis for $T^*_{\xi(t_0)}M$. The existence of such functions follows from the existence of globally defined $C^r$-coordinate functions about any point in M for smooth, real analytic, and Stein manifolds. One then argues, just as in the proof of the first implication above, that
$$\chi^j\circ \xi(t)=\chi^j\circ\xi(t_0)+\int^{t}_{t_0}X\chi^j(s,\xi(s))\d s$$
for each $j\in\{1,...,n\}$. This gives 
$$(\chi^j\circ\xi)'(t)=X\xi^j(t,\xi(t)),\hspace{2cm}j\in\{1,...,n\}, \ \text{a.e. } t\in\mathbb{T'}.$$
The linear independence of $\d\chi^1,...,\d\chi^n$ in a neighbourhood of $\xi(t_0)$ gives $\xi'(t)=X(t,\xi(t))$ for almost every $t$ in some neighbourhood of $t_0$. As this holds for every $t_0\in\mathbb{T'}$, we conclude that $\xi$ is an integral curve for $X$. 
\end{proof}
\end{lem}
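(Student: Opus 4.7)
My plan is to establish the equivalence by proving the implications (i) $\Rightarrow$ (ii) $\Rightarrow$ (iii) $\Rightarrow$ (i), closing the loop. The first two implications should be largely formal, while the substantive content lies in (iii) $\Rightarrow$ (i), where the scalar integral identity, tested only against globally defined $C^r$-functions, must be upgraded to the intrinsic ODE and to local absolute continuity of $\xi$ itself.

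For (i) $\Rightarrow$ (ii), local absolute continuity of $\xi$ combined with smoothness of $f$ makes $f\circ\xi$ locally absolutely continuous, so we may differentiate almost everywhere and recognise the derivative as $\langle df(\xi(t));\xi'(t)\rangle = Xf(t,\xi(t))$; the fundamental theorem of calculus for absolutely continuous functions then delivers the identity. For (ii) $\Rightarrow$ (iii) one merely notes that every real analytic function is smooth and that a holomorphic function decomposes into smooth real and imaginary parts, so the identity prescribed for smooth test functions specialises to $C^r$ test functions.

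The crux is (iii) $\Rightarrow$ (i). Fix $t_0\in\mathbb{T}'$ and set $x_0=\xi(t_0)$. I would choose $\chi^1,\dots,\chi^n\in C^r(M)$ whose differentials at $x_0$ form a basis of $T^*_{x_0}M$. Existence of such globally defined functions is immediate in the smooth case (cutoff extension of local coordinates), comes from Grauert's embedding theorem in the real analytic case, and in the holomorphic case is supplied by the Stein hypothesis, which gives a proper embedding of $M$ into some $\mathbb{C}^N$ whose ambient linear coordinates restrict to $M$ as the required functions. Applying (iii) to each $\chi^j$ shows that $\chi^j\circ\xi$ is locally absolutely continuous with derivative $X\chi^j(t,\xi(t))$ almost everywhere, since the right-hand side of the integral identity is an indefinite Lebesgue integral of $s\mapsto X\chi^j(s,\xi(s))$, whose local integrability is guaranteed by Lemma \ref{lem:2.3}.

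By continuity, $d\chi^1,\dots,d\chi^n$ remain independent on a neighbourhood $\mathcal{V}$ of $x_0$, so $\chi=(\chi^1,\dots,\chi^n)$ restricts to a $C^r$-diffeomorphism of $\mathcal{V}$ onto an open subset of $\mathbb{F}^n$. Pulling back through this chart, local absolute continuity of the coordinate curves $\chi^j\circ\xi$ forces local absolute continuity of $\xi$ near $t_0$, and matching derivatives in the chart yields $\xi'(t)=X(t,\xi(t))$ for almost every $t$ in a neighbourhood of $t_0$. Covering $\mathbb{T}'$ by such neighbourhoods and using that the relevant properties are local closes the implication. The principal obstacle I foresee is not algebraic but rather the need for enough globally defined $C^r$-functions to produce a chart at an arbitrary point; this is exactly where the real analytic and Stein hypotheses are consumed, and without them the coordinate-free reformulation of (iii) would have no leverage on $\xi$.
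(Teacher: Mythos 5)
Your proposal is correct and follows essentially the same route as the paper: the cycle (i)~$\Rightarrow$~(ii)~$\Rightarrow$~(iii)~$\Rightarrow$~(i), with the key step in (iii)~$\Rightarrow$~(i) being the choice of globally defined $C^r$-functions $\chi^1,\dots,\chi^n$ whose differentials form a basis at $\xi(t_0)$, followed by differentiation of the integral identity and linear independence near $\xi(t_0)$. Your additional remarks on recovering the local absolute continuity of $\xi$ through the chart and on the integrability of $s\mapsto X\chi^j(s,\xi(s))$ via Lemma~\ref{lem:2.3} make explicit two points the paper's proof leaves implicit, but the argument is the same.
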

The next lemma is a sort of adaptation of the preceding lemma for curves that are not necessarily integral curves of vector fields. In the statement of the result, we make use of the notation
$$\mathbb{T}_{t_0,\alpha}=\mathbb{T}\cap [t_0-\alpha,t_0+\alpha].$$
\begin{lem}[Curves determined by functions]\label{lem:3.3}
Let $M$ be a manifold of class $C^\infty$, let $\mathbb{T} \subseteq \R$ be an interval, and let $\eta^j \in C^0(\mathbb{T}; \R)$. Let $x_0 \in M$ and suppose that $\chi^1,. . ., \chi^n \in C^\infty(M)$ are such that $(\d\chi^1(x_0), . . . , \d\chi^n(x_0))$ is a basis for $T^*_{x_0}M$. If $\eta^j(t_0) = \chi^j(x_0)$, then the following statements hold:
\begin{enumerate}[(i)]
    \item\label{lem:3.3-1} there exists $\alpha \in \R_{>0}$ and $\gamma \in C^0(\mathbb{T}_{t_0,\alpha}; M)$ such that
    $$\chi^j\circ \gamma(t) = \eta^j(t), \hspace{10pt} j \in \{1, . . . , n\},\  t \in \mathbb{T}_{t_0,\alpha};$$
    \item with $\chi^1, . . . , \chi^n$ and $\gamma$ as in part (\ref{lem:3.3-1}), $\alpha$ can be chosen so that the curve $\gamma$ is unique in that any two such curves agree on the intersection of their domains.
\end{enumerate}
\end{lem}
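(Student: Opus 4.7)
The plan is to reduce both parts to a direct application of the inverse function theorem applied to the bundled smooth map $\chi \triangleq (\chi^1,\ldots,\chi^n) : M \to \R^n$. The hypothesis that $(\d\chi^1(x_0),\ldots,\d\chi^n(x_0))$ is a basis for $T^*_{x_0}M$ is exactly the statement that $T_{x_0}\chi$ is a linear isomorphism onto $\R^n$. The inverse function theorem therefore supplies open neighbourhoods $\mathcal{U}$ of $x_0$ in $M$ and $\mathcal{V}$ of $\chi(x_0)=(\eta^1(t_0),\ldots,\eta^n(t_0))$ in $\R^n$ such that $\chi|\mathcal{U}:\mathcal{U}\to\mathcal{V}$ is a $C^\infty$-diffeomorphism; write $\psi:\mathcal{V}\to\mathcal{U}$ for its inverse.

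For part (i), form the continuous curve $\eta\triangleq(\eta^1,\ldots,\eta^n):\mathbb{T}\to\R^n$. Since $\eta(t_0)\in\mathcal{V}$ and $\mathcal{V}$ is open, continuity of $\eta$ at $t_0$ yields $\alpha\in\R_{>0}$ with $\eta(\mathbb{T}_{t_0,\alpha})\subseteq\mathcal{V}$. Define $\gamma\triangleq\psi\circ\eta|\mathbb{T}_{t_0,\alpha}:\mathbb{T}_{t_0,\alpha}\to\mathcal{U}\subseteq M$; this is continuous with $\chi\circ\gamma=\eta$, i.e., $\chi^j\circ\gamma=\eta^j$ for each $j\in\{1,\ldots,n\}$, and $\gamma(t_0)=\psi(\chi(x_0))=x_0$ automatically.

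For part (ii), choose a smaller open neighbourhood $\mathcal{U}'$ of $x_0$ with $\text{cl}(\mathcal{U}')\subseteq\mathcal{U}$, and further shrink $\alpha$ so that $\gamma(\mathbb{T}_{t_0,\alpha})\subseteq\mathcal{U}'$. Let $\widetilde\gamma:\mathbb{T}_{t_0,\widetilde\alpha}\to M$ be any other curve satisfying the conclusion of (i), with $\widetilde\gamma(t_0)=x_0$ (which is the only lift at $t_0$ lying in $\mathcal{U}$). Denote by $I$ the connected component of $t_0$ in the relatively open set $\widetilde\gamma^{-1}(\mathcal{U})\cap\mathbb{T}_{t_0,\min\{\alpha,\widetilde\alpha\}}$. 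On $I$, applying $\psi$ to the identity $\chi\circ\widetilde\gamma=\eta$ yields $\widetilde\gamma|I=\psi\circ\eta|I=\gamma|I\subseteq\mathcal{U}'$. At any boundary point $t^*$ of $I$ interior to $\mathbb{T}_{t_0,\min\{\alpha,\widetilde\alpha\}}$, continuity of $\widetilde\gamma$ would force $\widetilde\gamma(t^*)\in\text{cl}(\mathcal{U}')\subseteq\mathcal{U}$, contradicting maximality of $I$. Hence $I$ coincides with the full interval $\mathbb{T}_{t_0,\min\{\alpha,\widetilde\alpha\}}$ and $\widetilde\gamma=\gamma$ there.

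The only subtle point is that $\chi$ need not be globally injective, so a priori $\eta$ may admit multiple lifts; the clopen argument powered by the relatively compact subneighbourhood $\mathcal{U}'$ with $\text{cl}(\mathcal{U}')\subseteq\mathcal{U}$ is what upgrades local uniqueness inside $\mathcal{U}$ to uniqueness on the entire intersection of domains, and choosing $\alpha$ small enough to fit $\gamma$ inside $\mathcal{U}'$ is the precise mechanism that makes this go through.
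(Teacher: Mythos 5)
Your proof is correct, and part (i) is exactly the paper's argument: bundle the $\chi^j$ into a map $\chi:M\to\R^n$, invoke the inverse function theorem at $x_0$, shrink $\alpha$ until $\eta(\mathbb{T}_{t_0,\alpha})\subseteq\mathcal{V}$, and set $\gamma=\psi\circ\eta$. Where you diverge is part (ii): the paper simply asserts that $\gamma$ is the unique continuous curve \emph{valued in $\mathcal{U}$} with $\chi^j\circ\gamma=\eta^j$, which is immediate from injectivity of $\chi|\mathcal{U}$ but is strictly weaker than the stated conclusion if a competing curve is allowed to leave $\mathcal{U}$ (and some normalisation such as $\widetilde\gamma(t_0)=x_0$ is genuinely needed, since $\chi$ may fail to be globally injective and $\eta$ may admit other lifts through points $x_1$ with $\chi(x_1)=\chi(x_0)$). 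Your connectedness argument — shrink to $\mathcal{U}'$ with $\mathrm{cl}(\mathcal{U}')\subseteq\mathcal{U}$, fit $\gamma$ inside $\mathcal{U}'$, and show the set where $\widetilde\gamma$ agrees with $\gamma$ is relatively clopen in the intersection of domains because any boundary point would land in $\mathrm{cl}(\mathcal{U}')\subseteq\mathcal{U}$ — closes this gap cleanly and proves the stronger statement actually claimed. The trade-off is a longer argument; what it buys is uniqueness among all lifts through $x_0$ rather than only among $\mathcal{U}$-valued ones, which is the form one would want if the lemma were quoted outside the controlled setting of Lemma \ref{lem:3.6}, where the curves are constructed to stay in $\mathcal{U}$ by design.
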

\begin{proof}
Consider the map 
\begin{eqnarray*}
    f:M &\rightarrow&  \R^n\\
	  x&\mapsto& (\chi^1(x),...,\chi^n(x)).
\end{eqnarray*}
By the Inverse Function Theorem, $f$ is a diffeomorphism from a neighbourhood $\mathcal{U}$ of $x_0$ onto a neighbourhood $\mathcal{V}$ of $f(x_0)$. Let $\alpha \in \R_{>0}$ be sufficiently small that
$$(\eta^1(t),...,\eta^n(t))\in\mathcal{V},\hspace{10pt}j\in\{1,...,n\},\ t\in\mathbb{T}_{t_0,\alpha}.$$
Thus there is a unique continuous curve $\gamma : \mathbb{T}_{t_0,\alpha} \rightarrow \mathcal{U}$ satisfying $\chi^j \circ \gamma(t) = \eta^j(t)$ for
$j \in \{1, . . . , n\}$ and $t \in \mathbb{T}_{t_0,\alpha}$.
\end{proof}
The next step should be that of existence and uniqueness of integral curves for time-dependent vector fields (or time- and parameter-dependent vector fields with the parameter fixed). This, however, requires no special measures since the condition that $\Gamma^{\text{lip}}_{\text{LI}}(\mathbb{T};TM)$ returns exactly the condition required for existence and uniqueness of local integral curves, and, hence, also maximal integral curves, i.e., those defined on the largest possible time interval. Thus we shall not give an independent proof here. However, we will give a  a precise formulation for this in the presence of parameters, with our particular form of parameter-dependence, and we shall take for granted the usual existence and uniqueness results for maximal integral curves. 
\subsection{Flows for vector fields}\label{sec:3.2}
With the notion of an integral at hand, we can define flows of time-varying vector fields. 
\begin{defin}[Domain of a vector field, flow of a vector field]
Let $m\in \mathbb{Z}_{\geq 0}$, let $m'\in\{0,\text{lip}\}$, let $\nu\in\{m+m',\infty,\omega,\text{hol}\}$, and let $r\in \{\infty,\omega,\text{hol}\}$, as required. Let $M$ be a $C^r$ manifold, let $\mathbb{T}\subseteq \R$ be a time-domain, let $\mathcal{P}$ be a topological space, and let $X\in \Gamma^\nu_{\text{PLI}}(\mathbb{T};TM;\mathcal{P})$. 
 \begin{enumerate}[(i)]
    \item For $(t_0,x_0,p_0)\in \mathbb{T}\times M\times\mathcal{P}$, denote
     \begin{eqnarray*}
      J_X(t_0,x_0,p_0)=\bigcup\{J\subseteq \mathbb{T}\;|\; J \text{ is an interval and there exists an integral curve }\\
       \xi:J\rightarrow M\text{ for $X^{p_0}$ satisfying }\xi(t_0)=x_0\}.
     \end{eqnarray*}
     The interval $j_X(t_0, x_0, p_0)$ is the interval of existence for $X^{p_0}$ for the initial condition $(t_0,x_0)$.
    \item For $(t_1,x_0,p_0)\in \mathbb{T}\times M\times\mathcal{P}$, denote
    $$I_X(t_1,x_0)=\{t_0\in \mathbb{T}\;|\;t_1\in J_X(t_0,x_0,p_0)\}.$$
    \item For For $(t_1,x_0,p_0)\in \mathbb{T}\times M\times\mathcal{P}$, denote 
    $$D_X(t_1,t_0,p_0)=\{x\in M\;|\;t_1\in J_X(t_0,x,p_0)\}.$$
	\item Denote 
    $$D_X=\{(t_0,t_0,x_0,p_0)\in \mathbb{T}\times\mathbb{T}\times M\times\mathcal{P}\;|\;t_1\in J_X(t_0,x_0,p_0)\}.$$
	\item The \textbf{\textit{flow}} for $X$ is the mapping
      \begin{eqnarray*}
      \Phi^X:D_X &\rightarrow&  M\\
	         (t_1,t_0,x_0,p_0)&\mapsto& \xi(t_1),
      \end{eqnarray*}
    where $\xi$ is the integral curve for $X^{p_0}$ satisfying $\xi(t_0)=x_0$.
\end{enumerate} 
\end{defin}
We will work with time-varying vector fields both with and without parameter dependence. When we work with vector fields that are time-dependent but not parameter-dependent, we will simply omit the argument corresponding to the parameter without further mention. We shall also use the notation
$$\Phi^p_{t_1,t_0}:D_X(t_1,t_0,p)\rightarrow M$$
when convenient. 

The flow has the following elementary properties that follow from the definitions and by the uniqueness of integral curves:
\begin{enumerate}[(i)]
    \item for each $(t_0, x_0)\in \mathbb{T} \times M$, $(t_0, t_0, x_0) \in D_X$ and $\Phi^X(t_0, t_0, x_0) = x_0$;
    \item if $(t_2, t_1, x) \in D_X$, then $(t_3, t_2, \Phi^X(t_2, t_1, x)) \in D_X$ if and only if $(t_3, t_1, x) \in D_X$ and, if this holds, then
    $$\Phi^X(t_3, t_1, x) = \Phi^X(t_3, t_2, \Phi^X(t_2, t_1, x));$$
    \item if $(t_2, t_1, x) \in D_X$, then $(t_1, t_2, \Phi^X(t_2, t_1, x)) \in D_X$ and $\Phi^X(t_1, t_2, \Phi^X(t_2, t_1, x)) = x$.
\end{enumerate}
We can now state a “standard” theorem in our nonstandard framework. In the statement and proof of the result, we will find the notation
$$
|t_0,t_1|=
\begin{cases}
[t_0,t_1], \ \ \ t_1\ge t_0,\\ [t_1,t_0], \ \ \ t_1\le t_0.
\end{cases}
$$
useful, for $t_0,t_1\in\mathbb{T}$. We also denote $\text{LocFlow}^\nu(\mathbb{S'};\mathbb{S};\mathcal{U})$ the set of local flows defined on $\mathbb{S'}\times\mathbb{S}\times \mathcal{U}\subseteq\mathbb{T}\times\mathbb{T}\times M$, and
$$\mathcal{V}^\nu_{\mathbb{S'}\times\mathbb{S}\times\mathcal{U}}=\{X\in\Gamma^\nu_{\text{LI}}(\mathbb{T};TM) \ |\ \mathbb{S'}\times \mathbb{S}\times \mathcal{U}\subseteq D_X,\;\;\mathbb{S}\subseteq\mathbb{S'}\},$$
the space of time-verying sections whose flows are defined on $\mathbb{S'}\times \mathbb{S}\times \mathcal{U}$.

\begin{thm}[Continuous dependence of flows]\label{thm:3.4}
Let $M$ be a $C^\infty$-manifold, let $\mathbb{T}\subseteq \R$ be an interval, let $\mathcal{P}$ be a topological space, and let $X\in \Gamma^{\text{lip}}_{\text{PLI}}(\mathbb{T};TM;\mathcal{P})$.  Then the following statements hold:
\begin{enumerate}[(i)]
    \item\label{thm:3.4-1} for $(t_0, x_0, p_0) \in \mathbb{T} \times M \times \mathcal{P}$, $J_X(t_0, x_0, p_0)$ is a relatively open subinterval of $\mathbb{T}$;
    \item\label{thm:3.4-2} for $(t_0, x_0, p_0) \in \mathbb{T} \times M \times \mathcal{P}$, the curve
    \begin{eqnarray*}
       \gamma_{(t_0,x_0,p_0)}: J_X(t_0, x_0, p_0) &\rightarrow& M\\
       t&\mapsto & \Phi^X(t, t_0, x_0, p_0)
    \end{eqnarray*}
    is well-defined and locally absolutely continuous;
    \item\label{thm:3.4-3} for $(t_1, x_0, p_0) \in \mathbb{T} \times M \times \mathcal{P}$, $I_X(t_1, x_0, p_0)$ is a relatively open subinterval of $\mathbb{T}$;
    \item\label{thm:3.4-4} for $(t_1, x_0, p_0) \in\mathbb{T} \times M \times \mathcal{P}$, the curve
    \begin{eqnarray*}
       \iota_{(t_1,x_0,p_0)}: I_X(t_1, x_0, p_0) &\rightarrow& M\\
       t&\mapsto & \Phi^X(t_1, t, x_0, p_0)
    \end{eqnarray*}
    is well-defined and locally absolutely continuous;
    \item\label{thm:3.4-5} for $t_1, t_0 \in \mathbb{T}$ and $p_0 \in \mathcal{P}$, $D_X(t_1, t_0, p_0)$ is open in M;
    \item\label{thm:3.4-6} for $t_1, t_0 \in \mathbb{T}$ and $p_0 \in \mathcal{P}$ for which $D_X(t_1, t_0, p_0) \neq= \emptyset$, $\Phi^{X^{p_0}}_{t_1,t_0}$ is a locally bi-Lipschitz homeomorphism onto its image;
    \item\label{thm:3.4-7} $D_x$ is relatively open in $\mathbb{T}\times\mathbb{T}\times M\times\mathcal{P}$;
    \item\label{thm:3.4-8} the map 
    \begin{eqnarray*}
      \Phi^X:D_X \rightarrow  M
    \end{eqnarray*}
    is continuous;
    \item\label{thm:3.4-9} for for $(t_0, x_0, p_0) \in\mathbb{T} \times M \times \mathcal{P}$, and for $\epsilon\in\R_{>0}$, there exists $\alpha\in\R_{>0}$, a neighbourhood $\mathcal{U}$ of $x_0$, and a neighbourhood $\mathcal{O}$ of $p_0$ such that
    $$\sup J_X(t,x,p)>\sup J_X(t_0,x_0,p_0)-\epsilon,\hspace{1cm} \inf J_X(t,x,p)<\inf J_X(t_0,x_0,p_0)+\epsilon$$
    for all $(t,x,p)\in\rm{int}(\mathbb{T}_{t_0,\alpha})\times\mathcal{U}\times\mathcal{O}$;
    \item\label{thm:3.4-10} for $(t_1, t_0, x_0, p_0) \in D_X$, the curves
    $$|t_0,t_1|\ni t\mapsto \Phi^X(t,t_0,x,p)\in M$$
    converge uniformly to
    $$|t_0,t_1|\ni t\mapsto \Phi^X(t,t_0,x_0,p_0)\in M$$
    as $(x,p)\to (x_0,p_0)$.
\end{enumerate}
\end{thm}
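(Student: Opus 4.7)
The plan is to work through the ten claims in order, with Grönwall estimates driving the analysis and Lemmas \ref{lem:2.4} and \ref{lem:5.3} supplying the Lipschitz control, first in $t$ alone and then uniformly in a parameter neighborhood. Claims (\ref{thm:3.4-1})--(\ref{thm:3.4-4}) I would dispatch essentially from the definitions: $J_X(t_0,x_0,p_0)$ is an interval by the standard gluing of integral curves via uniqueness, and its relative openness in $\mathbb{T}$ is the maximal extension property (if an endpoint were attained interior to $\mathbb{T}$, the Carathéodory-type local existence theorem available in $\Gamma^{\text{lip}}_{\text{LI}}(\mathbb{T};TM)$ for the fixed parameter $p_0$ would extend the curve, contradicting maximality). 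Local absolute continuity of $\gamma_{(t_0,x_0,p_0)}$ is built into the definition of an integral curve of an $L^1$ vector field. Claims (\ref{thm:3.4-3}), (\ref{thm:3.4-4}) are symmetric and follow from the reversal identity $\Phi^X(t_1,t_2,\Phi^X(t_2,t_1,x))=x$ recorded in the elementary properties, which turns backward-in-time questions into forward-in-time ones.

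For claims (\ref{thm:3.4-5}), (\ref{thm:3.4-6}), for fixed $p_0$ I would apply Lemma \ref{lem:2.4} to the coordinate functions $X^{p_0}\chi^j$ (via Lemma \ref{lem:2.3}) on a precompact tube around the reference curve $\gamma_{(t_0,x_0,p_0)}(|t_0,t_1|)$, producing $\ell\in L^1_{\text{loc}}(\mathbb{T};\R_{\ge 0})$ with the time-integrated Lipschitz bound. Grönwall's inequality then gives, for $x,y$ in a small neighborhood of $x_0$,
\[
d_\mathbb{G}\bigl(\Phi^{X^{p_0}}_{t_1,t_0}(x),\Phi^{X^{p_0}}_{t_1,t_0}(y)\bigr)\le e^{\int_{|t_0,t_1|}\ell(s)\d s}\, d_\mathbb{G}(x,y),
\]
together with a dominated-convergence argument showing that nearby trajectories stay inside the tube up to time $t_1$; this simultaneously yields openness of $D_X(t_1,t_0,p_0)$ and the Lipschitz estimate. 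The reverse Lipschitz inequality comes from running the same argument backwards in time using the flow identity.

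The essential novelty, and hence the main obstacle, lies in claims (\ref{thm:3.4-7}) and (\ref{thm:3.4-8}), where parameters must be made to travel together with time. Here Lemma \ref{lem:5.3} is decisive: for the given point $(t_1,t_0,x_0,p_0)\in D_X$, I would choose $\mathbb{S}=|t_0,t_1|$, a relatively compact tube $K$ around $\gamma_{(t_0,x_0,p_0)}(\mathbb{S})$, and apply Lemma \ref{lem:5.3} (via functional characterization through $X^p\chi^j$) to obtain $C\in \R_{>0}$ and a neighborhood $\mathcal{O}$ of $p_0$ giving the $p$-uniform $\mathbb{S}$-integrated Lipschitz estimate on $K$. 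Parameter continuity of $p\mapsto X^p$ in $\Gamma^{\text{lip}}_{\text{LI}}(\mathbb{T};TM)$ supplies, by (\ref{eq:2.4}), a bound $\int_\mathbb{S} p^{\text{lip}}_K(X^p_t-X^{p_0}_t)\d t<\delta(p)$ with $\delta(p)\to 0$. Writing, via Lemma \ref{lem:3.2},
\[
\Phi^X(t,t_0',x,p)-\Phi^X(t_1,t_0,x_0,p_0)
\]
in integral form tested against the coordinate functions $\chi^j$, splitting the difference into (time endpoint) $+$ (initial state) $+$ (parameter) contributions, and applying Grönwall with the uniform constant $C$ from Lemma \ref{lem:5.3}, I would conclude both that a full neighborhood of $(t_1,t_0,x_0,p_0)$ in $\mathbb{T}\times\mathbb{T}\times M\times\mathcal{P}$ lies in $D_X$ and that $\Phi^X$ is continuous there.

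Finally, claims (\ref{thm:3.4-9}) and (\ref{thm:3.4-10}) are consequences of the framework just assembled. For (\ref{thm:3.4-9}), given $\epsilon>0$, pick $t_*$ with $\sup J_X(t_0,x_0,p_0)-\epsilon<t_*<\sup J_X(t_0,x_0,p_0)$ so that $(t_*,t_0,x_0,p_0)\in D_X$; openness of $D_X$ from (\ref{thm:3.4-7}) produces a neighborhood on which $t_*\in J_X(t,x,p)$, which forces $\sup J_X(t,x,p)>t_*$, and symmetrically for the infimum. For (\ref{thm:3.4-10}), the Grönwall estimate constructed for (\ref{thm:3.4-8}) is already a \emph{uniform} bound over $t\in |t_0,t_1|$, since $C$ and $\delta(p)$ control the estimate for \emph{every} intermediate time, so the conclusion of uniform convergence on $|t_0,t_1|$ follows immediately. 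The chief delicacy throughout is ensuring that the tube $K$ chosen for the reference curve continues to contain all nearby trajectories for all nearby parameters over the whole compact interval $\mathbb{S}$; this is precisely what Lemma \ref{lem:5.3} makes possible by supplying a single Lipschitz constant valid across the parameter neighborhood $\mathcal{O}$.
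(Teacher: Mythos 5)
Your plan for parts (\ref{thm:3.4-1})--(\ref{thm:3.4-3}) and (\ref{thm:3.4-5})--(\ref{thm:3.4-10}) is workable and takes a genuinely different route from the paper: you run Gr\"onwall estimates in a tube around the reference trajectory (with Lemma \ref{lem:2.4} and Lemma \ref{lem:5.3} supplying the $L^1$-in-time Lipschitz moduli, uniformly in the parameter), whereas the paper proves local existence with parameters by a contraction-mapping argument on the curves $t\mapsto(\chi^1\circ\gamma(t),\dots,\chi^n\circ\gamma(t))$ (Lemmas \ref{lem:3.6} and \ref{lem:3.7}) and then globalises by a ``supremum of good times'' extension argument rather than by a single tube estimate. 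Both routes deliver (\ref{thm:3.4-5})--(\ref{thm:3.4-8}); your version buys a cleaner uniform-in-$t$ statement that makes (\ref{thm:3.4-10}) immediate, at the cost of having to justify carefully (as you note) that all nearby trajectories remain in the tube for all nearby parameters over the whole of $|t_0,t_1|$ --- a bootstrap that must be spelled out, since Gr\"onwall only applies while the trajectory is known to stay where the Lipschitz bound holds.

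The genuine gap is part (\ref{thm:3.4-4}). You dismiss it as ``symmetric'' to (\ref{thm:3.4-2}) via the reversal identity $\Phi^X(t_1,t_2,\Phi^X(t_2,t_1,x))=x$, but that identity only says the fixed-time flow maps are mutually inverse; it does not make $t\mapsto\Phi^X(t_1,t,x_0,p_0)$ an integral curve of anything, so local absolute continuity in the \emph{initial}-time slot does not come for free. Writing $\Phi^X(t_1,t,x_0)=\Phi^X_{t_1,t_0}\circ\Phi^X_{t_0,t}(x_0)$, the issue is isolated in the curve $t\mapsto\Phi^X_{t_0,t}(x_0)$, and establishing its absolute continuity requires an explicit estimate: for a finite family of disjoint subintervals $(a_l,b_l)$ one must bound $\sum_l|\chi^j\circ\Phi^X(t_1,b_l,x_0)-\chi^j\circ\Phi^X(t_1,a_l,x_0)|$ by $\sum_l\int_{a_l}^{b_l}g(s)\,\d s$ up to a Gr\"onwall factor $\exp(\int l(s)\,\d s)$, where $g,l\in L^1_{\mathrm{loc}}$ dominate $|X\chi^j|$ and its Lipschitz modulus, and then compose with the locally Lipschitz map $\Phi^X_{t_1,t_0}$ (which requires the separate observation that locally Lipschitz maps preserve local absolute continuity of curves). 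This is precisely the portion of the argument the paper defers to the end of its proof and treats at length; your proposal contains no substitute for it.
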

\begin{proof} (\ref{thm:3.4-1}) Since $J_X(t_0, x_0, p_0)$ is a union of intervals, each of which contains $t_0$, it follows that it is itself an interval. To show that it is an open subset of $\mathbb{T}$, we show that, if $t \in J_X(t_0, x_0, p_0)$, there exists $\epsilon \in \R>0$ such that $\mathbb{T}_{t,\epsilon} \subseteq J_X(t_0, x_0, p_0)$.

First let us consider the case when $t$ is not an endpoint of $\mathbb{T}$, in the event that $\mathbb{T}$ contains one or both of its endpoints. In this case, by definition of $J_X(t_0, x_0, p_0)$, there is an open interval $J \subseteq \mathbb{T}$ containing $t_0$ and $t$, and an integral curve $\xi: J \rightarrow M$ for $X_{p_0}$ satisfying $\xi(t_0) = x_0$. In particular, there exists $\epsilon \in \R_{>0}$ such that $(t-\epsilon, t + \epsilon) \subseteq J \subseteq J_X(t_0, x_0, p_0)$, which gives the desired conclusion in this case.

Next suppose that $t$ is the right endpoint of $\mathbb{T}$, which we assume is contained in $\mathbb{T}$, of course. In this case, by definition of $J_X(t_0, x_0, p_0)$, there is an interval $J \subseteq \mathbb{T}$ containing $t_0$ and $t$, and an integral curve $\xi: J \rightarrow M$ for $X_{p_0}$ satisfying $\xi(t_0) = x_0$. In this case, there exists $\epsilon \in \R_{>0}$ such that
$$\mathbb{T}_{t,\epsilon}=(t-\epsilon,t]\subseteq J_X(t_0, x_0, p_0),$$
which gives the desired conclusion in this case. A similar argument gives the desired conclusion when $t$ is the left endpoint of $\mathbb{T}$.

(\ref{thm:3.4-2}) That $\gamma_{(t_0,x_0,p_0)}$ is defined in $J_X(t_0, x_0, p_0)$ was proved as part of the preceding part of the proof. The assertion that $\gamma_{(t_0,x_0,p_0)}$ is locally absolutely continuous follows from the
usual existence and uniqueness theorem.

(\ref{thm:3.4-3}) This follows similarly to part (\ref{thm:3.4-2}).

We will defer the proof of part (\ref{thm:3.4-4}) to the end of the proof.

We shall prove the assertions (\ref{thm:3.4-5}) and (\ref{thm:3.4-6}) of the theorem together, first by proving that these conditions hold locally, and then giving an extension argument to give the global version of the statement. Let us first prove a few technical lemmata that will be useful to us. First we give the initial part of the local version of the theorem.
\begin{lem}\label{lem:3.6}
Let $M$ be a $C^{\infty}$-manifold, let $\mathbb{T}\subseteq \mathbb{R}$ be a time domain, let $\mathcal{P}$ be a topological space, and let $X\in \Gamma^{\text{lip}}_{LI}(\mathbb{T},TM;\mathcal{P})$. For each $(t_0,x_0,p_0)\in \mathbb{T}\times M\times\mathcal{P}$, there exist $\alpha\in \mathbb{R}_{>0}$, a neighborhood $\mathcal{U}\subseteq M$ of $x_0$ and a neighborhood $\mathcal{O}\subseteq\mathcal{P}$ of $p_0$ such that, $(t,t_0,x,p)\in D_X$ for each $t\in \mathbb{T}_{t_0,\alpha}$, $x\in \mathcal{U}$ and $p\in\mathcal{O}$. Moreover, $\alpha$, $\mathcal{U}$ and $\mathcal{O}$ can be chosen such that:
\begin{enumerate}[(i)]
    \item\label{lem:3.6-1} the map
	$$\mathcal{U}\ni x\mapsto \Phi^{X^p}_{t_1,t_0}(x)$$
	is Lipschitz for every $p\in \mathcal{O}$ and every $t_1\in \mathbb{T}_{t_0,\alpha}$.
    \item\label{lem:3.6-2} the map 
	$$\mathbb{T}_{t_0,\alpha}\times \mathcal{U}\times\mathcal{O}\ni (t,x,p)\mapsto \Phi^X(t,t_0,x,p)$$
	is continuous.
\end{enumerate}
\end{lem}
\renewcommand\qedsymbol{$\nabla$}
\begin{proof}
We first essentially prove the local existence and uniqueness result, including the role of parameters. We make use of an arbitrarily selected Riemannian metric $\mathbb{G}$. Let $\chi^1,...,\chi^n\in C^\infty(M)$ be s.t. $\{\d\chi^1(x_0),...,\d\chi^n(x_0)\}$ is a basis for $T^*_{x_0}M$. Let $R\in \mathbb{R}_{>0}$ be s.t.
$$\mathcal{U}:=\{x\in M|d_G(x,x_0)<R\}$$
is geodesically convex \citep[Proposition IV.3.4]{MR0152974}. We choose $R$ sufficiently small that $\d\chi^1,...,\d\chi^n$ are linearly independent at points in $\mathcal{U}$. By Lemma \ref{lem:5.2}, there exists $C\in \mathbb{R}_{>0}$, such that
\begin{eqnarray}\label{eq:x1x2}
   &&C^{-1}\sup\{|\chi^j(x_1)-\chi^j(x_2)|\ \suchthat j\in\{1,...,n\}\}\nonumber\\
   &&\hspace{2cm}\leq d_G(x_1,x_2)\leq C\sup\{|\chi^j(x_1)-\chi^j(x_2)|:j\in\{1,...,n\}\},\;\;\;\;x_1,x_2\in \mathcal{U}
\end{eqnarray}
For $x\in M$ and $a\in\mathbb{R}_{>0}$, we denote by 
$$\mathcal{U}(a,x)\subseteq\cap_{j=1}^n(\chi^j)^{-1}(\chi^j(x)-a,\chi^j(x)+a)$$
the connected component of the set on the right containing $x$. With $\mathcal{U}$ chosen above, note that $\mathcal{U}(a,x)$ is a neighborhood, homeomorphic to an n-dimensional ball, of $x$ for $x\in\mathcal{U}$ and for $a$ sufficiently small that $\mathcal{U}(a,x)\subseteq \mathcal{U}$.

Following from \citep[Theorem 6.4: (v) and (vi)]{Jafarpour2014} and making use of the universal property of the initial topology, one can easily see that $X\chi^j\in C^{\text{lip}}_{\text{PLI}}(\mathbb{T};M;\mathcal{P})$ for $j\in\{1,...,n\}$. Let $r'\in\mathbb{R}_{>0}$ be such that $\mathcal{U}(r',x_0)\subseteq \mathcal{U}$. Let $r=\frac{r'}{2}$, and let $\lambda\in(0,1)$. There exists $\alpha\in \mathbb{R}_{>0}$ s.t.
\begin{equation}\label{eq:1.1}
\int_{\mathbb{T}_{t_0,\alpha}}|X\chi^j(s,x,p_0)|ds<\frac{r}{2}
\end{equation}
and
\begin{equation}\label{eq:1.2}
\int_{\mathbb{T}_{t_0,\alpha}}\text{dil} (X\chi^j)(s,x,p_0)ds<\frac{\lambda}{2C}
\end{equation}
for $x\in \mathcal{U}(r',x_0)$ and $j\in\{1,...,n\}$. Since $X\chi^j\in C^{\text{lip}}_{\text{PLI}}(\mathbb{T};M;\mathcal{P})$, $j\in\{1,...,n\}$, there exists a neighbourhood $\mathcal{O}$ of
$p_0$ such that
\begin{equation}\label{eq:1.3}
\int_{\mathbb{T}_{t_0,\alpha}}|X\chi^j(s,x,p)-X\chi^j(s,x,p_0)|ds<\frac{r}{2},
\end{equation}
and 
\begin{equation}\label{eq:1.4}
\int_{\mathbb{T}_{t_0,\alpha}}\text{dil} ((X\chi^j)^p-(X\chi^j)^{p_0})(s,x)ds<\frac{\lambda}{2C}
\end{equation}
for $x\in \mathcal{U}(r',x_0)$, $p\in \mathcal{O}$, and $j\in\{1,...,n\}$.

Applying triangle inequality to (\ref{eq:1.1})-(\ref{eq:1.4}),
\begin{equation}\label{eq:1.5}
\int_{\mathbb{T}_{t_0,\alpha}}|X\chi^j(s,x,p)|ds<r,
\end{equation}
and
\begin{equation}\label{eq:1.6}
\int_{\mathbb{T}_{t_0,\alpha}}\text{dil} (X\chi^j)(s,x,p)ds<\frac{\lambda}{C},
\end{equation}
for all $x\in \mathcal{U}(r',x_0)$, $p\in \mathcal{O}$, and $j\in\{1,...,n\}$.

The inequality (\ref{eq:1.6}) with the aid of Lemma \ref{lem:5.3}, give \begin{equation}\label{eq:1.7}
\int_{|t_0,t|}|X\chi^j(s,x_1,p)-X\chi^j(s,x_2,p)|ds<\frac{\lambda}{C}d_G(x_1,x_2),
\end{equation}
for $x_1,x_2\in \mathcal{U}(r',x_0)$, $j\in\{1,...,n\}$, and $p\in \mathcal{O}$.

If $y\in \mathcal{U}(r,x_0)$, then $\mathcal{U}(r,y)\subseteq \mathcal{U}(r',x_0)$. Therefore, (\ref{eq:1.5})-(\ref{eq:1.7}) hold for all $x,x_1,x_2\in \mathcal{U}(r,y)$, $t\in \mathbb{T}_{t_0,\alpha}$, $j\in\{1,...,n\}$ and $p\in \mathcal{O}$.

Denote $\mathcal{U}:=\mathcal{U}(r,x_0)$, and let $x\in \mathcal{U}$. For each $j\in \{1,...,n\}$, we denote by $\phi^j_0\in C^0(\mathbb{T}_{t_0,\alpha},\mathbb{R})$ the constant mapping $\phi ^j_0=\chi^j(x)$, and denote by $\overbar{B}(r,\phi^j_0)$ the closed ball of radius $r$ about $\phi^j_0$. For each $p\in \mathcal{O}$, we have the mapping
\begin{eqnarray*}
   F^j_{X^p}:&&\overbar{B}(r,\phi^j_0)\rightarrow C^0(\mathbb{T}_{t_0,\alpha},\mathbb{R})\\
	         &&\phi^j\mapsto \chi^j(x)+\int_{|t_0,t|}X\chi^j(s,\gamma_{\phi}(s),p)\d s,
\end{eqnarray*}
for $j\in \{1,...,n\}$, and where $\gamma_\phi \in C^0(\mathbb{T}_{t_0,\alpha},M)$ is the unique curve satisfying 
\begin{equation*}
\chi^j\circ \gamma_\phi=\phi^j(t),\;\;\;\;\;t\in \mathbb{T}_{t_0,\alpha},\;j\in\{1,...,n\},
\end{equation*}
cf. Lemma \ref{lem:3.3}. Note that the definition of $r$ ensures that $\gamma_\phi$ so defined takes values in $\mathcal{U}$.

Now we claim that $\text{Im}(\gamma_\phi)\in \mathcal{U}(r',x_0)$. Indeed, since $x\in \mathcal{U}=\mathcal{U}(r,x_0)$, then $$\mathcal{U}(r,x)\subseteq\cap_{j=1}^n(\chi^j)^{-1}(\chi^j(x)-r,\chi^j(x)+r)\subseteq \mathcal{U}(r',x_0).$$
Since $\phi^j\in \overbar{B}(r,\phi^j_0)$ and $\gamma_\phi (t)=(\chi^j)^{-1}\circ \phi^j(t)$ for all $j\in\{1,...,n\}$, then 
$$\gamma_\phi(t)\in \cap_{j=1}^n(\chi^j)^{-1}(\phi_0^j(t)-r,\phi_0^j(t)+r)=\cap_{j=1}^n(\chi^j)^{-1}(\chi^j(x)-r,\chi^j(x)+r)\subseteq\mathcal{U}(r',x_0).$$

We then claim that $F^j_{X^p}(\overbar{B}(r,\phi^j_0))\subseteq\overbar{B}(r,\phi^j_0),\  j\in\{1,...,n\}, p\in\mathcal{O}$. Indeed, if $\phi^j\in\overbar{B}(r,\phi^j_0),\ j\in\{1,...,n\}$, we have 
\begin{equation*}
    |F^j_{X^p}\circ\phi^j(t)-\phi_0^j(t)|\leq\int_{|t_0,t|}|X\chi^j(s,\gamma_{\phi}(s),p)|\d s< r
\end{equation*}

We also claim that the mapping 
\begin{equation}\label{eq:3.8}
    \prod\limits_{j=1}^{n} \overbar{B}(r,\phi^j_0)\ni(\phi^1,...,\phi^n)\mapsto (F^1_{X^p}\circ\phi^1,...,F^n_{X^p}\circ\phi^n)\in \prod\limits_{j=1}^{n} \overbar{B}(r,\phi^j_0)
\end{equation}
is a contraction mapping for each $p\in\mathcal{O}$, where $\prod\limits_{j=1}^{n}\overbar{B}(r,\phi^j_0)$
is given the product metric. Indeed, let $\phi^j_1,\phi^j_2\in \overbar{B}(r,\phi^j_0),\ j\in\{1,...,n\}$. Let $\gamma_1,\gamma_2\in C^0(\mathbb{T}_{t_0,\alpha},M)$ be the corresponding curves satisfying 
$$\chi^j\circ \gamma_a(t)=\phi_a^j(t),\hspace{10pt}t\in \mathbb{T}_{t_0,\alpha},\;j\in\{1,...,n\},\ a\in \{1,2\},$$
cf. Lemma \ref{lem:3.3}. Then we have, for each $j\in\{1,...,n\}$, 
\begin{eqnarray*}
   |F^j_{X^p}\circ\phi^j_1(t)-F^j_{X^p}\circ\phi^j_2(t)|
   &\leq&\int_{|t_0,t|}|X\chi^j(s,\gamma_1(s),p)-X\chi^j(s,\gamma_2(s),p)|\d s\\
   &\leq&\frac{\lambda}{C}\sup\{d_\mathbb{G}(\gamma_1(s),\gamma_2(s))\ \suchthat\ s\in|t_0,t|\}\\
   &\leq&\lambda \sup\{|\phi^k_1(s)-\phi^k_2(s)|\ \suchthat \ k\in\{1,...,n\}\},\;s\in\mathbb{T}_{t_0,\alpha}, 
\end{eqnarray*}
from which the desired conclusion follows.

By the Contraction Mapping Theorem \citep[Theorem 1.2.6]{MR960687} there exists a unique fixed point for the mapping (\ref{eq:3.8}) in $\prod\limits_{j=1}^{n} \overbar{B}(r,\phi^j_0)$; let us denote the components of this unique fixed point by $\phi^j,\ j\in\{1,...,n\}$. Let us also denote by $\xi\in C^0(\mathbb{T}_{t_0,\alpha};M)$ the corresponding curve in $M$, noting that 
$$\chi^j\circ\xi(t)=\chi^j(x)+\int_{|t_0,t|}X\chi^j(s,\xi(s),p)\d s,$$
for all $p\in \mathcal{O}$, cf. Lemma \ref{lem:3.3}. It remains to show that the $\xi$ is an integral curve for $X^p$
satisfying $\xi(t_0) = x$. $\xi(t_0) = x$ is obvious. We can, then, follow the proof of part (\ref{lem:3.2-3}) of Lemma \ref{lem:3.2} to see
that $\xi$ is an integral curve for $X^p$.

We next prove uniqueness of this integral curve on $\mathbb{T}_{t_0,\alpha}$. Suppose that $\eta: \mathbb{T}_{t_0,\alpha} \rightarrow M$
is another integral curve satisfying $\eta(t_0) = x_0$. By Lemma \ref{lem:3.2} we have
$$f\circ\eta(t)=f(x)+\int_{|t_0,t|}Xf(s,\eta(s))\d s$$
for every $f\in C^\infty(M)$. It then follows that, if we define
\begin{eqnarray*}
\phi^j:\mathbb{T}_{t_0,\alpha} &\rightarrow&  \R\\
	  t&\mapsto& \chi^j\circ \eta(t),
\end{eqnarray*}
then $(\phi^1,...,\phi^n)$ is a fixed point of the mapping (\ref{eq:3.8}). Since this fixed point is unique, we must have
$$\chi^j\circ \eta(t) = \chi^j\circ\xi(t),\hspace{10pt} j \in \{1, . . . , n\}, t \in \mathbb{T}_{t_0,\alpha}.$$ 
By Lemma \ref{lem:3.3} we conclude that $\eta = \xi$. One can also prove global uniqueness of integral curves using the standard arguments.

From the above, we conclude that
$$\mathbb{T}_{t_0,\alpha}\times\{t_0\}\times \mathcal{U}\times\mathcal{O}\subseteq D_X$$
and that
$$f\circ\Phi^X(t,t_0,x,p)=f(x)+\int_{|t,t_0|}Xf(s,\Phi^X(s,t_0,x,p),p)\d s$$
for $(t,x,p)\in\mathbb{T}_{t_0,\alpha}\times \mathcal{U}\times\mathcal{O}$ and $f\in C^{\infty}(M)$. This proves the existential part of the
lemma.

(\ref{lem:3.6-1}) Fix $p\in\mathcal{O}$. By (\ref{eq:x1x2}) and (\ref{eq:1.7}), we have 
\begin{align*}
    &\int_{|t_0,t|}|X^p\chi^j(s,x_1)-X^p\chi^j(s,x_2)|\d t\\
     &\hspace{4cm}\leq \lambda\max\{|\chi^l(x_1)-\chi^l(x_2)|\ \suchthat\ l\in\{1,...,k\}\},\ t\in\mathbb{T}_{t_0,\alpha},\ x_1,x_2\in\mathcal{U}.
\end{align*}

Let $t\in \mathbb{T}_{t_0,\alpha}$ be such that $t\geq t_0$ and let $x_1,x_2\in \mathcal{U}$. We then have
$$f\circ\Phi^{X^p}(t,t_0,x_a)=f(x_a)+\int_{|t,t_0|}Xf(s,\Phi^{X^p}(s,t_0,x_a))\d s,\;\;\;\;a\in\{1,2\},\ f\in C^\infty(M).$$
Thus, for $j\in\{1,...,n\}$,
\begin{eqnarray*}
   &&|\chi^j\circ\Phi^{X^p}(t,t_0,x_1)-\chi^j\circ\Phi^{X^p}(t,t_0,x_2)| \\
   &&\hspace{16pt}\leq |\chi^j(x_1)-\chi^j(x_2)|+\int_{|t_0,t|}|X^p\chi^j(s,\Phi^{X^p}(s,t_0,x_1))-X^p\chi^j(s,\Phi^{X^p}(s,t_0,x_2))|\d s\\
   &&\hspace{16pt}\leq |\chi^j(x_1)-\chi^j(x_2)|\\
   &&\hspace{24pt}+\lambda\sup\left\{|\chi^l\circ \Phi^{X^p}(s,t_0,x_1)-\chi^l\circ \Phi^{X^p}(s,t_0,x_2)|\ \suchthat\ s\in[t_0,t],\ l\in\{1,...,n\}\right\}
\end{eqnarray*}
Abbreviate
$$\xi^p(s)=\max\left\{|\chi^l\circ \Phi^{X^p}(s,t_0,x_1)-\chi^l\circ \Phi^{X^p}(s,t_0,x_2)|\ \suchthat\ l\in\{1,...,n\}\right\}$$
and
$$\delta^p=\sup\{\xi^p(s)\ \suchthat\ s\in\mathbb{T}_{t_0,\alpha}\}$$
The definitions then give
$$\delta^p\leq \xi^p(t_0)+\lambda \delta^p \implies \delta^p\leq (1-\lambda)^{-1}\xi^p(t_0).$$
Since 
$$\xi^p(t_0)=\max\left\{|\chi^j(x_1)-\chi^j(x_2)|\ \suchthat\ j\in\{1,...,n\}\right\},$$
together with (\ref{eq:x1x2}), we have
$$d_\mathbb{G}(\Phi^{X^p}(t, t_0, x_1), \Phi^{X^p}(t, t_0, x_2)) \leq C\xi^p(t) \leq C\delta^p \leq (1 - \lambda)^{-1}d_\mathbb{G}(x_1, x_2),$$
which shows that $\Phi^{X^p}_{t,t_0}|\mathcal{U}$ is Lipschitz. Incidentally, the Lipschitz constant is independent of $t \in \mathbb{T}_{t_0,\alpha}$ and $p\in \mathcal{O}$.

(\ref{lem:3.6-2}) Let $t_1,t_2\in \mathbb{T}_{t_0,\alpha}$ be such that $t_0\leq t_1\leq t_2$, let $x_1,x_2\in \mathcal{U}$, and let $p_1,p_2\in\mathcal{O}$. We first have, for $t\in[t_0,t_1]$,
\begin{eqnarray*}
   &&|\chi^j\circ\Phi^X(t,t_0,x_1,p_1)-\chi^j\circ\Phi^X(t,t_0,x_2,p_2)| \\
   &&\hspace{5pt}\leq|\chi^j(x_1)-\chi^j(x_2)| +\int_{|t_0,t|}|X\chi^j(s,\Phi^X(s,t_0,x_1,p_1),p_1)-X\chi^j(s,\Phi^X(s,t_0,x_2,p_2),p_2)|\d s\\
   &&\hspace{5pt}\leq|\chi^j(x_1)-\chi^j(x_2)| +\int_{|t_0,t|}|X\chi^j(s,\Phi^X(s,t_0,x_1,p_1),p_1)-X\chi^j(s,\Phi^X(s,t_0,x_2,p_2),p_1)|\d s\\
   &&\hspace{10pt}+\int_{|t_0,t|}|X\chi^j(s,\Phi^X(s,t_0,x_2,p_2),p_1)-X\chi^j(s,\Phi^X(s,t_0,x_2,p_2),p_2)|\d s
\end{eqnarray*}
for $j\in\{1,...,n\}$. Hence for $j\in\{1,...,n\}$ and $t\in[t_0,t_1]$, we use (\ref{eq:x1x2}) and (\ref{eq:1.7}) to give 
\begin{eqnarray*}
   &&\int_{|t_0,t|}|X\chi^j(s,\Phi^X(s,t_0,x_1,p_1),p_1)-X\chi^j(s,\Phi^X(s,t_0,x_2,p_2),p_1)|\d s\\
   &&\hspace{40pt}\leq \lambda\max\left\{|\chi^l\circ \Phi^{X}(s,t_0,x_1,p_1)-\chi^l\circ \Phi^{X}(s,t_0,x_2,p_2)|\ \suchthat\ l\in\{1,...,n\}\right\}
\end{eqnarray*}
We also clearly have
\begin{eqnarray*}
   &&\int_{|t_0,t|}|X\chi^j(s,\Phi^X(s,t_0,x_2,p_2),p_1)-X\chi^j(s,\Phi^X(s,t_0,x_2,p_2),p_2)|\d s\leq \rho\overset{\Delta}{=}\\
   &&\max\left\{\int_{\mathbb{T}_{t_0,\alpha}}|X\chi^j(s,\Phi^X(s,t_0,x_2,p_2),p_1)-X\chi^j(s,\Phi^X(s,t_0,x_2,p_2),p_2)|\d s\ \suchthat j\in\{1,...,n\}\right\}
\end{eqnarray*}
Let us denote
$$\xi(s)=\max\left\{|\chi^l\circ \Phi^{X^p}(s,t_0,x_1,p_1)-\chi^l\circ \Phi^{X}(s,t_0,x_2,p_2)|\ \suchthat\ l\in\{1,...,n\}\right\}$$
and
$$\delta=\sup\{\xi(s)\ \suchthat\ s\in\mathbb{T}_{t_0,\alpha}\}$$
so that 
$$\delta\leq \xi(t_0)+\lambda \delta^p \implies \delta\leq (1-\lambda)^{-1}\xi(t_0).$$
As per (\ref{eq:x1x2}), let $C \in \R_{>0}$ be such that
$$\xi(t_0) \leq C^{-1}d_\mathbb{G}(x_1, x_2).$$
Then we have 
\begin{eqnarray*}
   &&|\chi^j\circ\Phi^X(t_1,t_0,x_1,p_1)-\chi^j\circ\Phi^X(t_2,t_0,x_2,p_2)| \\
   &&\hspace{20pt}\leq|\chi^j\circ\Phi^X(t_1,t_0,x_1,p_1)-\chi^j\circ\Phi^X(t_1,t_0,x_2,p_2)|\\
   &&\hspace{28pt}+|\chi^j\circ\Phi^X(t_2,t_0,x_2,p_2)-\chi^j\circ\Phi^X(t_1,t_0,x_2,p_2)|\\
   &&\hspace{20pt}\leq \xi(t_0)+\lambda\delta+\rho+\int_{|t_1,t_2|}|X\chi^j(s,\Phi^X(s,t_0,x_2,p_2),p_2)|\d s\\
   &&\hspace{20pt}\leq \frac{C^{-1}}{1-\lambda}d_\mathbb{G}(x_1, x_2)+\rho +\int_{|t_1,t_2|}|X\chi^j(s,\Phi^X(s,t_0,x_2,p_2),p_1)|\d s\\
   &&\hspace{28pt}+\int_{|t_1,t_2|}|X\chi^j(s,\Phi^X(s,t_0,x_2,p_2),p_1)-X\chi^j(s,\Phi^X(s,t_0,x_2,p_2),p_2)|\d s
\end{eqnarray*}
Now we choose a neighbourhood $\mathcal{V}$ of $x_1$, $\sigma\in\R_{>0}$, and and a neighbourhood $\mathcal{O'}\subseteq\mathcal{O}$ of $p_1$ such that 
$$\frac{C^{-1}}{1-\lambda}d_\mathbb{G}(x_1, x_2)<\frac{\epsilon}{4},\hspace{10pt} x_2\in\mathcal{V};$$
$$\int_{|t_1,t_2|}|X\chi^j(s,\Phi^X(s,t_0,x_2,p_2),p_1)|\d s<\frac{\epsilon}{4},\hspace{10pt}|t_1-t_2|<\sigma;$$
$$\int_{|t_1,t_2|}|X\chi^j(s,\Phi^X(s,t_0,x_2,p_2),p_1)-X\chi^j(s,\Phi^X(s,t_0,x_2,p_2),p_2)|\d s<\frac{\epsilon}{4},\hspace{10pt} p_2\in\mathcal{O'};$$
$$\int_{\mathbb{T}_{t_0,\alpha}}|X\chi^j(s,\Phi^X(s,t_0,x_2,p_2),p_1)-X\chi^j(s,\Phi^X(s,t_0,x_2,p_2),p_2)|\d s<\frac{\epsilon}{4},\hspace{10pt} p_2\in\mathcal{O'}.$$
The last inequality implies that $\rho<\frac{\epsilon}{4}$. Thus we have
$$|\chi^j\circ\Phi^X(t_1,t_0,x_1,p_1)-\chi^j\circ\Phi^X(t_2,t_0,x_2,p_2)|<\epsilon,\hspace{10pt} x_2\in\mathcal{V},\ p_2\in\mathcal{O'},\ |t_1-t_2|<\sigma,$$
which gives the continuity of $(t,x,p)\mapsto \chi^j\circ\Phi^X(t,t_0,x,p)$, and so the continuity of $(t,x,p)\mapsto\Phi^X(t,t_0,x,p)$. 
\end{proof}
The next lemma is a refinement of the preceding one, giving the local version of the theorem statement.
\begin{lem}\label{lem:3.7}
Let $M$ be a $C^{\infty}$-manifold, let $\mathbb{T}\subseteq \mathbb{R}$ be a time domain, let $\mathcal{P}$ be a topological space, and let $X\in \Gamma^{\text{lip}}_{\text{LI}}(\mathbb{T},TM;\mathcal{P})$. For each $(t_0,x_0,p_0)\in \mathbb{T}\times M\times\mathcal{P}$, there exist $\alpha\in \mathbb{R}_{>0}$, a neighborhood $\mathcal{U}\subseteq M$ of $x_0$, and a neighborhood $\mathcal{O}\subseteq\mathcal{P}$ of $p_0$ such that
\begin{enumerate}[(i)]
    \item\label{lem:3.7-1} $\mathbb{T}_{t_0,\alpha}\times \{t_0\} \times\mathcal{U}\times\mathcal{O}\subseteq D_{X}$,
    \item\label{lem:3.7-2} the map 
	$$\mathbb{T}_{t_0,\alpha}\times \mathcal{U}\times\mathcal{O}\ni (t,x,p)\mapsto \Phi^X(t,t_0,x,p)$$
	is continuous, and
    \item\label{lem:3.7-3} the map
	$$\mathcal{U}\ni x\mapsto \Phi^{X^p}_{t,t_0}(x)\in M$$
	is s bi-Lipschitz homeomorphism onto its image for every $p\in \mathcal{O}$ and every $t\in \mathbb{T}_{t_0,\alpha}$.
\end{enumerate}
\end{lem}
\begin{proof}
(\ref{lem:3.7-1}) and (\ref{lem:3.7-2}) can be proven by Lemma \ref{lem:3.6}.

(\ref{lem:3.7-3}) Let $\alpha'$, $\mathcal{U'}$ and $\mathcal{O'}$ be as in Lemma \ref{lem:3.6}, and let $\alpha\in(0,\alpha']$, $\mathcal{U}\subseteq\mathcal{U'}$, and $\mathcal{O}\subseteq\mathcal{O'}$ be such that 
$$\Phi^X(t,t_0,x,p)\in \mathcal{U'},\;\;\;\;(t,x,p)\in \mathbb{T}_{t_0,\beta}\times\mathcal{U}\times\mathcal{O},$$
this is possible by Lemma \ref{lem:3.6}-(\ref{lem:3.7-1}). Let $t\in \mathbb{T}_{t_0,\alpha}$, $x\in\mathcal{U}$, $p\in \mathcal{O}$, and denote 
$$\mathcal{V}=\Phi^{X^p}_{t,t_0}(\mathcal{U})\subseteq\mathcal{U'}.$$
Since $y\overset{\Delta}{=}\Phi^X(t,t_0,x,p)\in \mathcal{U'}$ and $t\in \mathbb{T}_{t_0,\alpha}$, there exists a neighborhood $\mathcal{V'}$ of $y$ such that if $y'\in \mathcal{V'}$, then $(t,t_0,y',p)\in D_X$. Moreover, since $\Phi^{X^p}_{t_0,t}$ is continuous and Lipschitz, we can choose $\mathcal{V'}$ sufficiently small that $\Phi^{X^p}_{t_0,t}(y')\in \mathcal{U}$ if $y'\in \mathcal{V'}$. By Lemma \ref{lem:3.6}, $\Phi^{X^p}_{t_0,t}|\mathcal{V'}$ is Lipschitz. Therefore there is a neighborhood of $x$ on which the restriction of $\Phi^{X^p}_{t_0,t}$ is invertible, Lipschitz, and with a Lipschitz inverse.
\end{proof}
We now need to show that parts (\ref{thm:3.4-5}) and (\ref{thm:3.4-6}) of the theorem hold globally. Let $(t_0,x_0,p_0)\in \mathbb{T}\times M\times\mathcal{P}$, and denote by $J_+(t_0, x_0, p_0) \subseteq \mathbb{T}$ the set of $b > t_0$ such that, for each $b' \in [t_0, b)$, there exists a relatively open interval $J\subseteq \mathbb{T}$, a neighborhood $\mathcal{U}$ of $x_0$, and a neighborhood $\mathcal{O}$ of $p_0$ such that 
\begin{enumerate}
    \item $b'\in J$,
    \item $J\times\{t_0\}\times\mathcal{U}\times\mathcal{O}\subseteq D_X$,
	\item $J\times\mathcal{U}\times\mathcal{O}\ni (t,x,p)\mapsto\Phi^X(t,t_0,x,p)\in M$ is continuous, and 
	\item the map $\mathcal{U}\ni x\mapsto\Phi^X(t,t_0,x,p)$ is locally bi-Lipschitz homeomorphism onto its image for every $p\in \mathcal{O}$ and every $t\in J$.
\end{enumerate}
\begin{proof}
By Lemma \ref{lem:3.7}, $J_+(t_0,x_0,p_0)\neq\emptyset$. Then we consider the two cases.

The first case is $J_+(t_0,x_0,p_0)\cap [t_0,\infty)=\mathbb{T}\cap [t_0,\infty)$. In this case, for each $t\in \mathbb{T}$ with $t\geq t_0$, there exists a relatively open interval $J\subseteq\mathbb{T}$, a neighborhood $\mathcal{U}$ of $x_0$, and a neighborhood $\mathcal{O}$ of $p_0$ such that 
\begin{enumerate}
    \item $t\in J$,
    \item $J\times\{t_0\}\times\mathcal{U}\times\mathcal{O}\subseteq D_X$,
	\item $J\times\mathcal{U}\times\mathcal{O}\ni (\uptau,x,p)\mapsto\Phi^X(\uptau,t_0,x,p)\in M$ is continuous, and 
	\item the map $\mathcal{U}\ni x\mapsto\Phi^X(\uptau,t_0,x,p)$ is locally bi-Lipschitz homeomorphism onto its image for every $p\in \mathcal{O}$ and every $\uptau\in J$.
\end{enumerate}

The second case is  $J_+(t_0,x_0,p_0)\cap [t_0,\infty)\subsetneq\mathbb{T}\cap [t_0,\infty)$. Now we show this is impossible. In this case we let $t_1=\sup J_+(t_0,x_0,p_0)$ and note that $t_1\neq \sup \mathbb{T}$. We claim that $t_1\in J_Y(t_0,x_0,p_0)$. If this were not the case, then we must have $b\overset{\Delta}{=}\sup J_{X}(t_0,x_0,p_0)<t_1$. Since $b\in J_+(t_0,x_0,p_0)$, there must be a relatively open interval $J\subseteq \mathbb{T}$ containing $b$ such that $t\in J_{X}(t_0,x_0,p_0)$ for all $t\in J$. But, since there are $t's$ in $J$ larger than $b$, this contradicts the definition of $b$, and so we conclude that $t_1\in J_X(t_0,x_0,p_0)$.

We denote $x_1=\Phi^X(t_1,t_0,x_0,p_0)$. By Lemma \ref{lem:3.7}, there exists $\alpha_1\in\R_{>0}$, a neighborhood $\mathcal{V}_1$ of $x_1$ such that $(t,t_1,x,p)\in D_X$ for every $t\in \mathbb{T}_{t_1,\alpha_1}$, $x\in \mathcal{V}_1$, and $p\in\mathcal{O}_1$, and such that the map $$\mathbb{T}_{t_1,\alpha_1}\times \mathcal{V}_1\times\mathcal{O}_1\ni (t,x,p)\mapsto\Phi^X(t,t_1,x,p)$$
is continuous, and the map
$$\mathcal{V}_1\ni x\mapsto\Phi^X(t,t_1,x,p)$$
is locally bi-Lipschitz homeomorphism onto its image for every $t\in \mathbb{T}_{t_1,\alpha_1}$ and $p\in\mathcal{O}_1$. Let $\mathcal{V'}_1\subseteq \mathcal{V}_1$ be such that $\text{cl}(\mathcal{V'})\subseteq\mathcal{V}_1$. Since $t\mapsto\Phi^X(t,t_0,x_0,p_0)$ is continuous and $\Phi^X(t_1,t_0,x_0,p_0)=x_1$, let $\delta\in\R_{>0}$ be such that $\delta<\frac{\alpha}{2}$ and $\Phi^X(t,t_0,x_0,p_0)\in\mathcal{V'}_1$ for $t\in (t_1-\delta,t_1)$. Now let $\uptau_1\in (t_1-\delta,t_1)$ and, by our hypotheses on $t_1$, there exists an open interval $J$, a neighborhood $\mathcal{U'}_1$ of $x_0$, and a neighborhood $\mathcal{O'}_1$ of $p_0$ such that 
\begin{enumerate}
    \item $\uptau_1\in J$,
    \item $J\times\{t_0\}\times\mathcal{U'}_1\times\mathcal{O'}_1\subseteq D_X$,
	\item $J\times\mathcal{U'}_1\times\mathcal{O'}_1\ni (\uptau,x,p)\mapsto\Phi^X(\uptau,t_0,x,p)\in M$ is continuous, and 
	\item the map $\mathcal{U'}_1\ni x\mapsto\Phi^X(\uptau,t_0,x,p)$ is locally bi-Lipschitz homeomorphism onto its image for every $\uptau\in J$ and $p\in\mathcal{O'}_1$.
\end{enumerate}
We choose $J$, $\mathcal{U'}_1$ and $\mathcal{O'}_1$ sufficiently small that 
$$\{\Phi^X(t,t_0,x,p)\ \suchthat \ t\in J,\ x\in\mathcal{U'}_1,\ p\in\mathcal{O'}_1\}\subseteq\mathcal{V'}_1$$
This is possible by the continuity of the flow. We, moreover, assume that $\mathcal{O'}_1$ is sufficiently small as to be a subset of $\mathcal{O}_1$.

We claim that
$$ \mathcal{T}_{\uptau_1,\alpha_1}\times\{t_0\}\times\mathcal{U'}_1\times\mathcal{O'}_1\subseteq D_X.$$

We first show that 
\begin{equation}\label{eq:1.8}
   [\uptau_1,\uptau_1+\alpha_1)\times\{t_0\}\times\mathcal{U'}_1\times\mathcal{O'}_1\subseteq D_X.
\end{equation}
Let $x\in \mathcal{U'}_1$ and $p'\in\mathcal{O'}_1$, we have $(\uptau_1,t_0,x,p)\in D_X$ since $\uptau_1\in J$. By definition of $J$, $\Phi^X(\uptau_1,t_0,x,p)\in \mathcal{V'}_1$. By definition of $\uptau_1, t_1-\uptau_1<\delta<\frac{\alpha_1}{2}$. Then by definition of $\alpha_1$ and $\mathcal{V}_1$,
$$(t_1,\uptau_1,\Phi^X(\uptau_1,t_0,x,p'),p)\in D_X$$
for every $x\in \mathcal{U'}_1$, $p'\in\mathcal{O'}_1$, and $p\in\mathcal{O}_1$. From this we conclude that $(t_1,t_0,x,p)\in D_X$ for every $x\in \mathcal{U'}_1$ and $p\in\mathcal{O'}_1$. Now since 
$$t\in[\uptau_1,\uptau_1+\alpha_1)\implies t\in \mathbb{T}_{t_1,\alpha_1}$$
we have $(t,t_1,\Phi^X(t_1,t_0,x,p),p)\in D_X$ for every $t\in \mathbb{T}_{\uptau_1,\alpha_1}$, $x\in\mathcal{U'}_1$, and $p\in\mathcal{O'}_1$. Since 
$$(t,t_1,\Phi^Y(t_1,t_0,x,p),p)=\Phi^X(t,t_0,x,p),$$
we conclude (\ref{eq:1.8}). A similar argument gives the conclusion above.

Now we claim the map 
$$ \mathcal{T}_{\uptau_1,\alpha_1}\times\mathcal{U'}_1\times\mathcal{O'}_1\ni (t,x,p)\mapsto \Phi^X(t,t_0,x,p)$$
is continuous. This map is continuous at 
$$(t,x,p)\in (\uptau_1-\alpha_1,\uptau_1]\times\mathcal{U'}_1\times\mathcal{O'}_1$$
by definition of $\uptau_1$. For $t\in (\uptau_1,\uptau_1+\alpha_1)$ we have 
$$\Phi^X(t,t_0,x,p)=\Phi^X(t,\uptau_1,\Phi^Y(\uptau_1,t_0,x,p),p)$$
and continuity follows since the composition of continuous maps are continuous. 

Next we claim the map 
$$\mathcal{U'}_1\ni x\mapsto \Phi^Y(t,t_0,x,p)$$
is locally bi-Lipschitz homeomorshism onto its image for every $t\in \mathcal{T}_{\uptau_1,\alpha_1}$ and $p\in\mathcal{O'}_1$. By definition of $\uptau_1$, the map 
$$\Phi^{X^p}_{t,t_0}:\mathcal{U'}_1\mapsto\mathcal{V'}_1$$
is locally bi-Lipschitz onto its image for $t\in (\uptau_1-\alpha_1,\uptau_1]$ and $p\in\mathcal{O'}_1$. We also have that 
$$\Phi^{Y}_{t,\uptau_1}:\mathcal{V}_1\mapsto M$$
is locally bi-Lipschitz homeomorphism onto its image for $t\in (\uptau_1,\uptau_1+\alpha_1)$. Since the composition of locally bi-Lipschitz homeomorphisms onto their image is a locally bi-Lipschitz homeomorphism onto its image, our assertion follows.

By our arguments, we have an open interval $J'$, a neighborhood $\mathcal{U'}$ of $x_0$, and a neighbourhood $\mathcal{O'}_1$ of $p_0$ such that
\begin{enumerate}
    \item $t_1\in J'$,
    \item $J'\times\{t_0\}\times\mathcal{U'}_1\times\mathcal{O'}_1\subseteq D_X$,
	\item $J'\times\mathcal{U'}_1\times\mathcal{O'}_1\ni (t,x,p)\mapsto\Phi^X(t,t_0,x,p)\in M$ is continuous, and
	\item the map $\mathcal{U'}_1\ni x\mapsto\Phi^X(t,t_0,x,p)$ is locally bi-Lipschitz homeomorphism onto its image for every $t\in J'$ and $p\in\mathcal{O'}_1$.
\end{enumerate}
This contradicts the fact that $t_1=\sup J_+(t_0,x_0,p_0)$ and so the condition 
$$J_+(t_0,x_0,p_0)\cap [t_0,\infty)\subsetneq \mathbb{T}\cap [t_0,\infty)$$
cannot obtain. 

One similar shows that it must be the case that $J_-(t_0,x_0,p_0)\cap (-\infty,t_0]=\mathbb{T}\cap (-\infty,t_0]$, where $J_-(t_0,x_0)$ has the obvious definition. 
\end{proof}
Finally, we note that $\Phi^X_{t,t_0}$ is injective by uniqueness of solutions for $X$. Now, assertions (\ref{thm:3.4-5}) and (\ref{thm:3.4-6}) of the theorem follow, since the notions of “locally bi-Lipschitz homeomorphism” can be tested locally, i.e., in a neighbourhood of any point. We have shown something more, in fact, namely that, along with parts (\ref{thm:3.4-5}) and (\ref{thm:3.4-6}) of the theorem, the set
$$\{(t,x,p)\in\mathbb{T}\times M\times\mathcal{P}\ \suchthat\ (t,t_0,x,p)\in D_X\}$$
is open for each $t_0$, and that the mapping $(t, x, p) \mapsto \Phi^X(t, t_0, x, p)$ is continuous. We shall now use this fact and a lemma, to prove assertions (\ref{thm:3.4-7}) and (\ref{thm:3.4-8}) together.
\begin{lem}
Let $(t_1,t_0,x_0,p_0)\in D_X$. As Lemma \ref{lem:3.7}, there exist $\alpha_1\in\R_{>0}$, a neighborhood $\mathcal{U}_1$ of $x_0$, and a neighborhood $\mathcal{O}_1$ of $p_0$ such that 
$$\mathbb{T}_{t_1,\alpha_1}\times \{t_0\}\times\mathcal{U}_1\times\mathcal{O}_1\subseteq D_X,$$
and the map $(t,x,p)\mapsto \Phi^X(t,x_0,x,p)$ is continuous on this domain. Then the map 
$$(t,x,p)\mapsto \Phi^X(t_0,t,x,p)$$
is continuous for $(t,x,p)$ nearby $(t_0,x_0,p_0)$.
\end{lem}
\begin{proof}
To see this, we proceed rather as in the proof of Lemma \ref{lem:3.6}. Let $U$, $\chi^1,...,\chi^n$, and $C$ be just as in the initial part of the proof of Lemma \ref{lem:3.6}. We also choose $\alpha, r \in R_{>0}$ and a neighbourhood $\mathcal{O}$ of $p_0$ such that $\mathcal{U}(r, x_0) \in \mathcal{U} \cap \mathcal{U}_1$ and such that
\begin{equation}\label{eq:1.9}
\int_{|t,t_0|}|X\chi^j(s,x,p)|\d s<\frac{r}{2}
\end{equation}
and
\begin{equation}\label{eq:1.15}
\int_{|t,t_0|}|X\chi^j(s,x_1,p)-X\chi^j(s,x_2,p)|\d s<\frac{\lambda}{C}d_G(x_1,x_2),
\end{equation}
for all $t\in \mathbb{T}_{t_0,\alpha}$, $x,x_1,x_2\in \mathcal{U}$, $j\in\{1,...,n\}$ and $p\in \mathcal{O}$.

Let $x\in \mathcal{U}(r/2,x_0)$. For each $j\in \{1,...,n\}$, we denote by $\phi^j_0\in C^0(\mathbb{T}_{t_0,\alpha},\mathbb{R})$ the constant mapping $\phi ^j_0=\chi^j(x_0)$, and denote by $\overbar{B}(r,\phi^j_0)$ the closed ball of radius $r$ about $\phi^j_0$. For each $p\in \mathcal{O}$, we have the mapping
\begin{eqnarray*}
   F^j_{X^p}:&&\overbar{B}(r,\phi^j_0)\rightarrow C^0(\mathbb{T}_{t_0,\alpha},\mathbb{R})\\
	         &&\phi^j\mapsto \chi^j(x)+\int_{|t,t_0|}X\chi^j(s,\gamma_{\phi}(s),p)ds,
\end{eqnarray*}
for $j\in \{1,...,n\}$, and where $\gamma_\phi \in C^0(\mathbb{T}_{t_0,\alpha},M)$ is the unique curve satisfying 
\begin{equation*}
\chi^j\circ \gamma_\phi=\phi^j(t),\;\;\;\;\;t\in \mathbb{T}_{t_0,\alpha},\;j\in\{1,...,n\},
\end{equation*}
cf. Lemma \ref{lem:3.3}.  Note that similar to the proof in Lemma \ref{lem:3.6}, the definition of $r$ ensures that $\gamma_\phi$ so defined takes values in $\mathcal{U}$.

We then claim that $F^j_{X^p}(\overbar{B}(r,\phi^j_0))\subseteq\overbar{B}(r,\phi^j_0)$, $j\in\{1,...,n\}$, $p\in\mathcal{O}$. Indeed, if $\phi^j\in\overbar{B}(r,\phi^j_0),\ j\in\{1,...,n\}$, we have 
\begin{equation*}
    |F^j_{X^p}\circ\phi^j(t)-\phi_0^j(t)|\leq|\chi^j(x)-\chi^j(x_0)|+\int_{|t,t_0|}|X\chi^j(s,\gamma_{\phi}(s))|\d s< r
\end{equation*}

We also claim that the mapping 
\begin{equation}\label{eq:1.16}
    \prod\limits_{j=1}^{n} \overbar{B}(r,\phi^j_0)\ni(\phi^1,...,\phi^n)\mapsto (F^1_{X^p}\circ\phi^1,...,F^n_{X^p}\circ\phi^n)\in \prod\limits_{j=1}^{n} \overbar{B}(r,\phi^j_0)
\end{equation}
is a contraction mapping for each $p\in\mathcal{O}$, where $\prod\limits_{j=1}^{n}\overbar{B}(r,\phi^j_0)$
is given the product metric. Indeed, let $\phi^j_1,\phi^j_2\in \overbar{B}(r,\phi^j_0),\ j\in\{1,...,n\}$. Let $\gamma_1,\gamma_2\in C^0(\mathbb{T}_{t_0,\alpha},\mathbb{R})$ be the corresponding curves satisfying 
$$\chi^j\circ \gamma_a(t)=\phi_a^j(t),\;\;\;\;\;t\in \mathbb{T}_{t_0,\alpha},\;j\in\{1,...,n\},\ a\in \{1,2\},$$
using Lemma \ref{lem:3.3}. Then we have, for each $j\in\{1,...,n\}$, 
\begin{eqnarray*}
   |F^j_{X^p}\circ\phi^j_1(t)-F^j_{X^p}\circ\phi^j_2(t)|
   &\leq&\int_{|t_0,t|}|X\chi^j(s,\gamma_1(s),p)-X\chi^j(s,\gamma_2(s),p)|\d s\\
   &\leq&\frac{\lambda}{C}d_G(\gamma_1(s),\gamma_2(s))\\
   &\leq&\lambda \sup\{|\phi^k_1(s)-\phi^k_2(s)|\ \suchthat\  k\in\{1,...,n\}\},\;s\in\mathbb{T}_{t_0,\alpha} 
\end{eqnarray*}
from which the desired conclusion follows.

By the Contraction Mapping Theorem, there exists a unique fixed point for the mapping (\ref{eq:1.16}), which gives rise to a curve $\xi:\mathbb{T}_{t_0,\alpha}\mapsto \mathcal{U}$ satisfying
$$f\circ\xi(t)=f(x)+\int_{|t,t_0|}Xf(s,\xi(s),p)\d s.$$
Differentiating with respect to $t$ shows that $\xi$ is an integral curve for $X$ and $\xi(t_0)=x$. This shows that, if $t\in \mathbb{T}_{t_0,\alpha}$, $x\in \mathcal{U}(r/2,x_0)$, and $p\in\mathcal{O}$, then we have $\Phi^X(t_0,t,x,p)\in \mathcal{U}(r,x_0)$ and
$$f\circ\Phi^X(t_0,t,x,p)=f(x)+\int_{|t,t_0|}Xf(s,\Phi^Y(t_0,s,x,p),p)\d s$$
for $f\in C^\infty(M)$. We conclude that there exists $\alpha_0, r_0\in \R_{>0}$ and a neighbourhood $\mathcal{O}$ of $p_0$ such that 
$$\Phi^X(t_0,t,x,p)\in \mathcal{U}_1,\;\;\;\;\;\;(t,x,p)\in\mathbb{T}_{t_0,\alpha_0}\times\mathcal{U}(r_0,x_0)\times\mathcal{O}.$$
We then show that the map $(t,x,p)\mapsto\Phi^X(t_0,t,x,p)$ is continuous on $\mathbb{T}_{t_0,\alpha_0}\times\mathcal{U}(r_0,x_0)\times\mathcal{O}$, just as in the proof of Lemma \ref{lem:3.6}. 

Finally, if 
$$(t',t'_0,x,p)\in \mathbb{T}_{t,\alpha}\times\mathbb{T}_{t_0,\alpha_0}\times\mathcal{U}(r_0,x_0)\times\mathcal{O'},$$
then
$$\Phi^X(t',t'_0,\Phi^X(t_0,t'_0,x,p),p)=\Phi^X(t',t'_0,x,p),$$
which shows both that $D_X$ is open and that $\Phi^X$ is continuous, since that composition of continuous mappings is continuous.
\end{proof}
(\ref{thm:3.4-9}) Let $T_+ = \sup J_X(t_0, x_0, p_0)$. Then $(T_+ -\epsilon, t_0, x_0, p_0) \in D_X$. Since $D_X$ is open, there
exists a neighbourhood $\mathcal{U}$ of $x_0$ and a neighbourhood $\mathcal{O}$ of $p_0$ such that
$$\{T_+ -\frac{\epsilon}{2}\} \times \mathbb{T}_{t_0,\alpha} \times \mathcal{U} \times \mathcal{O} \subseteq D_X.$$
In other words, $[t_0, T_+ -\frac{\epsilon}{2}] \subseteq J_X(t, x, p)$ for every $(t, x, p) \in \mathbb{T}_{t_0,\alpha} \times \mathcal{U} \times \mathcal{O}$. Thus, for such $(t, x, p)$,
$$\sup J_X(t, x, p) \geq T_+ - \frac{\epsilon}{2} > T_+ - \epsilon = \sup J_X(t_0, x_0, p_0) - \epsilon,$$
as claimed. A similar argument holds for the left endpoint of intervals of existence.

(\ref{thm:3.4-10}) Let $t \in |t_0, t_1|$ and let $\epsilon \in \R_{>0}$. Following the argument and using notation inspired by the proof of part (\ref{lem:3.6-2}) of Lemma \ref{lem:3.6}, there is an interval $\mathbb{T}_t \subseteq \mathbb{T}$, a neighbourhood $\mathcal{V}_{t,\epsilon}$ of $x_0$, a neighbourhood $\mathcal{O}_{t,\epsilon} \subseteq \mathcal{P}$ of $p_0$, and $\chi^j_t \in C^\infty(M)$, $j \in \{1, . . . , n\}$, (n being the dimension
of $M$) such that
\begin{eqnarray*}
   &&|\chi^j_t\circ\Phi^X(t',t,\Phi^X(t,t_0,x,p),p)-\chi^j_t\circ\Phi^X(t',t,\Phi^X(t,t_0,x_0,p_0),p_0)|\leq C^{-1}_t\epsilon\\
   &&\hspace{8cm}(t',x,p)\in \mathbb{T}_t\times \mathcal{V}_{t,\epsilon}\times \mathcal{O}_{t,\epsilon},\ \ j\in\{1,...,n\},
\end{eqnarray*}
where $C_t\in\R_{>0}$ is such that 
$$d_\mathbb{G}(x_1, x_2) \leq C_t \max \left\{|\chi^j_t(x_1) - \chi^j_t(x_2)|\ \suchthat\  j \in\{1, . . . , n\},\ x_1, x_2 \in \mathcal{V}_{t,\epsilon}\right\}.$$
Let $t_1, . . . , t_k \in |t_0, t_1|$ be such that $|t_0, t_1| \subseteq \cup_{j=1}^{k}\mathbb{T}_{t_j}$. For $t \in |t_0, t_1|$, let $j_t \in \{1, . . . , k\}$ be such that $t \in \mathbb{T}_{t_{j_t}}$, and let $x \in \cap^k_{j=1}\mathcal{V}_{t_j,\epsilon}$ and $p \in \cap^k_{j=1}\mathcal{O}_{t_j,\epsilon}$. Let $C = \max\{C_{t_1}
, . . . , C_{t_k}\}$. Then, if $t \in |t_0, t_1|$ and with $j \in \{1, . . . , k\}$ such that $t \in \mathbb{T}_{t_j}$,
$$d_\mathbb{G}(\Phi^X(t, t_0, x, p), \Phi^X(t, t_0, x_0, p_0)) \leq C|\chi^j_t\circ \Phi^X(t, t_0, x, p) - \chi^j_t\circ\Phi^X(t, t_0, x_0, p_0)| < \epsilon,$$
which gives the desired uniform convergence.

(\ref{thm:3.4-9})\footnote{The author acknowledges Robert Kipka for furnishing a version of this proof.} Since the parameter-dependence plays no role here, we suppose we are in the parameter-independent case to simplify the notation. By definition of $I_X(t_1, x_0)$, the curve $\iota_{(t_1,x_0)}$ is well-defined. We will show that it is locally absolutely continuous. We first prove this locally, and so work with a time-varying vector field $X$. Let $(t_0, x_0) \in \mathbb{T} \times M$ and let $\alpha \in \R_{>0}$ and $\mathcal{U}$ be a neighbourhood of $x_0$ such that
$$f\circ\iota_{(t_1,x_0)}(t)= f\circ\Phi^X(t_1,t,x_0)=f(x_0)+\int_{|t,t_1|}Xf(s,\Phi^Y(s,t,x_0))\d s,$$
for $t \in \mathbb{T}_{t_0,\alpha}$ and $x \in \mathcal{U}$. Following our constructions in the preceding parts of the proof, we work with functions $\chi^1, . . . , \chi^n \in C^\infty(M)$ whose differentials are linearly independent on $\mathcal{U}$. We also let $g, l \in L^1_{\text{loc}}(\mathbb{T}; \R_{\geq 0})$ be such that
$$|X\chi^j(t, x)| \leq g(t)$$
and
$$|X\chi^j(t, x_1) - X\chi^j(t, x_2)| \leq l(t) \max\{|\chi^m(x_1) - \chi^m(x_2)|\ \suchthat\  m \in \{1, . . . , n\}\}$$
for $t \in \mathbb{T}$, $x, x_1, x_2 \in \mathcal{U}$, and $j \in \{1, . . . , n\}$, this because $X\chi^j \in C^{\text{lip}}_{\text{LI}}(\mathbb{T}; M)$ by Lemma \ref{lem:2.4}, (\ref{eq:x1x2}), and \citep[Theorem 6.4: (v) and (vi)]{Jafarpour2014}. 

Let $\delta\in \R_{>0}$ be such that, if $(a_l, b_l)$, $l \in \{1, . . . , k\}$, is a collection of pairwise disjoint subintervals of $\mathbb{T}_{t_0,\alpha}$ satisfying
$$\sum_{l=1}^{k}|b_l-a_l|<\delta,$$
then
$$\sum_{l=1}^{k}\int_{a_1}^{b_l}g(s)\d s<\epsilon\exp \left(-\int_{\mathbb{T}_{t_0,\alpha}}l(s)\d s\right).$$
Let $t_1 \in \mathbb{T}_{t_0,\alpha}$ and $j \in \{1, . . . , n\}$, and compute
\begingroup
\allowdisplaybreaks
\begin{eqnarray*}
   &&\sum_{l=1}^{k}|\chi^j\circ \iota_{(t_1,x_0)}(b_l)-\chi^j\circ \iota_{(t_1,x_0)}(a_l)|\\
   &&\hspace{1cm}=\sum_{l=1}^{k}|\chi^j\circ \Phi^X(t_1,b_l,x_0)-\chi^j\circ \Phi^X(t_1,a_l,x_0)|\\
   &&\hspace{1cm}\leq \sum_{l=1}^{k}\left|\int_{b_l}^{t_1}X\chi^j(s,\Phi^X(s,b_l,x_0))\d s-\int_{a_l}^{t_1}X\chi^j(s,\Phi^X(s,a_l,x_0))\d s\right|\\
   &&\hspace{1cm}\leq \sum_{l=1}^{k}\left|\int_{b_l}^{t_1}X\chi^j(s,\Phi^X(s,b_l,x_0))\d s-\int_{a_l}^{t_1}X\chi^j(s,\Phi^X(s,b_l,x_0))\d s\right|\\
   &&\hspace{1.4cm}+\sum_{l=1}^{k}\left|\int_{a_l}^{t_1}X\chi^j(s,\Phi^X(s,b_l,x_0))\d s-\int_{a_l}^{t_1}X\chi^j(s,\Phi^X(s,a_l,x_0))\d s\right|\\
   &&\hspace{1cm}\leq \sum_{l=1}^{k}\int_{b_l}^{b_l}|X\chi^j(s,\Phi^X(s,b_l,x_0))|\d s\\
   &&\hspace{1.4cm}+\sum_{l=1}^{k}\int_{a_l}^{t_1}|X\chi^j(s,\Phi^X(s,b_l,x_0))-X\chi^j(s,\Phi^X(s,a_l,x_0))|\d s\\
   &&\hspace{1cm}\leq\sum_{l=1}^{k}\int_{b_l}^{b_l} g(s)\d s +\sum_{l=1}^{k}\int_{a_l}^{t_1} l(s)\max\{|\chi^m\circ\Phi^X(s,b_l,x_0)\\
   &&\hspace{1.4cm}-\chi^m\circ\Phi^X(s,a_l,x_0)|\ \suchthat\ m\in\{1,...,n\}\}\d s\\
   &&\hspace{1cm}\leq\sum_{l=1}^{k}\int_{b_l}^{b_l} g(s)\d s +\sum_{l=1}^{k}\int_{a_l}^{t_1} l(s)\max\{|\chi^m\circ\Phi^X(s,b_l,x_0)\\
   &&\hspace{1.4cm}-\chi^m\circ\Phi^X(s,a_l,x_0)|\ \suchthat\ m\in\{1,...,n\}\}\d s\\
   &&\hspace{1.4cm}+\sum_{l=1}^{k}\int_{t_0-\alpha}^{a_l} l(s)\max\{|\chi^m\circ\Phi^X(s,b_l,x_0)-\chi^m\circ\Phi^X(s,a_l,x_0)|\ \suchthat\ m\in\{1,...,n\}\}\d s\\
   &&\hspace{1cm}\leq\sum_{l=1}^{k}\int_{b_l}^{b_l} g(s)\d s +\int_{t_0-\alpha}^{t_1} l(s)\sum_{l=1}^{k}\max\{|\chi^m\circ\Phi^X(s,b_l,x_0)\\
   &&\hspace{1.4cm}-\chi^m\circ\Phi^X(s,a_l,x_0)|\ \suchthat\ m\in\{1,...,n\}\}\d s.
\end{eqnarray*}
\endgroup
For $t \in \mathbb{T}_{t_0,\alpha}$ and let us denote
$$\kappa(t) = \sum_{l=1}^{k}\max{|\chi^j\circ \iota_{(t,x_0)}(b_l)- \chi^j\circ \iota_{(t,x_0)}(a_l)|\ \suchthat\  j \in \{1, . . . , m\}}.$$
Our computations above show that
$$\kappa(t)\leq\sum_{l=1}^{k}\int_{b_l}^{b_l} g(s)\d s+ \int_{t_0-\alpha}^{t_1} l(s)\kappa(s)\d s.$$
Thus, by Gronwall’s inequality \citep[Lemma C.3.1]{MR1640001}, we have
$$\kappa(t_1) \leq \exp \left(\int_{t_0-\alpha}^{t_1}l(s) \d s \right)\sum_{l=1}^{k}\int_{b_l}^{b_l} g(s)\d s.$$
This shows that, for every $t_1 \in \mathbb{T}_{t_0,\alpha}$,
$$\sum_{l=1}^{k}|\chi^j\circ \iota_{(t,x_0)}(b_l)- \chi^j\circ \iota_{(t,x_0)}(a_l)|<\epsilon.$$
Thus, in particular, $\iota(t_1,x_0)$ is absolutely continuous on $\mathbb{T}_{t_0,\alpha}$ for each $t_1 \in \mathbb{T}_{t_0,\alpha}$.
Now we prove the suitable conclusion globally. Here we make use of another lemma.
\begin{lem}
Let $\mathcal{U} \subseteq \R^n$ and $\mathcal{U}\subseteq \R^m$ be open, let $\mathbb{T} \subseteq \R$ be a time-domain, let $\Phi: \mathcal{U} \rightarrow \mathcal{V}$ be locally Lipschitz, and let $\gamma : \mathbb{T} \rightarrow \mathcal{U}$ have locally absolutely continuous components. Then $\Phi\circ \gamma$ has locally absolutely continuous components.
\end{lem}
\begin{proof}
Let $[a, b] \subseteq \mathbb{T}$ be a compact subinterval. Since $\gamma([a, b])$ is compact and $\Phi$ is locally Lipschitz, there exists $L \in \R_{>0}$ such that
$$||\Phi \circ \gamma(t_1) - \Phi \circ \gamma(t_2)|| \leq L||\gamma(t_1) - \gamma(t2)||,\hspace{10pt} t_1, t_2 \in [a, b].$$
Let $\epsilon \in \R_{>0}$ and let $\delta \in \R_{>0}$ be such that, if $((a_j , b_j ))_{j\in\{1,...,k\}}$ is a family of disjoint intervals such that
$$\sum_{j=1}^{k}|b_j-a_j|<\delta,$$
then
$$\sum_{j=1}^{k}|\gamma^a(b_j)-\gamma^a(a_j)|<\frac{\epsilon}{L\sqrt{n}}, \hspace{10pt}a\in\{1,...,n\}.$$
Then, for any $a \in \{1, . . . , n\}$ and $\alpha \in \{1, . . . , m\}$, and using standard relationships between the 2-norm and the 1-norm for $\R^n$,
\begin{eqnarray*}
   \sum_{j=1}^{k}|\Phi^\alpha\circ \gamma(b_j)-\Phi^\alpha\circ \gamma(a_j)|&\leq& \sum_{j=1}^{k}||\Phi\circ \gamma(b_j)-\Phi\circ \gamma(a_j)||\leq \sum_{j=1}^{k} L||\gamma(b_j)-\gamma(a_j)||\\
   &\leq& L\sqrt{n}\max\left\{\sum_{j=1}^{k}|\gamma^a(b_j)-\gamma^a(a_j)|\ \suchthat\ j\in\{1,...,n\} \right\}<\epsilon
\end{eqnarray*}
whenever 
$$\sum_{j=1}^{k}|b_j-a_j|<\delta.$$
\end{proof}
Now let $(t_1, t_0, x_0) \in D_X$ and let $\alpha \in \R_{>0}$ and $\mathcal{U}$ be as above. For $t \in \mathbb{T}_{t_0,\alpha}$ we have
$$f \circ \iota_{(t_1,x_0)}(t) = f \circ \Phi^X(t_1, t, x_0) = f \circ \Phi^X_{t_1,t_0}\circ \Phi^X_{t_0,t}(x_0).$$
By our computations above, the curve $t\mapsto\Phi^X_{t_0,t}(x_0)$ is absolutely continuous on $\mathbb{t_0,\alpha}$. Since $x\mapsto\Phi^X_{t_1,t_0}(x)$ is locally Lipschitz by part (\ref{thm:3.4-6}), it follows from the previous lemma that $t \mapsto f \circ \iota_{(t_1,x_0)}(t)$ is absolutely continuous on $\mathbb{T}_{t_0,\alpha}$. Since local absolute continuity is a local property, i.e., it only needs to hold in any neighbourhood of any point, it follows that $t \mapsto f \circ \iota_{(t_1,x_0)}(t)$ is locally absolutely continuous. Thus, by definition, $\iota_{(t_1,x_0)}$ is locally absolutely continuous. 
\renewcommand\qedsymbol{$\blacksquare$}
\end{proof}
The reader will note that a substantial portion of the proof is taken up with the extension of the usual local statements—such as one normally finds in presentations of ordinary differential equations—to global statements valid on the whole of the domain of the vector
field. We feel as if it worth doing this carefully once, since it is not easy to find, and impossible to find in the generality we give here. A few useful consequences of the theorem follow.
\begin{cor}[Images of compact subsets of initial conditions are compact]
Let $M$ be a $C\infty$-manifold, let $\mathbb{T} \subset \R$ be an interval, and let $X \in\Gamma^{\text{lip}}_{\text{LI}}(\mathbb{T};TM)$. Let $K \subseteq M$ be compact and let $t_0, t_1 \in \mathbb{T}$ be such that
\begin{equation}\label{eq:3.14}
    |t_0, t_1| \times \{t_0\} \times K \subseteq D_X. 
\end{equation}
Then 
\begin{equation}\label{eq:3.15}
    \bigcup_{(t,x)\in|t_0,t_1|\times K} \Phi^X(t,t_0,x)
\end{equation}
is compact.
\end{cor}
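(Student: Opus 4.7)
The plan is essentially a one-line argument that exploits the continuity of the flow $\Phi^X$ established in Theorem~\ref{thm:3.4}. First I would observe that the set
\[
|t_0,t_1|\times\{t_0\}\times K \subseteq \mathbb{T}\times\mathbb{T}\times M
\]
is compact, since it is a product of the compact interval $|t_0,t_1|$, the singleton $\{t_0\}$, and the compact set $K$. By the hypothesis \eqref{eq:3.14}, this compact set is contained in $D_X$.

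Next I would invoke part \eqref{thm:3.4-8} of Theorem~\ref{thm:3.4}, which states that the flow map
\[
\Phi^X\colon D_X\longrightarrow M
\]
is continuous. Since the continuous image of a compact set is compact, the set
\[
\Phi^X\bigl(|t_0,t_1|\times\{t_0\}\times K\bigr)
\]
is a compact subset of $M$.

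Finally I would observe that this image is exactly the union displayed in \eqref{eq:3.15}, by definition of image:
\[
\Phi^X\bigl(|t_0,t_1|\times\{t_0\}\times K\bigr)
= \bigcup_{(t,x)\in |t_0,t_1|\times K}\Phi^X(t,t_0,x).
\]
This completes the proof. There is no real obstacle here; the only point requiring care is to notice that compactness of $|t_0,t_1|\times\{t_0\}\times K$ together with the continuity of $\Phi^X$ on its (open) domain $D_X$, proved in Theorem~\ref{thm:3.4}, is exactly what the statement asks for. The hypothesis \eqref{eq:3.14} is needed precisely to ensure that the compact set on which we are evaluating $\Phi^X$ lies in its domain of definition.
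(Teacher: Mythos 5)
Your proof is correct and is essentially identical to the paper's own argument: both identify the set in \eqref{eq:3.15} as the image of the compact set $|t_0,t_1|\times\{t_0\}\times K$ under the flow map $\Phi^X$, which is continuous by part (viii) of Theorem~\ref{thm:3.4}. Nothing further is needed.
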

\begin{proof}
This follows since the set (\ref{eq:3.15}) is the image of the compact set (\ref{eq:3.14}) under the continuous (by part (\ref{thm:3.4-8}) of the theorem) map $\Phi^X$.
\end{proof}
\begin{cor}[Robustness of compactness by variations of parameters]\label{cor:3.11}
Let $M$ be a $C^\infty$-manifold, let $\mathbb{T} \subseteq \R$ be an interval, let $\mathcal{P}$ be a topological space, and let $X \in\Gamma^{\text{lip}}_{\text{PLI}}(\mathbb{T};TM; \mathcal{P})$. Let $K \subseteq M$ be compact, let $t_0, t_1 \in \mathbb{T}$, and let $p_0 \in \mathcal{P}$ be such that
\begin{equation*}
    |t_0, t_1| \times \{t_0\} \times K\times\{p_0\} \subseteq D_X. 
\end{equation*}
Then there exists a a neighbourhood $\mathcal{O} \subseteq \mathcal{P}$ of $p_0$ such that
\begin{equation*}
    \bigcup_{(t,x,p)\in|t_0,t_1|\times K\times\mathcal{O}} \Phi^X(t,t_0,x,p)
\end{equation*}
is well-defined and precompact.
\end{cor}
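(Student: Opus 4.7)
The plan is to deduce this from the openness of $D_X$ and continuity of $\Phi^X$ established in Theorem \ref{thm:3.4}, combined with local compactness of the manifold $M$ and one application of the tube lemma.

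First, I would observe that the slice $|t_0,t_1|\times\{t_0\}\times K\times\{p_0\}$ is compact (a product of compact sets) and contained in $D_X$ by hypothesis. Continuity of $\Phi^X$ on $D_X$, by Theorem \ref{thm:3.4}(\ref{thm:3.4-8}), then guarantees that its image
$$K' := \{\Phi^X(t,t_0,x,p_0) \ \suchthat \ t\in|t_0,t_1|,\ x\in K\}$$
is a compact subset of $M$. Since $M$ is Hausdorff, second countable, and finite-dimensional, it is locally compact, and so $K'$ admits a relatively compact open neighbourhood $V\subseteq M$.

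Next, I would form the preimage $W := (\Phi^X)^{-1}(V) \subseteq D_X$. By continuity of $\Phi^X$ and the openness of $D_X$ in $\mathbb{T}\times\mathbb{T}\times M\times\mathcal{P}$ (Theorem \ref{thm:3.4}(\ref{thm:3.4-7})), $W$ is open in $\mathbb{T}\times\mathbb{T}\times M\times\mathcal{P}$, and it contains the compact set $C\times\{p_0\}$, where $C := |t_0,t_1|\times\{t_0\}\times K$. Applying the tube lemma to the compact factor $C$ and the topological factor $\mathcal{P}$ at the point $p_0$, I obtain an open neighbourhood $\mathcal{O}\subseteq\mathcal{P}$ of $p_0$ such that $C\times\mathcal{O}\subseteq W\subseteq D_X$. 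In particular, $\Phi^X$ is well-defined on $|t_0,t_1|\times\{t_0\}\times K\times\mathcal{O}$, and its image lies in $V$, whose closure is compact in $M$. Consequently, the union
$$\bigcup_{(t,x,p)\in|t_0,t_1|\times K\times\mathcal{O}} \Phi^X(t,t_0,x,p) \subseteq \mathrm{cl}(V)$$
is precompact, as required.

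The main subtlety, if any, is the tube-lemma step: since $\mathcal{P}$ is an arbitrary topological space and is not assumed locally compact, one must invoke the version of the tube lemma that requires compactness only on the other factor. This is the standard formulation of the lemma, so no additional work is needed beyond assembling the three ingredients (openness of $D_X$, continuity of $\Phi^X$, and local compactness of $M$).
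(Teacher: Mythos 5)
Your proof is correct, but it takes a genuinely different route from the paper's. The paper begins exactly as you do: it obtains the compact set $K_0=\bigcup_{(t,x)\in|t_0,t_1|\times K}\Phi^X(t,t_0,x,p_0)$ from the preceding corollary and chooses a precompact neighbourhood $\mathcal{V}$ of it. But from there it appeals to Theorem \ref{thm:3.4}(\ref{thm:3.4-10}) — the uniform convergence of $t\mapsto\Phi^X(t,t_0,x,p)$ to $t\mapsto\Phi^X(t,t_0,x_0,p_0)$ as $(x,p)\to(x_0,p_0)$ — to produce, for each $x\in K$, neighbourhoods $\mathcal{U}_x$ of $x$ and $\mathcal{O}_x$ of $p_0$ keeping the flow inside $\mathcal{V}$, and then finishes with a finite subcover of $K$ and an intersection of the $\mathcal{O}_{x_j}$. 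You instead use only parts (\ref{thm:3.4-7}) and (\ref{thm:3.4-8}) of the theorem: openness of $D_X$ and continuity of $\Phi^X$ make $W=(\Phi^X)^{-1}(V)$ open in $\mathbb{T}\times\mathbb{T}\times M\times\mathcal{P}$, and the tube lemma applied to the compact slice $|t_0,t_1|\times\{t_0\}\times K\times\{p_0\}$ extracts the single neighbourhood $\mathcal{O}$ in one stroke. Your invocation of the tube lemma is legitimate — the standard form requires compactness only of the factor being ``tubed'' and places no hypothesis (such as local compactness) on $\mathcal{P}$ — and it simultaneously delivers the well-definedness claim, since $|t_0,t_1|\times\{t_0\}\times K\times\mathcal{O}\subseteq W\subseteq D_X$. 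What your approach buys is economy: the quantitative uniform-convergence statement (\ref{thm:3.4-10}) is replaced by the purely topological facts (\ref{thm:3.4-7}) and (\ref{thm:3.4-8}), and the finite-subcover bookkeeping is absorbed into the proof of the tube lemma rather than written out. The paper's version makes the covering explicit, which matches the style of the neighbouring arguments (compare Corollary \ref{cor:3.12}), but both proofs are compactness arguments of essentially the same depth and either would serve here.
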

\begin{proof}
By the previous corollary,
$$K_0\overset{\Delta}{=}\bigcup_{(t,x)\in|t_0,t_1|\times K} \Phi^X(t,t_0,x)$$
is compact. Since $M$ is locally compact, let $\mathcal{V}$ be a precompact neighbourhood of $K_0$. By part (\ref{thm:3.4-10}) of the theorem, for $x \in K$, let $\mathcal{U}_x$ be a neighbourhood of $x$ and let $\mathcal{O}_x$ be a neighbourhood of $p_0$ such that
$$\bigcup_{(t,x,p)\in|t_0,t_1|\times (K\cap\mathcal{U}_x)\times\mathcal{O}_x} \Phi^X(t,t_0,x,p)\subseteq\mathcal{V}.$$
By compactness of $K$, let $x_1, . . . , x_m \in K$ be such that
$K=\cup_{j=1}^{m}K\cap \mathcal{U}_{x_j}$ and let $\mathcal{O}=\cap_{j=1}^{k}\mathcal{O}_{x_j}$. Then 
$$\Phi^X(t,t_0,x,p)\subseteq\mathcal{V},\hspace{10pt}(t,x,p)\in|t_0,t_1|\times K\times\mathcal{O},$$
as desired.
\end{proof}
\begin{cor}[Uniform Lipschitz character of flows]\label{cor:3.12}
Let $M$ be a $C^\infty$-manifold, let $\mathbb{T} \subseteq \R$ be an interval, let $\mathcal{P}$ be a topological space, and let $X \in\Gamma^{\text{lip}}_{\text{PLI}}(\mathbb{T};TM; \mathcal{P})$. Let $K \subseteq M$ be compact, let $t_0, t_1 \in \mathbb{T}$, and let $p_0 \in \mathcal{P}$ be such that
\begin{equation*}
    |t_0, t_1| \times \{t_0\} \times K\times\{p_0\} \subseteq D_X. 
\end{equation*}
Then there exists a a neighbourhood $\mathcal{O} \subseteq \mathcal{P}$ of $p_0$ and $C\in\R_{>0}$ such that
$$d_\mathbb{G}(\Phi^X(t, t_0, x_1, p), \Phi^X(t, t_0, x_2, p)) \leq Cd_\mathbb{G}(x_1, x_2),\hspace{10pt} t \in |t_0, t_1|,\  x_1, x_2 \in K, \ p \in \mathcal{O}.$$
\end{cor}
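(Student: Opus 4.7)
The plan is to combine the uniform-in-$(t,p)$ local Lipschitz estimate extracted from the proof of Lemma~\ref{lem:3.6} with a compactness/chaining argument along the trajectory, after first using Corollary~\ref{cor:3.11} to confine the flow to a compact region of state space.

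First, I apply Corollary~\ref{cor:3.11} to obtain a neighbourhood $\mathcal{O}_1\subseteq\mathcal{P}$ of $p_0$ such that $K_1:=\text{cl}\{\Phi^X(t,t_0,x,p):(t,x,p)\in|t_0,t_1|\times K\times\mathcal{O}_1\}$ is compact. For each $(\tau,y)\in|t_0,t_1|\times K_1$, I invoke Lemma~\ref{lem:3.6}(\ref{lem:3.6-1}) at the triple $(\tau,y,p_0)$ to obtain $\alpha_{\tau,y}\in\R_{>0}$, neighbourhoods $\mathcal{V}_{\tau,y}$ of $y$ and $\mathcal{O}_{\tau,y}\subseteq\mathcal{O}_1$ of $p_0$, and a constant $L_{\tau,y}\in\R_{>0}$ such that $x\mapsto\Phi^X(t,\tau,x,p)$ is $L_{\tau,y}$-Lipschitz on $\mathcal{V}_{\tau,y}$ for every $t\in\mathbb{T}_{\tau,\alpha_{\tau,y}}$ and every $p\in\mathcal{O}_{\tau,y}$. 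The key point, visible in the proof of Lemma~\ref{lem:3.6}, is that the bound $(1-\lambda)^{-1}$ produced by the contraction-mapping step is uniform in $(t,p)$ throughout the product box.

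Second, I cover the compact set $\Sigma=\{(\tau,\Phi^X(\tau,t_0,x,p_0)):\tau\in|t_0,t_1|,\,x\in K\}$ by finitely many boxes of the above form, and via a Lebesgue-number argument produce a time-partition $t_0=\sigma_0<\sigma_1<\cdots<\sigma_N=t_1$ (the case $t_1<t_0$ is symmetric) and a compatible finite subdivision of $K$ into closed pieces $K_1,\ldots,K_s$, together with an assignment $(\ell,i)\mapsto j(\ell,i)$ such that the tube $\{(\tau,\Phi^X(\tau,t_0,x,p_0)):\tau\in[\sigma_i,\sigma_{i+1}],\,x\in K_\ell\}$ is contained in the single assigned box. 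Combining the continuity of Theorem~\ref{thm:3.4}(\ref{thm:3.4-8}) with the uniform-convergence statement of Theorem~\ref{thm:3.4}(\ref{thm:3.4-10}), and collecting over the compact set $K$, I then shrink to a common neighbourhood $\mathcal{O}\subseteq\bigcap_{\ell,i}\mathcal{O}_{\tau_{j(\ell,i)},y_{j(\ell,i)}}$ of $p_0$ on which the same containment persists for every $p\in\mathcal{O}$.

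Finally, I set $L=\max_{\ell,i}L_{\tau_{j(\ell,i)},y_{j(\ell,i)}}$ and iterate the semigroup identity $\Phi^X(\sigma_{i+1},t_0,\cdot,p)=\Phi^X(\sigma_{i+1},\sigma_i,\cdot,p)\circ\Phi^X(\sigma_i,t_0,\cdot,p)$. On each segment the outer factor is $L$-Lipschitz on the relevant patch, so the composition is $L^N$-Lipschitz on each piece $K_\ell$, uniformly in $p\in\mathcal{O}$; truncating at the segment containing an arbitrary intermediate $t\in|t_0,t_1|$ preserves the bound. A uniform Lipschitz constant on all of $K$ then follows from the piecewise bound by a routine patching argument using finitely many near-minimizing paths in $M$ connecting distinct pieces, absorbing a geometric factor depending only on $K$ and the chosen subdivision into the final constant $C$. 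The main obstacle is the second step, namely arranging the time-partition, the subdivision of $K$, the patch assignment, and the parameter neighbourhood $\mathcal{O}$ simultaneously so that the chained Lipschitz estimate closes up, which rests crucially on the uniform-convergence statement of Theorem~\ref{thm:3.4}(\ref{thm:3.4-10}).
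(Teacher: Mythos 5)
Your proposal is correct and follows essentially the same route as the paper: extract the uniform-in-$(t,p)$ local Lipschitz constant from the contraction-mapping step of Lemma~\ref{lem:3.6}, confine the flow to a compact region via Corollary~\ref{cor:3.11}, cover the trajectory tube by finitely many boxes, chain the estimates along the time axis via the composition identity, and then globalize over $K$. The only divergence is in the last globalization step, where the paper replaces your near-minimizing-path patching (which is delicate, since such paths need not remain where the local estimates hold) by the more robust Lebesgue-number argument together with the crude diameter bound $2M/r$ for points of $K$ at distance at least $r$; your argument closes up once that substitution is made.
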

\begin{proof}
As in the proof of Lemma \ref{lem:3.6}(\ref{lem:3.6-1}) from the theorem (see, especially, the last few lines of that part of the proof), for $(t, x) \in |t_0, t_1| \times K$, there exists an open interval $\mathbb{T}_{(t,x)} \subseteq |t_0, t_1|$ containing $t$, a neighbourhood $\mathcal{U}_{(t,x)} \subseteq M$ of $x$, and a neighbourhood $\mathcal{O}_{(t,x)}$ of $p_0$ such that
$$\mathbb{T}_{(t,x)} \times \{t_0\} \times\mathcal{U}_{(t,x)}\times\mathcal{O}_{(t,x)} \subseteq D_X,$$
and such that there exists $C_{(t,x)} \in \R_{>0}$ for which
\begin{eqnarray}\label{eq:3.16}
   &&d_\mathbb{G}(\Phi^X(t', t_0, x_1, p), \Phi^X(t', t_0, x_2, p)) \leq C_{(t,x)}d_\mathbb{G}(\Phi^X(t, t_0, x_1, p), \Phi^X(t, t_0, x_2, p)),\nonumber\\
   &&\hspace{8cm}t \in |t_0, t_1|,\  x_1, x_2 \in K, \ p \in \mathcal{O}.
\end{eqnarray}
Denote 
$$\mathcal{N}_{(t,x)} = \{\Phi^X(t', t_0, x', p)\  \suchthat (t', x', p) \in \mathbb{T}_{(t,x)} \times \mathcal{U}_{(t,x)} \times \mathcal{O}_{(t,x)}\},$$
noting that $\mathcal{N}_{(t,x)}$ is a filter neighbourhood of $\Phi^X(t, t_0, x, p_0)$. By compactness of
$$K_x \overset{\Delta}{=}\{\Phi^X(t, t_0, x, p_0)\  \suchthat\  t \in |t_0, t_1|\},$$
there exist $t_{x,1}, . . . , t_{x,m_x} \in |t_0, t_1|$ such that
$$K_x\subseteq\bigcup_{j=1}^{m_x}\text{int}(\mathcal{N}_{(t_{x,j},x)}).$$
Let us also choose $t_{x,0} = t_0$ and so add to this finite cover the set $\mathcal{N}_{(t_{x,0},x)}$ associated with $t = t_0$. Let $\mathcal{O}_x=\cap_{j=0}^{m_x}\mathcal{O}_{(t_{x,j},x)}$ and $\mathcal{U}_x=\cap_{j=0}^{m_x}\mathcal{U}_{(t_{x,j},x)}$, and note that 
$$|t_0, t_1| \times \{t_0\} \times K\cap \mathcal{U}_x\times\mathcal{O}_x \subseteq D_X. $$
Also note that
$$\mathcal{V}_{x} \overset{\Delta}{=} \{\Phi^X(t, t_0, x', p)\  \suchthat (t, x', p) \in |t_0,t_1| \times \mathcal{U}_{x} \times \mathcal{O}_{x}\}$$
is a filter neighbourhood of $K_x$. Since the intervals $\mathbb{T}_{(t_{x,j} ,x)}$, $j \in \{0, . . . , m_x\}$, cover $|t_0, t_1|$
and since $t_{x,0} = t_0$, for any $j \in \{1, . . . , m\}$ we can write
\begin{equation}\label{eq:3.17}
    \Phi^X(t_{x,j} , t_0, x, p) = \Phi^X _{t_{x,j_m},t_{x,j_{m-1}}}\circ \Phi^X_{t_{x,j_1},t_0}
\end{equation}
for some $j_1, . . . , j_m \in \{1, . . . , m_x\}$ satisfying $t_{x,j_l} \in \mathbb{T}_{x,j_{l-1}}$, $l \in \{1, . . . , m\}$. Moreover, we can do this with at most $m_x$ compositions.

Now let $x_1, x_2 \in \mathcal{U}_x$ and note that, for $t \in |t_0, t_1|$, $t \in\mathbb{T}_{(t_{x,j} ,x)}$ for some $j \in \{0, 1, . . . , m_x\}$.
We also have $x_1, x_2 \in \mathcal{U}_{(t_{x,j} ,x)}$. Therefore, with this $j$ chosen and for $p \in \mathcal{O}_x$,
$$d_\mathbb{G}(\Phi^X(t, t_0, x_1, p), \Phi^X(t, t_0, x_2, p)) \leq C_{t_j,x}d_\mathbb{G}(\Phi^X(t_{x,j}, t_0, x_1, p), \Phi^X(t_{x,j}, t_0, x_2, p)).$$
Using the composition (\ref{eq:3.17}) and the bound (\ref{eq:3.16}) for each term in the composition, we have
$$d_\mathbb{G}(\Phi^X(t, t_0, x_1, p), \Phi^X(t, t_0, x_2, p)) \leq C_xd_\mathbb{G}(x_1, x_2),\hspace{10pt} t \in |t_0, t_1|,$$
taking 
$$C_x = \max\{C_{(t_{x,1},x)}, . . . , C_{(t_{x,m_x},x)}\}^{m_x+1}$$
Choose $x_1, . . . , x_m \in K$ so that $K = \cup^m_{j=1}(K \cap \mathcal{U}_{x_j})$. Let $\mathcal{O} = \cap^m_{j=1}\mathcal{O}_{x_j}$. If necessary and by Corollary \ref{cor:3.11}, shrink $\mathcal{O}$ so that
$$\{\Phi^X(t, t_0, x, p)\ \suchthat\  t \in |t_0, t_1|, x \in K, p \in \mathcal{O}\}$$
is precompact. Let $M \in \R_{>0}$ and $x_0 \in K$ be such that
$$d_\mathbb{G}(x, x_0) \leq M,\hspace{10pt} x \in K.$$
Note that
$$\{\Phi^X(t, t_0, x, p)\ \suchthat\  t \in |t_0, t_1|, x \in K, p \in \mathcal{O}\}\subseteq\bigcup_{j=1}^{m}\mathcal{V}_{x_j}.$$
By the Lebesgue Number Lemma \citep[Theorem 1.6.11]{MR1835418}, let $r \in \R_{>0}$ be such that, if $x_1, x_2 \in K$ satisfy $d_\mathbb{G}(x_1, x_2) < r$, then there exists $j \in \{1, . . . , m\}$ such that $x_1, x_2 \in \mathcal{U}_{x_j}$. Let
$$C = \max \left\{ C_{x_1}, . . . , C_{x_m},\frac{2M}{r}\right\}.$$
Finally, let $t \in |t_0, t_1|$, let $x_1, x_2 \in K$, and let $p \in \mathcal{O}$. If $d_\mathbb{G}(x_1, x_2) < r$, let $j \in \{1, . . . , m\}$ be such that $x_1, x_2 \in\mathcal{U}_{x_j}$. Then we have
$$d_\mathbb{G}(\Phi^X(t, t_0, x_1, p), \Phi^X(t, t_0, x_2, p)) \leq C_{x_j}d_\mathbb{G}(x_1, x_2)\leq Cd_\mathbb{G}(x_1, x_2).$$
If $d_\mathbb{G}(x_1, x_2) \geq r$, then
\begin{eqnarray*}
   d_\mathbb{G}(\Phi^X(t, t_0, x_1, p), \Phi^X(t, t_0, x_2, p))&\leq& d_\mathbb{G}(\Phi^X(t, t_0, x_1, p),x_0)+d_\mathbb{G}(\Phi^X(t, t_0, x_2, p),x_0)\\
   &\leq&\frac{2M}{r}r\leq Cd_\mathbb{G}(x_1, x_2),
\end{eqnarray*}
as desired.
\end{proof}
\subsection{Continuous dependence of fixed-time flow on parameter}
\begin{thm}
Let $m\in \mathbb{Z}_{\geq 0}$, let $m'\in\{0,\rm{lip}\}$, let $\nu\in\{m+m',\infty,\omega,\rm{hol}\}$ satisfy $\nu\geq \rm{lip}$, and let $r\in \{\infty,\omega,\rm{hol}\}$as appropriate. Let $M$ be a $C^r$-manifold, let $\mathbb{T}\subseteq\R$ be an interval, let $\mathcal{P}$ be a topological space. For $X\in\Gamma^\nu_{\rm{PLI}}(\mathbb{T};TM;\mathcal{P})$, let $t_0,t_1\in\mathbb{T}$ and $p_0\in\mathcal{P}$ be such that there exists a precompact open set $\mathcal{U}\subseteq M$ such that $\rm{cl}(\mathcal{U})\subseteq \rm{D}_X(t_1,t_0,p_0)$. Then there exists a neighbourhood $\mathcal{O}\subseteq\mathcal{P}$ of $p_0$ such that mapping
\begin{eqnarray*}
  \mathcal{O}\ni p \mapsto \Phi^{X^p}_{t_1,t_0}\in C^\nu(\mathcal{U};M)
\end{eqnarray*}
is well-defined and continuous.
\end{thm}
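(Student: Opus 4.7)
My plan proceeds in three conceptual stages: well-definedness, individual $C^\nu$-regularity, and continuity in the parameter. For well-definedness, the hypothesis $\text{cl}(\mathcal{U}) \subseteq D_X(t_1,t_0,p_0)$ combined with the semigroup property of the flow yields $|t_0,t_1| \times \{t_0\} \times \text{cl}(\mathcal{U}) \times \{p_0\} \subseteq D_X$. Applying Corollary \ref{cor:3.11} supplies a neighbourhood $\mathcal{O}_1$ of $p_0$ on which this inclusion persists, so $\Phi^{X^p}_{t_1,t_0}$ is defined on $\text{cl}(\mathcal{U})$ for all $p \in \mathcal{O}_1$. That each such map lies in $C^\nu(\mathcal{U}; M)$ follows from regular dependence of flows on initial conditions: for $\nu = \text{lip}$ this is Corollary \ref{cor:3.12}, and for higher $\nu$ one invokes the standard variational equations driven by jets of $X^p$, which remain in $\Gamma^\nu_{\text{LI}}$.

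The core of the theorem is parameter continuity. Following the globally-defined-function philosophy of Section \ref{sec:3.1}, for any $f \in C^r(M)$, $x \in \mathcal{U}$, and $t \in |t_0, t_1|$, I would compute $f\circ\Phi^{X^p}(t,t_0,x) - f\circ\Phi^{X^{p_0}}(t,t_0,x)$ by subtracting the integral characterisations of each flow and splitting the integrand into two pieces: (A) $Xf(\cdot,\Phi^{X^p},p) - Xf(\cdot,\Phi^{X^{p_0}},p)$, which compares the same vector field along two nearby flows; and (B) $Xf(\cdot,\Phi^{X^{p_0}},p) - Xf(\cdot,\Phi^{X^{p_0}},p_0)$, which compares two nearby vector fields along the same orbit. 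Piece (A) is bounded, via Lemma \ref{lem:5.3}, by an $L^1$-function times $d_\mathbb{G}(\Phi^{X^p}(s,t_0,x), \Phi^{X^{p_0}}(s,t_0,x))$, uniformly in a neighbourhood of $p_0$; piece (B) tends to zero as $p \to p_0$ by the continuity condition (\ref{eq:2.4}) characterising membership in $\Gamma^\nu_{\text{PLI}}$. A Gronwall argument then yields uniform-in-$x$ convergence $\Phi^{X^p}_{t_1,t_0} \to \Phi^{X^{p_0}}_{t_1,t_0}$ in $C^0(\mathcal{U}; M)$.

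To upgrade this to $C^\nu$, I would apply the same scheme to the variational equations: for each $m$, the $m$-jet of the flow satisfies a coupled integral system whose driving data consists of jets of $X^p$, which depend continuously on $p$ in the appropriate seminorms of $\Gamma^\nu_{\text{PLI}}$ (again by (\ref{eq:2.4})). Running Gronwall on the total space of the relevant jet bundle, and using that the Lipschitz constants obtained in Corollary \ref{cor:3.12} can be iterated up the jet tower, gives convergence in each $p^\nu_K$ for finite differentiability orders and hence also in the $C^\infty$ topology.

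The hard part will be the real analytic case, where the seminorms $p^\omega_{K,\boldsymbol{a}}$ involve all derivative orders simultaneously, weighted by a sequence $\boldsymbol{a} \in c_0(\mathbb{Z}_{\geq 0};\R_{>0})$. The Gronwall estimates coming from the variational equations must be organised uniformly across all orders, which requires Cauchy-type growth estimates for jets of real analytic vector fields (cf.\ Section \ref{sec:sfras}) together with the specific characterisations of the $C^\omega$-topology from \citep[\S 5.2]{Jafarpour2014}. Coordinating the growth of all jet orders under parameter perturbation so that the $c_0$-weights can be absorbed uniformly is where I expect the main technical burden to lie; the holomorphic case should then be a comparatively straightforward consequence, as its seminorms depend only on pointwise values rather than on a tower of jets.
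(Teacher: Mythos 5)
Your overall architecture --- well-definedness via Corollary \ref{cor:3.11}, then a two-term splitting of $f\circ\Phi^{X^p}_{t_1,t_0}(x)-f\circ\Phi^{X^{p_0}}_{t_1,t_0}(x)$ into a ``same vector field, two orbits'' piece and a ``two vector fields, same orbit'' piece, the latter killed by (\ref{eq:2.4}) --- is exactly the paper's. Where you diverge is in how the first piece is closed and how higher regularity is reached. You close with Gronwall; the paper instead never runs Gronwall in this proof, but feeds the already-established uniform convergence of flows (Theorem \ref{thm:3.4}(\ref{thm:3.4-10})) into the continuity of the curve-to-composite-section map (Lemma \ref{lem:3.14}, and Lemmas \ref{lem:2.5}--\ref{lem:2.6} in the jet-bundle version), i.e.\ a dominated-convergence argument. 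Your Gronwall closure is workable --- the uniform integrated Lipschitz bound of Lemma \ref{lem:5.3} controls the exponential factor uniformly in $p$ --- but note that Lemma \ref{lem:5.3} as stated is an integrated estimate with fixed endpoints, so you must descend to the pointwise dilatation bound to set up the integral inequality. For $C^m$ you propose variational equations; the paper's substitute is to view $j_m\Phi^{X^p}_{t,t_0}(x)$ as an element of $\mathrm{Hom}_{\R}(J^m(M;\R)_{\Phi^{X^p}_{t,t_0}(x)};J^m(M;\R)_x)$ and rerun the $C^0$ argument in the bundle $J^m(M;\R)$, which is the coordinate-free packaging of the same idea. Up to this point your route is a legitimate, more classical alternative.

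The genuine gap is the real analytic case, which you correctly identify as the crux but for which your plan would not go through as described. Running Gronwall on the variational system at each jet order $m$ and then trying to absorb the weights $a_0a_1\cdots a_m$ of $p^\omega_{K,\boldsymbol{a}}$ fails because the Gronwall/iteration constants grow without control in $m$ (each ascent of the jet tower multiplies in derivatives of $X^p$, so the order-$m$ constant is not uniformly summable against an arbitrary $\boldsymbol{a}\in c_0(\mathbb{Z}_{\geq 0};\R_{>0})$); ``coordinating the growth of all jet orders'' is precisely the problem, and no mechanism is offered to solve it. The paper's mechanism is structurally different: by Lemma \ref{lem:heotdvf} one extends $X^{p_0}$ and $f$ holomorphically to a neighbourhood $\overbar{\mathcal{U}}$ of $M$ in a complexification $\overbar{M}$, obtains the $C^0$ estimate there exactly as in the Lipschitz case, and then invokes the Cauchy estimates $\|j_m(\cdot)\|_{\mathbb{G}_{M,\pi,m}}\leq Cr^{-m}|\cdot|$ to dominate \emph{every} jet order simultaneously by the single $C^0$ holomorphic quantity; since $a_j\leq r$ for all but finitely many $j$, the products $a_0(a_1/r)\cdots(a_m/r)$ are bounded and the $C^\omega$ seminorms collapse onto the $C^0$ one. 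In other words, the real analytic case is resolved by reduction to the $C^0$ case through complexification, not by a uniform-in-order Gronwall scheme; without the holomorphic extension step your outline has no access to Cauchy estimates for the flow and the argument stalls. (Relatedly, the logical order is the reverse of yours: the holomorphic case is the verbatim $C^0$ case, and the real analytic case is derived from it, not the other way around.)
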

We break down the proof into the various classes of regularity. The proofs bear a strong resemblance to one another, so we go through the details carefully in the first case we prove, the locally Lipschitz case, and then merely outline where the arguments differ for the other regularity classes.
\begin{proof}
\subsubsection{The \texorpdfstring{$C^\text{lip}$}{}-case}\label{sec:3.3.1}
Note that $\rm{cl}(\mathcal{U})$ is compact. Therefore, by Corollary \ref{cor:3.11}, there exists a compact set $K' \subseteq M$ and a neighbourhood $\mathcal{O}$ of $p_0$ such that
$$\Phi^{X^p}_{t_1,t_0}(x)\in \text{int}(K'),\hspace{10pt}(x,p)\in\text{cl}(\mathcal{U})\times \mathcal{O}.$$
This gives the well-definedness assertion of the theorem. Note, also, that it gives the welldefinedness assertion for all $\nu \in \{m + m', \infty, \omega, \text{hol}\}$, and so we need not revisit this for the remainder of the proof. The compact set $K' \subseteq M$ and the neighbourhood $\mathcal{O}$ of $p_0$ will be used in all parts of the proof without necessarily referring to our constructions here.

For continuity, first we show that the mapping
\begin{eqnarray*}
  \mathcal{O}\ni p \mapsto \Phi^{X^p}_{t_1,t_0}\in C^0(\mathcal{U};M)
\end{eqnarray*}
is continuous. The topology for $C^0(\mathcal{U};M)$ is the uniform topology defined by the semimetrics 
$$d^0_{K,f}(\Phi_1,\Phi_2)=\sup\{|f\circ\Phi_1(x)-f\circ\Phi_2(x)|\ | \ x\in K\},\hspace{10pt} f\in C^\infty(M), \ K\subseteq \mathcal{U}\ \text{compact}.$$
Thus, we must show that, for $f_1,...,f_m\in C^\infty(M)$, for $K_1,...,K_m\subseteq\mathcal{U}$ compact, and for $\epsilon_1,...,\epsilon_m\in\R_{>0}$, there exists a neighborhood $\mathcal{O}$ of $p_0$ such that 
$$|f_j\circ\Phi^{X^p}_{t_1,t_0}(x_j)-f_j\circ \Phi^{X^{p_0}}_{t_1,t_0}(x_j)|<\epsilon_j,\hspace{10pt} x_j\in K_j, \ p\in\mathcal{O}, \ j\in\{1,...,m\}.$$
It will suffice to show that, for $f\in C^\infty(M)$, for $K\subseteq\mathcal{U}$ compact, and for $\epsilon\in\R_{>0}$, we have  
$$|f\circ\Phi^{X^p}_{t_1,t_0}(x)-f\circ \Phi^{X^{p_0}}_{t_1,t_0}(x)|<\epsilon,\hspace{10pt} x\in K, \ p\in\mathcal{O}.$$
Indeed, if we show that then, taking $K=\cup_{j=1}^{k}K_j$ and $\epsilon
=\min\{\epsilon_1,...,\epsilon_m\}$, we have 
$$|f_j\circ\Phi^{X^p}_{t_1,t_0}(x)-f_j\circ \Phi^{X^{p_0}}_{t_1,t_0}(x)|<\epsilon,\hspace{10pt} x\in K, \ p\in\mathcal{O}, \ j\in\{1,...,m\}$$
for a suitable $\mathcal{O}$. This suffices to give the desired conclusion.

It is useful to consider the space $C^0(\mathbb{T};M)$ with the topology (indeed, uniformity) defined by the family of semimetrics
$$d_{\mathbb{S},M}(\gamma_1,\gamma_2)=\{d_{\mathbb{G}}(\gamma_1(t),\gamma_2(t))\ |\ t\in \mathbb{S}\},\hspace{10pt}\mathbb{S}\subseteq\mathbb{T} \text{ a compact interval.}$$
For $g\in C^0_{\text{LI}}(|t_0,t_1|;M)$, we also have the mapping
\begin{eqnarray*}
  \Psi_{|t_0,t_1|,M,g}:C^0(|t_0,t_1|;M) &\rightarrow & L^1_{\rm{loc}}(|t_0,t_1|;\R)\\
       \gamma&\mapsto& (t\mapsto g_t(\gamma(t))).
\end{eqnarray*}
The following lemma gives the well-definedness and continuity of this mapping.
\renewcommand\qedsymbol{$\nabla$}
\begin{lem}\label{lem:3.14}
If $\mathbb{T}\subseteq\R$ is an interval and if $g\in C^0_{\text{LI}}(\mathbb{T};M)$, then $\Psi_{\mathbb{T},M,g}$ is well-defined and continuous. 
\end{lem}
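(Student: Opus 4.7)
The plan is to mirror the arguments of Lemma \ref{lem:2.5} and Lemma \ref{lem:2.6}, applied here in the simpler setting where the ``fibre'' is just $\R$ and the varying datum is a curve $\gamma \in C^0(\mathbb{T};M)$ rather than a section of $E^* \otimes F$. First I would address well-definedness, which amounts to verifying that, for each $\gamma \in C^0(\mathbb{T};M)$, the function $t \mapsto g(t,\gamma(t))$ lies in $L^1_{\text{loc}}(\mathbb{T};\R)$.

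For measurability I would repeat verbatim the dyadic step-function construction from the proof of Lemma \ref{lem:2.5}. Fix a compact subinterval $[a,b]\subseteq \mathbb{T}$, set $t_{k,j} = a + \tfrac{j-1}{k}(b-a)$, and define
\[
g_k(t) = \sum_{j=1}^{k} g(t,\gamma(t_{k,j}))\,\chi_{\mathbb{T}_{k,j}}(t).
\]
Each $g_k$ is measurable because $t\mapsto g(t,\gamma(t_{k,j}))$ is measurable (since $g \in C^0_{\text{LI}}(\mathbb{T};M)$ and $g^{p}$ is Carath\'eodory in the relevant sense), and pointwise in $t$ we have $g_k(t) \to g(t,\gamma(t))$ by the continuity of $s\mapsto g(t,\gamma(s))$. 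The pointwise limit of measurable functions is measurable, so $t\mapsto g(t,\gamma(t))$ is measurable on $[a,b]$, hence on $\mathbb{T}$. For integrability, given a compact $\mathbb{S}\subseteq\mathbb{T}$, the image $\gamma(\mathbb{S})$ is compact in $M$, and by definition of $C^0_{\text{LI}}(\mathbb{T};M)$ there exists $h \in L^1(\mathbb{S};\R_{\geq 0})$ such that $|g(t,x)| \leq h(t)$ for $(t,x)\in \mathbb{S}\times \gamma(\mathbb{S})$. This shows $t\mapsto g(t,\gamma(t)) \in L^1(\mathbb{S};\R)$, and therefore $\Psi_{\mathbb{T},M,g}(\gamma) \in L^1_{\text{loc}}(\mathbb{T};\R)$.

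For continuity I would adapt the argument from Lemma \ref{lem:2.6}. Let $\gamma_j \to \gamma$ in $C^0(\mathbb{T};M)$, and let $\mathbb{S}\subseteq\mathbb{T}$ be a compact interval. Since $\gamma(\mathbb{S})$ is compact and $M$ is locally compact, there is a precompact neighbourhood $\mathcal{W}$ of $\gamma(\mathbb{S})$; by uniform convergence on $\mathbb{S}$, $\gamma_j(\mathbb{S})\subseteq \mathcal{W}$ for all sufficiently large $j$. Hence there is a common compact set $K\subseteq M$ containing $\gamma(\mathbb{S})$ and all the $\gamma_j(\mathbb{S})$ (for $j$ large), and by $g\in C^0_{\text{LI}}(\mathbb{T};M)$ there exists $h\in L^1(\mathbb{S};\R_{\geq 0})$ with $|g(t,x)|\leq h(t)$ for $(t,x)\in\mathbb{S}\times K$. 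For each fixed $t\in\mathbb{S}$, continuity of $x\mapsto g(t,x)$ together with $\gamma_j(t)\to \gamma(t)$ gives
\[
\lim_{j\to\infty} g(t,\gamma_j(t)) = g(t,\gamma(t)),
\]
and the uniform integrable bound $|g(t,\gamma_j(t))| \leq h(t)$ allows the Dominated Convergence Theorem to yield
\[
\lim_{j\to\infty}\int_{\mathbb{S}}|g(t,\gamma_j(t)) - g(t,\gamma(t))|\,\d t = 0,
\]
which is exactly convergence in the seminorm defining the topology of $L^1_{\text{loc}}(\mathbb{T};\R)$ associated with $\mathbb{S}$.

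The main obstacle, if any, is the measurability step, since one must pass from the joint measurability-in-$t$ and continuity-in-$x$ encoded in $g\in C^0_{\text{LI}}$ to measurability of the composite $t\mapsto g(t,\gamma(t))$; but this is precisely what the step-function approximation of Lemma \ref{lem:2.5} accomplishes, and it transfers with no essential change because the trivial line bundle setting is a special case of the vector bundle framework there. Everything else is a direct application of dominated convergence.
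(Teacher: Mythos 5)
Your proposal is correct and follows essentially the same route as the paper's proof: the dyadic step-function approximation for measurability, the compactness-plus-$L^1$-domination argument for local integrability, and the precompact-neighbourhood-plus-Dominated-Convergence argument for continuity are exactly the steps the paper takes (indeed the paper's proof is itself a near-verbatim specialisation of Lemmas \ref{lem:2.5} and \ref{lem:2.6}, as you anticipated). Your phrasing of the conclusion as $\lim_{j\to\infty}\int_{\mathbb{S}}|g(t,\gamma_j(t))-g(t,\gamma(t))|\,\d t=0$ is in fact the slightly sharper and more appropriate form for the $L^1$-seminorm topology, but this is a refinement of the same argument, not a different one.
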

\begin{proof}
We first show that $t\mapsto g(t,\gamma(t))$ is measurable on $\mathbb{T}$. Note that 
$$t\mapsto g(t,\gamma(s))$$
is measurable for each $s \in \mathbb{T}$ and that
\begin{equation}\label{eq:3.18}
    s\mapsto g(t,\gamma(s))
\end{equation}
is continuous for each $t \in \mathbb{T}$. Let $[a, b] \subseteq \mathbb{T}$ be compact, let $k \in \mathbb{Z}_{>0}$, and denote
$$t_{k,j} = a + \frac{j-1}{k}(b - a), \hspace{10pt} j \in \{1, . . . , k + 1\}.$$
Also denote 
$$\mathbb{T}_{k,j} = [t_{k,j} , t_{k,j+1}), \hspace{10pt} j \in \{1, . . . , k - 1\},$$
and $\mathbb{T}_{k,k} = [t_{k,k}, t_{k,k+1}]$. Then define $g_k : \mathbb{T} \rightarrow \R$ by
$$g_k(t)=\sum_{j=1}^{k}g(t,\gamma(t_{k,j}))\chi_{t_{k,j}}.$$
Note that $g_k$ is measurable, being a sum of products of measurable functions \citep[Proposition 2.1.7]{MR3098996}. By continuity of (\ref{eq:3.18}) for each $t \in \mathbb{T}$, we have
$$\lim_{k\to\infty}g_k(t) = g(t,\gamma(t)),\hspace{10pt} t \in [a, b],$$
showing that $t\mapsto g(t,\gamma(t))$ is measurable on $[a,b]$, as pointwise limits of measurable functions are measurable \citep[Proposition 2.1.5]{MR3098996}. Since the compact interval $[a, b] \subseteq \mathbb{T}$ is arbitrary, we conclude that $t \mapsto  g(t,\gamma(t))$ is measurable on $\mathbb{T}$.

Let $\mathbb{S} \subseteq \mathbb{T}$ be compact and let $K \subseteq M$ be a compact set for which $\gamma(\mathbb{S}) \subseteq K$. Since $g\in C^0_{\text{LI}}(\mathbb{T};M)$ is continuous, there exists $h\in L^1(\mathbb{S};\R_{\geq 0})$ be such that 
$$|g(t,x)|\leq h(t) \hspace{20pt} (t,x)\in\mathbb{S}\times K,$$
In particular, this shows that $t \mapsto g(t,\gamma(t))$ is integrable on $\mathbb{S}$ and so locally integrable on $\mathbb{T}$. This gives the well-definedness of $\Psi_{\mathbb{T},M,g}$.

For continuity, let $\gamma_j\in C^0(\mathbb{S};M),\ j\in\mathbb{Z}_{>0}$, be a sequence of curves converging uniformly to $\gamma\in C^0(\mathbb{S};M)$. Let $\mathbb{S}\subseteq\mathbb{T}$ be a compact interval and let $K\subseteq M$ be compact. Since $\text{image}(\gamma)\cup K$ is compact and $M$ is locally compact, we can find a precompact neighbourhood $\mathcal{U}$ of $\text{image}(\gamma)\cup K$. Then for $N\in \mathbb{Z}_{>0}$ 0 sufficiently large, we have $\text{image}(\gamma_j)\subseteq\mathcal{U}$ for all $j\geq N$ by uniform convergence. Therefore, we can find a compact set $K'\subseteq M$ such that $\text{image}(\gamma_j)\subseteq K'$ for all $j\geq N$ and $\text{image}(\gamma)\subseteq K'$. Then for fixed $t\in\mathbb{S}$, continuity of $x\mapsto g(t,x)$ ensures that $\lim_{j\to\infty}g(t,\gamma_j(t))=g(t,\gamma(t))$. We also have
$$|g(t,\gamma_j(t))|\leq h(t) \hspace{20pt} t\in\mathbb{S}.$$
Therefore, by the Dominated Convergence Theorem
$$\lim_{j\to\infty}\int_{\mathbb{S}}g(t,\gamma_j(t))\ dt=\int_{\mathbb{S}}g(t,\gamma(t))\ dt,$$
which gives the desired continuity.
\end{proof}
Now consider the following mapping 
\begin{eqnarray*}
  \Phi_{\mathbb{T}, K,\mathcal{O}}:K\times\mathcal{O} &\rightarrow & C^0(\mathbb{T};M)\\
       (x,p)&\mapsto& (t\mapsto\Phi^X(t,t_0,x,p)).
\end{eqnarray*}
By Theorem \ref{thm:3.4}(\ref{thm:3.4-10}), this mapping is continuous. We also have the continuous mapping
\begin{eqnarray*}
  \iota_{|t_0,t_1|}:C^0(\mathbb{T};M) &\rightarrow& C^0(|t_0,t_1|;M)\\
       \gamma&\mapsto& \gamma||t_0,t_1|.
\end{eqnarray*}
Let $f\in C^\infty(M)$, let $\epsilon>0$, and let $x\in K$. Combining the observations of two paragraphs previous with the lemma, the mapping
$$\Psi_{|t_0,t_1|,M,X^{p_0}f}\circ \iota_{|t_0,t_1|}\circ \Phi_{\mathbb{T}, K,\mathcal{O}}: K\times \mathcal{O}\rightarrow L^1(|t_0,t_1|;\R)$$
is continuous. Thus there exists a relative neighbourhood $\mathcal{V}_x\subseteq K$ of $x$ and a neighbourhood $\mathcal{O}_x\subseteq\mathcal{O}$ of $p_0$ such that
$$\int_{|t_0,t_1|}|X^{p_0}f(s,\Phi^{X^p}_{s,t_0}(x'))-X^{p_0}f(s,\Phi^{X^{p_0}}_{s,t_0}(x'))|\ ds<\frac{\epsilon}{2} \hspace{15pt} x'\in\mathcal{V}_x, \ p\in\mathcal{O}_x.$$
Let $x_1,...,x_m\in K$ be such that $K= \cup_{j=1}^{m}\mathcal{V}_{x_j}$ and define a neighbourhood $\mathcal{O}_1=\cap_{j=1}^{k}\mathcal{O}_{x_j}$ of $p_0$. Then we have
\begin{equation}\label{eq:5.19}
    \int_{|t_0,t_1|}|X^{p_0}f(s,\Phi^{X^p}_{s,t_0}(x))-X^{p_0}f(s,\Phi^{X^{p_0}}_{s,t_0}(x))|\ ds<\frac{\epsilon}{2},\hspace{15pt}x\in K,\ p\in\mathcal{O}_1 .
\end{equation}
By (\ref{eq:2.4}), we can further shrink $\mathcal{O}_1$ if necessary so that
$$\int_{|t_0,t_1|}|X^{p}f(s,x)-X^{p_0}f(s,x)|\ ds<\frac{\epsilon}{2} \hspace{15pt} x'\in K, \ p\in\mathcal{O}_1.$$
Then we have 
\begin{eqnarray*}
   &&|f\circ\Phi^{X^p}_{t_1,t_0}(x)-f\circ \Phi^{X^{p_0}}_{t_1,t_0}(x)|\\
   &&\hspace{2cm}\leq\int_{|t_0,t_1|}|X^{p}f(s,\Phi^{X^p}_{s,t_0}(x))-X^{p_0}f(s,\Phi^{X^{p_0}}_{s,t_0}(x))|\ ds\\
   &&\hspace{2cm}\leq \int_{|t_0,t_1|}|X^{p}f(s,\Phi^{X^p}_{s,t_0}(x))-X^{p_0}f(s,\Phi^{X^{p}}_{s,t_0}(x))|\ ds\\
   &&\hspace{2.4cm}+\int_{|t_0,t_1|}|X^{p_0}f(s,\Phi^{X^p}_{s,t_0}(x))-X^{p_0}f(s,\Phi^{X^{p_0}}_{s,t_0}(x))|\ ds\\
   &&\hspace{2cm}\leq \frac{\epsilon}{2}+\frac{\epsilon}{2}=\epsilon,
\end{eqnarray*}
for $x\in K$ and $p\in\mathcal{O}_1$, as desired. 

Now, to show the mapping 
\begin{eqnarray*}
  \mathcal{O}\ni p \mapsto \Phi^{X^p}_{t_1,t_0}\in C^{\rm{lip}}(\mathcal{U};M)
\end{eqnarray*}
is continuous, we must include our analysis the dilatation semimetrics
$$\lambda^0_{K,f}(\Phi_1,\Phi_2)=\sup\{\text{dil}(f\circ\Phi_1-f\circ\Phi_2)(x)\ | \ x\in K\},\hspace{10pt} f\in C^\infty(M), \ K\subseteq \mathcal{U}\ \text{compact},$$
for $C^{\rm{lip}}(\mathcal{U};M)$. As we argued above, it will suffice to show that, for $f\in C^\infty(M)$, for $K\subseteq\mathcal{U}$ compact, and for $\epsilon\in\R_{>0}$, there exists a neighborhood $\mathcal{O}$ of $p_0$ such that 
$$\text{dil}(f\circ\Phi^{X^p}_{t_1,t_0}-f\circ \Phi^{X^{p_0}}_{t_1,t_0})(x)<\epsilon,\hspace{10pt} x\in K, \ p\in\mathcal{O}.$$
By Corollary \ref{cor:3.12}, we let $C\in\R_{>0}$ and let $\mathcal{O'}\subseteq\mathcal{O}$ be a neighbourhood of $p_0$ such that
$$d_{\mathbb{G}}(\Phi^X(t,t_0,x_1,p),\Phi^X(t,t_0,x_2,p))\leq Cd_{\mathbb{G}}(x_1,x_2), \hspace{10pt} t\in|t_0,t_1|,\ x_1,x_2\in K,\ p\in\mathcal{O'}.$$
Let $f\in C^\infty(M)$, let $K\subseteq\mathcal{U}$ be compact, and let $\epsilon>0$. First we estimate 
$$\int_{|t_0,t_1|}\text{dil}(X_{s}^{p_0}f\circ \Phi^{X^{p}}_{s,t_0}-X_{s}^{p_0}f\circ \Phi^{X^{p_0}}_{s,t_0})(x)ds.$$
We shall use a strategy similar to the above for the $p^0_K$ seminorm, and borrow the notation from the computations there. For $g\in C^{\text{lip}}_{\text{LI}}(|t_0,t_1|;M)$, we also have the mapping
\begin{eqnarray*}
  \Psi_{|t_0,t_1|,M,g}:C^0(|t_0,t_1|;M) &\rightarrow & L^1_{\rm{loc}}(|t_0,t_1|;\R)\\
       \gamma&\mapsto& (t\mapsto \text{dil}\ g_t(\gamma(t))),
\end{eqnarray*} 
We may show that this mapping is well-defined in a manner similar to Lemma \ref{lem:3.14}, merely changing absolute value for dilatation. In like manner, since $g\in C^{\text{lip}}_{\text{LI}}(|t_0,t_1|;M)$, by virtue of Lemma \ref{lem:lsdic}, we conclude that $\text{dil}\ g\in C^{0}_{\text{LI}}(|t_0,t_1|;M)$. From Lemma \ref{lem:3.14}, $\Psi_{|t_0,t_1|,M,g}$ is continuous.

Now consider the following mapping 
\begin{eqnarray*}
  \Phi_{\mathbb{T}, K,\mathcal{O}}:K\times\mathcal{O} &\rightarrow & C^0(\mathbb{T};M)\\
       (x,p)&\mapsto& (t\mapsto\Phi^X(t,t_0,x,p)).
\end{eqnarray*}
By Theorem \ref{thm:3.4} (\ref{thm:3.4-10}), this mapping is continuous. We also have the continuous mapping
\begin{eqnarray*}
  \iota_{|t_0,t_1|}:C^0(\mathbb{T};M) &\rightarrow& C^0(|t_0,t_1|;M)\\
       \gamma&\mapsto& \gamma||t_0,t_1|.
\end{eqnarray*}
Let $f\in C^\infty(M)$, let $\epsilon>0$, and let $x\in K$. Combining the observations of two paragraphs previous, the mapping
$$\Psi_{|t_0,t_1|,M,X^{p_0}f}\circ \iota_{|t_0,t_1|}\circ \Phi_{\mathbb{T}, K,\mathcal{O}}: K\times \mathcal{O}\rightarrow L^1(|t_0,t_1|;\R)$$
is continuous. Thus there exists a relative neighbourhood $\mathcal{V}_x\subseteq K$ of $x$ and a neighbourhood $\mathcal{O}_x\subseteq\mathcal{O}$ of $p_0$ such that
$$\int_{|t_0,t_1|}|\text{dil}(X^{p_0}_sf)(\Phi^{X^p}_{s,t_0}(x'))-\text{dil}(X^{p_0}_sf)(\Phi^{X^{p_0}}_{s,t_0}(x'))|\ ds<\frac{\epsilon}{2} \hspace{15pt} x'\in\mathcal{V}_x, \ p\in\mathcal{O}_x.$$
Let $x_1,...,x_m\in K$ be such that $K= \cup_{j=1}^{m}\mathcal{V}_{x_j}$ and define a neighbourhood $\mathcal{O}_2=\cap_{j=1}^{k}\mathcal{O}_{x_j}$ of $p_0$. Then, for $x\in K$ and $p\in\mathcal{O}_2$, we have
\begin{equation}\label{eq:5.20}
   \int_{|t_0,t_1|}|\text{dil}(X^{p_0}_sf)(\Phi^{X^p}_{s,t_0}(x'))-\text{dil}(X^{p_0}_sf)(\Phi^{X^{p_0}}_{s,t_0}(x'))|\ ds<\frac{\epsilon}{2}. 
\end{equation}
Next we estimate
$$\int_{|t_0,t_1|}\text{dil}((X^p_sf-X^{p_0}_sf)\circ\Phi^{X^p}_{s,t_0})(x)ds.$$

For $(s,x)\in|t_0,t_1|\times K$, let $\mathcal{V}_{(s,x)}\subseteq K$ be precompact neighbourhood of $\Phi^X(s,t_0,x,p_0)$ as in Lemma \ref{lem:lsdasd}. Abbreviate $\mathcal{U}_{(s,x)}=(\Phi^{p_0}_{s,t_0})^{-1}(\mathcal{V}_{(s,x)})$, a precompact neighbourhood of $x$. By definition of local sectional dilatation,
$$\text{dil}((X^p_sf-X^{p_0}_sf)\circ\Phi^{X^p}_{s,t_0})(x')\leq l_{\text{cl}(\mathcal{U}_{(s,x)})}((X^p_sf-X^{p_0}_sf)\circ\Phi^{X^p}_{s,t_0}),\ \ p\in\mathcal{O'},\ x'\in K\cap\mathcal{U}_{(s,x)}.$$
Since the composition of locally Lipschitz maps is locally Lipschitz,  \citep [Proposition 1.2.2]{MR1832645}, we have
$$\text{dil}((X^p_sf-X^{p_0}_sf)\circ\Phi^{X^p}_{s,t_0})(x)\leq C l_{\text{cl}(\mathcal{U}_{(s,x)})}(X^p_sf-X^{p_0}_sf),\ \ p\in\mathcal{O'}.$$
Since 
$$\{\Phi^x(s,t_0,x,p_0)\  |\ s\in|t_0,t_1|,\ x\in K\}$$
is compact, we can cover this set with finitely many sets $\mathcal{V}_{(s_1,x_1)},...,\mathcal{V}_{(s_m,x_m)}$. By  Lemma \ref{lem:lsdasd} and (\ref{eq:2.4}), we can shrink $\mathcal{O}_2$ so that
$$\int_{|t_0,t_1|}l_{\text{cl}(\mathcal{U}_{(s_j,x_j)})}(X^p_sf-X^{p_0}_sf) ds<\frac{\epsilon}{2C}, \ \ p\in\mathcal{O}_2,\ j\in\{1,...,m\}.$$
This gives 
$$\int_{|t_0,t_1|}\text{dil}((X^p_sf-X^{p_0}_sf)\circ\Phi^{X^p}_{s,t_0})(x)ds\leq \frac{\epsilon}{2}\ \ x\in K,\ p\in\mathcal{O}_2.$$
Combining the preceding calculations, we have
\begin{eqnarray*}
   \text{dil}(f\circ\Phi^{X^p}_{t_1,t_0}-f\circ \Phi^{X^{p_0}}_{t_1,t_0})(x)&\leq&\int_{|t_0,t_1|}\text{dil}(X^{p}f\circ\Phi^{X^p}_{s,t_0}-X^{p_0}f\circ\Phi^{X^{p_0}}_{s,t_0})(s,x)\ ds\\
   &\leq& \int_{|t_0,t_1|}\text{dil}((X^pf-X^{p_0}f)\circ\Phi^{X^p}_{s,t_0})(s,x)ds\\
   &&+\int_{|t_0,t_1|}\text{dil}(X^{p_0}f\circ\Phi^{X^p}_{s,t_0}-X^{p_0}f\circ\Phi^{X^{p_0}}_{s,t_0})(s,x)\ ds\\
   &<& \frac{\epsilon}{2}+\frac{\epsilon}{2}=\epsilon,
\end{eqnarray*}
for $x\in K$ and $p\in\mathcal{O}_2$.

Finally, combining the two parts of the proof for the two sorts of different seminorms for $C^{\rm{lip}}(\mathcal{U};M)$, we ascertain that, for every compact $K\subseteq\mathcal{U}$, every $f\in C^\infty(M)$, and every $\epsilon\in\R_{>0}$, if $p\in\mathcal{O}_1\cap\mathcal{O}_2$, then we ave 
$$p^0_K(f\circ\Phi^{X^p}_{t_1,t_0}-f\circ \Phi^{X^{p_0}}_{t_1,t_0})<\epsilon,\ \ \lambda^0_K(f\circ\Phi^{X^p}_{t_1,t_0}-f\circ \Phi^{X^{p_0}}_{t_1,t_0})<\epsilon,$$
which gives the desired result.

\subsubsection{The \texorpdfstring{$C^{m}$}{}-case}\label{sec:3.3.2}
The topology for $C^m(\mathcal{U};M)$ is the uniform topology defined by the semimetrics 
$$d^m_{K,f}(\Phi_1,\Phi_2)=\sup\{\|j_m(f\circ\Phi_1)(x)-j_m(f\circ\Phi_2)(x)\|_{\mathbb{G}_{M,m}}\ | \ x\in K\},\  f\in C^\infty(M), \ K\subseteq \mathcal{U}\ \text{compact}.$$
As in the preceding section when we proved $C^0$ continuity, it suffices to show that, for
$f\in C^\infty(M)$, $K\subseteq\mathcal{U}$ compact, and for $\epsilon\in\R_{>0}$, there exists a neighbourhood $\mathcal{O'}$ of $p_0$ such that 
$$\|j_m(f\circ\Phi^{X^p}_{t_1,t_0})(x)-j_m(f\circ\Phi^{X^{p_0}}_{t_1,t_0})(x)\|_{\mathbb{G}_{M,m}}<\epsilon,\hspace{10pt}x\in K,\ p\in\mathcal{O'}.$$

Thus let $f\in C^\infty(M)$, $K\subseteq\mathcal{U}$ compact, and let $\epsilon\in\R_{>0}$. 

Consider the mapping
\begin{eqnarray*}
  \Phi_{|t_0,t_1|, K,\mathcal{O}}:K\times\mathcal{O} &\rightarrow & C^0(|t_0,t_1|;J^m(\mathcal{U};M))\\
       (x,p)&\mapsto& (t\mapsto j_m\Phi^{X^p}_{t,t_0}(x)),
\end{eqnarray*}
which is well-defined and continuous. For $(x,p)\in K\times\mathcal{O}$ and for $t\in|t_0,t_1|$, we can think of $j_m\Phi^{X^p}_{t,t_0}(x))$ as a linear mapping 
\begin{eqnarray*}
  j_m\Phi^{X^p}_{t,t_0}(x):J^m(M;\R)_{\Phi^{X^p}_{t,t_0}(x)} &\rightarrow & J^m(M;\R)_x \\
       j_m g(\Phi^{X^p}_{t,t_0}(x))&\mapsto& j_m (g\circ\Phi^{X^p}_{t,t_0})(x).
\end{eqnarray*}
Now, fixing $(x,p)\in\mathcal{U}\times \mathcal{O}$ for the moment, recall the constructions of Section \ref{sec:2.3.2}, particularly those preceding the statement of Lemma \ref{lem:2.5}. We consider the notation from those constructions with
\begin{enumerate}
    \item $N=M$,
    
    \item $E=F=J^m(M;\R)$,
    
    \item $\Gamma(t)=j_m\Phi^{X^p}_{t,t_0}(x)\in \text{Hom}_{\R}(J^m(M,\R)_{\Phi^{X^p}_{t,t_0}(x)};J^m(M;\R)_x)$, and
    
    \item $\xi=j_m(X^{p_0}f)$.
\end{enumerate}
Thus, again in the notation from Section \ref{sec:2.3.2}, we have
$$\gamma_M(t)=\Phi^{X^p}_{t,t_0}(x),\hspace{10pt}\gamma_N(t)=x.$$
We then have the integrable section of $E=J^m(M;\R)$ given by 
\begin{eqnarray*}
  \xi_\Gamma:|t_0,t_1| &\rightarrow & E\\
       t&\mapsto& (t\mapsto j_m (X^p_tf\circ\Phi^{X^p}_{t,t_0})(x))
\end{eqnarray*}
to obtain continuity of the mapping
\begin{eqnarray*}
  \Psi_{|t_0,t_1|,J^m(M;\R),j_m (X^{p_0}f)}:C^0(|t_0,t_1|;J^m(\mathcal{U};M)) &\rightarrow & L^1_{\text{loc}}(|t_0,t_1|;J^m(M;\R))\\
       \Gamma&\mapsto& (t\mapsto \Gamma(t)(j_m(X^{p_0}_tf)(\gamma_M(t)))),
\end{eqnarray*}
and so of the composition
$$\Psi_{|t_0,t_1|,J^m(M;\R),j_m (X^{p_0}f)}\circ \Phi_{|t_0,t_1|, K,\mathcal{O}}: K\times\mathcal{O}\rightarrow L^1_{\text{loc}}(|t_0,t_1|;J^m(M;\R)).$$
Note that this is precisely the continuity of the mapping
$$K\times\mathcal{O}\ni (x,p)\mapsto (t\mapsto j_m(X^{p_0}f\circ \Phi^{X^p}_{t,t_0}(x)))\in L^1_{\text{loc}}(|t_0,t_1|;J^m(M;\R)).$$
In order to convert this continuity into a continuity statement involving the fibre norm for $J^m(M;\R)$, we note that for $x\in K$,  there exists a neighbourhood $\mathcal{V}_x$ and affine functions $F^1_x,..., F^{n+k}_x\in\text{Aff}^\infty(J^m(M;\R))$ which are coordinates for $\rho^{-1}_m(\mathcal{V}_x)$. We can choose a Riemannian metric for $J^m(M;\R)$, whose restriction to fibres agrees with the fibre metric (\ref{eq:fibremetric}) \citep[\S 4.1]{lewis2020geometric}. It follows, therefore, from Lemma \ref{lem:5.2} that there exists $C_x\in\R_{>0}$ such that
$$\|j_m g_1(x')-j_m g_2(x')\|_{\mathbb{G}_{M,m}}\leq C_x|F^l_x\circ j_m g_1(x')-F^l_x\circ j_m g_2(x')|,$$
for $g_1,g_2\in C^\infty(M)$, $x'\in\mathcal{V}_x$, $l\in\{1,...,n+k\}$. By the continuity proved in the preceding paragraph, we can take a relative neighbourhood $\mathcal{V}_x\subseteq K$ of $x$ sufficiently small and a neighbourhood $\mathcal{O}_x\subseteq\mathcal{O}$ of $p_0$ such that 
$$\int_{|t_0,t_1|}|F^l_x\circ j_m(X^{p_0}f\circ \Phi^{X^p}_{t,t_0}(x'))-F^l_x\circ j_m(X^{p_0}f\circ \Phi^{X^{p_0}}_{t,t_0}(x'))|dt<\frac{\epsilon}{2C_x},$$
for all $x'\in\mathcal{V}_x$, $p\in\mathcal{O}_x$, and $l\in\{1,...,n+k\}$, recalling from Section \ref{sec:2.3.2} the definition of the topology for $L^1(|t_0, t_1|; J^m(M; \R)$. Therefore,
$$\int_{|t_0,t_1|}\|j_m(X^{p_0}f\circ \Phi^{X^p}_{t,t_0}(x'))-j_m(X^{p_0}f\circ \Phi^{X^{p_0}}_{t,t_0}(x'))\|_{\mathbb{G}_{M,m}} ds <\frac{\epsilon}{2}$$
for all $x'\in\mathcal{V}_x$, $p\in\mathcal{O}_x$. Now let $x_1,...,x_s\in K$ be such that $K= \cup_{r=1}^{s}\mathcal{V}_{x_r}$ and define a neighbourhood $\mathcal{O'}=\cap_{r=1}^{s}\mathcal{O}_{x_r}$ of $p_0$. Then we have 
\begin{equation}\label{eq:5.21}
    \int_{|t_0,t_1|}\|j_m(X^{p_0}_sf\circ \Phi^{X^p}_{s,t_0})(x')-j_m(X^{p_0}_sf\circ \Phi^{X^{p_0}}_{s,t_0})(x')\|_{\mathbb{G}_{M,m}} ds <\frac{\epsilon}{2}
\end{equation}
for all $x'\in K$, $p\in\mathcal{O'}$. By (\ref{eq:2.4}), we can further shrink $\mathcal{O'}$ if necessary so that 
$$\int_{|t_0,t_1|}\|j_m(X^{p}f)(s,y)-j_m(X^{p_0}f)(s,y)\|_{\mathbb{G}_{M,m}}\d s<\frac{\epsilon}{2} \hspace{15pt} y'\in K', \ p\in\mathcal{O'}.$$
Then we have 
\begin{eqnarray*}
   &&\|j_m (f\circ\Phi^{X^p}_{t_1,t_0})(x)-j_m(f\circ \Phi^{X^{p_0}}_{t_1,t_0})(x)\|_{\mathbb{G}_{M,m}}\\
   &&\hspace{40pt}\leq\int_{|t_0,t_1|}\|j_m(X^{p}f\circ \Phi^{X^p}_{t,t_0})(x)-j_m(X^{p_0}f\circ \Phi^{X^{p_0}}_{t,t_0})(x)\|_{\mathbb{G}_{M,m}} ds\\
   && \hspace{40pt}\leq\int_{|t_0,t_1|}\|j_m(X^{p}f\circ \Phi^{X^p}_{t,t_0})(x)-j_m(X^{p_0}f\circ \Phi^{X^{p}}_{t,t_0})(x)\|_{\mathbb{G}_{M,m}} ds\\
   &&\hspace{50pt}+\int_{|t_0,t_1|}\|j_m(X^{p_0}f\circ \Phi^{X^p}_{t,t_0})(x)-j_m(X^{p_0}f\circ \Phi^{X^{p_0}}_{t,t_0})(x)\|_{\mathbb{G}_{M,m}} ds\\
   &&\hspace{40pt}\leq \frac{\epsilon}{2}+\frac{\epsilon}{2}=\epsilon,
\end{eqnarray*}
for $x\in K$ and $p\in\mathcal{O'}$, as desired. 
\subsubsection{The \texorpdfstring{$C^{m+\text{lip}}$}{}-case}
We will note a few facts here.
\begin{enumerate}
    \item The $C^{m+\text{lip}}$ topology for $C^{m+\text{lip}}$ is the initial topology induced by the $C^{\text{lip}}$-topology for $\Gamma^{\text{lip}}(J^m(M;\R))$ under the mapping $f\mapsto j_m f$. 
    
    \item The mapping 
    $$\mathcal{O}\ni p\mapsto \Phi^{v_m X^p}_{t_1,t_0}=j_m\Phi^{X^p}_{t_1,t_0}\in C^{\text{lip}}(\rho^{-1}_m(\mathcal{U});J^m(M;M))$$
    is well-defined and continuous (by the $C^m$-case proved in Section \ref{sec:3.3.2}). Thus in the diagram 
    \begin{center}
    \begin{tikzcd}[column sep=large]
    C^{m+\text{lip}}(\mathcal{U};M)
    \arrow[r,"\Phi\mapsto j_m\Phi"] &C^{\text{lip}}(\rho^{-1}_m(\mathcal{U});J^m(M;M)) 
     \\
    \mathcal{O} \arrow[u, "p\mapsto\Phi^{X^p}_{t_1,t_0}"]
    \arrow[ur, "p\mapsto j_m\Phi^{X^p}_{t_1,t_0}"', sloped]  
    \end{tikzcd}
    \end{center}
    the continuity of the diagonal map gives the continuity of the vertical map, as desired.

\end{enumerate}
\subsubsection{The \texorpdfstring{$C^{\infty}$}{}-case}
From the result in the $C^m$-case for $m\in\mathbb{Z}_{\geq 0}$, the mapping
$$\mathcal{O}\ni p\mapsto \Phi^{X^p}_{t_1,t_0}\in C^m(\mathcal{U};M)$$
is continuous for each $m\in\mathbb{Z}_{\geq 0}$. From the diagram 
\begin{center}
    \begin{tikzcd}[column sep=large]
    C^{\infty}(\mathcal{U};M)
    \arrow[r,"\hookrightarrow"] &C^{m}(\mathcal{U};M) 
     \\
    \mathcal{O} \arrow[u, "p\mapsto\Phi^{X^p}_{t_1,t_0}"]
    \arrow[ur, "p\mapsto j_m\Phi^{X^p}_{t_1,t_0}"', sloped]  
    \end{tikzcd}
\end{center}
and noting that the diagonal mappings in the diagram are continuous, we obtain the continuity of the vertical mapping as a result of the fact that the $C^\infty$-topology is the initial topology induced by the $C^m$-topologies, $m\in\mathbb{Z}_{\geq 0}$.

\subsubsection{The \texorpdfstring{$C^{\text{hol}}$}{}-case}
Since the $C^{\text{hol}}$-topology is the $C^0$
-topology, with the scalars extended
to be complex and the functions restricted to be holomorphic, the analysis in Section \ref{sec:3.3.1} can be carried out verbatim to give the theorem in the holomorphic case.

\subsubsection{The \texorpdfstring{$C^{\omega}$}{}-case}
The topology for $C^{\omega}(\mathcal{U};M)$ is the uniform topology defined by the semimetrics 
$$d^\omega_{K,f}(\Phi_1,\Phi_2)=\sup\{a_0a_1...a_m\|j_m (f\circ\Phi_1-f\circ\Phi_2)(x)\|_{G_{M,\pi,m}}\ | \ x\in K,\ m\in \mathbb{Z}_{\geq 0}\},$$
where $f\in C^{\omega}(M)$, $\ K\subseteq \mathcal{U}\ \text{compact}$ and $\boldsymbol{a}=(a_j)_{j\in \mathbb{Z}_{\geq 0}}\in c_0(\mathbb{Z}_{\geq 0};\R_{>0})$.

Thus, we must show that, for $f_1,...,f_m\in C^{\omega}(M)$, for $K_1,...,K_m\subseteq\mathcal{U}$ compact, for $\boldsymbol{a}^1,... \boldsymbol{a}^m\in c_0(\mathbb{Z}_{\geq 0};\R_{>0})$, and for $\epsilon_1,...,\epsilon_m\in\R_{>0}$, there exists a neighborhood $\mathcal{O'}$ of $p_0$ such that 
$$a^j_0a^j_1...a^j_n\|j_n(f_j\circ\Phi^{X^p}_{t_1,t_0}-f_j\circ \Phi^{X^{p_0}}_{t_1,t_0})(x_j)\|_{G_{M,\pi,n}}<\epsilon_j,\hspace{10pt} x_j\in K_j, \ p\in\mathcal{O'}, \ j\in\{1,...,m\},\ n\in\mathbb{Z}_{\geq 0}.$$

It will suffice to show that, for $f\in C^\omega(M)$, $K\subseteq\mathcal{U}$ compact, $\boldsymbol{a}=(a_j)_{j\in\mathbb{Z}_{\geq 0}}\in c_0(\mathbb{Z}_{\geq 0};\R_{>0})$, and for $\epsilon\in\R_{>0}$, there exists a neighborhood $\mathcal{O'}$ of $p_0$ such that 
$$a_0a_1...a_n\|j_n(f\circ\Phi^{X^p}_{t_1,t_0}-f\circ \Phi^{X^{p_0}}_{t_1,t_0})(x)\|_{G_{M,\pi,n}}<\epsilon,\hspace{10pt} x\in K, \ p\in\mathcal{O'}, \ n\in\mathbb{Z}_{\geq 0}.$$

Indeed, if we show that then, taking $K=\cup_{j=1}^{k}K_j$ and $\epsilon
=\min\{\epsilon_1,...,\epsilon_m\}$, we have 
$$a^j_0a^j_1...a^j_n\|j_n(f_j\circ\Phi^{X^p}_{t_1,t_0}-f_j\circ \Phi^{X^{p_0}}_{t_1,t_0})(x)\|_{G_{M,\pi,n}}<\epsilon,\hspace{10pt} x\in K, \ p\in\mathcal{O}_j, \ j\in\{1,...,m\},\ n\in\mathbb{Z}_{\geq 0}$$
for a suitable $\mathcal{O}_j$ for each $j\in\{1,...,m\}$. It then suffices to show that  
$$a_0a_1...a_n\|j_n(f\circ\Phi^{X^p}_{t_1,t_0}-f\circ \Phi^{X^{p_0}}_{t_1,t_0})(x)\|_{G_{M,\pi,n}}<\epsilon,\hspace{10pt} x\in K, \ p\in\mathcal{O}, \ n\in\mathbb{Z}_{\geq 0}$$
for $\mathcal{O'}=\cap_{j=1}^{m}\mathcal{O}_j$. This suffices to give the desired conclusion.

Let $f\in C^\omega(M)$, $K\subseteq\mathcal{U}$ compact, $\boldsymbol{a}=(a_j)_{j\in\mathbb{Z}_{\geq 0}}\in c_0(\mathbb{Z}_{\geq 0};\R_{>0})$, and let $\epsilon\in\R_{>0}$. As in all preceding cases, the estimate is broken into two parts.

For the first part, we let $\overbar{M}$ be a holomorphic extension of $M$ and, by Lemma \ref{lem:heotdvf}, let $\overbar{\mathcal{U}}\subseteq\overbar{M}$ be a neighbourhood of $M$ and let $\overbar{X}^{p_0}\in\Gamma^{\text{hol}}_{\text{LI}}(|t_0,t_1|;T\overbar{\mathcal{U}})$ be such that $\overbar{X}^{p_0}|M=X^{p_0}$. We also let $\overbar{f}$ be the extension of $f$, possibly after shrinking $\overbar{\mathcal{U}}$. Then the mapping
$$K\times\mathcal{O}\ni (z,p)\mapsto (t\mapsto \overbar{X}^{p_0}\overbar{f}\circ \Phi^{X^p}_{t,t_0}(z)))\in L^1(|t_0,t_1|;\mathbb{C})$$
is continuous, just as in the $C^0$-case from Section \ref{sec:3.3.1}. Therefore, restricting to $M$,
\begin{equation}\label{eq:4.3}
    K\times\mathcal{O}\ni (x,p)\mapsto (t\mapsto X^{p_0}f\circ \Phi^{X^p}_{t,t_0}(x)))\in L^1(|t_0,t_1|;\mathbb{R})
\end{equation}
is continuous. By Cauchy estimates for holomorphic sections \citep[Proposition 4.2]{Jafarpour2014}, there exists $C,r\in \R_{>0}$ such that 
\begin{eqnarray*}
   &&\|j_m(X^{p_0}f\circ\Phi^{X^p}_{t,t_0})(x)-j_m(X^{p_0}f\circ\Phi^{X^{p_0}}_{t,t_0})(x)\|_{\mathbb{G}_{M,\pi,m}}\\
   &&\hspace{80pt}\leq Cr^{-m}|X^{p_0}f\circ\Phi^{X^p}_{t,t_0}(x)-X^{p_0}f\circ\Phi^{X^{p_0}}_{t,t_0}(x)|, \hspace{20pt} x\in K, t\in|t_0,t_1|.
\end{eqnarray*} 
Without loss of generality, we can take $r\in (0,1)$. Let $N\in\mathbb{Z}_{\geq 0}$ be the smallest integer for which $a_j\leq r$ for $j\geq N$. Then for $m\in\mathbb{Z}_{\geq 0}$, we have 
$$Ca_0\frac{a_1}{r}\cdots\frac{a_m}{r}\leq \left\{\begin{array}{lcl} Ca_0\frac{a_1}{r}\cdots\frac{a_m}{r},\hspace{45pt}m\in\{0,1,...,N\},\\
Ca_0\frac{a_1}{r}\cdots\frac{a_N}{r},\hspace{45pt}m\geq N+1.\end{array}\right.$$
Denote 
$$M=\max\left\{Ca_0\frac{a_1}{r}\cdots\frac{a_m}{r}\ \big|\ m\in\{o,1,...,N\}\right\}$$
and for $x\in K$, let $\mathcal{V}_x\subseteq K$ be a relative neighborhood of $x$ and let $\mathcal{O}_x\subseteq\mathcal{O}$ be a
neighbourhood of $p_0$ such that
$$\int_{|t_0,t_1|}|X^{p_0}f(t,\Phi^{X^p}_{t,t_0}(x'))-X^{p_0}f(t,\Phi^{X^{p_0}}_{t,t_0}(x'))|\ dt<\frac{\epsilon}{2M} \hspace{15pt} x'\in\mathcal{V}_x, \ p\in\mathcal{O}_x,$$
this being possible by continuity of the mapping (\ref{eq:4.3}). We then have
\begin{eqnarray*}
   &&\int_{|t_0,t_1|}a_0a_1...a_m\|j_m(X^{p_0}f\circ\Phi^{X^p}_{t,t_0})(x')-j_m(X^{p_0}f\circ\Phi^{X^{p_0}}_{t,t_0})(x')\|_{\mathbb{G}_{M,\pi,m}}dt\\
   &&\hspace{5pt}\leq \int_{|t_0,t_1|}Ca_0\frac{a_1}{r}\cdots\frac{a_m}{r}|X^{p_0}f\circ\Phi^{X^p}_{t,t_0}(x')-X^{p_0}f\circ\Phi^{X^{p_0}}_{t,t_0}(x')|dt<\frac{\epsilon}{2},\hspace{5pt} x'\in\mathcal{V}_x, \ p\in\mathcal{O}_x,\ m\in\mathbb{Z}_{\geq 0}.
\end{eqnarray*}
Letting $x_1,...,x_s\in K$ be such that $K\subseteq \cup_{r=1}^{s}\mathcal{V}_{x_r}$ and defining $\mathcal{O'}=\cap_{r=1}^{k}\mathcal{O}_{x_r}$, we have  for $x\in K$ and $p\in\mathcal{O'}$, we have
\begin{eqnarray}\label{eq:5.23}
   &&\int_{|t_0,t_1|}a_0a_1...a_m\|j_m(X^{p_0}f\circ\Phi^{X^p}_{t,t_0})(x')-j_m(X^{p_0}f\circ\Phi^{X^{p_0}}_{t,t_0})(x')\|_{\mathbb{G}_{M,\pi,m}}dt<\frac{\epsilon}{2}\\
   &&\hspace{300pt}\ x'\in K, \ p\in\mathcal{O'},\ m\in\mathbb{Z}_{\geq 0}.\nonumber
\end{eqnarray}
For the second half of the estimate, routinely by (\ref{eq:2.4}), we can further shrink $\mathcal{O'}$ if necessary so that
$$\int_{|t_0,t_1|}a_0a_1...a_m\|j_mX^{p}f(s,x)-j_mX^{p_0}f(s,x)\|_{\mathbb{G}_{M,\pi,m}}\ ds<\frac{\epsilon}{2} \hspace{15pt} x\in K', \ p\in\mathcal{O'},m\in\mathbb{Z}_{\geq 0}.$$
Combining the two estimates, for a fixed $\boldsymbol{a}=(a_j)_{j\in\mathbb{Z}_{\geq 0}}\in c_0(\mathbb{Z}_{\geq 0};\R_{>0})$ and $m\in\mathbb{Z}_{\geq 0}$, we have 
\begin{eqnarray*}
   &&a_0a_1...a_m\|j_m(f\circ\Phi^{X^p}_{t_1,t_0}-f\circ \Phi^{X^{p_0}}_{t_1,t_0})(x)\|_{G_{M,\pi,m}}\\
   &&\hspace{15pt}\leq a_0a_1...a_m\left\|j_m\left(\int_{|t_0,t_1|}X^{p}f(t,\Phi^{X^p}_{t,t_0}(x))-X^{p_0}f(t,\Phi^{X^{p_0}}_{t,t_0}(x))\ dt\right)\right\|_{G_{M,\pi,m}}\\
   &&\hspace{15pt}\leq \int_{|t_0,t_1|}a_0a_1...a_m\|j_m(X^{p}f\circ\Phi^{X^p}_{t,t_0})(x)-j_m(X^{p_0}f\circ\Phi^{X^{p_0}}_{t,t_0})(x)\|_{\mathbb{G}_{M,\pi,m}}dt\\
   &&\hspace{15pt}\leq \int_{|t_0,t_1|}a_0a_1...a_m\|j_m(X^{p}f\circ\Phi^{X^p}_{t,t_0})(x)-j_m(X^{p_0}f\circ\Phi^{X^{p}}_{t,t_0})(x)\|_{\mathbb{G}_{M,\pi,m}}dt\\
   &&\hspace{20pt}+\int_{|t_0,t_1|}a_0a_1...a_m\|j_m(X^{p_0}f\circ\Phi^{X^p}_{t,t_0})(x)-j_m(X^{p_0}f\circ\Phi^{X^{p_0}}_{t,t_0})(x)\|_{\mathbb{G}_{M,\pi,m}}dt\\
   &&\hspace{15pt} \leq\frac{\epsilon}{2}+\frac{\epsilon}{2}=\epsilon,
\end{eqnarray*}
for $x\in K$ and $p\in\mathcal{O'}$, as desired. 
\end{proof}

\section{The exponential map}\label{sec:4}
For the development of the exponential map, we shall use the language of category theory and sheaf theory. For both vector fields and flows, we will work with presheaves defined by prescribing local sections over a basis for the topology. We also wish to talk about classes of vector fields with various properties, e.g., certain regularity or geometric properties.

In the last section, we have shown that exponential maps are defined locally on open cubes $\mathbb{S'}\times \mathbb{S}\times \mathcal{U}\subseteq\mathbb{T}\times\mathbb{T}\times M$. More precisely, we established the well-definedness of the map 
$$\overbar{\text{exp}}:  L^1_{\text{loc}}(\mathbb{T}; \Gamma^\nu(TM))\supseteq \mathcal{V}^{\nu}(\mathbb{S'}\times\mathbb{S}\times\mathcal{
U})\rightarrow \text{LocFlow}^\nu(\mathbb{S'};\mathbb{S};\mathcal{U})$$
from $\mathcal{V}^{\nu}(\mathbb{S'}\times\mathbb{S}\times\mathcal{
U})$, the space of time-verying sections whose flows are defined on $\mathbb{S'}\times \mathbb{S}\times \mathcal{U}$, to the space of their local flows of regularity $\nu$. This plays a key role in constructing the ultimate localisation of the map $\overbar{\text{exp}}$ between the presheaves of such spaces,
$$\text{exp}:\{\text{presheaves of vector fields}\}\rightarrow\{\text{presheaves of local flows}\}.$$

\subsection{Categories of time-varying sections}
We will define what we shall call a ``time-varying section." This is nothing more than a time-varying section, defined locally.
\begin{defin}[$C^\nu$-time-varying local section]
Let $m\in \mathbb{Z}_{\geq 0}$, let $m'\in\{0,\text{lip}\}$, let $\nu\in\{m+m',\infty,\omega,\text{hol}\}$, and let $r\in \{\infty,\omega,\text{hol}\}$, as required. A \textbf{\textit{(locally) integrally bounded local section of class}} $\mathbf{C^\nu}$ is a quadruple $(\mathbb{T},E,\mathcal{U},\xi)$ where 
\begin{enumerate}[(i)]
    \item $\mathbb{T}\subseteq \R$ is an interval,
    \item $\pi:E\rightarrow M$ is a $C^r$-vector bundle,
    \item $\mathcal{U}\subseteq M$ is open, and
    \item $\xi\in \Gamma^\nu_{\text{I}}(\mathbb{T};E|\mathcal{U})$ ($\in \Gamma^\nu_{\text{LI}}(\mathbb{T};E|\mathcal{U})$)
\end{enumerate}
\end{defin}
We denote $L^1_{\text{loc}}(\mathbb{T};\Gamma^\nu(E|\mathcal{U}))$ the set of integrally bounded local sections of class $C^\nu$. The reader may notice that we have relaboured constructions we have already made in Section \ref{sec:2.3}, the only deference being that we have introduced an open subset $\mathcal{U}\subseteq M$. This is true. The relabouring of this is simply for the purpose of pedagogy so that we have symmetry with our categories of flows when we introduce them in 3.2.1. 

Equipped with the seminorms defined by (\ref{eq:2.3}), the category we are building in this section is a subcategory of the category $\mathcal{LCTVS}$ of locally convex topological vector spaces. 

Now let us consider morphisms in the category we are building. We give the definition in the locally integrable case, but the obvious definition can also be made in the integrable case.
\begin{defin}[Morphisms of sets of local time-varying sections]\label{def:4.3}
Let $m\in \mathbb{Z}_{\geq 0}$, let $m'\in\{0,\text{lip}\}$, let $\nu\in\{m+m',\infty,\omega,\text{hol}\}$, and let $r\in \{\infty,\omega,\text{hol}\}$, as required. Let $\mathbb{T},\mathbb{S}\subseteq\R$ be intervals. A mapping 
$$\alpha:L^1_{\text{loc}}(\mathbb{T};\Gamma^\nu(E|\mathcal{U}))\rightarrow L^1_{\text{loc}}(\mathbb{S};\Gamma^\nu(F|\mathcal{V}))$$
is a morphism of sets of $C^\nu$-local time-varying sections if 
\begin{enumerate}
    \item[(i)]{
    $\alpha$ is continuous, and
    }
    \item[(ii)]{
    there exists a mapping $\tau_\alpha:\mathbb{T}\rightarrow\mathbb{S}$ and a $C^r$-vector bundle mapping $\Phi_\alpha\in \text{VB}^r(E;F)$ over $\Phi_{\alpha,0}\in C^r(M;N)$ such that:
    \begin{enumerate}
    \item[(a)]{
    $\tau_\alpha$ is the restriction to $\mathbb{T}$ of a nonconstant affine mapping,
    }
    \item[(b)]{
    $\Phi_{\alpha,0}(\mathcal{U})\subseteq\mathcal{V}$, and
    }
    \item[(c)]{
    $\Phi_\alpha\circ \xi(t,x)=\alpha(\xi)(t,\Phi_{\alpha,0}(x))$ for $t\in\mathbb{T}$ and $x\in\mathcal{U}$.
	}
\end{enumerate}
	}
\end{enumerate}
\end{defin}
Let us give some examples of morphisms of sets of time-varying local sections.
\begin{exam}[Morphisms of sets of local time-varying sections] 
If $E=F$, and if $\mathbb{S}\subseteq\mathbb{T}$ and $\mathcal{V}\subseteq\mathcal{U}$, then the ``restriction morphism"
$$\rho_{\mathbb{T}\times \mathbb{U},\mathbb{S}\times \mathcal{V}}:L^1_{\text{loc}}(\mathbb{T};\Gamma^\nu(E|\mathcal{U}))\rightarrow L^1_{\text{loc}}(\mathbb{S};\Gamma^\nu(E|\mathcal{V}))$$
given by 
$$\rho_{\mathbb{T}\times \mathbb{U},\mathbb{S}\times \mathcal{V}}(\xi)(t,x)=\xi(t,x),\hspace{10pt} (t,x)\in \mathbb{S}\times \mathcal{V}.$$
In this case we have ``$\tau_\alpha=\iota_{\mathbb{T},\mathbb{S}}$" and ``$\Phi_\alpha=\iota_{E|\mathcal{U},E|\mathcal{V}}$" where $\iota_{\mathbb{T},\mathbb{S}}$ and $\iota_{E|\mathcal{U},E|\mathcal{V}}$ are the inclusions. Of course, an entirely similar construction holds in the integrable case. 

One can directly verify that morphisms can be composed and that composition is associative. One also has an identity morphism
$$\text{id}:L^1_{\text{loc}}(\mathbb{T};\Gamma^\nu(E|\mathcal{U}))\rightarrow L^1_{\text{loc}}(\mathbb{T};\Gamma^\nu(E|\mathcal{U}))$$
given by ``$\tau_{\text{id}}=\text{id}_{\mathbb{T}}$" and ``$\Phi_{\text{id}}=\text{id}_{E|\mathcal{U}}$." This, then, gives the category $\mathcal{G}^\nu_{\text{LI}}$ of locally
integrally bounded local sections of class $C^\nu$ whose objects are the sets $L^1_{\text{loc}}(\mathbb{T};\Gamma^\nu(E|\mathcal{U}))$. of locally integrable local sections of class $C^\nu$ and whose morphisms are as defined above. This category is a subcategory of $\mathcal{LCTVS}$. As we shall see, we like the category $\mathcal{LCTVS}$ because direct limits exist in this category. One similarly denotes by $\mathcal{G}^\nu_{\text{I}}$ the category of
integrable local sections of class $C^\nu$. 
\end{exam}
\subsection{Presheaves of time-varying vector fields}
Throughout the following studies, we will investigate the open and connected subsets (thus a domain) $\mathcal{W}\subseteq \mathbb{T}\times\mathbb{T}\times M$ such that all the points $p_1=(t_1,t_0,x_0)\in\mathcal{W}$ satisfy the following properties:
\begin{enumerate}[(a)]
    \item if $t_1=t_0$, then there exists $p_2=(t_2,t_0,x_0)\in \mathcal{W}$ where $t_2\neq t_0$, and the segment from $p_1$ to $p_2$ must also be contained in $\mathcal{W}$;
    
    \item if $t_1\neq t_0$, then there exists $p_0=(t_0,t_0,x_0)\in \mathcal{W}$, and the segment from $p_1$ to $p_0$ must also be contained in $\mathcal{W}$.
\end{enumerate}

For convenience, we will call the subsets of $\mathbb{T}\times\mathbb{T}\times M$ with this property \textbf{flow admissible}. We have the following lemma.
\begin{lem}
Let $\mathcal{W}\subseteq\mathbb{T}\times\mathbb{T}\times M$ be open and flow admissible. Then there exists a countable cover $\{\mathbb{S'}_i\times\mathbb{S}_i\times \mathcal{U}_i\}_{i\in \mathbb{Z}_{>0}}$ that is flow admissible for each  $\mathbb{S'}_i\times\mathbb{S}_i\times \mathcal{U}_i$, $i\in \mathbb{Z}_{>0}$.
\end{lem}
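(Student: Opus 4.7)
The plan is to construct, around each point of $\mathcal{W}$, an open product cube $\mathbb{S'}\times\mathbb{S}\times\mathcal{U}\subseteq\mathcal{W}$ (with $\mathbb{S'},\mathbb{S}$ open subintervals of $\mathbb{T}$ and $\mathcal{U}$ open in $M$) satisfying the inclusion $\mathbb{S}\subseteq\mathbb{S'}$, and then to invoke the second countability of $\mathbb{T}\times\mathbb{T}\times M$ (two intervals times the second countable manifold $M$) to pass to a countable subcover. The inclusion $\mathbb{S}\subseteq\mathbb{S'}$ is precisely what makes each cube flow admissible in its own right: for $(t_1,t_0,x_0)$ in such a cube with $t_1=t_0$, nontriviality of the open interval $\mathbb{S'}$ produces some $t_2\in\mathbb{S'}\setminus\{t_0\}$, and convexity of $\mathbb{S'}$ delivers the connecting segment inside $\mathbb{S'}\times\{t_0\}\times\{x_0\}$; for $t_1\neq t_0$, the inclusion forces $(t_0,t_0,x_0)$ back into the cube and convexity of $\mathbb{S'}$ again handles the segment.

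To build such a cube around a point $p=(t_1,t_0,x_0)\in\mathcal{W}$ with $t_1\neq t_0$, I would exploit flow admissibility of $\mathcal{W}$: the segment $L$ joining $p$ to $(t_0,t_0,x_0)$ lies entirely in $\mathcal{W}$ and is compact. Cover $L$ by basic open products $V_s\times W_s\times\mathcal{U}_s\subseteq\mathcal{W}$, extract a finite subfamily whose first factors $V_{s_1},\dots,V_{s_n}$ cover the first-coordinate projection of $L$, take $\mathbb{S'}$ to be the connected component of $\bigcup_i V_{s_i}$ containing that projection (an open interval), and set $W=\bigcap_i W_{s_i}$, $\mathcal{U}=\bigcap_i\mathcal{U}_{s_i}$. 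A short tube-lemma style check gives $\mathbb{S'}\times W\times\mathcal{U}\subseteq\mathcal{W}$. Setting $\mathbb{S}=W\cap\mathbb{S'}$ produces an open subinterval of $\mathbb{S'}$ still containing $t_0$, and the cube $\mathbb{S'}\times\mathbb{S}\times\mathcal{U}$ has all the required properties.

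For a diagonal point $p=(t_0,t_0,x_0)\in\mathcal{W}$, flow admissibility of $\mathcal{W}$ supplies $t_2\neq t_0$ with $(t_2,t_0,x_0)\in\mathcal{W}$ together with the joining segment; the previous construction applied at $(t_2,t_0,x_0)$ yields a cube whose first factor $\mathbb{S'}$ automatically contains $t_0$ (being an interval covering the whole segment's first-coordinate projection), and so contains $p$. The resulting family of flow-admissible cubes covers $\mathcal{W}$, and Lindel\"ofness of the ambient second countable product furnishes a countable subcover. The only real obstacle, and the place where the hypothesis on $\mathcal{W}$ is used essentially, is enforcing $\mathbb{S}\subseteq\mathbb{S'}$ while still containing $p$: without the connecting segment guaranteed by flow admissibility of $\mathcal{W}$, there is no way to enlarge the ``final time'' factor $\mathbb{S'}$ enough to swallow the ``initial time'' factor $\mathbb{S}$ within $\mathcal{W}$.
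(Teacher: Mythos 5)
Your proof is correct and follows essentially the same route as the paper: cover the connecting segment by finitely many basic product neighbourhoods inside $\mathcal{W}$, take the union of the first (final-time) factors and the intersection of the other two, and pass to a countable subcover. Your extra step of replacing the initial-time factor by $\mathbb{S}=W\cap\mathbb{S'}$ is a worthwhile refinement --- the paper takes $T=\bigcap_k T_k$ without checking $T\subseteq T'$, which is precisely the inclusion needed for the resulting cube to be flow admissible --- and your explicit Lindel\"of appeal supplies the countability that the paper leaves implicit.
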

\begin{proof}
Let $p_1=(t_1,t_0,x_0)\in \mathcal{W}$ be an arbitrary point, then $p_1$ is an interior point as $\mathcal{W}$ is open. Hence there exists an open ball $B_r(p_1)$ of radius $r$ cantered at $p_1$ that lies inside $\mathcal{W}$.
\begin{enumerate}[(i)]
    \item When $t_1=t_0$, then $p_1$ is be covered by an open cube $T\times T\times U\subset \mathbb{T}\times\mathbb{T}\times\mathcal{U}$ lies inside $B_r(p_1)$, and this cube is flow admissible.
    
    \item When $t_1\neq t_0$, since $\mathcal{W}$ is flow admissible, there exists a $p_0=(t_0,t_0,x_0)\in\mathcal{W}$ and the line segment from $p_1$ to $p_0$ lies inside $\mathcal{W}$. Since this line segment is compact, there is a finite open cover $\{T'_k \times T_k\times U_k\}_{k=1}^n$ such that $\bigcup_{k=1}^n T'_k \times T_k\times U_k\subset \mathcal{W}$. 
    
    Now denote 
    $$T'=\bigcup_{k=1}^n T'_k,\hspace{10pt} T=\bigcap_{k=1}^n T_k,\hspace{10pt} U=\bigcap_{k=1}^n U_k.$$
    Hence $T'\times T\times U$ is flow admissible, and covers the line segment from $p_0$ to $p_1$.
    
\end{enumerate}
 
\end{proof}

Now, consider the product space $\displaystyle\prod_{i\in\mathbb{Z}_{>0}}\mathcal{V}^\nu_{\mathbb{S'}_i\times\mathbb{S}_i\times\mathcal{U}_i}$ with the initial topology, i.e., the coarsest topology such that the following map
\begin{eqnarray*}
  \pi_{k}:\displaystyle\prod_{i\in\mathbb{Z}_{>0}}\mathcal{V}^\nu_{\mathbb{S'}_i\times\mathbb{S}_i\times\mathcal{U}_i}\rightarrow \mathcal{V}^\nu_{\mathbb{S'}_k\times\mathbb{S}_k\times\mathcal{U}_k}
\end{eqnarray*}
is continuous for each $k\in\mathbb{Z}_{>0}$.

\textbf{Properties of this topology:}
\begin{enumerate}[(i)]
    \item It is Hausdorff (as the product of Hausdorff spaces is Hausdorff).
    
    \item It is complete (as the product of complete spaces is complete).
    
    \item It is separable (as the product of separable spaces is separable).
    
    \item It is Suslin (as the product of Suslin spaces is Suslin).
\end{enumerate}

For any open subset $\mathcal{W}\subseteq \mathbb{T}\times\mathbb{T}\times M$, we will define the presheaf of sets over $\mathcal{W}$, denoted by $\mathscr{G}^{\nu}_{\text{LI}}(\mathbb{T};TM)(\mathcal{W})$. Let $\mathcal{W}$ be covered by a family $\{\mathbb{S'}_i\times\mathbb{S}_i\times \mathcal{U}_i\}_{i\in \mathbb{Z}_{>0}}$ that is flow admissible for each $i\in\mathbb{Z}_{>0}$. We assign
$$\mathscr{G}^{\nu}_{\text{LI}}(\mathbb{T};TM)(\mathcal{W})\subseteq\displaystyle\prod_{i\in\mathbb{Z}_{>0}}\mathcal{V}^\nu_{\mathbb{S'}_i\times\mathbb{S}_i\times\mathcal{U}_i}$$
consisting of all sequences $(X_i)_{i\in\mathbb{Z}_{>0}}$, $X_i\in \mathcal{V}^\nu_{\mathbb{S'}_i\times\mathbb{S}_i\times\mathcal{U}_i}$ for which 
$\Phi^{X_j}|_{R}=\Phi^{X_k}|_{R}$ whenever $R=(\mathbb{S'}_j\times\mathbb{S}_j\times\mathcal{U}_j)\cap(\mathbb{S'}_k\times\mathbb{S}_k\times\mathcal{U}_k)$ for some $j,k\in\mathbb{Z}_{>0}$. For convenience, we call this the \textbf{``overlap condition."}
\begin{lem}\label{lem:2.2}
Let $\mathbb{S'}\times\mathbb{S}\times\mathcal{U},\Tilde{\mathbb{S'}}\times\Tilde{\mathbb{S}}\times\Tilde{\mathcal{U}}\subseteq \mathbb{T}\times M$ be flow admissible and intersecting each other. Let $X\in \mathcal{V}^\nu_{\mathbb{S'}\times\mathbb{S}\times\mathcal{U}}$ and $Y\in \mathcal{V}^\nu_{\Tilde{\mathbb{S'}}\times\Tilde{\mathbb{S}}\times\Tilde{\mathcal{U}}}$ satisfy the overlap condition. Then there exists a $Z\in\mathcal{V}^\nu_{(\mathbb{S'}\times\mathbb{S}\times\mathcal{U})\cup(\Tilde{\mathbb{S'}}\times\Tilde{\mathbb{S}}\times\Tilde{\mathcal{U}})}$ such that $\Phi^Z|_{\mathbb{S'}\times\mathbb{S}\times\mathcal{U}}=\Phi^X$ and $\Phi^Z|_{\Tilde{\mathbb{S'}}\times\Tilde{\mathbb{S}}\times\Tilde{\mathcal{U}}}=\Phi^Y$.  
\end{lem}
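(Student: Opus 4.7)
The strategy is to extract from the overlap condition an open set on which the vector fields $X$ and $Y$ themselves coincide, and then to assemble a single global time-varying vector field $Z$ by gluing. Introduce the open subsets
$$\mathcal{R}_X=\{(t,\Phi^X(t,t_0,x_0))\in\mathbb{T}\times M\;|\;(t,t_0,x_0)\in\mathbb{S'}\times\mathbb{S}\times\mathcal{U}\}$$
and analogously $\mathcal{R}_Y$, which are open in $\mathbb{T}\times M$ by part (\ref{thm:3.4-6}) of Theorem \ref{thm:3.4}. Since the triple overlap $R=(\mathbb{S'}\cap\Tilde{\mathbb{S'}})\times(\mathbb{S}\cap\Tilde{\mathbb{S}})\times(\mathcal{U}\cap\Tilde{\mathcal{U}})$ is non-empty, differentiating the hypothesised identity $\Phi^X=\Phi^Y$ on $R$ with respect to its first argument yields, via Lemma \ref{lem:3.2}, the equality $X(t,\Phi^X(t,t_0,x_0))=Y(t,\Phi^X(t,t_0,x_0))$ for a.e.~$t$ and each $(t_0,x_0)\in(\mathbb{S}\cap\Tilde{\mathbb{S}})\times(\mathcal{U}\cap\Tilde{\mathcal{U}})$. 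Taking a countable dense set of such initial data and using continuity of $X(t,\cdot)-Y(t,\cdot)$ in the spatial variable, one upgrades this to $X(t,\cdot)=Y(t,\cdot)$ on the open $t$-slice of $\mathcal{R}_X\cap\mathcal{R}_Y$ for a.e.~$t\in\mathbb{S'}\cap\Tilde{\mathbb{S'}}$.

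The construction of $Z$ then divides by regularity class. In the real analytic and holomorphic cases, the identity principle, combined with connectedness of $M$ (and the Stein assumption when $\nu=\text{hol}$), upgrades the spatial agreement from the open slice to all of $M$, yielding $X(t,\cdot)=Y(t,\cdot)$ on $M$ for a.e.~$t\in\mathbb{S'}\cap\Tilde{\mathbb{S'}}$. One then defines $Z(t,\cdot)$ to equal $X(t,\cdot)$ for $t\in\mathbb{S'}$ and $Y(t,\cdot)$ for $t\in\Tilde{\mathbb{S'}}\setminus\mathbb{S'}$, with $Z(t,\cdot)=X(t,\cdot)$ elsewhere; well-definedness on the temporal overlap is by the manifold-wide equality, and membership in $\Gamma^\nu_{\text{LI}}(\mathbb{T};TM)$ follows from measurability of the defining pieces together with the local integral bounds inherited from $X$ and $Y$. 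In the Lipschitz, finitely differentiable, and smooth cases, one instead takes a partition of unity $\{\chi_X,\chi_Y,\chi_0\}$ on $\mathbb{T}\times M$ subordinate to a suitable refinement of the open cover $\{\mathcal{R}_X,\mathcal{R}_Y,\mathbb{T}\times M\setminus\overline{\mathcal{R}_X\cap\mathcal{R}_Y}\}$ and sets $Z=\chi_X X+\chi_Y Y+\chi_0 X$: since $X=Y$ on $\mathcal{R}_X\cap\mathcal{R}_Y$, the convex combination equals $X$ there, and the trajectories are preserved while the full $C^\nu$-regularity is maintained across the cover.

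Verification is then immediate from the uniqueness of integral curves, via Lemma \ref{lem:3.2}: since $Z$ coincides with $X$ along every trajectory of $X$ starting from $\mathbb{S}\times\mathcal{U}$, that trajectory is also an integral curve of $Z$, whence $\Phi^Z|_{\mathbb{S'}\times\mathbb{S}\times\mathcal{U}}=\Phi^X$; the symmetric argument gives the corresponding identity for $Y$, so both domains are contained in $D_Z$, giving $Z\in\mathcal{V}^\nu_{(\mathbb{S'}\times\mathbb{S}\times\mathcal{U})\cup(\Tilde{\mathbb{S'}}\times\Tilde{\mathbb{S}}\times\Tilde{\mathcal{U}})}$ as required. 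The main obstacle will be the real analytic and holomorphic cases, where the absence of partitions of unity forces the identity-principle argument, and one must carefully track the almost-everywhere-in-$t$ qualifier when propagating the equality from the open flow-reachable neighbourhood to all of $M$, and verify that the resulting piecewise-in-time construction lies in the appropriate $C^\omega$ or $C^{\text{hol}}$ integrally bounded space.
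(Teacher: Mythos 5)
Your construction of $Z$ takes a genuinely different route from the paper's: the paper first glues the two flows into a single map $\Psi$ on the union of the two boxes (well-defined by the overlap condition) and then obtains $Z$ by differentiating $\Psi$ along its trajectories, whereas you glue the vector fields themselves and recover the flow afterwards by uniqueness of integral curves. Your opening step and your final verification are sound in outline, and the real analytic and holomorphic cases do work essentially as you describe: the identity principle on a connected manifold (Stein in the holomorphic case) upgrades $X_t=Y_t$ from a nonempty open set to all of $M$ for a.e.\ $t$ in the temporal overlap, after which the piecewise-in-time definition is consistent and lies in $\Gamma^\nu_{\text{LI}}(\mathbb{T};TM)$.

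The gap is in the partition-of-unity step for $\nu\in\{m+m',\infty\}$. Your verification requires $Z_t=Y_t$ on the \emph{entire} $t$-slice $\mathcal{R}_Y(t)$, since every point of $\mathcal{R}_Y(t)$ lies on some $Y$-trajectory issued from $\tilde{\mathbb{S}}\times\tilde{\mathcal{U}}$. But with $\chi_0$ supported in the complement of $\overline{\mathcal{R}_X\cap\mathcal{R}_Y}$, at points of $\mathcal{R}_Y\setminus\overline{\mathcal{R}_X\cap\mathcal{R}_Y}$ one has $\chi_X=0$ and $Z=\chi_Y Y+\chi_0 X$ with $\chi_0>0$ in general, so $Z\neq Y$ there and the $Y$-trajectories through such points are not integral curves of $Z$. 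This cannot be repaired by a cleverer choice of cutoffs: the requirements $Z=X$ on $\mathcal{R}_X$ and $Z=Y$ on $\mathcal{R}_Y$ are equalities on open sets, not support conditions, so a partition of unity buys nothing beyond the sheaf-gluing of $X$ and $Y$ over the open set $\mathcal{R}_X\cup\mathcal{R}_Y$ (which is automatic once $X=Y$ on the intersection). The genuine difficulty is extending the glued field from $\mathcal{R}_X\cup\mathcal{R}_Y$ to a globally defined element of $\Gamma^\nu_{\text{LI}}(\mathbb{T};TM)$ without changing it there, and this can fail outright: if $\overline{\mathcal{R}_X(t)}$ and $\overline{\mathcal{R}_Y(t)}$ meet at a point lying outside $\overline{\mathcal{R}_X(t)\cap\mathcal{R}_Y(t)}$ at which $X_t\neq Y_t$, no continuous global extension exists. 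A secondary overreach: differentiating the flow identity only yields $X_t=Y_t$ on the set reachable from the common initial data $(\mathbb{S}\cap\tilde{\mathbb{S}})\times(\mathcal{U}\cap\tilde{\mathcal{U}})$, whereas your gluing needs it on all of $\mathcal{R}_X(t)\cap\mathcal{R}_Y(t)$, which may contain points reachable from the two boxes only via distinct initial conditions that the overlap condition does not control.
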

\begin{proof}
Denote $(\mathbb{S'}\times\mathbb{S}\times\mathcal{U})\cap(\Tilde{\mathbb{S'}}\times\Tilde{\mathbb{S}}\times\Tilde{\mathcal{U}})=R$. Since $X\in \mathcal{V}^\nu_{\mathbb{S'}\times\mathbb{S}\times\mathcal{U}}$ and $Y\in \mathcal{V}^\nu_{\Tilde{\mathbb{S'}}\times\Tilde{\mathbb{S}}\times\Tilde{\mathcal{U}}}$, then the flow for $X$ is the mapping
\begin{eqnarray*}
    \Phi^X:\mathbb{S'}\times\mathbb{S}\times\mathcal{U} &\rightarrow&  M\\
	  (t,t_0,x_0)&\mapsto& \xi(t),
\end{eqnarray*}
where $\xi$ is a locally absolutely continuous curve $\xi:\mathbb{I}\subseteq\mathbb{T}\rightarrow M$ satisfying $\xi(t_0)=x_0$ for every $(t_0,x_0)\in \mathbb{S}\times\mathcal{U}$ and $\xi'(t)=X(t,\xi(t))$ for almost every $t\in\mathbb{S'}$; the flow for $Y$ is the mapping
\begin{eqnarray*}
    \Phi^Y:\Tilde{\mathbb{S'}}\times\Tilde{\mathbb{S}}\times\Tilde{\mathcal{U}} &\rightarrow&  M\\
	  (t,t_0,x_0)&\mapsto& \eta(t),
\end{eqnarray*}
where $\eta$ is a locally absolutely continuous curve $\eta:\mathbb{I'}\subseteq\mathbb{T}\rightarrow M$ satisfying $\eta(t_0)=x_0$ for every $(t_0,x_0)\in \Tilde{\mathbb{S}}\times\Tilde{\mathcal{U}}$ and $\eta'(t)=Y(t,\eta(t))$ for almost every $t\in\Tilde{\mathbb{S'}}$.

Since $X$ and $Y$ satisfy the overlap condition, then $\Phi^X|_{R}=\Phi^Y|_{R}$, i.e., $\xi(t)=\eta(t)$ for each $(t,t_0,x_0)\in R$. Now let $\Psi$ be a flow such that 
\begin{eqnarray*}
    \Psi:(\mathbb{S'}\times\mathbb{S}\times\mathcal{U})\cup (\Tilde{\mathbb{S'}}\times\Tilde{\mathbb{S}}\times\Tilde{\mathcal{U}} ) &\rightarrow&  M\\
	  (t,t_0,x_0)&\mapsto& \zeta(t),
\end{eqnarray*}
and $\Psi|_{\mathbb{S'}\times\mathbb{S}\times\mathcal{U}}=\Phi^X$ and $\Psi|_{\Tilde{\mathbb{S'}}\times\Tilde{\mathbb{S}}\times\Tilde{\mathcal{U}}}=\Phi^Y$, i.e., $\eta$ is a locally absolutely continuous curve $\eta:\mathbb{T'}\subseteq\mathbb{T}\rightarrow M$ satisfying
\begin{enumerate}[(a)]
    \item if $(t_0,x_0)\in\mathbb{S}\times \mathcal{U}$, then $\zeta(t_0)=x_0$ and $\zeta(t)=\xi(t)$ for almost every $t\in\mathbb{S'}$;
    \item if $(t_0,x_0)\in\Tilde{\mathbb{S}}\times\Tilde{\mathcal{U}}$, then $\zeta(t_0)=x_0$ and $\zeta(t)=\eta(t)$ for almost every $t\in\Tilde{\mathbb{S'}}$.
\end{enumerate}
Then let $Z$ be a vector field such that $\zeta'(t)=Z(t,\zeta(t))$ for almost every $t\in\Tilde{\mathbb{T'}}$. Then it is obvious that $\Psi$ is the flow for $Z$ such that $\Psi|_{\mathbb{S'}\times\mathbb{S}\times\mathcal{U}}=\Phi^X$ and $\Psi|_{\Tilde{\mathbb{S'}}\times\Tilde{\mathbb{S}}\times\Tilde{\mathcal{U}}}=\Phi^Y$.
\end{proof}
\begin{lem}
Let $\mathbb{S'}\times\mathbb{S}\times\mathcal{U},\Tilde{\mathbb{S'}}\times\Tilde{\mathbb{S}}\times\Tilde{\mathcal{U}}\subseteq \mathbb{T}\times M$ be flow admissible and that $\Tilde{\mathbb{S'}}\times\Tilde{\mathbb{S}}\times\Tilde{\mathcal{U}}\subseteq\mathbb{S'}\times\mathbb{S}\times\mathcal{U}$. Then the map
$$\rho_{\mathbb{S'}\times\mathbb{S}\times\mathcal{U},\Tilde{\mathbb{S'}}\times\Tilde{\mathbb{S}}\times\Tilde{\mathcal{U}}}:\mathcal{V}^\nu_{\mathbb{S'}\times\mathbb{S}\times\mathcal{U}}\rightarrow \mathcal{V}^\nu_{\Tilde{\mathbb{S'}}\times\Tilde{\mathbb{S}}\times\Tilde{\mathcal{U}}}$$
given by 
$$\rho_{\mathbb{S'}\times\mathbb{S}\times\mathcal{U},\Tilde{\mathbb{S'}}\times\Tilde{\mathbb{S}}\times\Tilde{\mathcal{U}}}(X)=X$$
is homeomorphism onto its image.
\end{lem}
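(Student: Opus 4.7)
The map $\rho_{\mathbb{S'}\times\mathbb{S}\times\mathcal{U},\Tilde{\mathbb{S'}}\times\Tilde{\mathbb{S}}\times\Tilde{\mathcal{U}}}$ is by definition the identity assignment $X\mapsto X$ between two subsets of the ambient locally convex space $\Gamma^\nu_{\text{LI}}(\mathbb{T};TM)$. The plan is therefore to reduce the statement to an elementary fact about subspace topologies, after first verifying that the map is well-defined on the two specified subsets.

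First I would verify well-definedness: if $X\in\mathcal{V}^\nu_{\mathbb{S'}\times\mathbb{S}\times\mathcal{U}}$, then by definition $\mathbb{S'}\times\mathbb{S}\times\mathcal{U}\subseteq D_X$; since $\Tilde{\mathbb{S'}}\times\Tilde{\mathbb{S}}\times\Tilde{\mathcal{U}}\subseteq\mathbb{S'}\times\mathbb{S}\times\mathcal{U}$ by hypothesis, we also have $\Tilde{\mathbb{S'}}\times\Tilde{\mathbb{S}}\times\Tilde{\mathcal{U}}\subseteq D_X$, and the nesting $\Tilde{\mathbb{S}}\subseteq\Tilde{\mathbb{S'}}$ is inherited from that of the supersets. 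Hence $X\in\mathcal{V}^\nu_{\Tilde{\mathbb{S'}}\times\Tilde{\mathbb{S}}\times\Tilde{\mathcal{U}}}$, so $\rho$ maps into its stated codomain. Injectivity is obvious since $\rho$ is the identity assignment, which gives $\mathcal{V}^\nu_{\mathbb{S'}\times\mathbb{S}\times\mathcal{U}}\subseteq \mathcal{V}^\nu_{\Tilde{\mathbb{S'}}\times\Tilde{\mathbb{S}}\times\Tilde{\mathcal{U}}}$ as sets.

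Next I would pin down the topologies. Both $\mathcal{V}^\nu_{\mathbb{S'}\times\mathbb{S}\times\mathcal{U}}$ and $\mathcal{V}^\nu_{\Tilde{\mathbb{S'}}\times\Tilde{\mathbb{S}}\times\Tilde{\mathcal{U}}}$ carry the subspace topology inherited from $\Gamma^\nu_{\text{LI}}(\mathbb{T};TM)\simeq L^1_{\text{loc}}(\mathbb{T};\Gamma^\nu(TM))$, as introduced in Section \ref{sec:2.3} via the seminorms $p^\nu_{K,\mathbb{S},1}$ given by (\ref{eq:2.3}). Continuity of $\rho$ then follows from the universal property of the subspace topology: if $\mathcal{O}\subseteq \mathcal{V}^\nu_{\Tilde{\mathbb{S'}}\times\Tilde{\mathbb{S}}\times\Tilde{\mathcal{U}}}$ is open, then $\mathcal{O}=\mathcal{O}'\cap \mathcal{V}^\nu_{\Tilde{\mathbb{S'}}\times\Tilde{\mathbb{S}}\times\Tilde{\mathcal{U}}}$ for some open $\mathcal{O}'\subseteq \Gamma^\nu_{\text{LI}}(\mathbb{T};TM)$, and
$$\rho^{-1}(\mathcal{O})=\mathcal{O}'\cap \mathcal{V}^\nu_{\mathbb{S'}\times\mathbb{S}\times\mathcal{U}}$$
is open in the source by definition of the subspace topology.

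Finally, I would argue continuity of the inverse $\rho^{-1}:\rho(\mathcal{V}^\nu_{\mathbb{S'}\times\mathbb{S}\times\mathcal{U}})\to \mathcal{V}^\nu_{\mathbb{S'}\times\mathbb{S}\times\mathcal{U}}$ by the same mechanism, using the transitivity of subspace topologies: the image $\rho(\mathcal{V}^\nu_{\mathbb{S'}\times\mathbb{S}\times\mathcal{U}})=\mathcal{V}^\nu_{\mathbb{S'}\times\mathbb{S}\times\mathcal{U}}$ (viewed inside $\mathcal{V}^\nu_{\Tilde{\mathbb{S'}}\times\Tilde{\mathbb{S}}\times\Tilde{\mathcal{U}}}$) carries the topology induced from $\mathcal{V}^\nu_{\Tilde{\mathbb{S'}}\times\Tilde{\mathbb{S}}\times\Tilde{\mathcal{U}}}$, and this coincides with the topology induced directly from $\Gamma^\nu_{\text{LI}}(\mathbb{T};TM)$, which is exactly the topology on the source. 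In fact there is no real obstacle here: the only content of the statement is that the inclusion $\mathcal{V}^\nu_{\mathbb{S'}\times\mathbb{S}\times\mathcal{U}}\hookrightarrow \mathcal{V}^\nu_{\Tilde{\mathbb{S'}}\times\Tilde{\mathbb{S}}\times\Tilde{\mathcal{U}}}$ of nested subspaces of a common locally convex space is a topological embedding, which is a purely formal consequence of the subspace-topology construction and so needs no substantial computation.
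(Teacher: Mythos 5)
Your proposal is correct and follows essentially the same route as the paper: one checks that $\mathcal{V}^\nu_{\mathbb{S'}\times\mathbb{S}\times\mathcal{U}}\subseteq\mathcal{V}^\nu_{\Tilde{\mathbb{S'}}\times\Tilde{\mathbb{S}}\times\Tilde{\mathcal{U}}}$ because $\mathbb{S'}\times\mathbb{S}\times\mathcal{U}\subseteq D_X$ forces $\Tilde{\mathbb{S'}}\times\Tilde{\mathbb{S}}\times\Tilde{\mathcal{U}}\subseteq D_X$, so that $\rho$ is the inclusion of nested subsets of the common ambient space $\Gamma^\nu_{\text{LI}}(\mathbb{T};TM)$ and hence a homeomorphism onto its image. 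Your version merely makes explicit the transitivity-of-subspace-topologies step that the paper leaves implicit, which is a welcome clarification rather than a divergence.
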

\begin{proof}
Let $X\in\mathcal{V}^\nu_{\mathbb{S'}\times\mathbb{S}\times\mathcal{U}}$, i.e., $\mathbb{S'}\times\mathbb{S}\times\mathcal{U}\subseteq D_X$. Then $\Tilde{\mathbb{S'}}\times\Tilde{\mathbb{S}}\times\Tilde{\mathcal{U}}\subseteq D_X$, whence $\mathcal{V}^\nu_{\mathbb{S'}\times\mathbb{S}\times\mathcal{U}}\subseteq\mathcal{V}^\nu_{\Tilde{\mathbb{S'}}\times\Tilde{\mathbb{S}}\times\Tilde{\mathcal{U}}}$. Therefore $\rho_{\mathbb{S'}\times\mathbb{S}\times\mathcal{U},\Tilde{\mathbb{S'}}\times\Tilde{\mathbb{S}}\times\Tilde{\mathcal{U}}}$ is an inclusion map, hence homeomorphism onto its image.
\end{proof}
\begin{thm}\label{thm:2.3}
$\mathscr{G}^{\nu}_{\text{LI}}(\mathbb{T};TM)(\mathcal{W})$ is unique up to homeomorphisms, i.e., it is independent of the choices of the covers for $\mathcal{W}$. 
\end{thm}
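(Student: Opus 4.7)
The strategy is a common-refinement argument: for any two countable flow-admissible covers $\mathcal{A}=\{A_i\}_{i\in\mathbb{Z}_{>0}}$ and $\mathcal{B}=\{B_j\}_{j\in\mathbb{Z}_{>0}}$ of $\mathcal{W}$, I will build a canonical homeomorphism between the associated presheaf values (which I abbreviate $\mathscr{G}_{\mathcal{A}}(\mathcal{W})$ and $\mathscr{G}_{\mathcal{B}}(\mathcal{W})$) by passing through a common countable flow-admissible refinement. First I form this refinement $\mathcal{C}=\{C_k\}_{k\in\mathbb{Z}_{>0}}$: each intersection $A_i\cap B_j$ is open (though not necessarily flow-admissible), and the same argument used in the countable-cover lemma above produces countably many flow-admissible cubes covering $A_i\cap B_j$; enumerating these over all $(i,j)$ yields a countable flow-admissible cover $\mathcal{C}$ in which each $C_k$ is contained in some $A_{i(k)}$ and some $B_{j(k)}$. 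It then suffices to show that refinement induces a homeomorphism $\mathscr{G}_{\mathcal{A}}(\mathcal{W})\cong\mathscr{G}_{\mathcal{C}}(\mathcal{W})$, since composition with the analogous statement for $\mathcal{B}$ gives the theorem.

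For the forward direction I define $\rho:\mathscr{G}_{\mathcal{A}}(\mathcal{W})\to\mathscr{G}_{\mathcal{C}}(\mathcal{W})$ by fixing for each $k$ an index $i(k)$ with $C_k\subseteq A_{i(k)}$ and setting the $k$-th coordinate of $\rho((X_i))$ to be $X_{i(k)}$, regarded as an element of $\mathcal{V}^{\nu}_{C_k}$ via the continuous inclusion $\mathcal{V}^{\nu}_{A_{i(k)}}\hookrightarrow\mathcal{V}^{\nu}_{C_k}$ furnished by the preceding lemma. The overlap condition transfers: whenever $C_k\cap C_{k'}\neq\emptyset$, both $A_{i(k)}$ and $A_{i(k')}$ contain that intersection, so $\Phi^{X_{i(k)}}$ and $\Phi^{X_{i(k')}}$ must coincide there by hypothesis on $(X_i)$. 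Continuity of $\rho$ is immediate from the initial topology on the product codomain, since each coordinate of $\rho$ is the composition of a projection $\pi_{i(k)}$ with a continuous inclusion.

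For the inverse direction I define $\sigma:\mathscr{G}_{\mathcal{C}}(\mathcal{W})\to\mathscr{G}_{\mathcal{A}}(\mathcal{W})$ by gluing. Given $(Z_k)\in\mathscr{G}_{\mathcal{C}}(\mathcal{W})$ and an index $i$, the subcollection $\{C_k:C_k\cap A_i\neq\emptyset\}$ is a countable open cover of $A_i$ whose associated flows $\Phi^{Z_k}$ are mutually compatible on overlaps. Iterated application of Lemma \ref{lem:2.2}, combined with a compact-exhaustion argument on $A_i$ that reduces each step to a finite gluing, yields a global vector field $Y_i\in\Gamma^{\nu}_{\text{LI}}(\mathbb{T};TM)$ with $A_i\subseteq D_{Y_i}$ whose flow agrees with $\Phi^{Z_k}$ on every $A_i\cap C_k$. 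The resulting tuple $(Y_i)_i$ automatically satisfies the overlap condition, so lies in $\mathscr{G}_{\mathcal{A}}(\mathcal{W})$; at the level of flows one then has $\sigma\circ\rho=\mathrm{id}$ and $\rho\circ\sigma=\mathrm{id}$, because the definitions force both compositions to produce families whose flows match the original flows on the base cover.

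The main obstacle is verifying continuity of $\sigma$. Lemma \ref{lem:2.2} gives only existence, not uniqueness, of the glued vector field, so a canonical choice must be made---for example via a fixed partition of unity on $M$ subordinate to a locally finite refinement of $\{A_i\cap C_k\}_k$, which exists by paracompactness---and the resulting assembly must be shown continuous in each seminorm $p^{\nu}_{K,\mathbb{S},1}$ defining the $L^1_{\text{loc}}(\mathbb{T};\Gamma^{\nu}(TM))$ topology. Concretely one controls $p^{\nu}_{K,\mathbb{S},1}(Y^{1}_i-Y^{2}_i)$ over a compact $K\subseteq A_i$ by the finitely many $p^{\nu}_{K,\mathbb{S},1}(Z^{1}_k-Z^{2}_k)$ indexed by the cubes of the locally finite refinement meeting $K$; since the partition-of-unity construction depends linearly on the pieces $Z_k$, this dependence is continuous, which is exactly what is needed. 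Once this estimate is in hand $\sigma$ is continuous, $\rho$ and $\sigma$ are mutually inverse homeomorphisms, and composing the two such homeomorphisms associated with $\mathcal{C}$ furnishes the sought canonical homeomorphism $\mathscr{G}_{\mathcal{A}}(\mathcal{W})\cong\mathscr{G}_{\mathcal{B}}(\mathcal{W})$.
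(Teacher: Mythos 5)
Your common-refinement organization is a legitimate alternative to the paper's argument: the paper compares the two covers directly, defining mutually inverse maps $T,T'$ between the two product subspaces by matching flows on pairwise intersections of cubes from the \emph{different} covers, and then reduces continuity to that of a single gluing map $\psi$ into one factor $\mathcal{V}^\nu_{\mathbb{S'}_k\times\mathbb{S}_k\times\mathcal{U}_k}$ via the universal property of the product/initial topology. Your restriction map $\rho$ is unproblematic, and the common refinement does exist (indeed the intersection of two open flow-admissible sets is again flow admissible, since condition (a) is automatic for open sets and condition (b) passes to intersections, so you need not re-run the covering lemma from scratch).

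The gap is in $\sigma$, and it is not merely a technical one. First, the theorem covers all $\nu\in\{m+m',\infty,\omega,\mathrm{hol}\}$, and there are no $C^\omega$ or $C^{\mathrm{hol}}$ partitions of unity, so your proposed canonical gluing is unavailable in exactly the regularity classes the paper cares most about. Second, even in the smooth case the construction does not produce an admissible output: the overlap condition identifies the \emph{flows} of $Z_k$ and $Z_{k'}$ on $C_k\cap C_{k'}$, which pins the vector fields down only along trajectories emanating from that overlap (in particular on its time-diagonal slice), not on all of $\mathbb{T}\times(\operatorname{supp}\chi_k\cap\operatorname{supp}\chi_{k'})$. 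Where the $Z_k$ genuinely differ, $\sum_k\chi_k Z_k$ is an uncontrolled convex combination, and since the flow is a nonlinear functional of the vector field there is no reason its integral curves reproduce the prescribed $\Phi^{Z_k}$ on $A_i\cap C_k$; linearity of the assembly gives continuity of a map, but not that its values lie in the target set defined by flow-matching constraints, which is the whole point. Third, $\sigma\circ\rho=\mathrm{id}$ does not follow from ``the flows match'': elements of $\mathscr{G}^{\nu}_{\text{LI}}(\mathbb{T};TM)(\mathcal{W})$ are tuples of globally defined vector fields, and distinct vector fields can share the same flow on a given cube, so recovering the flows on the base cover does not recover the original tuple. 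To repair this you would need either to work with equivalence classes of vector fields modulo equality of flows on the prescribed cubes, or to abandon the partition of unity and glue as the paper does, via Lemma~\ref{lem:2.2} together with the universal property of the initial topology on the product.
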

\begin{proof}
Let $\{\mathbb{S'}_i\times\mathbb{S}_i\times \mathcal{U}_i\}_{i\in \mathbb{Z}_{>0}}$ and $\{\Tilde{\mathbb{S'}}_j\times\Tilde{\mathbb{S}}_j\times \Tilde{\mathcal{U}}_j\}_{j\in \mathbb{Z}_{>0}}$ be two open covers of $\mathcal{W}$ and satisfy the semi-uniform condition for each $i,j\in\mathbb{Z}_{>0}$, and let $\mathcal{P}\subseteq \displaystyle\prod_{i\in\mathbb{Z}_{>0}}\mathcal{V}^\nu_{\mathbb{S'}_i\times\mathbb{S}_i\times\mathcal{U}_i}$ and $\mathcal{Q}\subseteq \displaystyle\prod_{j\in\mathbb{Z}_{>0}}\mathcal{V}^\nu_{\Tilde{\mathbb{S'}}_j\times \Tilde{\mathbb{S}}_j\times\Tilde{\mathcal{U}}_j}$ be the subsets that satisfies the overlap condition. Consider the map
\begin{eqnarray*}
  T:\mathcal{P}&\rightarrow & \mathcal{Q}\\
       (X_1,X_2,...)&\mapsto& (Y_1,Y_2,...)
\end{eqnarray*}
where $X_i\in \mathcal{V}^\nu_{\mathbb{S'}_i\times\mathbb{S}_i\times\mathcal{U}_i},\ i\in\mathbb{Z}_{>0}$ and $Y_j\in \mathcal{V}^\nu_{\Tilde{\mathbb{S'}}_j\times \Tilde{\mathbb{S}}_j\times\Tilde{\mathcal{U}}_j},\ j\in\mathbb{Z}_{>0}$ satisty the over lap condition, and if $R=(\mathbb{S'}_k\times\mathbb{S}_k\times\mathcal{U}_k)\cap(\Tilde{\mathbb{S'}}_l\times\Tilde{\mathbb{S}}_l\times\Tilde{\mathcal{U}}_l)\neq \emptyset$, then $\Phi^{X_k}|_{R}=\Phi^{Y_l}|_{R}$.
Consider the map
\begin{eqnarray*}
  T':\mathcal{Q}&\rightarrow & \mathcal{P}\\
       (Y_1,Y_2,...)&\mapsto& (X_1,X_2,...)
\end{eqnarray*}
where $X_i\in \mathcal{V}^\nu_{\mathbb{S'}_i\times\mathbb{S}_i\times\mathcal{U}_i},\ i\in\mathbb{Z}_{>0}$ and $Y_j\in \mathcal{V}^\nu_{\Tilde{\mathbb{S'}}_j\times \Tilde{\mathbb{S}}_j\times\Tilde{\mathcal{U}}_j},\ j\in\mathbb{Z}_{>0}$ satisty the over lap condition, and if $R=(\mathbb{S'}_k\times\mathbb{S}_k\times\mathcal{U}_k)\cap(\Tilde{\mathbb{S'}}_l\times\Tilde{\mathbb{S}}_l\times\Tilde{\mathcal{U}}_l)\neq \emptyset$, then $\Phi^{X_k}|_{R}=\Phi^{Y_l}|_{R}$.

By the overlap condition, $T\circ T'=T'\circ T=\text{Id}$, thus $T$ is an isomorphism with $T^{-1}=T'$. Now we need to show that T is a homeomorphism.

Let $\{\Tilde{\mathbb{S'}}_{n_l}\times\Tilde{\mathbb{S}}_{n_l}\times\Tilde{\mathcal{U}}_{n_l}\}_{l\in\mathbb{Z}_{>0}}$ be an open cover for $\mathbb{S'}_k\times\mathbb{S}_k\times \mathcal{U}_k$ for some $n_l\in\{1,2,...\}$, such that $(\mathbb{S'}_k\times\mathbb{S}_k\times \mathcal{U}_k)\cap(\Tilde{\mathbb{S'}}_{n_l}\times\Tilde{\mathbb{S}}_{n_l}\times\Tilde{\mathcal{U}}_{n_l})\neq \emptyset$ for all $l\in\mathbb{Z}_{>0}$. Denote $R_{kl}=(\mathbb{S'}_k\times\mathbb{S}_k\times \mathcal{U}_k)\cap(\Tilde{\mathbb{S'}}_{n_l}\times\Tilde{\mathbb{S}}_{n_l}\times\Tilde{\mathcal{U}}_{n_l})$. Consider the following diagram 
\begin{center}
\begin{tikzcd}[column sep=large]
\displaystyle\prod_{i\in\mathbb{Z}_{>0}}\mathcal{V}^\nu_{\mathbb{S'}_i\times\mathbb{S}_i\times\mathcal{U}_i}\supseteq\mathcal{P} \arrow[d,dashed, start anchor={[xshift=8ex]}, end anchor={[xshift=8ex]}, "\phi"] \arrow[r,"\pi_k"] &\mathcal{R}\subseteq\mathcal{V}^\nu_{\mathbb{S'}_k\times\mathbb{S}_k\times\mathcal{U}_k} 
 \\
\displaystyle\prod_{j\in\mathbb{Z}_{>0}}\mathcal{V}^\nu_{\Tilde{\mathbb{S'}}_j\times \Tilde{\mathbb{S}}_j\times\Tilde{\mathcal{U}}_j}\supseteq\mathcal{Q} \arrow[u,dashed,start anchor={[xshift=7ex]}, end anchor={[xshift=7ex]}, "\phi'"]
\arrow[ur, start anchor={[xshift=5ex]}, "\psi"]  
\end{tikzcd}
\end{center}
where $\psi$ is the map
\begin{eqnarray*}
  \psi:\hspace{10pt}\mathcal{Q}&\rightarrow & \mathcal{R}\subseteq\mathcal{V}^\nu_{\mathbb{S'}_k\times\mathbb{S}_k\times\mathcal{U}_k}\\
       (X_1,X_2,...)&\mapsto& Y
\end{eqnarray*}
such that $Y\in \mathcal{V}^\nu_{\mathbb{S'}_k\times\mathbb{S}_k\times\mathcal{U}_k}$ satisfies $\Phi^Y|_{R_{kl}}=\Phi^{X_{n_l}}|_{R_{kl}}$ for all $l\in\mathbb{Z}_{>0}$. Lemma \ref{lem:2.2} shows that that $\psi$ is well-defined. 

Now, we claim that $\psi$ is continuous.

\textbf{Apporach 1:} Consider the following diagram:
\begin{center}
\begin{tikzcd}[column sep=large]
\displaystyle\prod_{j\in\mathbb{Z}_{>0}}\mathcal{V}^\nu_{\Tilde{\mathbb{S'}}_j\times \Tilde{\mathbb{S}}_j\times\Tilde{\mathcal{U}}_j}\supseteq\mathcal{Q} \arrow[r, "\pi'_k"] \arrow[rrd, "\psi"] \arrow[rr,"T", bend left, end anchor={[xshift=2ex]}, start anchor={[xshift=5ex]}] &\mathcal{V}^\nu_{\Tilde{\mathbb{S'}}_{n_l}\times \Tilde{\mathbb{S}}_{n_l}\times\Tilde{\mathcal{U}}_{n_l}} \arrow[r, "\rho_{\Tilde{\mathbb{S'}}_{n_l}\times \Tilde{\mathbb{S}}_{n_l}\times\Tilde{\mathcal{U}}_{n_l},R_{kl}}"]& \mathcal{V}^\nu_{R_{kl}}
 \\
 &  &\mathcal{V}^\nu_{\mathbb{S'}_k\times\mathbb{S}_k\times\mathcal{U}_k}\arrow[u, "\rho_{\mathbb{S'}_k\times\mathbb{S}_k\times\mathcal{U}_k,R_{kl}}"]
\end{tikzcd}
\end{center}
It is convincing that this diagram commutes. Now let us describe the map $\rho_{\mathbb{S'}_k\times\mathbb{S}_k\times\mathcal{U}_k,R_{kl}}$,
\begin{eqnarray*}
  \rho_{\mathbb{S'}_k\times\mathbb{S}_k\times\mathcal{U}_k,R_{kl}}:\mathcal{V}^\nu_{\mathbb{S'}_k\times\mathbb{S}_k\times\mathcal{U}_k}&\rightarrow & \mathcal{V}^\nu_{R_{kl}}\\
       X&\mapsto& Y,
\end{eqnarray*}
where $\Phi^X|_{R_{kl}}=\Phi^Y$. Let $\mathcal{O}\subseteq \text{Im}(\psi)\subseteq \mathcal{V}^\nu_{\mathbb{S'}_k\times\mathbb{S}_k\times\mathcal{U}_k}$ be open. Since $\rho_{\mathbb{S'}_k\times\mathbb{S}_k\times\mathcal{U}_k,R_{kl}}$ is an open map, $\rho_{\mathbb{S'}_k\times\mathbb{S}_k\times\mathcal{U}_k,R_{kl}}(\mathcal{O})$ is open in $\mathcal{V}^\nu_{R_{kl}}$. Denote $T:=\rho_{\Tilde{\mathbb{S'}}_{n_l}\times \Tilde{\mathbb{S}}_{n_l}\times\Tilde{\mathcal{U}}_{n_l},R_{kl}}\circ\pi'_k$. Since $\rho_{\Tilde{\mathbb{S'}}_{n_l}\times \Tilde{\mathbb{S}}_{n_l}\times\Tilde{\mathcal{U}}_{n_l},R_{kl}}$ and $\pi'_k$ are both continuous, so is $T$. Then there exists an open set $U\subseteq \mathcal{Q}$ such that $T (U)\subseteq \rho_{\mathbb{S'}_k\times\mathbb{S}_k\times\mathcal{U}_k,R_{kl}}(\mathcal{O})$. Since for any map $f:A\rightarrow B$, $D\subseteq f^{-1}(f(D))$ for all subsets $D\subseteq A$., hence 
$$U\subseteq T^{-1}\circ T(U)\subseteq T^{-1}\circ \rho_{\mathbb{S'}_k\times\mathbb{S}_k\times\mathcal{U}_k,R_{kl}} (\mathcal{O}).$$
Since $T=\rho_{\mathbb{S'}_k\times\mathbb{S}_k\times\mathcal{U}_k,R_{kl}}\circ \psi$, then $T^{-1}=\psi^{-1}\circ (\rho_{\mathbb{S'}_k\times\mathbb{S}_k\times\mathcal{U}_k,R_{kl}})^{-1}$. Hence 
\begin{eqnarray*}
	\psi(U)
	    & \subseteq & \psi\circ T^{-1}\circ \rho_{\mathbb{S'}_k\times\mathbb{S}_k\times\mathcal{U}_k,R_{kl}} (\mathcal{O})\\
 	    & = &\psi\circ (\psi^{-1}\circ (\rho_{\mathbb{S'}_k\times\mathbb{S}_k\times\mathcal{U}_k,R_{kl}})^{-1})\circ \rho_{\mathbb{S'}_k\times\mathbb{S}_k\times\mathcal{U}_k,R_{kl}} (\mathcal{O}) \\
 	    & = &(\psi\circ \psi^{-1})\circ [(\rho_{\mathbb{S'}_k\times\mathbb{S}_k\times\mathcal{U}_k,R_{kl}})^{-1}\circ \rho_{\mathbb{S'}_k\times\mathbb{S}_k\times\mathcal{U}_k,R_{kl}}] (\mathcal{O})\\
	    & = & (\rho_{\mathbb{S'}_k\times\mathbb{S}_k\times\mathcal{U}_k,R_{kl}})^{-1}\circ \rho_{\mathbb{S'}_k\times\mathbb{S}_k\times\mathcal{U}_k,R_{kl}} (\mathcal{O})=\mathcal{O}
\end{eqnarray*}
provided that $(\rho_{\mathbb{S'}_k\times\mathbb{S}_k\times\mathcal{U}_k,R_{kl}})^{-1}\circ \rho_{\mathbb{S'}_k\times\mathbb{S}_k\times\mathcal{U}_k,R_{kl}}(\mathcal{O})=\text{Id}(\mathcal{O})$. Indeed, this is true under the condition that $\rho_{\mathbb{S'}_k\times\mathbb{S}_k\times\mathcal{U}_k,R_{kl}}(\mathcal{O})$ is injective. Hence $\psi$ is continuous. 

\textbf{Approach 2:} $\psi$ is the map
\begin{eqnarray*}
  \psi:\displaystyle\prod_{j\in\mathbb{Z}_{>0}}\mathcal{V}^\nu_{\Tilde{\mathbb{S'}}_j\times \Tilde{\mathbb{S}}_j\times\Tilde{\mathcal{U}}_j}\supseteq\hspace{10pt}\mathcal{Q}&\rightarrow & \mathcal{R}\subseteq\mathcal{V}^\nu_{\mathbb{S'}_k\times\mathbb{S}_k\times\mathcal{U}_k}\\
      \boldsymbol{X}:=(X_1,X_2,...)&\mapsto& Y
\end{eqnarray*}
such that $Y\in \mathcal{V}^\nu_{\mathbb{S'}_k\times\mathbb{S}_k\times\mathcal{U}_k}$ satisfies $\Phi^Y|_{R_{kl}}=\Phi^{X_{n_l}}|_{R_{kl}}$ for all $l\in\mathbb{Z}_{>0}$.
\begin{enumerate}
    \item $\psi$ is a linear map. 
    
    \textbf{Proof:} Let $\boldsymbol{X_1}=(X_1^1,X_2^1,...), \boldsymbol{X_2}=(X^2_1,X^2_2,...)\in \displaystyle\prod_{j\in\mathbb{Z}_{>0}}\mathcal{V}^\nu_{\Tilde{\mathbb{S'}}_j\times \Tilde{\mathbb{S}}_j\times\Tilde{\mathcal{U}}_j}$, and let $\psi(\boldsymbol{X_1}+\boldsymbol{X_2})=Y$, $\psi(\boldsymbol{X_1})=Y_1$, $\psi(\boldsymbol{X_2})=Y_2$. Since $Y_1$ is such that $\Phi^{Y_1}|_{R_{kl}}=\Phi^{X^1_{n_l}}|_{R_{kl}}$ and $Y_2$ is such that $\Phi^{Y_2}|_{R_{kl}}=\Phi^{X^2_{n_l}}|_{R_{kl}}$ for all $l\in\mathbb{Z}_{>0}$, then 
    $$Y_1|_{R_{kl}}=(\Phi^{Y_1}|_{R_{kl}})'=(\Phi^{X^1_{n_l}}|_{R_{kl}})'=X^1_{n_l}|_{R_{kl}}\hspace{10pt}\text{ for all }l\in\mathbb{Z}_{>0};$$
    $$Y_2|_{R_{kl}}=(\Phi^{Y_2}|_{R_{kl}})'=(\Phi^{X^2_{n_l}}|_{R_{kl}})'=X^2_{n_l}|_{R_{kl}}\hspace{10pt}\text{ for all }l\in\mathbb{Z}_{>0}.$$
    On the other hand,
    $$Y|_{R_{kl}}=(\Phi^Y|_{R_{kl}})'=(\Phi^{X^1_{n_l}+X^2_{n_l}}|_{R_{kl}})'=(X^1_{n_l}+X^2_{n_l})|_{R_{kl}}=X^1_{n_l}|_{R_{kl}}+X^2_{n_l}|_{R_{kl}}\hspace{10pt}\text{ for all }l\in\mathbb{Z}_{>0},$$
    hence 
    $$Y|_{R_{kl}}=Y_1|_{R_{kl}}+Y_2|_{R_{kl}}\hspace{10pt}\text{ for all }l\in\mathbb{Z}_{>0},$$
    i.e., $Y=Y_1+Y_2$.
    
    \item $\psi$ is continuous. \textbf{Proof:} Let $\boldsymbol{X}:=(X_{i})_{i\in\mathbb{Z}_{>0}}$ be an element in $\mathcal{Q}$ and let $p_{K,\mathbb{I},1}$ be a seminorm on $\mathcal{R}\subseteq\mathcal{V}^\nu_{\mathbb{S'}_k\times\mathbb{S}_k\times\mathcal{U}_k}$, where $K\subseteq \mathbb{M}$ and $\mathbb{I}\subseteq\mathbb{T}$ compact and $\mathcal{U}_k\subseteq K$, $\mathbb{S'}_k\subseteq\mathbb{I}$. Let $\{\Tilde{\mathbb{S'}}_{n_l}\times\Tilde{\mathbb{S}}_{n_l}\times\Tilde{\mathcal{U}}_{n_l}\}_{l\in\mathbb{Z}_{>0}}$ be an open cover for $\mathbb{S'}_k\times\mathbb{S}_k\times \mathcal{U}_k$ for some $n_l\in\{1,2,...\}$, such that $(\mathbb{S'}_k\times\mathbb{S}_k\times \mathcal{U}_k)\cap(\Tilde{\mathbb{S'}}_{n_l}\times\Tilde{\mathbb{S}}_{n_l}\times\Tilde{\mathcal{U}}_{n_l})\neq \emptyset$ for all $l\in\mathbb{Z}_{>0}$. For a fixed $l\in \mathbb{Z}_{>0}$, let $K'_{n_l}:=K\cup \Tilde{\mathcal{U}}_{n_l}$ and $\mathbb{I'}_{n_l}:=\mathbb{I}\cup \Tilde{\mathbb{S'}}_{n_l}$. Then $p_{K'_{n_l},\mathbb{I'}_{n_l},1}$ is a seminorm on $\mathcal{V}^\nu_{\Tilde{\mathbb{S'}}_{n_l}\times\Tilde{\mathbb{S}}_{n_l}\times\Tilde{\mathcal{U}}_{n_l}}$. Since $$\pi_{n_l}:\displaystyle\prod_{j\in\mathbb{Z}_{>0}}\mathcal{V}^\nu_{\Tilde{\mathbb{S'}}_j\times \Tilde{\mathbb{S}}_j\times\Tilde{\mathcal{U}}_j}\supseteq\mathcal{Q}\rightarrow\mathcal{V}^\nu_{\Tilde{\mathbb{S'}}_{n_l}\times\Tilde{\mathbb{S}}_{n_l}\times\Tilde{\mathcal{U}}_{n_l}}$$ 
    is continuous, there exists a seminorm $q_{n_l}$ on $\mathcal{Q}$ and $C_l\in\R_{>0}$ such that 
    $$q_{n_l}(\boldsymbol{X})\leq C_l\ p_{K'_{n_l},\mathbb{I'}_{n_l},1}(X_{n_l})$$
    Then there exists a $q\in\{q_{n_1}, q_{n_2},...\}$ on $\mathcal{Q}$ such that 
    $$q(\boldsymbol{X})\leq q_{n_l}(\boldsymbol{X}) \hspace{15pt} \text{for all } l\in\mathbb{Z}_{>0}.$$

    Since $K\subseteq K'_{n_l}$ and $\mathbb{I}\subseteq\mathbb{I'}_{n_l}$, then $p_{K'_{n_l},\mathbb{I'}_{n_l},1}(X_{n_l})\leq p_{K,\mathbb{I},1}(X_{n_l})$. This is true for every $l\in\mathbb{Z}_{>0}$. 
\end{enumerate}

Since $\iota,\ \iota',\ \pi_k,\ \psi$ are continuous, by the universal property of products and subspaces, there exist $\phi$ and $\phi'$ such that
\begin{equation}\label{eq:2.1}
   \psi\circ\iota'\circ \phi=\pi_k\circ\iota \text{ and } 
\end{equation}
\begin{equation}\label{eq:2.2}
    \pi_k\circ\iota\circ\phi'=\psi\circ\iota'.
\end{equation}
Now we need to show that $T=\phi$ and $T'=\phi'$. Indeed, $T$ has the property that satisfies (\ref{eq:2.1}), hence $T=\phi$ by the uniqueness of the map $\phi$. Similarly, $T'$ has the property that satisfies (\ref{eq:2.2}), hence $T=\phi'$ by the uniqueness of the map $\phi'$.
\end{proof}

\subsection{Category of time-varying local flows}
We have thoroughly developed in Section 4.1 and 4.2 our presheaf theoretic notion of what a vector field is. In this section we begin to develop the presheaf/groupoid point of view for flows. What we do in this section is develop the space that will be the codomain of the exponential map. We do this by developing a vector field independent theory of flows that is analogous to our theory of vector fields in Section 4.1, we develop a notion of ``category of flows” that we will use as the basis for defining
the ``flow presheaf.” 
\begin{defin}[$C^\nu$-local flow]
Let $m\in \mathbb{Z}_{\geq 0}$, let $m'\in\{0,\text{lip}\}$, let $\nu\in\{m+m',\infty,\omega,\text{hol}\}$, and let $r\in \{\infty,\omega,\text{hol}\}$, as required. A ``$C^\nu$-local flow" is a quintuple $(\mathbb{T'},\mathbb{T},M,\mathcal{U},\Phi)$ where 
\begin{enumerate}
    \item[(i)]{
    $\mathbb{T}\subseteq \mathbb{T'}\subseteq\R$ are intervals,
    }
    \item[(ii)]{
    $M$ is a $C^\nu$-manifold,
    }
    \item[(iii)]{
    $\mathcal{U}\subseteq M$ is open, and
    }
    \item[(iv)]{
    $\Phi:\mathbb{T'}\times\mathbb{T}\times \mathcal{U}\rightarrow M$ is such that:
    \begin{enumerate}
    \item[(a)]{
    $\Phi(t_0,t_0,x)=x,\;(t_0,t_0,x)\in \mathbb{T'}\times\mathbb{T}\times \mathcal{U}$;
    }
    \item[(b)]{
    $\Phi(t_2,t_1,\Phi(t_1,t_0,x))=\Phi(t_2,t_0,x)$, $t_0,t_1,\in\mathbb{T},\; t_2\in\mathbb{T'}\;x\in\mathcal{U}$;
    }
    \item[(c)]{
    the map $x\mapsto\Phi(t_1,t_0,x)$ is a $C^\nu$-diffeomorphism onto its image for every $t_0\in\mathbb{T}$ and $t_1\in\mathbb{T'}$;
	}
	\item[(d)]{
    the map $(t_1,t_0)\mapsto\Phi_{t_1,t_0}\in C^\nu(\mathcal{U};M)$ is continuous, where $\Phi_{t_1,t_0}(x)=\Phi(t_1,t_0,x)$.
	}
\end{enumerate}
	}
\end{enumerate}
\end{defin}
We denote
$$\text{LocFlow}^\nu(\mathbb{S'}\times\mathbb{S}\times\mathcal{U})=\{\Phi:\mathbb{S'}\times\mathbb{S}\times \mathcal{U}\rightarrow M\;|\;(\mathbb{S'},\mathbb{S},M,\mathcal{U},\Phi) \text{ is a $C^\nu$-local flow}\}.$$
We will be working with a category whose objects are the sets of local flows $\text{LocFlow}^\nu(\mathbb{S'}\times\mathbb{S}\times\mathcal{U})$. We shall think of this category as a subcategory of the category $\mathpzc{Top}$ of topological spaces
and continuous maps. In particular, we shall place topologies on the spaces $\text{LocFlow}^\nu(\mathbb{S'}\times\mathbb{S}\times\mathcal{U})$. This we do as follows.

Let $M$ be a $C^r$-manifold and let $\mathbb{T}\subseteq \R$ be an interval. Let $\mathbb{S},\mathbb{S'}\subseteq\mathbb{T}$ be subintervals, $\mathbb{S}\subseteq\mathbb{S'}$, and let $\mathcal{U}\subseteq M$ be open. Note that a local flow $\Phi\in \text{LocFlow}^\nu(\mathbb{S'};\mathbb{S};\mathcal{U})$ defines defines absolutely continuous curves with respect to its final time,
$$\hat{\Phi}\in \rm{AC}(\mathbb{S'};C^0(\mathbb{S};C^\nu(\mathcal{U};M)))$$
by $\hat{\Phi}(t)(t_0)(x)=\Phi(t,t_0,x)$. Then we topologise $\text{LocFlow}^\nu(\mathbb{S'};\mathbb{S};\mathcal{U})$ as follows.

First, we give $C^0(\mathbb{S};C^\nu(\mathcal{U}))$ the topology defined by the seminorms 
$$p^\nu_{K,\mathbb{I}}(g)=\sup\{p^\nu_K\circ g(t_0)\ |\ t_0\in\mathbb{I})\},$$
$\mathbb{I}\subseteq\mathbb{S}$ a compact interval, $p^\nu_K$ is the appropriate seminorm defined by (\ref{eq:3.2}) for $C^\nu(\mathcal{U})$.

Second, we give $\rm{AC}(\mathbb{S'};C^0(\mathbb{S};C^\nu(\mathcal{U})))$ the topology difined by the seminorms 
$$q^\nu_{K,\mathbb{I},\mathbb{I'}}(g)=\max\{p^\nu_{K,\mathbb{I},\mathbb{I'},\infty}(g),\ \hat{p}^\nu_{K,\mathbb{I},\mathbb{I'},1}(g)\}$$
where 
$$p^\nu_{K,\mathbb{I},\mathbb{I'},\infty}(g)=\sup\{p^\nu_{K,\mathbb{I}}\circ g(t)\ |\ t\in\mathbb{I'}\}\hspace{20pt}\text{ and }\hspace{20pt} \hat{p}^\nu_{K,\mathbb{I},\mathbb{I'},1}(g)=\int_{\mathbb{I'}}p^\nu_{K,\mathbb{I}}\circ g'(t) dt,$$
$\mathbb{I}\subseteq\mathbb{S}$, $\mathbb{I'}\subseteq\mathbb{S'}$ compact intervals. 

Finally, we give $\rm{AC}(\mathbb{S'};C^0(\mathbb{S};C^\nu(\mathcal{U};M)))$ the initial topology associated with the mappings
\begin{eqnarray*}
    \Psi_f:\rm{AC}(\mathbb{S'};C^0(\mathbb{S};C^\nu(\mathcal{U};M)))&\rightarrow& \rm{AC}(\mathbb{S'};C^0(\mathbb{S};C^\nu(\mathcal{U})))\\
       \Phi&\mapsto& f\circ\Phi,
\end{eqnarray*}
for $f\in C^\nu(M)$.

More explicitly for the topology of the space $\rm{AC}(\mathbb{S'};C^0(\mathbb{S};C^\nu(\mathcal{U})))$, given $f\in C^\nu(M)$ and $\Phi\in\text{LocFlow}^\nu(\mathbb{S'};\mathbb{S};\mathcal{U})$,
$$p_{K,\mathbb{I},\mathbb{I'},\infty}^{\nu}(f\circ \Phi)=\sup \left\{p_{K}^{\nu}(f\circ\Phi(t_1,t_0,x_0))\ |\ (t_1,t_0)\in\mathbb{I'}\times\mathbb{I}\right\}$$
and 
\begin{eqnarray*}
   \hat{p}^\nu_{K,\mathbb{I},\mathbb{I'},1}(f\circ\Phi)&:=&\int_{\mathbb{I'}} p^\nu_{K,\mathbb{I}} \left(\frac{d}{dt}(f\circ\Phi(t,t_0,x_0))\right) \ dt\\
   &=& \int_{\mathbb{I'}} p^\nu_{K,\mathbb{I}} \left(\langle df(\Phi(t,t_0,x_0)),\frac{d}{dt}\Phi(t,t_0,x_0)\rangle\right) \ dt\\
   &=& \int_{\mathbb{I'}} \sup\left\{p^\nu_K\left(\langle df(\Phi(t,t_0,x_0)),\frac{d}{dt}\Phi(t,t_0,x_0)\rangle\right)\ \bigg|\ t_0\in\mathbb{I}\right\} \ dt
\end{eqnarray*}
where $K\subseteq M$ and $\mathbb{I}\subseteq\mathbb{I'}\subseteq \mathbb{T}$ are compact. The topology defined properly above for flows is important in that they allow us to consider such spaces in the category of topological spaces, which allows us to construct the presheaf of local flows.
\begin{defin}[Morphisms of sets of local flows]
Let $m\in \mathbb{Z}_{\geq 0}$, let $m'\in\{0,\text{lip}\}$, let $\nu\in\{m+m',\infty,\omega,\text{hol}\}$, and let $r\in \{\infty,\omega,\text{hol}\}$, as required. Let $M$ and $N$ be $C^r$-manifolds, and let $\mathbb{T}\subseteq\R$ be a time interval. Let $\mathbb{S'}\times\mathbb{S}\times\mathcal{U}\subseteq\mathbb{T}\times\mathbb{T}\times M$ and $\Tilde{\mathbb{S'}}\times\Tilde{\mathbb{S}}\times\Tilde{\mathcal{U}}\subseteq\mathbb{T}\times\mathbb{T}\times N$ be flow admissible. A mapping 
$$\alpha:\text{LocFlow}^\nu(\mathbb{S'};\mathbb{S};\mathcal{U})\rightarrow \text{LocFlow}^\nu(\Tilde{\mathbb{S'}};\Tilde{\mathbb{S}};\Tilde{\mathcal{U}})$$
is a morphism of sets of $C^\nu$-local flows if 
\begin{enumerate}
    \item[(i)]{
    $\alpha$ is continuous, and
    }
    \item[(ii)]{
    there exist mappings $\tau_\alpha:\mathbb{S}\rightarrow\Tilde{\mathbb{S}}$, $\tau'_\alpha:\mathbb{S'}\rightarrow\Tilde{\mathbb{S'}}$, and $\phi_\alpha\in C^\nu(M;N)$ such that:
    \begin{enumerate}
    \item[(a)]{
    $\tau_\alpha$ and $\tau'_\alpha$ are the restrictions to $\Tilde{\mathbb{S}}$ and $\Tilde{\mathbb{S'}}$ of nonconstant affine mappings, respectively;
    }
    \item[(b)]{
    $\phi_\alpha(\mathcal{U})\subseteq\mathcal{V}$;
    }
    \item[(c)]{
    $\phi_\alpha\circ \Phi(t_1,t_0,x)=\alpha(\Phi)(\tau'_\alpha(t_1),\tau_\alpha(t_0),\phi_\alpha(x))$ for every $\Phi\in \text{LocFlow}^\nu(\mathbb{T};\mathcal{U})$ for $t_0,t_1\in\mathbb{T}$ and $x\in\mathcal{U}$.
	}
    \end{enumerate}
	}
\end{enumerate}
\end{defin}
Let us give some examples of morphisms of sets of flows.
\begin{exam}\label{exm:flow}(Morphisms of sets of local time-varying sections) 
If $N=M$, and if $\Tilde{\mathbb{S'}}\times\Tilde{\mathbb{S}}\times\Tilde{\mathcal{U}}\subseteq\mathbb{S'}\times\mathbb{S}\times\mathcal{U}$, then the ``restriction morphism"
$$\rho_{\mathbb{S'}\times\mathbb{S}\times\mathcal{U},\Tilde{\mathbb{S'}}\times\Tilde{\mathbb{S}}\times\Tilde{\mathcal{U}}}:\text{LocFlow}^\nu(\mathbb{S'};\mathbb{S};\mathcal{U})\rightarrow \text{LocFlow}^\nu(\Tilde{\mathbb{S'}};\Tilde{\mathbb{S}};\Tilde{\mathcal{U}})$$
given by 
$$\rho_{\mathbb{S'}\times\mathbb{S}\times\mathcal{U},\Tilde{\mathbb{S'}}\times\Tilde{\mathbb{S}}\times\Tilde{\mathcal{U}}}(\Phi)(t_1,t_0,x)=\Phi(t_1,t_0,x),\hspace{10pt} (t_1,t_0,x)\in \rho_{\mathbb{S'}\times\mathbb{S}\times\mathcal{U},\Tilde{\mathbb{S'}}\times\Tilde{\mathbb{S}}\times\Tilde{\mathcal{U}}}.$$
In this case we have ``$\tau_\alpha=\iota_{\mathbb{S'},\Tilde{\mathbb{S'}}}$" and ``$\phi_\alpha=\iota_{E|\mathcal{U},E|\mathcal{V}}$" where $\iota_{\mathbb{T},\mathbb{S}}$ and $\iota_{E|\mathcal{U},E|\Tilde{\mathcal{U}}}$ are the inclusions.
\end{exam}
\subsection{Presheaves of time-varying flows}
The manner in which we construct the presheaves of time-varying flows is similar to the manner in which we constructed the presheaves of vector fields in Section 4.2. For any open flow admissible subset $\mathcal{W}\subseteq \mathbb{T}\times\mathbb{T}\times M$, let $\{\mathbb{S'}_i\times\mathbb{S}_i\times \mathcal{U}_i\}_{i\in \mathbb{Z}_{>0}}$ be an open cover of $\mathcal{W}$ and is flow admissible for each $i\in\mathbb{Z}_{>0}$. We will define the presheaf of sets over $\mathcal{W}$, denoted by $\localflow^\nu(\mathbb{T};\mathbb{T};TM)(\mathcal{W})$, the subset of $\displaystyle\prod_{i\in\mathbb{Z}_{>0}}\text{LocFlow}^\nu(\mathbb{S'}_i\times\mathbb{S}_i\times\mathcal{U}_i)$ consisting of all sequences $(\Phi_i)_{i\in\mathbb{Z}_{>0}}$, $\Phi_i\in \text{LocFlow}^\nu(\mathbb{S'}_i\times\mathbb{S}_i\times\mathcal{U}_i)$ for which 
$\Phi_j|_{R}=\Phi_k|_{R}$ whenever $R=(\mathbb{S'}_j\times\mathbb{S}_j\times\mathcal{U}_j)\cap(\mathbb{S'}_k\times\mathbb{S}_k\times\mathcal{U}_k)$ for some $j,k\in\mathbb{Z}_{>0}$.
\begin{thm}
$\localflow^\nu(\mathbb{T};\mathbb{T};TM)(\mathcal{W})$ is unique up to homeomorphisms, i.e., it is independent of the choices of the covers for $\mathcal{W}$. 
\end{thm}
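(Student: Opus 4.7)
The plan is to mirror the proof of Theorem \ref{thm:2.3}, adapting the vector-field arguments to flows, where gluing is in fact simpler because local flows agreeing on overlaps glue directly as functions, without needing to reconstruct any underlying generator. First I would fix two flow-admissible open covers $\{\mathbb{S'}_i\times\mathbb{S}_i\times\mathcal{U}_i\}_{i\in\mathbb{Z}_{>0}}$ and $\{\Tilde{\mathbb{S'}}_j\times\Tilde{\mathbb{S}}_j\times\Tilde{\mathcal{U}}_j\}_{j\in\mathbb{Z}_{>0}}$ of $\mathcal{W}$ and consider the two candidate values
$$\mathcal{P}\subseteq\prod_{i\in\mathbb{Z}_{>0}}\text{LocFlow}^\nu(\mathbb{S'}_i\times\mathbb{S}_i\times\mathcal{U}_i),\qquad \mathcal{Q}\subseteq\prod_{j\in\mathbb{Z}_{>0}}\text{LocFlow}^\nu(\Tilde{\mathbb{S'}}_j\times\Tilde{\mathbb{S}}_j\times\Tilde{\mathcal{U}}_j),$$
cut out by the overlap condition and equipped with the subspace topology from the product initial topology defined in Section 4.3.

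Next I would construct a bijection $T\colon\mathcal{P}\to\mathcal{Q}$ by $T((\Phi_i)_{i\in\mathbb{Z}_{>0}})=(\Psi_j)_{j\in\mathbb{Z}_{>0}}$, where each $\Psi_j$ is the unique local flow on $\Tilde{\mathbb{S'}}_j\times\Tilde{\mathbb{S}}_j\times\Tilde{\mathcal{U}}_j$ obtained by the pointwise gluing $\Psi_j|_{R_{ji}}=\Phi_i|_{R_{ji}}$ on the overlaps $R_{ji}=(\Tilde{\mathbb{S'}}_j\times\Tilde{\mathbb{S}}_j\times\Tilde{\mathcal{U}}_j)\cap(\mathbb{S'}_i\times\mathbb{S}_i\times\mathcal{U}_i)$; well-definedness is immediate from the overlap condition on $(\Phi_i)$, and the flow axioms (a)--(d) of a $C^\nu$-local flow are local, so $\Psi_j$ is again a $C^\nu$-local flow. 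Defining $T'\colon\mathcal{Q}\to\mathcal{P}$ symmetrically, the overlap conditions on both sides yield $T\circ T'=T'\circ T=\text{Id}$, so $T$ is a bijection. This is the analogue of the set-theoretic part of Theorem \ref{thm:2.3}, and, unlike the vector-field case (Lemma \ref{lem:2.2}), no auxiliary existence argument is needed because flows glue as set-maps.

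The main obstacle will be showing that $T$ is continuous. By the universal property of the product topology on $\mathcal{Q}$, it suffices to check that $\pi_k\circ T\colon\mathcal{P}\to\text{LocFlow}^\nu(\Tilde{\mathbb{S'}}_k\times\Tilde{\mathbb{S}}_k\times\Tilde{\mathcal{U}}_k)$ is continuous for each $k$. Choose a countable subcover $\{\mathbb{S'}_{i_l}\times\mathbb{S}_{i_l}\times\mathcal{U}_{i_l}\}_{l\in\mathbb{Z}_{>0}}$ of $\Tilde{\mathbb{S'}}_k\times\Tilde{\mathbb{S}}_k\times\Tilde{\mathcal{U}}_k$ drawn from the first cover. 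For each seminorm $q^\nu_{K,\mathbb{I},\mathbb{I'}}$ on $\text{LocFlow}^\nu(\Tilde{\mathbb{S'}}_k\times\Tilde{\mathbb{S}}_k\times\Tilde{\mathcal{U}}_k)$, with $K\subseteq\Tilde{\mathcal{U}}_k$, $\mathbb{I}\subseteq\Tilde{\mathbb{S}}_k$, $\mathbb{I'}\subseteq\Tilde{\mathbb{S'}}_k$ compact, the compact set $\mathbb{I'}\times\mathbb{I}\times K$ admits a finite subcover by the $R_{k,i_l}$'s, say $l\in\{1,\dots,N\}$. Since $q^\nu_{K,\mathbb{I},\mathbb{I'}}$ is built from a supremum of $p^\nu_K$-seminorms over $\mathbb{I'}\times\mathbb{I}$ and an $L^1$-integral over $\mathbb{I'}$ of such suprema, both of which split over a finite cover of the compact parameter set, we obtain
$$q^\nu_{K,\mathbb{I},\mathbb{I'}}(\pi_k\circ T((\Phi_i)))\leq \sum_{l=1}^{N} q^\nu_{K_l,\mathbb{I}_l,\mathbb{I'}_l}(\Phi_{i_l})$$
for suitable compact $K_l$, $\mathbb{I}_l$, $\mathbb{I'}_l$ arising from the subcover. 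Each term on the right is exactly a product-topology seminorm on $\mathcal{P}$ composed with the coordinate projection $\pi_{i_l}$, hence continuous. This estimate gives continuity of $\pi_k\circ T$, and varying $k$ gives continuity of $T$.

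Continuity of $T'=T^{-1}$ follows by the same argument with the roles of the two covers reversed. Consequently $T$ is a homeomorphism, and $\localflow^\nu(\mathbb{T};\mathbb{T};TM)(\mathcal{W})$ is well-defined independently of the chosen flow-admissible cover, as required. The delicate point throughout is the interaction between the $\rm{AC}$-seminorms $\hat{p}^\nu_{K,\mathbb{I},\mathbb{I'},1}$ and the gluing: here the derivative in the final-time variable $t$ appears under the integral, but since the local flows $\Phi_{i_l}$ all agree on overlaps and their derivatives in $t$ therefore also agree almost everywhere on overlaps, the $\rm{AC}$-seminorm of the glued flow splits over the finite subcover exactly as the $L^\infty$-seminorm does, so no additional work is needed beyond the estimate above.
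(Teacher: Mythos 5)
Your proposal is correct and takes essentially the same route as the paper, whose entire proof of this statement is the single line ``Same argument as Theorem \ref{thm:2.3}''; you have in effect written that argument out, rightly observing that the gluing step is easier here than in the vector-field case (Lemma \ref{lem:2.2}) because local flows glue directly as set-maps, and your seminorm estimate over a finite subcover is exactly the continuity mechanism the paper intends. The one point you dispatch too quickly is axiom (b) of a $C^\nu$-local flow: the cocycle identity $\Psi(t_2,t_1,\Psi(t_1,t_0,x))=\Psi(t_2,t_0,x)$ is not a pointwise-local condition, so verifying it for the glued $\Psi_j$ requires a short chaining argument along the time segment from $t_0$ to $t_2$ (this is precisely what flow-admissibility of the cover provides) rather than the bare assertion that the flow axioms are local --- though this is no less rigorous than the paper's own treatment.
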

\begin{proof}
Same argument as Theorem \ref{thm:2.3}.
\end{proof}
\subsection{The exponential map and its  homeomorphism}
To establish the exponential map properly, we will use the language of category theory. Consider the following commutative diagram in the category of topological spaces:
\begin{center}
\begin{tikzcd}[row sep=large]
\displaystyle\prod_{i\in\mathbb{Z}_{>0}}\mathcal{V}^\nu_{\mathbb{S'}_i\times\mathbb{S}_i\times\mathcal{U}_i} \supseteq\mathscr{G}^{\nu}_{\text{LI}}(\mathbb{T};TM)(\mathcal{W}) \arrow[d,"\pi_j", start anchor={[xshift=5ex]}, end anchor={[xshift=5ex]} ] \arrow[r, "exp"] \arrow[dr, sloped, "\overbar{\text{exp}}_{\mathbb{S'}_j\times\mathbb{S}_j\times\mathcal{U}_j}\circ\pi_j", start anchor={[xshift=-7ex]}, end anchor={[xshift=-8ex]}] &\localflow^\nu(\mathbb{T};\mathbb{T};TM)(\mathcal{W})\subseteq\displaystyle\prod_{i\in\mathbb{Z}_{>0}}\text{LocFlow}^\nu(\mathbb{S'}_i\times\mathbb{S}_i\times\mathcal{U}_i) \arrow[d,"\text{pr}_j", start anchor={[xshift=-13ex]}, end anchor={[xshift=-13ex]}]
 \\
\mathcal{V}^\nu_{\mathbb{S'}_j\times\mathbb{S}_j\times\mathcal{U}_j}\supseteq Q_j
\arrow[r, "\overbar{\text{exp}}_{\mathbb{S'}_j\times\mathbb{S}_j\times\mathcal{U}_j}"]& P_j\subseteq\text{LocFlow}^\nu(\mathbb{S'}_j\times\mathbb{S}_j\times\mathcal{U}_j)   
\end{tikzcd}
\end{center}
where $\pi_j$ and $\text{pr}_j$ are the canonical projections hence continuous. Suppose that $\overbar{\text{exp}}_{\mathbb{S'}_j\times\mathbb{S}_j\times\mathcal{U}_j}$ are continuous, then by the universal property of the subspace of product spaces, there exists a well-defined continuous mapping 
\begin{eqnarray*}
exp:\displaystyle\prod_{i\in\mathbb{Z}_{>0}}\mathcal{V}^\nu_{\mathbb{S'}_i\times\mathbb{S}_i\times\mathcal{U}_i} \supseteq\mathscr{G}^{\nu}_{\text{LI}}(\mathbb{T};TM)(\mathcal{W})&\longrightarrow&\localflow^\nu(\mathbb{T};\mathbb{T};TM)(\mathcal{W}).
\end{eqnarray*}
Moreover, we have an explicit description of this map by 
$$exp((X_1,...,X_k,... ))=(\overbar{\text{exp}}_{\mathbb{S'}_1\times\mathbb{S}_1\times\mathcal{U}_1}(X_1),...,\overbar{\text{exp}}_{\mathbb{S'}_k\times\mathbb{S}_k\times\mathcal{U}_k}(X_k),...)$$

We have shown in Section \ref{sec:3.2}, that for each $(t_0,x_0)\in \mathbb{T}\times M$, there exist $\mathbb{S'},\mathbb{S}\subseteq \mathbb{T}$, $\mathcal{U}\subseteq M$ such that $(t_0,x_0)\in\mathbb{S}\times\mathcal{U}$, and an open subset $\mathcal{V}^\nu_{\mathbb{S'}\times\mathbb{S}\times\mathcal{U}}\subseteq\Gamma^\nu_{\text{LI}}(\mathbb{T};TM)$ such that the map
\begin{eqnarray}\label{eq:expbar}
  \overbar{\text{exp}}_{\mathbb{S'}\times\mathbb{S}\times\mathcal{U}}:\mathcal{V}^\nu_{\mathbb{S'}\times\mathbb{S}\times\mathcal{U}}&\rightarrow & \text{LocFlow}^\nu(\mathbb{S'};\mathbb{S};\mathcal{U})\nonumber\\
       X&\mapsto& \Phi^X
\end{eqnarray}
is well-defined. We will show that this map is homeomorphism onto its image by showing it is a continuous map with an continuous inverse. Then the continuity of $exp$ will follow by the following well-known lemma.
\begin{lem}
Let $A,\ B\ ,C\ ,D\ $ be topological spaces and let $f:A\rightarrow B$ and $g:C\rightarrow D$ be continuous maps. Then the map $f\times g: A\times C\rightarrow B\times D$ given by $$(f\times g) (x,y)= f(x)\times g(y)$$
is continuous.
\end{lem}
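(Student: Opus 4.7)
The plan is to give a direct, textbook proof using the universal property of the product topology, since the lemma is a standard fact about continuous maps into products. Concretely, I will first observe that the map $f \times g : A \times C \to B \times D$ defined by $(x,y) \mapsto (f(x), g(y))$ can be checked for continuity by showing that each of its two components, after postcomposition with the canonical projections $\pi_B : B \times D \to B$ and $\pi_D : B \times D \to D$, is continuous.

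First I would compute
$$\pi_B \circ (f \times g) = f \circ \mathrm{pr}_A, \qquad \pi_D \circ (f \times g) = g \circ \mathrm{pr}_C,$$
where $\mathrm{pr}_A : A \times C \to A$ and $\mathrm{pr}_C : A \times C \to C$ are the projections from the domain. Each projection is continuous by definition of the product topology on $A \times C$, and $f$ and $g$ are continuous by hypothesis. Hence both compositions are continuous as compositions of continuous maps. By the universal property of the product topology on $B \times D$, a map into $B \times D$ is continuous if and only if its composition with each of $\pi_B$ and $\pi_D$ is continuous, so $f \times g$ is continuous.

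Alternatively, and equally quickly, I could argue at the level of a subbasis: the collection of sets of the form $U \times D$ and $B \times V$, with $U \subseteq B$ open and $V \subseteq D$ open, forms a subbasis for the product topology on $B \times D$. Their preimages under $f \times g$ are
$$(f \times g)^{-1}(U \times D) = f^{-1}(U) \times C, \qquad (f \times g)^{-1}(B \times V) = A \times g^{-1}(V),$$
both open in $A \times C$ by continuity of $f$ and $g$ together with the definition of the product topology on $A \times C$. Since the preimage of every subbasis element is open, $f \times g$ is continuous.

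There is no real obstacle here; the lemma is essentially a restatement of the universal property of product topologies. The only care needed is to state the argument cleanly enough that it can be cited later to deduce continuity of the exponential map from the pointwise continuity of the component maps $\overline{\mathrm{exp}}_{\mathbb{S}'_j \times \mathbb{S}_j \times \mathcal{U}_j}$ assembled through the canonical projections in the preceding diagram.
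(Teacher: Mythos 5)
Your proof is correct. Note that the paper itself supplies no proof of this lemma at all --- it is simply labelled ``well-known'' and invoked to pass from the continuity of the component maps $\overbar{\text{exp}}_{\mathbb{S'}_j\times\mathbb{S}_j\times\mathcal{U}_j}$ to that of $\text{exp}$ --- so there is nothing to compare against; your write-up fills a gap the author chose to leave open. Both of your arguments (the universal property of the product topology via the identities $\pi_B \circ (f \times g) = f \circ \mathrm{pr}_A$ and $\pi_D \circ (f \times g) = g \circ \mathrm{pr}_C$, and the subbasis computation $(f \times g)^{-1}(U \times D) = f^{-1}(U) \times C$, $(f \times g)^{-1}(B \times V) = A \times g^{-1}(V)$) are complete and standard; either one alone suffices. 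The only cosmetic remark is that the paper's formula $f(x) \times g(y)$ is an abuse of notation for the ordered pair $(f(x), g(y))$, which you have correctly interpreted.
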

 We first consider the continuity. 
\subsubsection{Continuity}

\begin{propo}
Let $m\in \mathbb{Z}_{\geq 0}$, let $m'\in\{0,\text{lip}\}$, let $\nu\in\{m+m',\infty,\omega,\text{hol}\}$, and let $r\in \{\infty,\omega,\text{hol}\}$, as required. Let $M$ be a $C^r$-manifold and let $\mathbb{T}\subseteq \R$ be an interval.Let $\mathbb{S},\ \mathbb{S'}\subseteq \mathbb{T}$ and $\mathcal{U}\subseteq M$ be open. The map
\begin{eqnarray*}
  \text{exp}_{\mathbb{S'}\times\mathbb{S}\times\mathcal{U}}:\mathcal{N}\subseteq\mathcal{V}^\nu_{\mathbb{S'}\times\mathbb{S}\times\mathcal{U}}&\rightarrow & \text{LocFlow}^\nu(\mathbb{S'};\mathbb{S};\mathcal{U})\\
       X&\mapsto& \Phi^X
\end{eqnarray*}
is continuous.
\end{propo}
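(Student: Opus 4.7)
The strategy is to re-express $X\mapsto\Phi^X$ as a special case of the parameter-dependence theorem proved immediately above in Section 3.3, with the parameter space taken to be $\mathcal{V}^\nu_{\mathbb{S'}\times\mathbb{S}\times\mathcal{U}}$ itself. Concretely, I would form the tautological parameter-dependent vector field $\widetilde X:\mathbb{T}\times M\times \mathcal{V}^\nu_{\mathbb{S'}\times\mathbb{S}\times\mathcal{U}}\to TM$ defined by $\widetilde X(t,x,X)=X_t(x)$. Using the characterisation of $\Gamma^\nu_{\text{PLI}}(\mathbb{T};TM;\mathcal{P})$ in Section 2.3.3, together with the very definition of the seminorms $p^\nu_{K,\mathbb{S},1}$ on $L^1_{\text{loc}}(\mathbb{T};\Gamma^\nu(TM))$, one verifies directly that $\widetilde X\in \Gamma^\nu_{\text{PLI}}(\mathbb{T};TM;\mathcal{V}^\nu_{\mathbb{S'}\times\mathbb{S}\times\mathcal{U}})$, and manifestly $\Phi^{\widetilde X}(t_1,t_0,x,X)=\Phi^X(t_1,t_0,x)$.

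Next, the topology on $\text{LocFlow}^\nu(\mathbb{S'};\mathbb{S};\mathcal{U})$ is the initial topology generated by the maps $\Phi\mapsto f\circ\Phi$ into $\mathrm{AC}(\mathbb{S'};C^0(\mathbb{S};C^\nu(\mathcal{U})))$ for $f\in C^\nu(M)$, whose topology in turn is defined by the pair of seminorms $p^\nu_{K,\mathbb{I},\mathbb{I'},\infty}$ and $\hat p^\nu_{K,\mathbb{I},\mathbb{I'},1}$. Therefore it suffices to prove, for each $f\in C^\nu(M)$, each compact $K\subseteq\mathcal{U}$, $\mathbb{I}\subseteq\mathbb{S}$, $\mathbb{I'}\subseteq\mathbb{S'}$, and each $\epsilon\in\R_{>0}$, that there is a neighbourhood of any fixed $X_0$ on which both seminorms of $f\circ\Phi^X-f\circ\Phi^{X_0}$ are less than $\epsilon$. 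For $p^\nu_{K,\mathbb{I},\mathbb{I'},\infty}$, apply the Section 3.3 theorem to $\widetilde X$ with $(t_0,t_1)$ fixed, giving continuity of $X\mapsto \Phi^X_{t_1,t_0}$ in $C^\nu(\mathcal{U};M)$; uniformity in $(t_1,t_0)\in \mathbb{I'}\times \mathbb{I}$ then comes from compactness together with the joint $(t_1,t_0)$-continuity built into the definition of local flow, by a standard covering argument using the filter-neighbourhood/Lebesgue-number techniques already used in Corollary 3.12.

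For the derivative seminorm, observe that
$$\tfrac{d}{dt}\bigl(f\circ\Phi^X(t,t_0,x)\bigr)=(X_tf)\bigl(\Phi^X(t,t_0,x)\bigr)$$
almost everywhere in $t$, so $\hat p^\nu_{K,\mathbb{I},\mathbb{I'},1}(f\circ\Phi^X-f\circ\Phi^{X_0})$ is dominated by
$$\int_{\mathbb{I'}}\sup_{t_0\in\mathbb{I}}p^\nu_K\bigl(X_tf\circ\Phi^X_{t,t_0}-X_{0,t}f\circ\Phi^{X_0}_{t,t_0}\bigr)\,dt.$$
Inserting $\pm X_{0,t}f\circ\Phi^X_{t,t_0}$, this splits as a ``vector-field'' term $\int p^\nu_K((X_t-X_{0,t})f\circ\Phi^X_{t,t_0})\,dt$ and a ``flow'' term $\int p^\nu_K(X_{0,t}f\circ\Phi^X_{t,t_0}-X_{0,t}f\circ\Phi^{X_0}_{t,t_0})\,dt$. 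The vector-field term is controlled because $X\mapsto Xf$ is a continuous linear map $\Gamma^\nu_{\text{LI}}(\mathbb{T};TM)\to C^\nu_{\text{LI}}(\mathbb{T};M)$ (Lemma \ref{lem:2.3}), composed with pullback by $\Phi^X$ whose $C^\nu$-norms are locally uniformly bounded near $X_0$ by the fixed-time theorem of Section 3.3 and Corollary \ref{cor:3.12}. The flow term is handled by exactly the same integral estimate executed case-by-case in Sections 3.3.1--3.3.6 --- one uses the continuous mapping $\Psi_{|t_0,t_1|,\cdot}\circ\iota_{|t_0,t_1|}\circ\Phi_{\mathbb{T},K,\mathcal{O}}$ built in the proof of Section 3.3, now with parameter $X\in\mathcal{V}^\nu_{\mathbb{S'}\times\mathbb{S}\times\mathcal{U}}$, to conclude smallness from the already-established $C^\nu$-continuity of $X\mapsto\Phi^X_{t,t_0}$.

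The main obstacle I anticipate is the derivative seminorm, which, unlike the fixed-time statement of Section 3.3, mixes the time-integrated norm of $X$ itself with the regularity of $\Phi^X$. Resolving it requires recycling the Section 3.3 proof templates in each of the regularity classes $C^{m+m'}$, $C^\infty$, $C^\omega$, $C^{\text{hol}}$ --- in particular, using Cauchy estimates and the Hartogs-type extension lemma in the $C^\omega$-case to transfer real-analytic estimates to the holomorphic setting --- so the argument splits naturally into the same six subcases. The end result is that continuity of $\text{exp}_{\mathbb{S'}\times\mathbb{S}\times\mathcal{U}}$ reduces, in each regularity class, to the parameter-dependence theorem already in hand, applied to the tautological parameter-dependent vector field $\widetilde X$.
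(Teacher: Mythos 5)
Your proposal is correct and follows essentially the same route as the paper: reduce to the initial topology generated by the maps $\Psi_f$, bound both seminorms $p^\nu_{K,\mathbb{I},\mathbb{I'},\infty}$ and $\hat p^\nu_{K,\mathbb{I},\mathbb{I'},1}$ by $\int_{\mathbb{I'}}\sup_{t_0\in\mathbb{I}}p^\nu_K(X_sf\circ\Phi^X_{s,t_0}-Yf(s,\Phi^Y_{s,t_0}))\,ds$, and split the integrand into a ``flow'' term controlled by the fixed-time parameter-continuity theorem of Section~3.3 (with the space of vector fields itself serving as the parameter space, which the paper leaves implicit but you make explicit via $\widetilde X$) and a ``vector-field'' term controlled by the seminorm $p^\nu_{K',\mathbb{I'},f}(X-Y)$. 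The only inessential difference is that for the sup seminorm the paper obtains uniformity in $(t_1,t_0)$ directly from the integral representation $f\circ\Phi^X(t_1,t_0,x)-f(x)=\int_{|t_0,t_1|}Xf(s,\Phi^X_{s,t_0}(x))\,ds$ rather than via your proposed covering argument.
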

\begin{proof} 
Consider the following mappings
\begin{eqnarray*}
  \mathcal{N}\subseteq\mathcal{V}^\nu_{\mathbb{S'}\times\mathbb{S}\times\mathcal{U}}\xrightarrow{\text{exp}_{\mathbb{S'}\times\mathbb{S}\times\mathcal{U}}} \text{LocFlow}^\nu(\mathbb{S'};\mathbb{S};\mathcal{U})\xrightarrow{\Psi_f}\rm{AC}(\mathbb{S'};C^0(\mathbb{S};C^\nu(\mathcal{U}))).
\end{eqnarray*}
Since $\text{LocFlow}^\nu(\mathbb{S'};\mathbb{S};\mathcal{U})$ is equipped with the initial topology with $\Psi_f$, it is enough to show the continuity of $\Psi_f\circ\text{exp}$ for a fixed $f\in C^\nu(M)$.

Let $\{K_i\}_{i\in\mathbb{Z}_{>0}}\subset M$ be compact neighborhoods of $x_0$ and such that $K_{j}\subset\text{int}(K_{j+1})$ and $x_0\in \text{int}(K_1)$, and $M=\bigcup\limits_{i\in\mathbb{Z}_{>0}}K_i$. Denote $K=\bigcap\limits_{i\in \mathbb{Z}_{>0}}K_i$. Similarly, let $\{\mathbb{I}_i\}_{i\in\mathbb{Z}_{>0}}$ be compact neighborhoods of $t_0$ and such that $\mathbb{I}_{j}\subset\text{int}(\mathbb{I}_{j+1})$ and $t_0\in \text{int}(\mathbb{I}_1)$, and $\mathbb{T}=\bigcup\limits_{i\in\mathbb{Z}_{>0}}\mathbb{I}_i$.

For each $f\in C^\nu(M)$, $X\in\mathcal{V}^\nu_{\mathbb{S'}\times\mathbb{S}\times\mathcal{U}}$, let $\mathcal{R}$ be a neighborhood of $\text{exp}(X)$. Then there exist increasing sequences $\{i_1,i_2,...,i_m\}\subset \mathbb{Z}_{>0}$, $\{j'_1,j'_2,...,j'_n\}\subset \mathbb{Z}_{>0}$ and $\{j_1,j_2,...,j_n\}\subset \mathbb{Z}_{>0}$ such that $\mathbb{I}_{j_k}\subseteq\mathbb{I}_{j'_k}$ for all $k\in\{1,2,...,n\}$ and
$$\bigcap\limits^m_{k=1}\bigcap\limits^n_{l=1}\left\{\Phi\in \text{LocFlow}^\nu(\mathbb{S'};\mathbb{S};\mathcal{U})\;|\;\;q_{K_{i_k},\mathbb{I}_{j_l},\mathbb{I}_{j'_l},f}^{\nu}(\Phi-\Phi^X)<r\right\}\subseteq\mathcal{R}$$
forms a neighborhood of $\Phi^X$.

Observe, $i_a<i_b$ implies $K_{i_a}\subset K_{i_b}$ which gives 
$$q_{K_{i_b},\mathbb{I}_{j_l},\mathbb{I}_{j'_l},f}^{\nu^{-1}}([0,r))\subseteq q_{K_{i_a},\mathbb{I}_{j_l},\mathbb{I}_{j'_l},f}^{\nu^{-1}}([0,r)),$$
and $j_a<j_b$ (thus $j'_a<j'_b$ ) implies $\mathbb{I}_{j_a}\subset \mathbb{I}_{j_b}$ and $\mathbb{I}_{j'_a}\subset \mathbb{I}_{j'_b}$, which gives
$$q_{K_{i_k},\mathbb{I}_{j_b},\mathbb{I}_{j'_b},f}^{\nu^{-1}}([0,r))\subseteq q_{K_{i_k},\mathbb{I}_{j_a},\mathbb{I}_{j'_a},f}^{\nu^{-1}}([0,r)).$$
Observe that 
\[
\rotatebox{0}{$
\begin{array}{ccccccc}
\mathbb{I}_{j_1} & \subseteq & \mathbb{I}_{j_2} & \subseteq & ...&\subseteq &\mathbb{I}_{j_n}\\
\rotatebox{-90}{$\subseteq$}& &\rotatebox{-90}{$\subseteq$}& &\rotatebox{-90}{$\subseteq$}& &\rotatebox{-90}{$\subseteq$}\\[9pt]
\mathbb{I}_{j'_1} & \subseteq & \mathbb{I}_{j'_2} & \subseteq & ...&\subseteq &\mathbb{I}_{j'_n}
\end{array}
$}
\]
and 
\[
\rotatebox{0}{$
\begin{array}{ccccccc}
K_{i_1} & \subseteq & K_{i_2} & \subseteq & ...&\subseteq &K_{i_m}.
\end{array}
$}
\]
Let $K:=K_{i_m}$, $\mathbb{I}:=\mathbb{I}_{j_n}$ and $\mathbb{I'}:=\mathbb{I}_{j'_n}$, then $\mathbb{I}\subseteq\mathbb{I'}$. 
Now, let
$$Q:=\left\{\Phi\in \text{LocFlow}^\nu(\mathbb{S'};\mathbb{S};\mathcal{U})\;|\;\;q_{K,\mathbb{I},\mathbb{I'},f}^{\nu}(\Phi-\Phi^X)<r\right\}.$$
By (\ref{eq:5.19}-\ref{eq:5.23}), there exists a neighborhood $\mathcal{N}$ of $X$ and compact $K'\subseteq M$ such that $\Phi^Y(t_1,t_0,x)\in K'$ for all $(t_1,t_0,x)\in \mathbb{I'}\times\mathbb{I}\times K$ and $Y\in \mathcal{N}$, and that
$$\sup\left\{\int_{\mathbb{I'}}p^\nu_K\left(Xf(s,\Phi^{X}_{s,t_0})-Xf(s,\Phi^Y_{s,t_0})\right)\d s\ \bigg|\ t_0\in\mathbb{I}\right\} \leq \frac{r}{2}$$
for all $Y\in\mathcal{N}$. Let 
$$\mathcal{N'}:=\left\{Y\in \mathcal{V}^\nu_{\mathbb{S'}\times\mathbb{S}\times\mathcal{U}}\;|\;\;p_{K',\mathbb{I'},f}^{\nu}(Y-X)<\frac{r}{2}\right\},$$
and denote $\mathcal{O}:=\mathcal{N}\cap \mathcal{N'}$. We claim that $\text{exp}_{\mathbb{S'}\times\mathbb{S}\times\mathcal{U}}(\mathcal{O})\subseteq Q$. Indeed, for any $Y\in \mathcal{O}$,
\begin{eqnarray*}
   &&p_{K,\mathbb{I},\mathbb{I'},\infty}^{\nu}(f\circ\Phi^X-f\circ\Phi^Y)\\
   &&\hspace{15pt}=\sup\{p_K^{\nu}(f\circ(\Phi^{X}-\Phi^{Y})(t_1,t_0))\ |\ (t_1,t_0)\in \mathbb{I'}\times\mathbb{I}\}\\
   &&\hspace{15pt}= \sup \left\{p_K^{\nu}\left(\int_{|t_0,t_1|}Xf(s,\Phi^{X}_{s,t_0})-Yf(s,\Phi^Y_{s,t_0})\d s\right)\ \bigg|\ (t_1,t_0)\in\mathbb{I'}\times\mathbb{I}\right\}\\
   &&\hspace{15pt}\leq \sup \left\{p_K^{\nu}\left(\int_{|t_0,t_1|}Xf(s,\Phi^{X}_{s,t_0})-Xf(s,\Phi^Y_{s,t_0})\d s\right)\ \bigg|\ (t_1,t_0)\in\mathbb{I'}\times\mathbb{I}\right\}\\
   &&\hspace{25pt}+\sup \left\{p_K^{\nu}\left(\int_{|t_0,t_1|}Xf(s,\Phi^{Y}_{s,t_0})-Yf(s,\Phi^Y_{s,t_0}) \d s\right)\ \bigg|\ (t_1,t_0)\in\mathbb{I'}\times\mathbb{I}\right\}\\
   &&\hspace{15pt}\leq  \sup\left\{\int_{\mathbb{I'}}p^\nu_K\left(Xf(s,\Phi^{X}_{s,t_0})-Xf(s,\Phi^Y_{s,t_0})\right) \d s\ \bigg|\ t_0\in\mathbb{I}\right\}\\
   &&\hspace{25pt}+\sup\left\{p_{K'}^{\nu}\left(\int_{|t_0,t_1|}Xf(s,y)-Yf(s,y)\d s\right)\ \bigg|\ (t_1,t_0)\in\mathbb{I'}\times\mathbb{I}\right\}\\
   &&\hspace{15pt}\leq \frac{r}{2}+\int_{\mathbb{I'}}p_{K'}^{\nu}(Xf(s,y)-Yf(s,y))\d s\hspace{20pt}(\text{becasue } \mathbb{I}\subseteq\mathbb{I'})\\
   &&\hspace{15pt}\leq \frac{r}{2}+p_{K',\mathbb{I'},f}^{\nu}(X-Y). \\
   &&\hspace{15pt}<\frac{r}{2}+\frac{r}{2}\\
   &&\hspace{15pt}=r
\end{eqnarray*}
and 
\begin{eqnarray*}
   &&\hat{p}_{K,\mathbb{I},\mathbb{I'},1}^{\nu}(f\circ\Phi^X-f\circ\Phi^Y)\\
   &&=\int_{\mathbb{I'}} p^\nu_{K,\mathbb{I}} \left(\frac{d}{dt}(f\circ\Phi^X(t,t_0,x_0)-f\circ\Phi^Y(t,t_0,x_0))\right) \d t\\
   &&=\int_{\mathbb{I'}} \sup\left\{p^\nu_K\left(\langle df(\Phi^X_{t,t_0}),\frac{d}{dt}\Phi^X_{t,t_0}\rangle-\langle df(\Phi^Y_{t,t_0}),\frac{d}{dt}\Phi^Y_{t,t_0}\rangle\right)\ \bigg|\ t_0\in\mathbb{I}\right\} \d t\\
   &&=\int_{\mathbb{I'}} \sup\left\{p^\nu_K\left(Xf(t,\Phi^X_{t,t_0})- Yf(t,\Phi^Y_{t,t_0}\right)\ \bigg|\ t_0\in\mathbb{I}\right\} \d t\\
   &&\leq\int_{\mathbb{I'}} \sup\left\{p^\nu_K\left(Xf(t,\Phi^X_{t,t_0})- Xf(t,\Phi^Y_{t,t_0})\right)\ \bigg|\ t_0\in\mathbb{I}\right\} \d t\\
   &&\hspace{10pt}+\int_{\mathbb{I'}} \sup\left\{p^\nu_K\left(Xf(t,\Phi^Y_{t,t_0})- Yf(t,\Phi^Y_{t,t_0})\right)\ \bigg|\ t_0\in\mathbb{I}\right\} \d t\\
   &&\leq \frac{r}{2}+ \int_{\mathbb{I'}}p_{K'}^{\nu}(Xf(s,y)-Yf(s,y))\d s\\
   &&\leq \frac{r}{2}+p_{K',\mathbb{I'},f}^{\nu}(X-Y). \\
   &&<\frac{r}{2}+\frac{r}{2}\\
   &&=r
\end{eqnarray*}
Hence 
$$q^\nu_{K,\mathbb{I},\mathbb{I'},f}(f\circ\Phi^X-f\circ\Phi^Y)=\max\{p_{K,\mathbb{I},\mathbb{I'},\infty}^{\nu}(f\circ\Phi^X-f\circ\Phi^Y),\ \hat{p}_{K,\mathbb{I},\mathbb{I'},1}^{\nu}(f\circ\Phi^X-f\circ\Phi^Y)\}<r$$
for all $Y\in\mathcal{O}$. Therefore $\text{exp}_{\mathbb{S'}\times\mathbb{S}\times\mathcal{U}}(Y)\in Q$, whence the continuity of $\Psi_f\circ\text{exp}_{\mathbb{S'}\times\mathbb{S}\times\mathcal{U}}$ holds true.
\end{proof}
\subsubsection{Openness}
To show the openness, it is useful to consider the parameter-dependent local flows, i.e., the continuous maps
$$\mathcal{P}\ni p\mapsto \Phi^p\in \text{LocFlow}^\nu(\mathbb{S'};\mathbb{S};\mathcal{U}),$$
where $\mathcal{P}$ is a topological space. We denote this set by $\text{LocFlow}^\nu(\mathbb{S'};\mathbb{S};\mathcal{U};\mathcal{P})$.

We start by showing the following lemma which will help us to establish the continuity of the inverse of (\ref{eq:expbar}). 
\begin{lem}\label{thm:5.11}
Let $m\in \mathbb{Z}_{\geq 0}$, let $m'\in\{0,\rm{lip}\}$, let $\nu\in\{m+m',\infty,\omega,\rm{hol}\}$ satisfy $\nu\geq \rm{lip}$, and let $r\in \{\infty,\omega,\rm{hol}\}$as appropriate. Let $M$ be a $C^r$-manifold, let $\mathbb{T}\subseteq\R$ be an interval, let $\mathcal{P}$ be a topological space. Let $\mathbb{S}\subseteq\mathbb{S'}\subseteq\mathbb{T}$ and $\mathcal{U}\subseteq M$ be open. Let $\Phi^p\in\rm{LocFlow}^\nu(\mathbb{S'};\mathbb{S};\mathcal{U};\mathcal{P})$, let $f\in C^r(M)$, and let $(t_1,t_0,p_0)\in \mathbb{S'}\times\mathbb{S}\times\mathcal{P}$ be fixed and $t_0<t_1$. Then for any $\epsilon\in\R_{>0}$, there exists a neighborhood $\mathcal{O}\subseteq\mathcal{P}$ of $p_0$ and a compact $K\subseteq M$  such that $\Phi^{p}_{t,t_0}(x)\in \rm{int}(K)$ for all $(x,p)\in\mathcal{U}\times\mathcal{O}$ and that 
\begin{equation*}
    \int_{|t_0,t_1|}p^{\nu}_K\left(\frac{d}{d\tau}\bigg|_{\tau=s}f\circ\Phi^{p_0}_{\tau,t_0}\circ(\Phi^{p}_{s,t_0})^{-1}(x)-\frac{d}{d\tau}\bigg|_{\tau=s}f\circ\Phi^{p_0}_{\tau,t_0}\circ(\Phi^{p_0}_{s,t_0})^{-1}(x)\right)\d s<\epsilon \hspace{15pt}x\in K,\  p\in\mathcal{O}.
\end{equation*}
Or, equivalently, the map 
\begin{eqnarray*}
  \mathcal{O}\ni p \mapsto \frac{d}{d\tau}\big|_{\tau=s}f\circ\Phi^{p_0}_{\tau,t_0}\circ(\Phi^{p}_{s,t_0})^{-1}\in C^\nu(\mathcal{U};\R)
\end{eqnarray*}
is continuous.
\end{lem}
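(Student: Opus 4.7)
The plan is to reduce the integral inequality to an $L^1$-convergence statement for a family of $C^\nu$-seminorms, and then dispose of it by dominated convergence leveraging the assumed continuity of $p\mapsto\Phi^p$ in $\text{LocFlow}^\nu$.

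First, I would harvest the compact set $K$ via Corollary \ref{cor:3.11}: because $\text{cl}(\mathcal{U})\subseteq D_X(t_1,t_0,p_0)$ is compact, there exist a neighborhood $\mathcal{O}_0\subseteq\mathcal{P}$ of $p_0$ and a precompact open $\mathcal{V}\subseteq M$ with
\[
\Phi^p_{t,t_0}(x)\in\mathcal{V},\qquad (t,x,p)\in|t_0,t_1|\times\mathcal{U}\times\mathcal{O}_0;
\]
set $K=\text{cl}(\mathcal{V})$. Next, applying the chain rule to the absolutely continuous curve $\tau\mapsto f\circ\Phi^{p_0}_{\tau,t_0}(y)$ with $y=(\Phi^p_{s,t_0})^{-1}(x)$, one has
\[
\frac{d}{d\tau}\bigg|_{\tau=s} f\circ\Phi^{p_0}_{\tau,t_0}\circ(\Phi^{p}_{s,t_0})^{-1}(x)=X^{p_0}_s f\!\left(\Phi^{p_0}_{s,t_0}\circ(\Phi^{p}_{s,t_0})^{-1}(x)\right),
\]
where $X^{p_0}$ is the underlying time-dependent vector field of $\Phi^{p_0}$; the same expression at $p=p_0$ collapses to $X^{p_0}_s f(x)$. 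Writing $\Psi^{s,p}=\Phi^{p_0}_{s,t_0}\circ(\Phi^p_{s,t_0})^{-1}$, the claim reduces to
\[
\int_{|t_0,t_1|} p^\nu_K\!\left(X^{p_0}_s f\circ\Psi^{s,p}-X^{p_0}_s f\right) ds\xrightarrow[p\to p_0]{}0.
\]

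For the pointwise-in-$s$ convergence, the hypothesis $\Phi^{\bullet}\in\text{LocFlow}^\nu(\mathbb{S}';\mathbb{S};\mathcal{U};\mathcal{P})$ says $p\mapsto\Phi^p$ is continuous into the topology of Section 4.3, so in particular $p\mapsto\Phi^p_{s,t_0}$ is continuous into $C^\nu(\mathcal{U};M)$ for each fixed $s$. Since inversion of $C^\nu$-local diffeomorphisms and composition of $C^\nu$-maps are continuous operations, $\Psi^{s,p}\to\text{id}_{\mathcal{U}}$ in $C^\nu(\mathcal{U};M)$ as $p\to p_0$ for each fixed $s$. Noting that $X^{p_0}_s f\in C^\nu(M)$ (via Lemma \ref{lem:2.3} applied to $X^{p_0}\in\Gamma^\nu_{\text{LI}}(\mathbb{T};TM)$), continuity of composition then gives $X^{p_0}_s f\circ\Psi^{s,p}\to X^{p_0}_s f$ in $C^\nu(\mathcal{U};\R)$ for each $s$, so the integrand tends to $0$ pointwise in $s$.

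Finally, I would pass the limit under the integral by dominated convergence. The containment $\Psi^{s,p}(\mathcal{U})\subseteq K$, combined with a chain-rule estimate for the $C^\nu$-seminorm of a composition, bounds $p^\nu_K(X^{p_0}_s f\circ\Psi^{s,p})$ by $C\,p^\nu_{K'}(X^{p_0}_s f)$ uniformly in $p\in\mathcal{O}_0$ (after possibly shrinking $\mathcal{O}_0$) and $s\in|t_0,t_1|$, for some compact $K'\supseteq K$ and constant $C$ independent of $s,p$; the dominating function $(1+C)\,p^\nu_{K'}(X^{p_0}_s f)$ is integrable in $s$ by the definition of $\Gamma^\nu_{\text{LI}}$. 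The main obstacle I anticipate is precisely this uniform chain-rule estimate for the $C^\nu$-seminorm, which must be handled case by case for $\nu\in\{m+\text{lip},\infty,\omega,\text{hol}\}$, mirroring the case analysis in Section \ref{sec:3.3.1}; the real analytic case requires Cauchy estimates \citep[Proposition 4.2]{Jafarpour2014}, while the Lipschitz and $C^m$ cases follow from standard chain-rule/Fa\`a di Bruno arguments combined with Lemma \ref{lem:5.2}. Once that bound is in place, dominated convergence plus a final shrinkage of $\mathcal{O}$ within $\mathcal{O}_0$ yields the desired inequality; the equivalent reformulation at the end of the statement is then just the recognition that this $L^1$-bound is exactly continuity of $p\mapsto\bigl(s\mapsto \frac{d}{d\tau}\big|_{\tau=s} f\circ\Phi^{p_0}_{\tau,t_0}\circ(\Phi^p_{s,t_0})^{-1}\bigr)$ into $L^1_{\text{loc}}(|t_0,t_1|;C^\nu(\mathcal{U};\R))$.
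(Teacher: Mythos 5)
Your overall architecture (the chain--rule identity, reduction to an $L^1$-in-$s$ statement, dominated convergence) is reasonable, and the identification $\frac{d}{d\tau}\big|_{\tau=s}f\circ\Phi^{p_0}_{\tau,t_0}\circ(\Phi^{p}_{s,t_0})^{-1}(x)=X^{p_0}_sf\circ\Psi^{s,p}(x)$ is correct. The genuine gap is in your pointwise-in-$s$ step: you assert that, because $\Psi^{s,p}\to\mathrm{id}_{\mathcal{U}}$ in $C^\nu(\mathcal{U};M)$, ``continuity of composition'' gives $X^{p_0}_sf\circ\Psi^{s,p}\to X^{p_0}_sf$ in $C^\nu(\mathcal{U};\R)$. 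This fails precisely for the classes $\nu=m+\mathrm{lip}$ that the lemma must cover (including the base case $\nu=\mathrm{lip}$): the function $X^{p_0}_sf$ is in general only of class $C^{m+\mathrm{lip}}$, its top-order derivative exists only almost everywhere, and the top-order dilatation seminorm is not continuous under composition with maps converging to the identity. Concretely, with $g(x)=|x|$ and $\Psi_\delta(x)=x+\delta$ one has $\Psi_\delta\to\mathrm{id}$ even in $C^\omega$, yet $\lambda^0_K(g\circ\Psi_\delta-g)=2$ for every small $\delta>0$, so $g\circ\Psi_\delta\not\to g$ in $C^{\mathrm{lip}}$. A second, related gap: you need $p\mapsto(\Phi^{p}_{s,t_0})^{-1}$ to be continuous into $C^\nu(\mathcal{U};M)$, but the hypothesis only gives continuity of the forward flow, and continuity of the inversion operator on $C^\nu$ mapping spaces is nowhere established in the paper and is itself a nontrivial parametric inverse-function statement. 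Your dominated-convergence scaffolding is fine once pointwise convergence of $s\mapsto p^\nu_K(\cdots)$ is secured, but as written the central convergence claim is false for the Lipschitz classes.

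The paper's proof is organised precisely to avoid these two points. It never composes at the level of $C^\nu$ mapping spaces: it fixes the time-dependent object $g_s=\frac{d}{d\tau}\big|_{\tau=s}f\circ\Phi^{p_0}_{\tau,t_0}$ (respectively its $m$-jet $j_mg_s$, or its dilatation $\mathrm{dil}\,g_s$) and studies its evaluation along the curve $s\mapsto(\Phi^{p}_{s,t_0})^{-1}(x)$, for which only $C^0$-continuity of $(x,p)\mapsto(\Phi^{p}_{s,t_0})^{-1}(x)$ is needed (Theorem \ref{thm:3.4}(\ref{thm:3.4-10}) applied to the reversed flow). The $L^1$-in-$s$ continuity then comes from the composite-section machinery (Lemmas \ref{lem:2.5}, \ref{lem:2.6}, \ref{lem:3.14}, \ref{lem:4.15}), the Lipschitz top order from continuity of the local dilatation (Lemma \ref{lem:lsdic}), the $C^m$ case from the jet-bundle formulation of Section \ref{sec:2.3.2}, and the $C^\omega$ case from holomorphic extension and Cauchy estimates. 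To salvage your route you would have to split the difference into a ``move the evaluation point inside a fixed section'' term and a ``perturb the section'' term, exactly as the paper does, rather than appealing to continuity of composition and inversion.
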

\begin{proof}

\textbf{(The $C^{\text{lip}}$-case)}. Now consider the following mapping 
\begin{eqnarray*}
  \Phi_{\mathbb{T}, M,\mathcal{P}}:M\times\mathcal{P} &\rightarrow & C^0(\mathbb{T};M)\\
       (x,p)&\mapsto& (t\mapsto\Phi^p(t,t_0,x)).
\end{eqnarray*}
By Theorem \ref{thm:3.4} (\ref{thm:3.4-10}), this mapping is continuous. It is obvious that
\begin{eqnarray*}
  \Phi^*_{\mathbb{T}, M,\mathcal{P}}:M\times\mathcal{P} &\rightarrow & C^0(\mathbb{T};M)\\
       (x,p)&\mapsto& (t\mapsto(\Phi_{t,t_0}^p)^{-1}(x))
\end{eqnarray*}
is also continuous. Let us prove the following lemma that will help us in our proof later. 
\begin{lem}\label{lem:4.15}
Let $f\in C^\infty(M)$ and $p_0\in\mathcal{P}$ be fixed. Define a map 
\begin{eqnarray*}
  g:\mathbb{T}\times M &\rightarrow& \R\\
       (t,x)&\mapsto& \frac{d}{d\tau}\big|_{\tau=t}f\circ\Phi^{p_0}_{\tau,t_0}(x).
\end{eqnarray*}
Then for $\gamma\in C^0(\mathbb{T};M)$, the mapping
\begin{eqnarray*}
  \Psi_{\mathbb{T},M,g}:C^0(\mathbb{T};M) &\rightarrow & L^1_{\rm{loc}}(\mathbb{T};\R)\\
       \gamma&\mapsto& (s\mapsto g(s,\gamma(s)))
\end{eqnarray*}
is well-defined and continuous.
\end{lem}
\renewcommand\qedsymbol{$\nabla$}
\begin{proof}
Denote $g_x:t\mapsto g(t,x)$ and $g_t: x\mapsto g(t,x)$. We first note that $g_x\in L^1_{\text{loc}}(\mathbb{T};\R)$ since $f\circ\Phi^{p_0}_{\tau,t_0}$ is locally absolutely continuous. We first show that $t\mapsto g(t,\gamma(t))$ is measurable on $\mathbb{T}$. Note that 
$$t\mapsto g(t,\gamma(s))$$
is measurable for each $s \in \mathbb{T}$ and that
\begin{equation}\label{eq:4.4}
    s\mapsto g(t,\gamma(s))
\end{equation}
is continuous for each $t \in \mathbb{T}$ (this since both $x\mapsto g_t(x)$ and $\gamma$ are continuous). Let $[a, b] \subseteq \mathbb{T}$ be compact, let $k \in \mathbb{Z}_{>0}$, and denote
$$t_{k,j} = a + \frac{j-1}{k}(b - a), \hspace{10pt} j \in \{1, . . . , k + 1\}.$$
Also denote 
$$\mathbb{T}_{k,j} = [t_{k,j} , t_{k,j+1}), \hspace{10pt} j \in \{1, . . . , k - 1\},$$
and $\mathbb{T}_{k,k} = [t_{k,k}, t_{k,k+1}]$. Then define $g_k : \mathbb{T} \rightarrow \R$ by
$$g_k(t)=\sum_{j=1}^{k}g(t,\gamma(t_{k,j}))\chi_{t_{k,j}}.$$
Note that $g_k$ is measurable, being a sum of products of measurable functions \citep[Proposition 2.1.7]{MR3098996}. By continuity of (\ref{eq:4.4}) for each $t \in \mathbb{T}$, we have
$$\lim_{k\to\infty}g_k(t) = g(t,\gamma(t)),\hspace{10pt} t \in [a, b],$$
showing that $t\mapsto g(t,\gamma(t))$ is measurable on $[a,b]$, as pointwise limits of measurable functions are measurable \citep[Proposition 2.1.5]{MR3098996}. Since the compact interval $[a, b] \subseteq \mathbb{T}$ is arbitrary, we conclude that $t \mapsto  g(t,\gamma(t))$ is measurable on $\mathbb{T}$.

Let $\mathbb{S} \subseteq \mathbb{T}$ be compact and let $K \subseteq M$ be a compact set for which $\gamma(\mathbb{S}) \subseteq K$. Since $g_x\in L^1_{\text{loc}}(\mathbb{T};\R)$, there exists $C\in \R_{> 0}$ be such that 
$$\int_\mathbb{S}|g(t,x)| \d t\leq C \hspace{20pt} x\in K.$$
In particular, this shows that $t \mapsto g(t,\gamma(t))$ is integrable on $\mathbb{S}$ and so locally integrable on $\mathbb{T}$. This gives the well-definedness of $\Psi_{\mathbb{T},M,g}$.

For continuity, let $\gamma_j\in C^0(\mathbb{S};M),\ j\in\mathbb{Z}_{>0}$, be a sequence of curves converging uniformly to $\gamma\in C^0(\mathbb{S};M)$. Let $\mathbb{S}\subseteq\mathbb{T}$ be a compact interval and let $K\subseteq M$ be compact. Since $\text{image}(\gamma)\cup K$ is compact and $M$ is locally compact, we can find a precompact neighbourhood $\mathcal{U}$ of $\text{image}(\gamma)\cup K$. Then for $N\in \mathbb{Z}_{>0}$ 0 sufficiently large, we have $\text{image}(\gamma_j)\subseteq\mathcal{U}$ for all $j\geq N$ by uniform convergence. Therefore, we can find a compact set $K'\subseteq M$ such that $\text{image}(\gamma_j)\subseteq K'$ for all $j\geq N$ and $\text{image}(\gamma)\subseteq K'$. Then for fixed $t\in\mathbb{S}$, continuity of $x\mapsto g(t,x)$ ensures that $\lim_{j\to\infty}g(t,\gamma_j(t))=g(t,\gamma(t))$. We also have
$$\int_\mathbb{S}|g(t,\gamma_j(t))| \d t\leq C \hspace{20pt} t\in\mathbb{S}$$
for some $C\in\R_{>0}$. Therefore, by the Dominated Convergence Theorem
$$\lim_{j\to\infty}\int_{\mathbb{S}}g(t,\gamma_j(t))\ dt=\int_{\mathbb{S}}g(t,\gamma(t))\ dt,$$
which gives the desired continuity.
\end{proof}

Denote the following continuous mapping 
\begin{eqnarray*}
  \iota_{|t_0,t_1|}:C^0(\mathbb{T};M) &\rightarrow& C^0(|t_0,t_1|;M)\\
       \gamma&\mapsto& \gamma||t_0,t_1|.
\end{eqnarray*}
Then the mapping
$$\Psi_{|t_0,t_1|,M,g}\circ \iota_{|t_0,t_1|}\circ \Phi_{\mathbb{T}, M,\mathcal{P}}: M\times \mathcal{P}\rightarrow L^1_{\text{loc}}(|t_0,t_1|;\R)$$
is continuous being a composition of continuous maps.

Let $K\subseteq M$ be compact. Then for $f\in C^\infty(M)$, $\epsilon>0$, and $x\in K$, there exist a relative neighbourhood $\mathcal{V}_x\subseteq K$ of $x$ and a neighbourhood $\mathcal{O}_x\subseteq\mathcal{O}$ of $p_0$ such that
$$\int_{|t_0,t_1|}\bigg|\frac{d}{d\tau}\big|_{\tau=s}f\circ\Phi^{p_0}_{\tau,t_0}\circ(\Phi^{p}_{s,t_0})^{-1}(x')-\frac{d}{d\tau}\big|_{\tau=s}f\circ\Phi^{p_0}_{\tau,t_0}\circ(\Phi^{p_0}_{s,t_0})^{-1}(x')\bigg|ds<\epsilon \hspace{15pt} x'\in\mathcal{V}_x, \ p\in\mathcal{O}_x.$$
Let $x_1,...,x_m\in K$ be such that $K= \cup_{j=1}^{m}\mathcal{V}_{x_j}$ and define a neighbourhood $\mathcal{O}_1=\cap_{j=1}^{k}\mathcal{O}_{x_j}$ of $p_0$. Then we have
\begin{equation}\label{eq:4.5}
    \int_{|t_0,t_1|}\bigg|\frac{d}{d\tau}\big|_{\tau=s}f\circ\Phi^{p_0}_{\tau,t_0}\circ(\Phi^{p}_{s,t_0})^{-1}(x)-\frac{d}{d\tau}\big|_{\tau=s}f\circ\Phi^{p_0}_{\tau,t_0}\circ(\Phi^{p_0}_{s,t_0})^{-1}(x)\bigg|ds<\epsilon \hspace{15pt}x\in K,\  p\in\mathcal{O}_1.
\end{equation}
Now, consider the dilatation semimetrics
$$\lambda^0_{K,f}(\Phi_1,\Phi_2)=\sup\{\text{dil}(f\circ\Phi_1-f\circ\Phi_2)(x)\ | \ x\in K\},\hspace{10pt} f\in C^\infty(M), \ K\subseteq \mathcal{U}\ \text{compact},$$
for $C^{\rm{lip}}(\mathcal{U};M)$. By Corollary \ref{cor:3.12}, we let $C\in\R_{>0}$ and let $\mathcal{O'}\subseteq\mathcal{O}$ be a neighbourhood of $p_0$ such that
$$d_{\mathbb{G}}(\Phi^X(t,t_0,x_1,p),\Phi^X(t,t_0,x_2,p))\leq Cd_{\mathbb{G}}(x_1,x_2), \hspace{10pt} t\in|t_0,t_1|,\ x_1,x_2\in K,\ p\in\mathcal{O'}.$$
Let $f\in C^\infty(M)$, let $K\subseteq\mathcal{U}$ be compact, and let $\epsilon>0$. We now estimate 
\begin{equation*}
   \int_{|t_0,t_1|}\left|\text{dil}\left(\frac{d}{d\tau}\big|_{\tau=s}f\circ\Phi^{p_0}_{\tau,t_0}\circ(\Phi^{p}_{s,t_0})^{-1}-\frac{d}{d\tau}\big|_{\tau=s}f\circ\Phi^{p_0}_{\tau,t_0}\circ(\Phi^{p_0}_{s,t_0})^{-1}\right)(x')\right|\d s. 
\end{equation*}
We shall use a strategy similar to the above for the $p^0_K$ seminorm, and borrow the notation from the computations there. Let $f\in C^\infty(M)$ and $p_0\in\mathcal{P}$ be fixed. Define a map 
\begin{eqnarray*}
  g:\mathbb{T}\times M &\rightarrow& \R\\
       (t,x)&\mapsto& \frac{d}{d\tau}\big|_{\tau=t}f\circ\Phi^{p_0}_{\tau,t_0}(x).
\end{eqnarray*}
Then for $\gamma\in C^0(\mathbb{T};M)$, the mapping
\begin{eqnarray*}
  \Psi_{|t_0,t_1|,M,g}:C^0(|t_0,t_1|;M) &\rightarrow & L^1_{\rm{loc}}(|t_0,t_1|;\R)\\
       \gamma&\mapsto& (t\mapsto \text{dil}\ g_t(\gamma(t))),
\end{eqnarray*} 
is well-defined and continuous in a manner similar to \ref{lem:4.15} merely changing absolute value for dilatation. Combining the observations of the continuous maps, the mapping
$$\Psi_{|t_0,t_1|,M,g}\circ \iota_{|t_0,t_1|}\circ \Phi_{\mathbb{T}, M,\mathcal{P}}: M\times \mathcal{P}\rightarrow L^1_{\text{loc}}(|t_0,t_1|;\R)$$
is continuous.

Let $K\subseteq M$ be compact. For $f\in C^\infty(M)$, $\epsilon>0$, and $x\in K$, there exists a relative neighbourhood $\mathcal{V}_x\subseteq K$ of $x$ and a neighbourhood $\mathcal{O}_x\subseteq\mathcal{O}$ of $p_0$ such that
\begin{eqnarray*}
&&\int_{|t_0,t_1|}\left|\text{dil}\left(\frac{d}{d\tau}\big|_{\tau=s}f\circ\Phi^{p_0}_{\tau,t_0}\circ(\Phi^{p}_{s,t_0})^{-1}-\frac{d}{d\tau}\big|_{\tau=s}f\circ\Phi^{p_0}_{\tau,t_0}\circ(\Phi^{p_0}_{s,t_0})^{-1}\right)(x')\right|\d s<\epsilon \\
&&\hspace{13cm} x'\in\mathcal{V}_x, \ p\in\mathcal{O}_x.
\end{eqnarray*}
Let $x_1,...,x_m\in K$ be such that $K= \cup_{j=1}^{m}\mathcal{V}_{x_j}$ and define a neighbourhood $\mathcal{O}_2=\cap_{j=1}^{k}\mathcal{O}_{x_j}$ of $p_0$. Then, for $x\in K$ and $p\in\mathcal{O}_2$, we have
\begin{equation}\label{eq:4.6}
   \int_{|t_0,t_1|}\left|\text{dil}\left(\frac{d}{d\tau}\big|_{\tau=s}f\circ\Phi^{p_0}_{\tau,t_0}\circ(\Phi^{p}_{s,t_0})^{-1}-\frac{d}{d\tau}\big|_{\tau=s}f\circ\Phi^{p_0}_{\tau,t_0}\circ(\Phi^{p_0}_{s,t_0})^{-1}\right)(x')\right|\d s<\epsilon. 
\end{equation}
Finally, combining the (\ref{eq:4.5}) and (\ref{eq:4.6}) for the two sorts of different seminorms for $C^{\rm{lip}}(\mathcal{U};M)$, we ascertain that, for every compact $K\subseteq\mathcal{U}$, every $f\in C^\infty(M)$, and every $\epsilon\in\R_{>0}$, if $p\in\mathcal{O}_1\cap\mathcal{O}_2$, then we ave 
$$\int_{|t_0,t_1|}p^0_K\left(\frac{d}{d\tau}\big|_{\tau=s}f\circ\Phi^{p_0}_{\tau,t_0}\circ(\Phi^{p}_{s,t_0})^{-1}-\frac{d}{d\tau}\big|_{\tau=s}f\circ\Phi^{p_0}_{\tau,t_0}\circ(\Phi^{p_0}_{s,t_0})^{-1}\right)\d s<\epsilon$$
which gives the desired result.

\vspace{10pt}\textbf{(The $C^{m}$-case)}. The topology for $C^m(\mathcal{U};M)$ is the uniform topology defined by the semimetrics 
$$d^m_{K,f}(\Phi_1,\Phi_2)=\sup\{\|j_m(f\circ\Phi_1)(x)-j_m(f\circ\Phi_2)(x)\|_{\mathbb{G}_{M,m}}\ | \ x\in K\},\  f\in C^\infty(M), \ K\subseteq \mathcal{U}\ \text{compact}.$$
Consider the mapping
\begin{eqnarray*}
  \Phi_{|t_0,t_1|,M,\mathcal{P}}:M\times\mathcal{P} &\rightarrow & C^0(|t_0,t_1|;J^m(\mathcal{U};M))\\
       (x,p)&\mapsto& (t\mapsto j_m\Phi^{X^p}_{t,t_0}(x)),
\end{eqnarray*}
which is well-defined and continuous. It is obvious that the mapping 
\begin{eqnarray*}
  \Phi^*_{|t_0,t_1|, M,\mathcal{P}}:M\times\mathcal{P} &\rightarrow & C^0(|t_0,t_1|;J^m(\mathcal{U};M))\\
       (x,p)&\mapsto& (t\mapsto j_m(\Phi^{p}_{t,t_0})^{-1}(x)),
\end{eqnarray*}
is also continuous. 

For $(x,p)\in M\times\mathcal{P}$ and for $t\in|t_0,t_1|$, we can think of $j_m(\Phi^{p}_{t,t_0})^{-1}(x)$ as a linear mapping 
\begin{eqnarray*}
  j_m(\Phi^{p}_{t,t_0})^{-1}(x):J^m(M;\R)_{(\Phi^{p}_{t,t_0})^{-1}(x)} &\rightarrow & J^m(M;\R)_x \\
       j_m g((\Phi^{p}_{t,t_0})^{-1}(x))&\mapsto& j_m (g\circ(\Phi^{p}_{t,t_0})^{-1})(x).
\end{eqnarray*}
For a fixed $f\in C^\infty(M)$ and some $\gamma\in C^0(\mathbb{T};M)$, define a map 
\begin{eqnarray*}
  g:\mathbb{T}\times M &\rightarrow& \R\\
       (s,x)&\mapsto& j_m(\frac{d}{d\tau}\big|_{\tau=s}f\circ\Phi^{p_0}_{\tau,t_0})(x).
\end{eqnarray*}
Denote $g_x: s\mapsto g(s,x)$ and $g_s: x\mapsto g(s,x)$. Then $g_x\in L^1_{\text{loc}}(\mathbb{T};\R)$ since $f\circ\Phi^{p_0}_{\tau,t_0}$ is locally absolutely continuous, and $g_s\in C^0(M,\R)$ since for a fixed $s\in\mathbb{T}$,
$$\frac{d}{d\tau}\big|_{\tau=s}f\circ\Phi^{p_0}_{\tau,t_0}\in C^m(M;\R).$$
Now, fixing $(x,p)\in M\times \mathcal{P}$ for the moment, recall the constructions of Section \ref{sec:2.3.2}, particularly those preceding the statement of Lemma \ref{lem:2.5}. We consider the notation from those constructions with
\begin{enumerate}
    \item $N=M$,
    
    \item $E=F=J^m(M;\R)$,
    
    \item $\Gamma(s)=j_m(\Phi^{p}_{s,t_0})^{-1}(x)\in \text{Hom}_{\R}(J^m(M,\R)_{(\Phi^{p}_{s,t_0})^{-1}(x)};J^m(M;\R)_x)$, and
    
    \item $\xi=j_m(\frac{d}{d\tau}\big|_{\tau=s}f\circ\Phi^{p_0}_{\tau,t_0})$.
\end{enumerate}
Thus, again in the notation from Section \ref{sec:2.3.2}, we have
$$\gamma_M(s)=(\Phi^{p}_{s,t_0})^{-1}(x),\hspace{10pt}\gamma_N(s)=x.$$
We then have the integrable section of $E=J^m(M;\R)$ given by 
\begin{eqnarray*}
  \xi_\Gamma:|t_0,t_1| &\rightarrow & E\\
       s&\mapsto& (s\mapsto j_m (\frac{d}{d\tau}\big|_{\tau=s}f\circ\Phi^{p_0}_{\tau,t_0}\circ(\Phi^{p}_{s,t_0})^{-1})(x))
\end{eqnarray*}
to obtain continuity of the mapping
\begin{eqnarray*}
  \Psi_{|t_0,t_1|,J^m(M;\R),j_m (\frac{d}{d\tau}\big|_{\tau=s}f\circ\Phi^{p_0}_{\tau,t_0})}:C^0(|t_0,t_1|;J^m(\mathcal{U};M)) \rightarrow  L^1_{\text{loc}}(|t_0,t_1|;J^m(M;\R))\\
       \Gamma\mapsto (s\mapsto \Gamma(s)(j_m(\frac{d}{d\tau}\big|_{\tau=s}f\circ\Phi^{p_0}_{\tau,t_0})(\gamma_M(s)))),
\end{eqnarray*}
and so of the composition
$$\Psi_{|t_0,t_1|,J^m(M;\R),j_m (\frac{d}{d\tau}\big|_{\tau=s}f\circ\Phi^{p_0}_{\tau,t_0})}\circ \Phi_{|t_0,t_1|, M,\mathcal{P}}: M\times\mathcal{P}\rightarrow L^1_{\text{loc}}(|t_0,t_1|;J^m(M;\R)).$$
Note that this is precisely the continuity of the mapping
$$M\times\mathcal{P}\ni (x,p)\mapsto (t\mapsto j_m(\frac{d}{d\tau}\big|_{\tau=s}f\circ\Phi^{p_0}_{\tau,t_0}\circ (\Phi^{p}_{s,t_0})^{-1}(x)))\in L^1_{\text{loc}}(|t_0,t_1|;J^m(M;\R)).$$
In order to convert this continuity into a continuity statement involving the fibre norm for $J^m(M;\R)$, we note that for $x\in K$,  there exists a neighbourhood $\mathcal{V}_x$ and affine functions $F^1_x,..., F^{n+k}_x\in\text{Aff}^\infty(J^m(M;\R))$ which are coordinates for $\rho^{-1}_m(\mathcal{V}_x)$. We can choose a Riemannian metric for $J^m(M;\R)$, whose restriction to fibres agrees with the fibre metric (\ref{eq:fibremetric}) \citep[\S 4.1]{lewis2020geometric}. It follows, therefore, from Lemma \ref{lem:5.2} that there exists $C_x\in\R_{>0}$ such that
$$\|j_m g_1(x')-j_m g_2(x')\|_{\mathbb{G}_{M,m}}\leq C_x|F^l_x\circ j_m g_1(x')-F^l_x\circ j_m g_2(x')|,$$
for $g_1,g_2\in C^\infty(M)$, $x'\in\mathcal{V}_x$, $l\in\{1,...,n+k\}$. By the continuity proved in the preceding paragraph, we can take a relative neighbourhood $\mathcal{V}_x\subseteq K$ of $x$ sufficiently small and a neighbourhood $\mathcal{O}_x\subseteq\mathcal{O}$ of $p_0$ such that 
$$\int_{|t_0,t_1|}\left|F^l_x\circ j_m(\frac{d}{d\tau}\big|_{\tau=s}f\circ\Phi^{p_0}_{\tau,t_0}\circ (\Phi^{p}_{s,t_0})^{-1})(x')-F^l_x\circ j_m(\frac{d}{d\tau}\big|_{\tau=s}f\circ\Phi^{p_0}_{\tau,t_0}\circ (\Phi^{p_0}_{s,t_0})^{-1})(x')\right|\d s<\frac{\epsilon}{2C_x},$$
for all $x'\in\mathcal{V}_x$, $p\in\mathcal{O}_x$, and $l\in\{1,...,n+k\}$. Therefore,
$$\int_{|t_0,t_1|}\left\|j_m(\frac{d}{d\tau}\big|_{\tau=s}f\circ\Phi^{p_0}_{\tau,t_0}\circ (\Phi^{p}_{s,t_0})^{-1})(x')-j_m(\frac{d}{d\tau}\big|_{\tau=s}f\circ\Phi^{p_0}_{\tau,t_0}\circ (\Phi^{p_0}_{s,t_0})^{-1})(x')\right\|_{\mathbb{G}_{M,m}} \d s <\frac{\epsilon}{2}$$
for all $x'\in\mathcal{V}_x$, $p\in\mathcal{O}_x$. Now let $x_1,...,x_s\in K$ be such that $K= \cup_{r=1}^{s}\mathcal{V}_{x_r}$ and define a neighbourhood $\mathcal{O'}=\cap_{r=1}^{s}\mathcal{O}_{x_r}$ of $p_0$. Then we have 
$$\int_{|t_0,t_1|}\left\|j_m(\frac{d}{d\tau}\big|_{\tau=s}f\circ\Phi^{p_0}_{\tau,t_0}\circ (\Phi^{p}_{s,t_0})^{-1})(x')-j_m(\frac{d}{d\tau}\big|_{\tau=s}f\circ\Phi^{p_0}_{\tau,t_0}\circ (\Phi^{p_0}_{s,t_0})^{-1})(x')\right\|_{\mathbb{G}_{M,m}} ds <\frac{\epsilon}{2}$$
for all $x'\in K$, $p\in\mathcal{O'}$, as desired.
\end{proof}

\textbf{(The $C^{m+\text{lip}}$-case).} We will note a few facts here.
\begin{enumerate}
    \item The $C^{m+\text{lip}}$ topology for $C^{m+\text{lip}}$ is the initial topology induced by the $C^{\text{lip}}$-topology for $\Gamma^{\text{lip}}(J^m(M;\R))$ under the mapping $f\mapsto j_m f$. 
    
    \item The mapping 
    $$\mathcal{O}\ni p\mapsto j_m(\frac{d}{d\tau}\big|_{\tau=s}f\circ\Phi^{p_0}_{\tau,t_0}\circ(\Phi^{p}_{s,t_0})^{-1})\in C^{\text{lip}}(\mathcal{U};J^m(M;\R))$$
    is well-defined and continuous (by the $C^m$-case proved in Section \ref{sec:3.3.2}). Thus in the diagram
    \begin{center}
    \begin{tikzcd}[
     column sep={6cm,between origins},
     row sep={3cm,between origins},]
    C^{m+\text{lip}}(\mathcal{U};\R)
    \arrow[r,"\Phi\mapsto j_m\Phi"] &C^{\text{lip}}(\rho^{-1}_m(\mathcal{U});J^m(M;\R)) 
     \\
    \mathcal{O} \arrow[u, "p\mapsto\frac{d}{d\tau}|_{\tau=s}f\circ\Phi^{p_0}_{\tau,t_0}\circ(\Phi^{p}_{s,t_0})^{-1}"]
    \arrow[ur, "p\mapsto j_m(\frac{d}{d\tau}|_{\tau=s}f\circ\Phi^{p_0}_{\tau,t_0}\circ(\Phi^{p}_{s,t_0})^{-1})"', sloped]  
    \end{tikzcd}
    \end{center}
    the continuity of the diagonal map gives the continuity of the vertical map, as desired.
\end{enumerate}

\textbf{(The $C^\infty$-case).} From the result in the $C^m$-case for $m\in\mathbb{Z}_{\geq 0}$, the mapping
\begin{eqnarray*}
  \mathcal{O}\ni p \mapsto \frac{d}{d\tau}\big|_{\tau=s}f\circ\Phi^{p_0}_{\tau,t_0}\circ(\Phi^{p}_{s,t_0})^{-1}(x)\in C^\nu(\mathcal{U};\R)
\end{eqnarray*}
is continuous for each $m\in\mathbb{Z}_{\geq 0}$. From the diagram
\begin{center}
    \begin{tikzcd}[
     column sep={6cm,between origins},
     row sep={3cm,between origins},]
    C^{\infty}(\mathcal{U};\R)
    \arrow[r,"\hookrightarrow"] &C^{m}(\mathcal{U};\R) 
     \\
    \mathcal{O} \arrow[u, "p\mapsto\frac{d}{d\tau}|_{\tau=s}f\circ\Phi^{p_0}_{\tau,t_0}\circ(\Phi^{p}_{s,t_0})^{-1}"]
    \arrow[ur, "p\mapsto j_m(\frac{d}{d\tau}|_{\tau=s}f\circ\Phi^{p_0}_{\tau,t_0}\circ(\Phi^{p}_{s,t_0})^{-1})"', sloped]  
    \end{tikzcd}
\end{center}
and noting that the diagonal mappings in the diagram are continuous, we obtain the continuity of the vertical mapping as a result of the fact that the $C^\infty$-topology is the initial topology induced by the $C^m$-topologies, $m\in\mathbb{Z}_{\geq 0}$.

\textbf{(The $C^\omega$-case).} It will suffice to show that, for $f\in C^\omega(M)$, $K\subseteq\mathcal{U}$ compact, $\boldsymbol{a}=(a_j)_{j\in\mathbb{Z}_{\geq 0}}\in c_0(\mathbb{Z}_{\geq 0};\R_{>0})$, and for $\epsilon\in\R_{>0}$, there exists a neighborhood $\mathcal{O}$ of $p_0$ such that 
\begin{eqnarray*}
   a_0a_1...a_n\left\|j_n(\frac{d}{d\tau}\big|_{\tau=s}f\circ\Phi^{p_0}_{\tau,t_0}\circ (\Phi^{p}_{s,t_0})^{-1})-\frac{d}{d\tau}\big|_{\tau=s}f\circ\Phi^{p_0}_{\tau,t_0}\circ (\Phi^{p_0}_{s,t_0})^{-1}))(x)\right\|_{G_{M,\pi,n}}<\epsilon,\\
   \hspace{80pt} x\in K, \ p\in\mathcal{O}, \ n\in\mathbb{Z}_{\geq 0}
\end{eqnarray*}
Let $f\in C^\omega(M)$, $K\subseteq\mathcal{U}$ compact, $\boldsymbol{a}=(a_j)_{j\in\mathbb{Z}_{\geq 0}}\in c_0(\mathbb{Z}_{\geq 0};\R_{>0})$, and let $\epsilon\in\R_{>0}$. As in all preceding cases, the estimate is broken into two parts.

For the first part, we let $\overbar{M}$ be a holomorphic extension of $M$ and, by Lemma \ref{lem:heotdvf}, let $\overbar{\mathcal{U}}\subseteq\overbar{M}$ be a neighbourhood of $M$ and let $\overbar{\frac{d}{d\tau}\big|_{\tau=s}\Phi^{p_0}_{\tau,t_0}}\in\Gamma^{\text{hol}}_{\text{LI}}(|t_0,t_1|;T\overbar{\mathcal{U}})$ be such that $\overbar{\frac{d}{d\tau}\big|_{\tau=s}\Phi^{p_0}_{\tau,t_0}}|M=\frac{d}{d\tau}\big|_{\tau=s}\Phi^{p_0}_{\tau,t_0}$. We also let $\overbar{f}$ be the extension of $f$, possibly after shrinking $\overbar{\mathcal{U}}$. Then the mapping
$$K\times\mathcal{O}\ni (z,p)\mapsto (t\mapsto \overbar{\frac{d}{d\tau}\big|_{\tau=s}f\circ\Phi^{p_0}_{\tau,t_0}}\circ (\Phi^{p}_{s,t_0})^{-1}(z))\in L^1_{\text{loc}}(|t_0,t_1|;\mathbb{C})$$
is continuous, just as in the $C^0$-case from Section \ref{sec:3.3.1}. Therefore, restricting to $M$,
\begin{equation}\label{eq:5.26}
    K\times\mathcal{O}\ni (x,p)\mapsto (t\mapsto \frac{d}{d\tau}\big|_{\tau=s}f\circ\Phi^{p_0}_{\tau,t_0}\circ (\Phi^{p}_{s,t_0})^{-1}(x))\in L^1_{\text{loc}}(|t_0,t_1|;\mathbb{R})
\end{equation}
is continuous. By Cauchy estimates for holomorphic sections \citep[Proposition 4.2]{Jafarpour2014}, there exists $C,r\in \R_{>0}$ such that \begin{eqnarray*}   &&\left\|j_m\left(\frac{d}{d\tau}\big|_{\tau=s}f\circ\Phi^{p_0}_{\tau,t_0}\circ (\Phi^{p}_{s,t_0})^{-1}\right)(x)-j_m\left(\frac{d}{d\tau}\big|_{\tau=s}f\circ\Phi^{p_0}_{\tau,t_0}\circ (\Phi^{p_0}_{s,t_0})^{-1}\right)(x)\right\|_{\mathbb{G}_{M,\pi,m}}\\
   &&\leq Cr^{-m}\left|\frac{d}{d\tau}\big|_{\tau=s}f\circ\Phi^{p_0}_{\tau,t_0}\circ (\Phi^{p}_{s,t_0})^{-1}(x)-\frac{d}{d\tau}\big|_{\tau=s}f\circ\Phi^{p_0}_{\tau,t_0}\circ (\Phi^{p_0}_{s,t_0})^{-1}(x)\right|, \hspace{20pt} x\in K, t\in|t_0,t_1|.
\end{eqnarray*} 
Without loss of generality, we can take $r\in (0,1)$. Let $N\in\mathbb{Z}_{\geq 0}$ be the smallest integer for which $a_j\leq r$ for $j\geq N$. Then for $m\in\mathbb{Z}_{\geq 0}$, we have 
$$Ca_0\frac{a_1}{r}\cdots\frac{a_m}{r}\leq \left\{\begin{array}{lcl} Ca_0\frac{a_1}{r}\cdots\frac{a_m}{r},\hspace{45pt}m\in\{0,1,...,N\},\\
Ca_0\frac{a_1}{r}\cdots\frac{a_N}{r},\hspace{45pt}m\geq N+1.\end{array}\right.$$
Denote 
$$M=\max\left\{Ca_0\frac{a_1}{r}\cdots\frac{a_m}{r}\ \big|\ m\in\{o,1,...,N\}\right\}$$
and for $x\in K$, let $\mathcal{V}_x\subseteq K$ be a relative neighborhood of $x$ and let $\mathcal{O}_x\subseteq\mathcal{O}$ be a
neighbourhood of $p_0$ such that
$$\int_{|t_0,t_1|}\left|\frac{d}{d\tau}\big|_{\tau=s}f\circ\Phi^{p_0}_{\tau,t_0}\circ (\Phi^{p}_{s,t_0})^{-1}(x')-\frac{d}{d\tau}\big|_{\tau=s}f\circ\Phi^{p_0}_{\tau,t_0}\circ (\Phi^{p_0}_{s,t_0})^{-1}(x')\right|\ \d s<\frac{\epsilon}{M}, \ x'\in\mathcal{V}_x, \ p\in\mathcal{O}_x,$$
this being possible by continuity of the mapping (\ref{eq:5.26}). We then have
\begin{eqnarray*}
   &&\int_{|t_0,t_1|}a_0a_1...a_m\left\|j_m\left(\frac{d}{d\tau}\big|_{\tau=s}f\circ\Phi^{p_0}_{\tau,t_0}\circ (\Phi^{p}_{s,t_0})^{-1}-\frac{d}{d\tau}\big|_{\tau=s}f\circ\Phi^{p_0}_{\tau,t_0}\circ (\Phi^{p_0}_{s,t_0})^{-1}\right)(x')\right\|_{\mathbb{G}_{M,m}}\d s\\
   &&\leq \int_{|t_0,t_1|}Ca_0\frac{a_1}{r}\cdots\frac{a_m}{r}\left|\frac{d}{d\tau}\big|_{\tau=s}f\circ\Phi^{p_0}_{\tau,t_0}\circ (\Phi^{p}_{s,t_0})^{-1}(x')-\frac{d}{d\tau}\big|_{\tau=s}f\circ\Phi^{p_0}_{\tau,t_0}\circ (\Phi^{p_0}_{s,t_0})^{-1}(x')\right|\d s<\epsilon
\end{eqnarray*}
for $x'\in\mathcal{V}_x, \ p\in\mathcal{O}_x,\ m\in\mathbb{Z}_{\geq 0}.$
Letting $x_1,...,x_s\in K$ be such that $K\subseteq \cup_{r=1}^{s}\mathcal{V}_{x_r}$ and defining $\mathcal{O'}=\cap_{r=1}^{k}\mathcal{O}_{x_r}$, we have  for $x\in K$ and $p\in\mathcal{O'}$, we have
$$\int_{|t_0,t_1|}a_0a_1...a_m\left\|j_m\left(\frac{d}{d\tau}\big|_{\tau=s}f\circ\Phi^{p_0}_{\tau,t_0}\circ (\Phi^{p}_{s,t_0})^{-1}-\frac{d}{d\tau}\big|_{\tau=s}f\circ\Phi^{p_0}_{\tau,t_0}\circ (\Phi^{p_0}_{s,t_0})^{-1}\right)(x')\right\|_{\mathbb{G}_{M,m}}\d s<\epsilon$$
for $x'\in K, \ p\in\mathcal{O'},\ m\in\mathbb{Z}_{\geq 0}$, as desired.
\begin{propo}
Let $m\in \mathbb{Z}_{\geq 0}$, let $m'\in\{0,\text{lip}\}$, let $\nu\in\{m+m',\infty,\omega,\text{hol}\}$, and let $r\in \{\infty,\omega,\text{hol}\}$, as required. Let $M$ be a $C^r$-manifold and let $\mathbb{T}\subseteq \R$ be an interval.Let $\mathbb{S}\subseteq \mathbb{S'}\subseteq \mathbb{T}$ and $\mathcal{U}\subseteq M$ be open. The map
\begin{eqnarray*}
  \text{exp}_{\mathbb{S'}\times\mathbb{S}\times\mathcal{U}}:\mathcal{N}\subseteq\mathcal{V}^\nu_{\mathbb{S'}\times\mathbb{S}\times\mathcal{U}}&\rightarrow & \text{LocFlow}^\nu(\mathbb{S'};\mathbb{S};\mathcal{U})\\
       X&\mapsto& \Phi^X
\end{eqnarray*}
is open.
\end{propo}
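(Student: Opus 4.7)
The plan is to prove openness by constructing a continuous section of $\text{exp}_{\mathbb{S'}\times\mathbb{S}\times\mathcal{U}}$ over its image. To any flow $\Phi\in\text{exp}(\mathcal{N})$ associate its canonical infinitesimal generator
\[
X^\Phi(t,y) := \frac{d}{d\tau}\bigg|_{\tau=t}\Phi(\tau,t,y), \qquad (t,y)\in\mathbb{S'}\times\mathcal{U},
\]
extended outside $\mathbb{S'}\times\mathcal{U}$ by the chosen centre $X_0$ (with a smooth cutoff near $\partial\mathcal{U}$ to preserve the $C^\nu$-regularity in state) so as to land in $\mathcal{V}^\nu_{\mathbb{S'}\times\mathbb{S}\times\mathcal{U}}$. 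Showing that this selection $\Phi\mapsto X^\Phi$ is continuous at each $\Phi^{X_0}$ implies that, for any basic neighbourhood $U$ of $X_0$, every flow $\Phi$ sufficiently close to $\Phi^{X_0}$ admits a preimage in $U$; hence $\text{exp}(U)$ contains a neighbourhood of $\Phi^{X_0}$ in the image topology, which is precisely openness of $\text{exp}_{\mathbb{S'}\times\mathbb{S}\times\mathcal{U}}$ onto its image.

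To reduce to a function-level statement, I would invoke Lemma \ref{lem:2.3}: the topology on $\Gamma^\nu(TM)$ is the initial topology associated with the Lie-derivative maps $X\mapsto \mathscr{L}_X f$, $f\in C^r(M)$, and hence the $L^1_{\text{loc}}$-topology on $\mathcal{V}^\nu_{\mathbb{S'}\times\mathbb{S}\times\mathcal{U}}$ is generated by seminorms of the form $X\mapsto \int_{\mathbb{S}_0} p^\nu_K(X_tf)\,dt$. It therefore suffices to show that, for each $f\in C^r(M)$, each compact $K\subseteq \mathcal{U}$, and each compact $\mathbb{S}_0\subseteq \mathbb{S}\cap\mathbb{S'}$,
\[
\int_{\mathbb{S}_0} p^\nu_K\bigl(X^{\Phi^p}_t f - X^{\Phi^{p_0}}_t f\bigr)\,dt \longrightarrow 0
\]
as $\Phi^p\to \Phi^{p_0}$ in $\text{LocFlow}^\nu(\mathbb{S'};\mathbb{S};\mathcal{U})$. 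The crux is a one-line inequality: unfolding the flow seminorm via the identity $\frac{d}{dt_1}f\circ\Phi(t_1,t_0,y) = X^\Phi f(t_1,\Phi(t_1,t_0,y))$ shows
\[
\hat{p}^\nu_{K,\mathbb{S}_0,\mathbb{S}_0,1}(f\circ\Phi^p - f\circ\Phi^{p_0}) = \int_{\mathbb{S}_0}\sup_{t_0\in\mathbb{S}_0} p^\nu_K\bigl(X^{\Phi^p}f(t,\Phi^p_{t,t_0}(\cdot)) - X^{\Phi^{p_0}}f(t,\Phi^{p_0}_{t,t_0}(\cdot))\bigr)\,dt,
\]
and selecting the admissible value $t_0=t$ at each $t\in\mathbb{S}_0$ inside the inner supremum collapses both flows to the identity, so the integrand pointwise dominates $p^\nu_K(X^{\Phi^p}_t f - X^{\Phi^{p_0}}_t f)$. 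This yields
\[
\int_{\mathbb{S}_0} p^\nu_K\bigl(X^{\Phi^p}_t f - X^{\Phi^{p_0}}_t f\bigr)\,dt \;\leq\; \hat{p}^\nu_{K,\mathbb{S}_0,\mathbb{S}_0,1}(f\circ\Phi^p - f\circ\Phi^{p_0}),
\]
whose right-hand side tends to $0$ by the definition of convergence in $\text{LocFlow}^\nu(\mathbb{S'};\mathbb{S};\mathcal{U})$.

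The main obstacle is reconciling the ambient topology on $\mathcal{V}^\nu_{\mathbb{S'}\times\mathbb{S}\times\mathcal{U}}$, whose seminorms range over arbitrary compact subintervals $\mathbb{S}_0\subseteq\mathbb{T}$ and arbitrary compact $K\subseteq M$, with the flow topology, which probes data only on $\mathbb{S'}\times\mathcal{U}$: two vector fields agreeing on $\mathbb{S'}\times\mathcal{U}$ (up to the reachable set of the flow) but differing outside generate identical restricted flows, so $\text{exp}$ is not globally injective on $\mathcal{V}^\nu$. The extension step in the definition of the selected preimage handles this: since $X^\Phi$ is built to coincide with the fixed centre $X_0$ off a compact neighbourhood of the reachable set of $\Phi^{X_0}$ inside $\mathbb{S'}\times\mathcal{U}$, the difference $X^\Phi - X_0$ is supported in a set on which the key inequality applies, and every ambient seminorm $p^\nu_{K,\mathbb{S}_0,1}(X^\Phi - X_0)$ reduces to that controlled case. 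Verifying that the glued field still lies in $\mathcal{V}^\nu_{\mathbb{S'}\times\mathbb{S}\times\mathcal{U}}$—namely, that its flow on $\mathbb{S'}\times\mathbb{S}\times\mathcal{U}$ remains $\Phi$ provided the cutoff equals $1$ on that reachable set, which holds for $\Phi$ sufficiently close to $\Phi^{X_0}$ by Corollary \ref{cor:3.11}—then completes the argument.
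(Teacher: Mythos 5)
Your central inequality is correct and is a genuinely different---and more economical---route than the paper's. By evaluating the inner supremum in $\hat{p}^\nu_{K,\mathbb{S}_0,\mathbb{S}_0,1}$ at the admissible value $t_0=t$, where $\Phi_{t,t}=\mathrm{id}$, you recover $p^\nu_K(X^{\Phi^p}_tf-X^{\Phi^{p_0}}_tf)$ pointwise in $t$ and so bound the vector-field seminorm directly by the flow seminorm. The paper instead fixes $t_0$, writes $X_\Psi f(t,x)=\frac{d}{d\tau}\big|_{\tau=t}f\circ\Psi_{\tau,t_0}\circ\Psi_{t,t_0}^{-1}(x)$, and splits the difference into a term controlled by $\hat{p}$ and a term comparing $\Psi_{t,t_0}^{-1}$ with $\Phi_{t,t_0}^{-1}$; the latter requires the long preparatory Lemma \ref{thm:5.11} on the continuity of $p\mapsto\frac{d}{d\tau}\big|_{\tau=s}f\circ\Phi^{p_0}_{\tau,t_0}\circ(\Phi^{p}_{s,t_0})^{-1}$, proved case by case for every regularity class. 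Your $t_0=t$ observation renders that entire lemma unnecessary for the seminorms that probe the reachable set, which is a real simplification.

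The gap is in the gluing step. To promote the generator $X^\Phi$, which the flow determines only on its reachable set, to an element of $\mathcal{V}^\nu_{\mathbb{S'}\times\mathbb{S}\times\mathcal{U}}\subseteq\Gamma^\nu_{\text{LI}}(\mathbb{T};TM)$, you interpolate with the centre $X_0$ via a smooth cutoff. This fails in exactly the cases $\nu=\omega$ and $\nu=\text{hol}$ that the proposition covers: there are no real analytic or holomorphic bump functions (a $C^\omega$ function equal to $1$ on one open set and to $0$ on another, on a connected manifold, contradicts the identity theorem), so the glued field $\chi X^\Phi+(1-\chi)X_0$ is not of class $C^\omega$ or $C^{\text{hol}}$ and does not lie in $\mathcal{V}^\nu$. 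A secondary point: the reachable set from all of $\mathcal{U}$ over $\mathbb{S'}$ need not be precompact, so ``compact neighbourhood'' must at least be weakened to ``closed neighbourhood,'' and the Leibniz terms produced by $\chi$ have to be absorbed into the seminorm estimates (harmless for $\nu\leq\infty$, but it should be said). To cover the analytic and holomorphic cases you need a different device: either restrict, as the paper implicitly does by declaring $\text{exp}$ injective, to controlling only the seminorms $p^\nu_{K,\mathbb{S}_0,1}$ with $K$ in the reachable set and $\mathbb{S}_0\subseteq\mathbb{S'}$ (i.e., openness onto the image modulo identification of vector fields agreeing there), or produce a genuine analytic extension rather than a cut-off one. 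As written, the section $\Phi\mapsto X^\Phi$ is not well defined into $\mathcal{V}^\omega$ or $\mathcal{V}^{\text{hol}}$, so the proof is incomplete for those regularities.
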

\begin{proof}
Since $\text{exp}_{\mathbb{S'}\times\mathbb{S}\times\mathcal{U}}$ is one-to-one and onto its image, it is enough to show the continuity of the inverse map, denoted by 
\begin{eqnarray*}
  \text{exp}^{-1}_{\mathbb{S'}\times\mathbb{S}\times\mathcal{U}}:\text{LocFlow}^\nu(\mathbb{S'};\mathbb{S};\mathcal{U})&\rightarrow &\mathcal{V}^\nu_{\mathbb{S'}\times\mathbb{S}\times\mathcal{U}}\\
       \Phi&\mapsto& X_\Phi
\end{eqnarray*}
where 
$$X_\Phi(t,x)=X_\Phi(t,\Phi(t,t_0,x_0))=\frac{d}{d\tau}\big|_{\tau=t}\Phi(\tau,t_0,x_0)=\frac{d}{d\tau}\big|_{\tau=t}\Phi_{\tau,t_0}(x_0)=\frac{d}{d\tau}\big|_{\tau=t}\Phi_{\tau,t_0}\circ\Phi^{-1}_{t,t_0}(x)$$
and $(t,t_0,x_0)\in\mathbb{S'}\times\mathbb{S}\times \mathcal{U}$. Hence $\text{exp}^{-1}_{\mathbb{S'}\times\mathbb{S}\times\mathcal{U}}\circ \text{exp}_{\mathbb{S'}\times\mathbb{S}\times\mathcal{U}}=\text{Id}$. 

The topology of $\mathcal{V}^\nu_{\mathbb{S'}\times\mathbb{S}\times\mathcal{U}}$ is generated by the family of seminorms 
$$p_{K',\mathbb{I'},f}^\nu(X)=\int_{\mathbb{I'}} p^\nu_{K'}(X_tf)\ dt,$$
where $p^\nu_{K'}$ is the appropriate seminorm from (\ref{eq:3.2}).
For a fixed $\Phi\in \text{LocFlow}^\nu(\mathbb{S'};\mathbb{S};\mathcal{U})$, $X_\Phi \in \mathcal{V}^\nu_{\mathbb{S'}\times\mathbb{S}\times\mathcal{U}}$.

Let $\{K'_i\}_{i\in\mathbb{Z}_{>0}}\subset M$ be compact neighborhoods of $x_0$ and such that $K'_{j}\subset\text{int}(K'_{j+1})$ and $x_0\in \text{int}(K'_1)$, and $M=\bigcup\limits_{i\in\mathbb{Z}_{>0}}K'_i$. Denote $K'=\bigcap\limits_{i\in \mathbb{Z}_{>0}}K'_i$. Similarly, let $\{\mathbb{I}'_i\}_{i\in\mathbb{Z}_{>0}}$ be compact neighborhoods of $t_0$ and such that $\mathbb{I}'_{j}\subset\text{int}(\mathbb{I}'_{j+1})$ and $t_0\in \text{int}(\mathbb{I}'_1)$, and $\mathbb{T}=\bigcup\limits_{i\in\mathbb{Z}_{>0}}\mathbb{I}'_i$.

For for a fixed $\Phi\in\text{LocFlow}^\nu(\mathbb{S'};\mathbb{S};\mathcal{U})$, let $\mathcal{R}$ be a neighborhood of $X_\Phi\in\mathcal{V}^\nu_{\mathbb{S'}\times\mathbb{S}\times\mathcal{U}}$. Then there exist increasing sequences $\{i_1,i_2,...,i_m\}\subset \mathbb{Z}_{>0}$, $\{j_1,j_2,...,j_n\}\subset \mathbb{Z}_{>0}$ and a finite collection of functions $f_1,f_2,...f_p\in C^\nu(M)$ such that
$$\bigcap\limits^p_{s=1}\bigcap\limits^m_{k=1}\bigcap\limits^n_{l=1}\left\{Y\in \mathcal{V}^\nu_{\mathbb{S'}\times\mathbb{S}\times\mathcal{U}}\;|\;\;p_{K'_{i_k},\mathbb{I}'_{j_l},f_s}^{\nu}(Y-X_\Phi)<r\right\}\subseteq\mathcal{R}$$
forms a neighborhood of $X_\Phi$.

Observe, $i_a<i_b$ implies $K'_{i_a}\subset K'_{i_b}$ which gives
$$p_{K'_{i_b},\mathbb{I}'_{j_l},f_s}^{\nu^{-1}}([0,r))\subseteq p_{K'_{i_a},\mathbb{I}'_{j_l},f_s}^{\nu^{-1}}([0,r)),$$
and $j_a<j_b$ implies $\mathbb{I}'_{j_a}\subset \mathbb{I}'_{j_b}$, which gives
$$p_{K'_{i_k},\mathbb{I}'_{j_b},f_s}^{\nu^{-1}}([0,r))\subseteq p_{K'_{i_k},\mathbb{I}'_{j_a},f_s}^{\nu^{-1}}([0,r)).$$
Observe that $\mathbb{I}'_{j_1} \subseteq \mathbb{I}'_{j_2} \subseteq  ...\subseteq \mathbb{I}'_{j_n}$ and $K'_{i_1} \subseteq K'_{i_2} \subseteq ...\subseteq K'_{i_m}$. Denote $K':=K'_{i_m}$ and $\mathbb{I}':=\mathbb{I}'_{j_n}$, and let
$$Q:=\bigcap\limits^p_{s=1}\left\{Y\in \mathcal{V}^\nu_{\mathbb{S'}\times\mathbb{S}\times\mathcal{U}}\;|\;\;p_{K',\mathbb{I}',f_s}^{\nu}(Y-X_\Phi)<r\right\}.$$

The topology on topology on $\text{LocFlow}^\nu(\mathbb{S'};\mathbb{S};\mathcal{U})$ by the semi-metrics
$$q_{K,\mathbb{I},\mathbb{I'},f}^{\nu}(\Phi_1-\Phi_2)=\max \left\{p_{K,\mathbb{I},\mathbb{I'},\infty}^{\nu}(f\circ(\Phi_1-\Phi_2)),\  \hat{p}^\nu_{K,\mathbb{I},\mathbb{I'},1}(f\circ(\Phi_1-\Phi_2)) \right\}.$$
By Proposition (\ref{thm:5.11}), for a fixed $f_j\in\{f_1,...,f_p\}$, there exist $K_j\subseteq M$ and $\mathbb{I}_j\subseteq \mathbb{I'}$ compact with $t_0$ as interior, and a neighborhood $\mathcal{N}_j$ of $\Phi$, such that $\Psi^{-1}_{t,t_0}(x)\in \text{int}(K_j)$ for all $\Psi\in\mathcal{N}_j$ and $(t,t_0,x)\in \mathbb{I'}\times\mathbb{I}_j\times K'$ and that
\begin{equation}
\int_{\mathbb{I'}}p^{\nu}_{K'}\left(\frac{d}{d\tau}\big|_{\tau=s}f_j\circ\Psi_{\tau,t_0}\circ\Psi^{-1}_{s,t_0}(x)-\frac{d}{d\tau}\big|_{\tau=s}f_j\circ\Psi_{\tau,t_0}\circ\Phi^{-1}_{s,t_0}(x)\right)\d s<\frac{r}{2}.
\end{equation}
Now denote $K:=\bigcap\limits^p_{j=1}K_j$, $\mathbb{I}:=\bigcap\limits^p_{j=1} \mathbb{I}_j$.
Let 
$$\mathcal{N'}:=\bigcap\limits^p_{j=1}\left\{\Psi\in\text{LocFlow}^\nu(\mathbb{S'};\mathbb{S};\mathcal{U})\;|\;\;q_{K,\mathbb{I},\mathbb{I'},f_j}^{\nu}(\Psi-\Phi)<\frac{r}{2}\right\},$$
and denote $\mathcal{O}:=(\bigcap\limits^p_{j=1}\mathcal{N}_j)\cap \mathcal{N'}$. We claim that $\text{exp}^{-1}_{\mathbb{S'}\times\mathbb{S}\times\mathcal{U}}(\mathcal{O})\subseteq Q$. Indeed, for any $\Psi\in \mathcal{O}$ and for fixed $(t,t_0)\in\mathbb{I'}\times\mathbb{I}$, we have that for each $s\in\{1,...,p\}$,
\begin{eqnarray*}
   &&p^\nu_{K'}\left(X_\Psi f_s(t,x)-X_\Phi f_s(t,x)\right)\\
   &&=p^\nu_{K'}\left(\left(\frac{d}{d\tau}\bigg|_{\tau=t}\Psi_{\tau,t_0}\circ\Psi^{-1}_{t,t_0}-\frac{d}{d\tau}\bigg|_{\tau=t}\Phi_{\tau,t_0}\circ\Phi^{-1}_{t,t_0}\right)f_s\right)\\
   &&\leq p^\nu_{K'}\left(\left(\frac{d}{d\tau}\bigg|_{\tau=t}\Psi_{\tau,t_0}(\Psi^{-1}_{t,t_0}(x))-\frac{d}{d\tau}\bigg|_{\tau=t}\Psi_{\tau,t_0}(\Phi^{-1}_{t,t_0}(x))\right)f_s\right)\\
   &&\hspace{10pt}
   +p^\nu_{K'}\left(\left(\frac{d}{d\tau}\bigg|_{\tau=t}\Psi_{\tau,t_0}(\Phi^{-1}_{t,t_0}(x))-\frac{d}{d\tau}\bigg|_{\tau=t}\Phi_{\tau,t_0}(\Phi^{-1}_{t,t_0}(x))\right)f_s\right)\\
   &&\leq p^\nu_{K'}\left(\frac{d}{d\tau}\bigg|_{\tau=t}f_s\circ\Psi_{\tau,t_0}(\Psi^{-1}_{t,t_0}(x))-\frac{d}{d\tau}\bigg|_{\tau=t}f_s\circ\Psi_{\tau,t_0}(\Phi^{-1}_{t,t_0}(x))\right)\\
   &&\hspace{10pt}
   +p^\nu_{K}\left(\frac{d}{d\tau}\bigg|_{\tau=t}f_s\circ\Psi_{\tau,t_0}(y)-\frac{d}{d\tau}\bigg|_{\tau=t}f_s\circ\Phi_{\tau,t_0}(y)\right),
\end{eqnarray*}
which implies
\begin{eqnarray*}
   &&p_{K',\mathbb{I'},f_s}^{\nu}(X_\Psi-X_\Phi)\\
   &&=\int_{\mathbb{I'}} p^\nu_{K'}\left(X_\Psi f_s(t,x)-X_\Phi f_s(t,x)\right)\d t\\
   &&\leq \int_{\mathbb{I'}}p^\nu_{K'}\left(\frac{d}{d\tau}\bigg|_{\tau=t}f_s\circ\Psi_{\tau,t_0}(\Psi^{-1}_{t,t_0}(x))-\frac{d}{d\tau}\bigg|_{\tau=t}f_s\circ\Psi_{\tau,t_0}(\Phi^{-1}_{t,t_0}(x))\right)\d t\\
   &&\hspace{10pt}+\int_{\mathbb{I'}}p^\nu_{K}\left(\frac{d}{d\tau}\bigg|_{\tau=t}f_s\circ\Psi_{\tau,t_0}(y)-\frac{d}{d\tau}\bigg|_{\tau=t}f_s\circ\Phi_{\tau,t_0}(y)\right)\d t\\
   &&\leq \frac{r}{2}+\int_{\mathbb{I'}}\sup\left\{p^\nu_{K}\left(\frac{d}{d\tau}\bigg|_{\tau=t}f_s\circ\Psi_{\tau,t_0}(y)-\frac{d}{d\tau}\bigg|_{\tau=t}f_s\circ\Phi_{\tau,t_0}(y)\right)\ \bigg| t_0\in \mathbb{I}\right\}dt\\
   &&=\frac{r}{2}+\hat{p}^\nu_{K,\mathbb{I},\mathbb{I'},1}(f_s\circ\Psi-f_s\circ\Phi)\\
   &&\leq \frac{r}{2}+ q_{K,\mathbb{I},\mathbb{I'},f_s}^{\nu}(\Psi-\Phi)\\
   &&\leq \frac{r}{2}+\frac{r}{2}=r,
\end{eqnarray*}
as desired.
\end{proof}

\appendix
\section{Riemannian metrics}
The following results are used for the proof of different theorems in this paper. 

\begin{lem}[Comparison of Riemannian distance for different Riemannian metrics)]\label{lem:5.2}    
If $\mathbb{G}_1$ and $\mathbb{G}_2$ are smooth Riemannian metrics on $M$ with metrics $d_1$ and $d_2$, respectively, and if $K \subseteq M$ is compact, then there exists $c \in \R_{>0}$ such that
$$c^{-1}d_1(x_1, x_2) \leq d_2(x_1, x_2) \leq cd_1(x_1, x_2)$$
for every $x_1, x_2 \in K$.
\end{lem}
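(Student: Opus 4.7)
The plan is to reduce a statement about distances (which are infima of lengths over \emph{all} curves in $M$) to a statement about fibrewise comparison of the two metric tensors, which is a routine compactness argument. The two ingredients required are (i) a uniform pointwise comparison of $\mathbb{G}_1$ and $\mathbb{G}_2$ on a slightly enlarged compact set, and (ii) a localisation argument guaranteeing that near-minimising curves for one metric cannot escape that enlarged compact set.

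First I would pick a compact neighbourhood $K'$ of $K$; since $M$ is a manifold and both Riemannian distances induce the manifold topology, the compactness of $K$ lets us choose $r\in\R_{>0}$ with the property that every $d_i$-ball of radius $r$ centred at a point of $K$ is contained in $K'$, for $i=1,2$. Then I would observe that, for each $x\in K'$, the quotient $\|\cdot\|_{\mathbb{G}_2,x}/\|\cdot\|_{\mathbb{G}_1,x}$, viewed as a function on the unit sphere in $(T_xM,\mathbb{G}_1)$, attains its positive maximum and minimum; the resulting extremal eigenvalues of $\mathbb{G}_2$ relative to $\mathbb{G}_1$ depend continuously on $x$, so compactness of $K'$ yields a constant $c'\in\R_{>0}$ with
\[
(c')^{-1}\|v\|_{\mathbb{G}_1}\le\|v\|_{\mathbb{G}_2}\le c'\|v\|_{\mathbb{G}_1},\qquad x\in K',\ v\in T_xM.
\]
Integrating this fibrewise inequality along any piecewise differentiable curve lying in $K'$ gives the corresponding comparison of $\mathbb{G}_1$- and $\mathbb{G}_2$-lengths.

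Next, given $x_1,x_2\in K$ with $d_1(x_1,x_2)<r$, I would pick, for any $\varepsilon>0$ with $d_1(x_1,x_2)+\varepsilon<r$, a piecewise differentiable curve $\gamma$ from $x_1$ to $x_2$ of $\mathbb{G}_1$-length below $d_1(x_1,x_2)+\varepsilon$. The crucial observation is that every point of $\gamma$ lies within $\mathbb{G}_1$-distance $<r$ of $x_1$, hence inside $K'$; the length comparison therefore gives $\ell_{\mathbb{G}_2}(\gamma)\le c'(d_1(x_1,x_2)+\varepsilon)$, and taking the infimum over such $\gamma$ and then $\varepsilon\to 0$ yields $d_2(x_1,x_2)\le c'd_1(x_1,x_2)$. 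For points $x_1,x_2\in K$ with $d_1(x_1,x_2)\ge r$, I would use that $K$ is compact and $d_2$ continuous on $M\times M$ so that $D\triangleq\sup\{d_2(y_1,y_2)\mid y_1,y_2\in K\}$ is finite (this is where the standing connectedness hypothesis on $M$ is used), giving $d_2(x_1,x_2)\le D\le (D/r)d_1(x_1,x_2)$. Combining the two cases with $c=\max\{c',D/r\}$ produces the desired upper bound, and the reverse inequality follows by symmetry after replacing $c$ by a possibly larger constant.

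The only genuine subtlety is the localisation step: distances are defined by infima of lengths over \emph{all} admissible curves, so one cannot directly apply the fibrewise bound, which is valid only on $K'$. Handling this is the reason for inflating $K$ to $K'$ and choosing $r$ so that sufficiently short $d_1$-curves emanating from $K$ cannot escape $K'$. Everything else is either pointwise linear algebra or an application of compactness and the fact that a Riemannian distance induces the manifold topology.
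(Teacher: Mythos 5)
Your proposal is correct, but it localises differently from the paper. The paper first proves the fibrewise comparison of the two inner products on a single tangent space (Sublemma~1), then establishes a \emph{local} distance comparison (Sublemma~2) by working inside neighbourhoods that are geodesically convex for both metrics, so that the distance between two nearby points is realised by an actual minimising geodesic whose trace is controlled; the global statement is then assembled from finitely many such neighbourhoods via the Lebesgue Number Lemma, with the diameter-over-Lebesgue-number constant handling far-apart pairs. You instead avoid geodesics entirely: you get a single uniform fibrewise constant $c'$ on an enlarged compact set $K'$ by compactness of the unit sphere bundle, and you handle the localisation by observing that any curve of $\mathbb{G}_1$-length less than $r$ emanating from $K$ cannot leave $K'$, so the length comparison applies to \emph{near}-minimising curves and passes to the infimum. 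Your route is more elementary — it needs no existence theorem for geodesically convex neighbourhoods and would survive with merely continuous metric tensors — and your patching step is simpler because the constant is already uniform over $K'$; the paper's route is the more standard textbook decomposition and isolates the purely linear-algebraic content in a separate sublemma. Both treat the $d_1(x_1,x_2)\ge r$ case identically, via the diameter of $K$. The one point worth making explicit in a written-up version is the uniform choice of $r$: you should note that $x\mapsto d_1\bigl(x,M\setminus\operatorname{int}(K')\bigr)$ is continuous and positive on $K$ (or that the complement is empty), so that a single $r$ works for all centres in $K$ and for both metrics.
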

\begin{proof}
We shall prove the result in increments. The first step is simple linear algebra.
\begin{sublem}
If $\mathbb{G}_1$ and $\mathbb{G}_2$ are inner products on a finite-dimensional $\R$-vector space $V$, then there exists $c \in \R_{>0}$ such that
$$c^{-1}\mathbb{G}_1(v, v) \leq \mathbb{G}_2(v, v) \leq c\mathbb{G}_1(v, v)$$
for all $v\in V$.
\end{sublem}
\renewcommand\qedsymbol{$\nabla$}
\begin{proof}
Let $\mathbb{G}^{\fl}_j \in \text{Hom}_\R(V; V^*)$ and $\mathbb{G}^{\sh}_j \in \text{Hom}_\R(V^*; V)$, $j \in \{1, 2\}$, be the induced linear maps. Note that
$$\mathbb{G}_1(\mathbb{G}^{\sh}_1\circ \mathbb{G}^{\fl}_2(v_1), v_2) = \mathbb{G}_2(v_1, v_2) = \mathbb{G}_2(v_2, v_1) = \mathbb{G}_1(\mathbb{G}^{\sh}_1\circ \mathbb{G}^{\fl}_2(v_2), v_1),$$
showing that $\mathbb{G}^{\sh}_1\circ \mathbb{G}^{\fl}_2$
is $\mathbb{G}_1$-symmetric. Let $(e_1, . . . , e_n)$ be a $\mathbb{G}_1$-orthonormal basis for $V$ that is also a basis of eigenvectors for $G^{\sh}_1\circ \mathbb{G}^{\fl}_2$. The matrix representatives of $\mathbb{G}_1$ and $\mathbb{G}_2$ are then
$$[\mathbb{G}_1]=
\begin{bmatrix}
1 & 0 & \cdots& 0 \\
0 & 1 & \cdots & 0\\
\vdots & \vdots &  \ddots & \vdots\\
0 & 0 & \cdots& 1
\end{bmatrix}, \hspace{20pt}[\mathbb{G}_2]=
\begin{bmatrix}
\lambda_1 & 0 & \cdots& 0 \\
0 & \lambda_2 & \cdots & 0\\
\vdots & \vdots &  \ddots & \vdots\\
0 & 0 & \cdots& \lambda_n
\end{bmatrix},$$
where $\lambda_1, . . . , \lambda_n \in \R_{>0}$. Let us assume without loss of generality that $\lambda_1< \cdots < \lambda_n$. Then taking $c = \max\{\lambda_n, \lambda^{-1}_1\}$ gives the result, as one can verify directly.
\end{proof}
Next let us give the local version of the result.
\begin{sublem}
Let $\mathbb{G}_1$ and $\mathbb{G}_2$ be smooth Riemannian metrics on a manifold $M$ with metrics $d_1$ and $d_2$, respectively. For each $x \in M$, there exists a neighbourhood $\mathcal{U}_x$ of $x$ and $c_x \in \R_{>0}$ such that
$$c^{-1}_xd_1(x_1, x_2) \leq d_2(x_1, x_2) \leq c_xd_1(x_1, x_2)$$
for every $x_1, x_2 \in \mathcal{U}_x$.
\end{sublem}
\begin{proof}
Let $x \in M$. Let $\mathcal{N}_1$ and $\mathcal{N}_2$ be geodesically convex neighbourhoods of $x$ with respect to the Riemannian metrics $\mathbb{G}_1$ and $\mathbb{G}_2$, respectively [Kobayashi and Nomizu 1963, Proposition IV.3.4]. Thus every pair of points in $\mathcal{N}_1$ can be connected by a unique distanceminimising geodesic for $\mathbb{G}_1$ that remains in $\mathcal{N}_1$, and similarly with $\mathcal{N}_2$ and $\mathbb{G}_2$. By Sublemma 1, let $c_x \in \R_{>0}$ be such that
$$c^{-2}_x\mathbb{G}_1(v_x, v_x) \leq \mathbb{G}_2(v_x, v_x) \leq c_x^2\mathbb{G}_1(v_x, v_x), \hspace{10pt} v_x\in T_xM. $$
By continuity of $\mathbb{G}_1$ and $\mathbb{G}_2$, we can choose $\mathcal{N}_1$ and $\mathcal{N}_2$ sufficiently small that
$$c^{-2}_x\mathbb{G}_1(v_y, v_y) \leq \mathbb{G}_2(v_y, v_y) \leq c_x^2\mathbb{G}_1(v_y, v_y), \hspace{10pt} v_y\in \mathcal{N}_1\cup\mathcal{N}_1.$$
Now define $\mathcal{U}_x = \mathcal{N}_1\cap\mathcal{N}_1$. Then every pair of points in $\mathcal{U}_x$ can be connected with a unique distance-minimising geodesic of both $\mathbb{G}_1$ and $\mathbb{G}_2$ that remains in $\mathcal{N}_1\cup\mathcal{N}_1$. Now let $x_1, x_2 \in \mathcal{U}_x$. Let $\gamma : [0, 1] \to M$ be the unique distance-minimising $\mathbb{G}_1$-geodesic connecting $x_1$ and $x_2$. Then
\begin{eqnarray*}
  d_2(x_1, x_2) &\leq& \ell_{\mathbb{G}_2}(\gamma) = \int_0^1 \sqrt{\mathbb{G}_2(\gamma'(t),\gamma'(t) )} \d t\\
  &\leq& c_x\int_0^1 \sqrt{\mathbb{G}_1(\gamma'(t),\gamma'(t) )} \d t\\
  &\leq& c_x \ell_{\mathbb{G}_1}(\gamma)= c_xd_1(x_1,x_2).
\end{eqnarray*}
One similarly shows that $d_1(x_1, x_2) \leq c_xd_2(x_1, x_2)$.
\end{proof}
\renewcommand\qedsymbol{$\blacksquare$}
Now let $K \subseteq M$ be compact and, for each $x \in K$, let $\mathcal{U}_x$ be a neighbourhood of $x$ and let $c_x \in \R_{>0}$ be as in the preceding sublemma. Then $(\mathcal{U}_x)_{x\in K}$ is an open cover of $K$. Let $x_1, . . . , x_k \in K$ be such that
$$K \subseteq \cup^k_{j=1}\mathcal{U}_{x_j}.$$
Let
$$D_a = \sup\{d_a(x, y)\ \suchthat\  x, y \in K\},\hspace{10pt} a \in \{1, 2\}.$$
By the Lebesgue Number Lemma \citep[Theorem 1.6.11]{MR1835418}, let $r_a \in \R_{>0}$ be such that, if $x \in K$, then there exists $j \in \{1, . . . , k\}$ for which $B_a(r, x) \in \mathcal{U}_{x_j}$ ($B_a(r, x)$ is the ball with respect to the metric $d_a$). Let us denote
$$c=\max\left\{c_{x_1},...,c_{x_k},\frac{D_1}{r_2},\frac{D_2}{r_1}\right\}.$$
Now let $x_1, x_2 \in K$. If $d_1(x_1, x_2) < r_1$, then let $j \in \{1, . . . , k\}$ be such that $x_1, x_2 \in \mathcal{U}_j$. Then
$$d_2(x_1, x_2) \leq cd_1(x_1, x_2).$$
If $d_1(x_1, x_2) \geq r_1$, then
$$\frac{d_2(x_1,x_2)r_1}{D_2}\leq \frac{d_2(x_1,x_2)r_1}{d_2(x_1,x_2)}\leq d_1(x_1,x_2).$$
This gives $d_2(x_1, x_2) \leq cd_1(x_1, x_2)$. Swapping the roles of $\mathbb{G}_1$ and $\mathbb{G}_2$ gives $d_1(x_1, x_2) \leq cd_2(x_1, x_2)$, giving the lemma.
\end{proof}
\section{Holomorphic extensions}
\begin{lem}[Holomorphic extensions of $C^\omega$ time-dependent vector fields]\label{lem:heotdvf}
Let $\pi : E \to M$ be a $C^\omega$-vector bundle with $C^{\text{hol}}$-extension $\overbar{\pi}: \overbar{E} \to \overbar{M}$, let $\mathbb{T} \subseteq \R$ be an interval, and let $\xi \in \Gamma^\omega_{\text{LI}}(\mathbb{T}; E)$. Then, for every compact subinterval $\mathbb{S} \subseteq \mathbb{T}$, there exists a neighbourhood $\overbar{\mathcal{U}} \subseteq \overbar{M}$ of $M$ and $\overbar{\xi} \in L^1(\mathbb{S}; \Gamma^{\text{hol},\R}(\overbar{E}|\overbar{\mathcal{U}}))$ such that $\overbar{\xi}(t, x) = \xi(t, x)$ for every $(t, x) \in \mathbb{S} \times M$. 
\end{lem}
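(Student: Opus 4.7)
The plan is to reduce to a local quantitative question, glue local holomorphic extensions by the identity theorem, and then verify integrability of the resulting extension using the same Cauchy-type seminorm estimates that were employed in the $C^\omega$-case of Section~\ref{sec:3.3.1}.

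First I would exhaust $M$ by a countable locally finite family of relatively compact open sets $\mathcal{U}_i \subseteq M$ whose closures $K_i=\text{cl}(\mathcal{U}_i)$ lie in real analytic trivialising charts for $E$ that are restrictions of holomorphic trivialising charts for $\overbar{E}$. Fix $i$. Choose a candidate polydisc neighbourhood $\overbar{\mathcal{V}}_i\subseteq\overbar{M}$ of $K_i$ of some small radius $r_i>0$ (in the Riemannian distance induced by any extension of $\mathbb{G}_M$), and choose $\boldsymbol{a}_i = (a^i_j)_{j\in\mathbb{Z}_{\geq 0}}\in c_0(\mathbb{Z}_{\geq 0};\R_{>0})$ with $a^i_j \leq r_i/2$ for $j$ large. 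Since $\xi\in\Gamma^\omega_{\text{LI}}(\mathbb{T};E)$, the function $t\mapsto p^\omega_{K_i,\boldsymbol{a}_i}(\xi_t)$ lies in $L^1(\mathbb{S};\R_{\geq 0})$, hence is finite for almost every $t\in\mathbb{S}$ and, more importantly, the pointwise bounds
\[
\|j_m\xi_t(x)\|_{\mathbb{G}_{M,\pi,m}} \leq (a^i_0\cdots a^i_m)^{-1}\,p^\omega_{K_i,\boldsymbol{a}_i}(\xi_t),\qquad x\in K_i,\ m\in\mathbb{Z}_{\geq 0},
\]
translate, after absorbing combinatorial factors, into the bound $\|D^m\xi_t(x)\|\leq C\,m!\,\varrho^{-m}\,p^\omega_{K_i,\boldsymbol{a}_i}(\xi_t)$ for some fixed $\varrho>0$ independent of $t$ and $m$. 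This gives Taylor series for $\xi_t$ centred at any $x\in K_i$ that converge on a uniform polydisc of radius $\varrho/2$, producing a holomorphic extension $\overbar{\xi_t}\in\Gamma^{\text{hol},\R}(\overbar{E}|\overbar{\mathcal{V}}_i)$ for almost every $t\in\mathbb{S}$, after possibly shrinking $\overbar{\mathcal{V}}_i$ so that it is covered by such polydiscs.

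Next, for each $t$ for which all the local extensions $\overbar{\xi_t}|\overbar{\mathcal{V}}_i$ exist, the identity theorem for holomorphic sections ensures that on each overlap $\overbar{\mathcal{V}}_i\cap\overbar{\mathcal{V}}_j$ they agree, because they both restrict to $\xi_t$ on the nonempty totally real slice $\mathcal{U}_i\cap\mathcal{U}_j\subseteq M$. Thus the collection glues to a single holomorphic section $\overbar{\xi_t}\in\Gamma^{\text{hol},\R}(\overbar{E}|\overbar{\mathcal{U}})$ with $\overbar{\mathcal{U}} = \bigcup_i \overbar{\mathcal{V}}_i$, and by the construction $\overbar{\xi_t}(x)=\xi_t(x)$ for all $x\in M$. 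Discarding a null set of $t\in\mathbb{S}$ where the pointwise Cauchy bounds fail, we obtain $\overbar{\xi_t}$ for a.e.\ $t$.

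Finally I would verify that $t\mapsto\overbar{\xi_t}$ lies in $L^1(\mathbb{S};\Gamma^{\text{hol},\R}(\overbar{E}|\overbar{\mathcal{U}}))$. The topology of the target is generated by the seminorms $p^{\text{hol}}_{\overbar{L}}$ with $\overbar{L}\subseteq\overbar{\mathcal{U}}$ compact. For each such $\overbar{L}$, finitely many of the $\overbar{\mathcal{V}}_i$ cover $\overbar{L}$, and inside a smaller polydisc nested in $\overbar{\mathcal{V}}_i$ the Cauchy integral formula (equivalently, the Cauchy estimate in the converse direction) gives a bound
\[
p^{\text{hol}}_{\overbar{L}\cap\overbar{\mathcal{V}}_i}(\overbar{\xi_t}) \leq C_i\,p^\omega_{K_i,\boldsymbol{a}_i}(\xi_t)
\]
for some constant $C_i$ independent of $t$. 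Summing over the finitely many $i$ and integrating over $\mathbb{S}$ produces a finite bound, since each summand belongs to $L^1(\mathbb{S};\R_{\geq 0})$ by hypothesis. Measurability of $t\mapsto\overbar{\xi_t}$ follows from the measurability of $t\mapsto\xi_t$ via the continuity of the restriction map $\Gamma^{\text{hol},\R}(\overbar{E}|\overbar{\mathcal{U}})\to\Gamma^\omega(E)$ combined with the injectivity of this map and the Suslin property; alternatively, each local extension $t\mapsto\overbar{\xi_t}|\overbar{\mathcal{V}}_i$ is measurable because the Taylor coefficient maps $\xi_t\mapsto D^m\xi_t(x)$ are continuous linear.

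The main obstacle is the uniformity step, namely producing the $t$-independent neighbourhood $\overbar{\mathcal{V}}_i$ of $K_i$. Individually, each $\xi_t$ extends to a neighbourhood whose size a priori depends on $t$, and without further input one could imagine the extension radii shrinking to zero on a set of positive measure. The resolution is that integrability in $\Gamma^\omega(E)$ already encodes uniform-in-$t$ geometric control of the derivatives via the $p^\omega_{K_i,\boldsymbol{a}}$ seminorms: choosing $\boldsymbol{a}$ to correspond to a fixed radius pins down a $t$-independent extension radius, with all the $t$-dependence absorbed into the $L^1$ factor $p^\omega_{K_i,\boldsymbol{a}}(\xi_t)$. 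This is precisely the mechanism that is exploited throughout the paper whenever the real analytic case is treated.
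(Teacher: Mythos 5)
Your overall strategy (local extension with a uniform radius, gluing by the identity theorem on the totally real slice, then checking integrability by Cauchy estimates) is reasonable in outline, but the step on which everything rests --- extracting a $t$-independent extension radius from a single seminorm $p^\omega_{K_i,\boldsymbol{a}_i}$ --- does not work. The inequality
$\|j_m\xi_t(x)\|_{\mathbb{G}_{M,\pi,m}}\leq (a^i_0\cdots a^i_m)^{-1}p^\omega_{K_i,\boldsymbol{a}_i}(\xi_t)$
is correct, but since $\boldsymbol{a}_i\in c_0(\mathbb{Z}_{\geq 0};\R_{>0})$ forces $a^i_j\to 0$, the product $a^i_0\cdots a^i_m$ decays faster than any geometric sequence, so $(a^i_0\cdots a^i_m)^{-1}$ grows faster than $C\varrho^{-m}$ for every $\varrho>0$. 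Hence this bound cannot be ``absorbed'' into $\|D^m\xi_t(x)\|\leq C\,m!\,\varrho^{-m}\,p^\omega_{K_i,\boldsymbol{a}_i}(\xi_t)$ for a fixed $\varrho$, and no single seminorm of the $C^\omega$-topology controls a radius of convergence. (You have also used the implication in the wrong direction: choosing $a^i_j\leq r_i/2$ eventually is what shows that a section analytic with radius $r_i$ has finite seminorm, not the converse.) Real analyticity with a fixed radius is encoded only by the totality of the seminorms $p^\omega_{K,\boldsymbol{a}}$ over all $\boldsymbol{a}\in c_0(\mathbb{Z}_{\geq 0};\R_{>0})$, and upgrading ``$t\mapsto p^\omega_{K,\boldsymbol{a}}(\xi_t)$ is in $L^1$ for every $\boldsymbol{a}$'' to ``almost every $\xi_t$ extends to one fixed neighbourhood $\overbar{\mathcal{U}}$ with $\int_{\mathbb{S}}p^{\text{hol}}_{\overbar{L}}(\overbar{\xi}_t)\,\d t<\infty$'' is precisely the content of the lemma, not something that can be assumed at the outset.

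The paper closes exactly this gap by functional-analytic means rather than pointwise estimates: it uses the identification $\Gamma^\omega(E)\simeq\varinjlim_{\overbar{\mathcal{U}}\in\mathcal{N}_M}\Gamma^{\text{hol},\R}(\overbar{E}|\overbar{\mathcal{U}})$ of the real analytic section space as a locally convex direct limit over neighbourhoods of $M$ in $\overbar{M}$, together with $L^1(\mathbb{S};\Gamma^\omega(E))\simeq L^1(\mathbb{S};\R)\,\overbar{\otimes}_\pi\,\Gamma^\omega(E)$ and a sublemma showing that the completed projective tensor product with the Banach space $L^1(\mathbb{S};\R)$ commutes with locally convex direct limits. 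An element of the direct limit then lives in a single stage $L^1(\mathbb{S};\Gamma^{\text{hol},\R}(\overbar{E}|\overbar{\mathcal{U}}))$, which is the desired uniform neighbourhood. If you want to salvage a more hands-on argument, you would need some diagonal or category-type argument converting integrability for all $\boldsymbol{a}$ into a uniform geometric bound on a set of full measure; as written, your proof asserts the conclusion of that missing argument.
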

\begin{proof}
First of all, following \citep[Proposition 1]{MR102094} there is an holomorphic extension $\overbar{\pi}: \overbar{E} \to \overbar{M}$ of $\pi : E \to M$. This means that $M$ is a totally real submanifold of maximal dimension of $\overbar{M}$ and that $E$ is a totally real subbundle of $\overbar{E}$ of maximal dimension over the submanifold $M$. Let $\mathcal{N}_M$ be the set of neighbourhoods in $\overbar{M}$ of $M$ and denote by $\mathscr{G}^{\text{hol},\R}_{M,\overbar{E}}$ the set of germs of holomorphic sections about $M$ whose restriction to $M$ takes values in $E$. If $\overbar{\xi}$ is such an holomorphic section defined on some neighbourhood, we denote its germ by $[\overbar{\xi}]_M$. We provide $\mathcal{N}_M$ with the partial order $\mathcal{U} \preceq \mathcal{V}$ if $\mathcal{V} \subseteq \mathcal{U}$. Given $\mathcal{U} \preceq \mathcal{V}$, we
have a natural restriction mapping
\begin{eqnarray*}
  r_{\overbar{\mathcal{U}},\overbar{\mathcal{V}}}: \overbar{E}|\overbar{\mathcal{U}}&\to&\overbar{E}|\overbar{\mathcal{V}}\\
  \overbar{\xi} &\mapsto& \overbar{\xi}|\overbar{\mathcal{V}}.
\end{eqnarray*}
In this way we arrive at a directed system
$$\left(\Gamma^{\text{hol},\R}(\overbar{E}|\overbar{\mathcal{U}})_{\overbar{\mathcal{U}}\in\mathcal{N}_M},(r_{\overbar{\mathcal{U}},\overbar{\mathcal{V}}})_{\mathcal{U} \preceq \mathcal{V}}\right)$$
of locally convex $\R$-topological vector spaces. Algebraically, the direct limit is isomorphic to $\mathscr{G}^{\text{hol},\R}_{M,\overbar{E}}$, and this then gives $\mathscr{G}^{\text{hol},\R}_{M,\overbar{E}}$ the locally convex direct limit. Since every section $\xi \in \Gamma^\omega(E)$ possesses an extension $\overbar{\xi}\in \Gamma^{\text{hol},\R}(\overbar{E}|\overbar{\mathcal{U}})$ to some neighbourhood $\overbar{\mathcal{U}}$, it is fairly easy to show that the mapping
\begin{eqnarray*}
  \iota_M: \Gamma^\omega(E)&\to&\mathscr{G}^{\text{hol},\R}_{M,\overbar{E}}\\
  \xi &\mapsto& [\overbar{\xi}]|M,
\end{eqnarray*}
where $\overbar{\xi}$ is some holomorphic extension of $\xi$, is an isomorphism of $\R$-vector spaces \citep[Lemma 5.2]{Jafarpour2014}. In this way, we give $\Gamma^\omega(E)$ the topology induced from that of
$\mathscr{G}^{\text{hol},\R}_{M,\overbar{E}}$. This topology is, in fact, the $C^\omega$-topology described in Section \ref{sec:sfras}, although this is by no means easy to show. We refer to \citep{Jafarpour2014} for details.

From our description of the $C^\omega$-topology above, we have
$$\Gamma^\omega(E) \simeq \lim_{\overset{\longrightarrow}{\overbar{\mathcal{U}}\in\mathcal{N}_M}} 
\Gamma^{\text{hol},\R}(\overbar{E}|\overbar{\mathcal{U}})$$

Let $\mathbb{S} \subseteq \mathbb{T}$ be compact.  As per \citep[Theorem 3.2]{lewis2021geometric}, we have
\begin{eqnarray*}
  L^1(\mathbb{S}; \Gamma^\omega(E)) &\simeq& L^1(\mathbb{S}; \R)\overbar{\otimes}_\pi \Gamma^\omega(E),\\
  L^1(\mathbb{S}; \Gamma^{\text{hol},\R}(\overbar{E}|\overbar{\mathcal{U}})) &\simeq& L^1(\mathbb{S}; \R)\overbar{\otimes}_\pi \Gamma^{\text{hol},\R}(\overbar{E}|\overbar{\mathcal{U}}).
\end{eqnarray*}
Here $\otimes_\pi$ is the projective tensor product and $\overbar{\otimes}_\pi$ denotes the completion of the projective tensor product \citep[Chapter 15]{MR632257}. It then follows by\citep[Corollary 15.5.4(a)]{MR632257} that
$$L^1(\mathbb{S}; \R)\otimes_\pi \Gamma^\omega(E)\simeq \lim_{\overset{\longrightarrow}{\overbar{\mathcal{U}}}}L^1(\mathbb{S}; \R)\otimes_\pi \Gamma^{\text{hol},\R}(\overbar{E}|\overbar{\mathcal{U}})$$
We wish to assert the same conclusion for the completion of the projective tensor product.
For this, we use a sublemma.
\begin{sublem}
Let $((V_i)_{i\in I} ,(\phi_{ij} )_{i \preceq j} )$ be a directed system of locally convex topological vector spaces with locally convex direct limit $V$ and let $B$ be a Banach space. Then the locally convex direct limit of the directed system
$$((B\overbar{\otimes}_\pi V_i)_{i\in I} ,(\text{id}_B \overbar{\otimes}_\pi \phi_{ij} )_{i \preceq j} )$$
is isomorphic to $B\overbar{\otimes}_\pi V$.
\end{sublem}
\renewcommand\qedsymbol{$\nabla$}
\begin{proof}
By \citep[Corollary 15.5.4]{MR632257}, the direct limit of $(B\otimes_\pi V_i)_{i\in I}$ is isomorphic to $B\otimes_\pi V$, giving mappings $\text{id}B \otimes_\pi V_i \in L(B \otimes_\pi V_i; B \otimes_\pi V)$. By unique extension, we have a continuous linear mapping $\text{id}_B \overbar{\otimes}_\pi \phi_i \in L(B \overbar{\otimes}_\pi V_i; B \overbar{\otimes}_\pi V)$. We then have
$$B \otimes_\pi V_i\subseteq (\text{id}_B \overbar{\otimes}_\pi \phi_i)^{-1}(B \otimes_\pi V).$$
Immediately, upon taking closures,
$$B \overbar{\otimes}_\pi V_i\subseteq \text{cl}((\text{id}_B \overbar{\otimes}_\pi \phi_i)^{-1}(B \otimes_\pi V)).$$
We also have
$$(\text{id}_B \overbar{\otimes}_\pi \phi_i)^{-1}(B \otimes_\pi V)\subseteq (\text{id}_B \overbar{\otimes}_\pi \phi_i)^{-1}(\text{cl}( B \otimes_\pi V)).$$
Since the set on the right is closed, we have
$$\text{cl}((\text{id}_B \overbar{\otimes}_\pi \phi_i)^{-1}(B \otimes_\pi V))\subseteq (\text{id}_B \overbar{\otimes}_\pi \phi_i)^{-1}(\text{cl}( B \otimes_\pi V))=B \overbar{\otimes}_\pi V_i.$$
That is to say, $(\text{id}_B \overbar{\otimes}_\pi \phi_i)^{-1}(B \otimes_\pi V)$ is dense in $B \overbar{\otimes}_\pi V_i$. By \citep[Corollary 1]{Hustad1963} we conclude that $B\otimes_\pi V$ is dense in the direct limit of the directed system $(B\overbar{\otimes}_\pi V_i)_{i\in I}$. This
then gives the lemma since the closure of $B\otimes_\pi V$ is $B\overbar{\otimes}_\pi V$.
\end{proof}
\renewcommand\qedsymbol{$\blacksquare$}
By the sublemma, we have
$$L^1(\mathbb{S}; \Gamma^\omega(E))\simeq \lim_{\overset{\longrightarrow}{\overbar{\mathcal{U}}}}L^1(\mathbb{S}; \Gamma^{\text{hol},\R}(\overbar{E}|\overbar{\mathcal{U}})).$$
Therefore, there exists $\overbar{\mathcal{U}}\in\mathcal{N}_M$ and $\overbar{\xi}\in L^1(\mathbb{S}; \Gamma^{\text{hol},\R}(\overbar{E}|\overbar{\mathcal{U}}))$ such that $\xi(t,x)=\overbar{\xi}(t,x)$ for $(t,x)\in \mathbb{S}\times M$.
\end{proof}

\newpage

\printbibliography[heading=bibintoc]

\end{document}